\title{Multiscale Substitution Tilings}
\date{}
\author{Yotam Smilansky}
\address{Yotam Smilansky\newline\indent Department of Mathematics, Rutgers University, NJ, USA. \newline\indent {\tt yotam.smilansky@rutgers.edu}}
\author{Yaar Solomon}
\address{Yaar Solomon\newline\indent Department of Mathematics, Ben-Gurion University of the Negev, Israel.\newline\indent {\tt yaars@bgu.ac.il}}
\newcommand{\N}{{\mathbb{N}}}
\newcommand{\Z}{{\mathbb{Z}}}
\newcommand{\Q}{{\mathbb {Q}}}
\newcommand{\R}{{\mathbb{R}}}
\newcommand{\X}{{\mathbb{X}}}
\newcommand{\EE}{\mathcal{E}_0}
\newcommand{\EEE}{|\EE|}
\newcommand{\PP}{\mathcal{P}}
\newcommand{\OO}{\mathcal{O}}
\newcommand{\QQ}{\mathcal{Q}}
\newcommand{\RR}{\mathcal{R}}
\newcommand{\TT}{\mathcal{T}}
\renewcommand{\OO}{\mathcal{O}}
\renewcommand{\SS}{\mathcal{S}}
\newcommand{\CC}{\mathscr{C}}
\newcommand{\PPP}{\mathscr{P}}
\newcommand{\SSS}{\mathscr{S}}
\newcommand{\dist}{\mathbf{dist}}
\newcommand{\supp}{\mathrm{supp}}
\newcommand{\absolute}[1] {\left|{#1}\right|}
\newcommand{\norm}[1]{\left\|{#1}\right\|}
\newcommand{\vol}{\mathrm{vol}}
\newcommand{\tr}{\rm tr}
\newcommand {\ignore}[1]  {}
\theoremstyle{plain}
\newtheorem{thm}{Theorem}[section]
\newtheorem{lem}[thm]{Lemma}
\newtheorem{prop}[thm]{Proposition}
\newtheorem{cor}[thm]{Corollary}
\newtheorem*{Structural}{Structural results}
\newtheorem*{Geometric}{Geometric results}
\newtheorem*{Statistical}{Statistical results}
\newtheorem*{Dynamical}{Dynamical results}
\theoremstyle{definition}
\newtheorem{definition}[thm]{Definition}
\newtheorem{example}[thm]{Example}
\newtheorem{remark}[thm]{Remark}
\numberwithin{equation}{section}
\newif\ifdraft\drafttrue
\begin{document}
\begin{abstract}
      We introduce a new general framework for constructing tilings of Euclidean space, which we call multiscale substitution tilings. These tilings are generated by substitution schemes on a finite set of prototiles, in which multiple distinct scaling constants are allowed. This is in contrast to the standard case of the well-studied substitution tilings which includes examples such as the Penrose and the pinwheel tilings. Under an additional irrationality assumption on the scaling constants, our construction defines a new class of tilings and tiling spaces, which are intrinsically different from those that arise in the standard setup. We study various structural, geometric, statistical and dynamical aspects of these new objects and establish a wide variety of properties. Among our main results are explicit density formulas and the unique ergodicity of the associated tiling dynamical systems. 
\end{abstract}

\maketitle
\subsection*{Mathematics Subject Classification (2010)} 52C23, 52C22 (Primary) 37B10, 37C30, 05B45, 05C21, 37A05 (Secondary)
\section{Introduction}\label{sec:introduction}

In the construction of substitution tilings, which are a classical object of study within the field of aperiodic order and mathematical models of quasicrystals, a standing assumption is that the substitution rule that generates the tiling is associated with a single unique scaling constant. More precisely, given a set of initial tiles, also known as prototiles and usually assumed to be finite, the substitution rule describes a tessellation of each prototile by rescaled copies of prototiles, where the applied scaling constant is unique, and hence we refer to it as a fixed scale substitution rule. This implies that a uniform inflation by the reciprocal constant defines a patch of tiles, each of which is a copy of a prototile under some isometry of the space. By repeating this process countably many times, a tiling of the space can be defined, with the property that all of the tiles that appear in the tiling are copies of the original prototiles. The beautiful examples of the Penrose and pinwheel tilings, see \cite{Penrose}  and  \cite{Radin}, among other well-studied examples, can be constructed in this way, with isometry groups of translations and rigid motions, respectively. For a comprehensive discussion and additional examples see  \cite{BaakeGrimm} and references therein.

When considering the construction of substitution tilings, a natural question that may arise concerns the scaling constant: what kind of tilings emerge if the standard assumption of a single scaling constant is relaxed?

\begin{figure}[ht!]
	\includegraphics[scale=0.6]{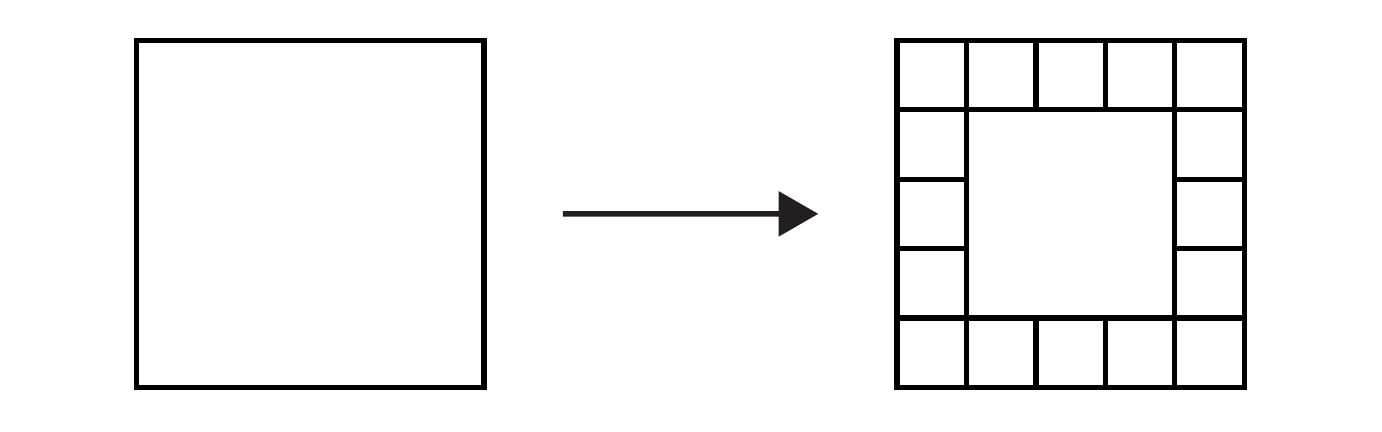}\caption{A multiscale substitution scheme on a unit square $S$, with scaling constants $1/5$ and $3/5$.}	\label{fig: square scheme}
\end{figure}

Clearly, if one is to use multiple distinct scaling constants, as in the example illustrated in Figure \ref{fig: square scheme}, an attempt to define tilings by consecutive substitutions and inflations in the way described above may result in tilings with either arbitrarily small or arbitrarily large tiles. These are of interest in their own right, as evident in \cite{Kenyon3} where substitution rules similar to that of Figure \ref{fig: square scheme} are introduced and studied by Kenyon in the context of self-similar sets. Yet such tilings do not naturally induce Delone sets, which are often used to model physical structures. 

In order to generate tilings with tiles of bounded scales, a different procedure must be used, and several concrete constructions of such tilings were previously considered. Sadun's beautiful generalized pinwheel tilings first appeared in \cite{Sadun - generalized Pinwhell}, where their geometrical and statistical properties were thoroughly studied. This was later extended to include dynamical and topological properties in \cite{Frank-Sadun (ILC fusion)}, in which generalized pinwheel tilings are considered in the context of Frank and Sadun's fusion tilings. A similar analysis was offered for special one dimensional tilings \cite[\S A.5]{Frank-Sadun (ILC fusion)}, and we will return to both of these examples in the following sections. 

We propose here a general procedure to overcome the above mentioned difficulty. In the present introductory section we shall describe and introduce the process in a somewhat intuitive way, a precise and detailed presentation will be given in the next sections. Start with a single tile, and inflate it continuously. When its volume passes a certain threshold, which is set {\it a priori} to be the unit volume, substitute the tile according to the prescribed substitution rule. Continue to inflate, and substitute any tile when its volume reaches the threshold. This defines a continuous family of patches of tiles within some bounded interval of scales, and from which tilings of the entire space can be defined as limiting objects with respect to a suitable topology. For an example of such a patch see Figure \ref{fig: square patch}, and note that it can also be defined by considering an inflation of the original tile, which is then substituted until all tiles are of volume smaller than the threshold. The collection of all tilings generated by a given substitution scheme defines the associated multiscale tiling space, which is a compact space of tilings and is closed under translations.

\begin{figure}[ht!]
	\includegraphics[scale=2.8]{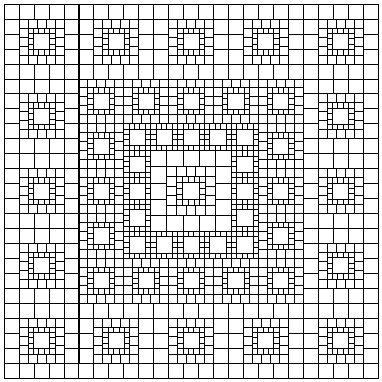}\caption{A patch of a multiscale substitution tiling.}	\label{fig: square patch}
\end{figure}

The aforementioned semi-flow, which we call the substitution semi-flow, is shown to be well defined on the space of tilings. It has periodic orbits which give rise to stationary points in this space, and these stationary tilings can be represented as limits of sequences of nested patches. As we will see, in the fixed scale case, this procedure generates tilings that can be defined by the standard substitution and inflation procedure. 

Our focus in this paper is on tilings generated by incommensurable multiscale substitution schemes on a finite set of prototiles, which are classified by a certain irrationality assumption, as will be precisely defined in \S\ref{sec: incommensurability}. For example, the substitution scheme illustrated in Figure \ref{fig: square scheme} is incommensurable. In this example incommensurability follows from the fact that the two participating scaling constants $1/5$ and $3/5$ are such that $(1/5)^m\neq(3/5)^n$ for any $0\neq m,n\in\Z$, which is true because $3$ and $5$ are co-prime. The substitution rule of the irrational case of the generalized pinwheel tiling is an early example of an incommensurable substitution scheme, as is the aforementioned one dimensional fusion tiling example.
In general, incommensurability amounts to the existence of at least two periodic orbits of the substitution semi-flow for which the logarithms of their lengths are linearly independent over the rationals. As we will see, there are more natural ways to define incommensurability, and incommensurability can be considered a typical property of multiscale substitution schemes. We remark that as in the fixed scale case, for non-incommensurable schemes, which we refer to as commensurable, the generated multiscale tilings can be defined also in the standard way, though perhaps a larger set of prototiles is required. In some aspects, fixed scale, commensurable and incommensurable schemes are analogous to integer, rational and irrational real numbers. In particular, fixed scale schemes are generalized by commensurable schemes, while incommensurability is a complement, disjoint property. 

We present a thorough study of this new class of incommensurable tilings and their properties. Such tilings consist of tiles of finitely many types that appear in infinitely many scales, and are therefore of infinite local complexity. The standard tools of the substitution matrix and the theory of Perron-Frobenius are no longer applicable, and the new tools of the associated directed weighted graph and the recent results of \cite{Yotam graphs} on the distribution of paths on incommensurable graphs are introduced. We show that incommensurability replaces the role of primitivity, and present explicit formulas for the asymptotic density of tiles, as well as for frequencies of tiles of certain types and scales and the volume they occupy. In addition we discuss a form of scale complexity and show that under a mild assumption on the geometry of prototiles, the Delone sets associated with incommensurable tilings are not uniformly spread, in the sense that the associated point sets are never bounded displacement equivalent to lattices. All the relevant terms will be precisely defined in the coming sections. 

Although the construction is well defined and many of our results hold also for the case where tiles are substituted by rescaled copies of prototiles under some isometry, when considering the associated tiling dynamical system, which is defined with respect to the action by translations,  we focus on the case where only translations of rescaled copies are allowed in the substitution rule. In such a case, the tiling dynamical system is shown to be minimal. Minimality, combined with the properties of the substitution semi-flow on the tiling space, allows for the existence of supertiles for any tiling in the tiling space, which form an extremely useful hierarchical structure on tilings. The appropriate variants of uniform patch frequencies, in which patches are counted together with their dilations in some non-trivial interval of scales, are shown to hold. Finally, we show that as in the fixed scale case, the existence of uniform patch frequencies can be used to establish unique ergodicity of the tiling dynamical system.

A detailed introduction of multiscale substitution schemes, the associated graphs and the precise notion of incommensurability, appears in \S\ref{sec:Settings and definitions} and \S\ref{sec: incommensurability}. Two large illustrations of fragments of multiscale substitution tilings are included as an appendix. The main results and the structure of the paper are summarized as follows, where the section number indicates the place in the paper where the result is properly stated and proved.

\begin{Structural} An incommensurable multiscale substitution scheme generates a multiscale tiling space. The substitution semi-flow acts on the multiscale tiling space, and periodic orbits give rise to stationary tilings \S\ref{sec:Construction of multiscale substitution tilings}. Each tiling is equipped with a hierarchical structure of supertiles, though not necessarily in a unique way \S\ref{sec:Dynamics}. 
\end{Structural}

\begin{Geometric}For every tiling in the multiscale tiling space, all tiles are similar to rescaled copies of prototiles \S\ref{sec:Construction of multiscale substitution tilings}. They appear in a dense set of scales within certain intervals of possible scales, and the same holds for any legal patch. Scale complexity is defined for stationary tilings, and the existence of ``Sturmian'' tilings is established \S\ref{sec: scales and complexity}. Appropriate variants of almost repetitivity and almost local indistinguishability are shown to hold \S\ref{sec:Dynamics}. Tilings are not uniformly spread \S\ref{sec: BD BL}.
\end{Geometric}

\begin{Statistical} Explicit asymptotic formulas for the number of tiles of a given type and scales within a given interval, that appear in large supertiles, are given \S\ref{sec: Frequencies}. A variant of uniform patch frequencies is established, where patches are counted together with dilations. For any non-trivial interval of dilations, legal patches have positive patch frequencies \S\ref{sec: patches}. When considered without dilations, all patches have uniform frequency zero.
	
\end{Statistical}

\begin{Dynamical} The multiscale tiling space is equipped with an action by translations to form a tiling dynamical system, which is minimal \S\ref{sec:Dynamics}, and in fact uniquely ergodic \S\ref{sec: unique ergodicity}. 
	
\end{Dynamical}

The pioneering tiling constructions introduced in \cite{Kenyon3, Sadun - generalized Pinwhell} and \cite{Frank-Sadun (ILC fusion)} were shown to satisfy some of the properties listed above using innovative methods. These include in particular the construction of stationary pinwheel tilings, a distinction between rational and irrational pinwheel tilings and tile statistics in the irrational case in \cite{Sadun - generalized Pinwhell}, and the unique ergodicity of the one dimensional tiling system in \cite[\S A.5]{Frank-Sadun (ILC fusion)}, which are recovered here. We note that when considered in the context of multiscale substitution tilings, these constructions have in common that their substitution scheme includes only a single prototile, and tiles appear in only two distinct scales in the substitution rule, which allows for direct computations. The new approach developed in this paper puts these constructions under the umbrella of incommensurable multiscale substitution schemes. The use of the directed weighted graph model and the properties of its flow allow for the study of more complicated constructions with any finite number of prototiles and participating scales. We would also like to mention the tilings studied recently in \cite{BV} and \cite{BBV}, where graph directed iterated function systems are used for generating tilings, but unlike the  work presented here the focus is on commensurable constructions. 

The definition and study of multiscale substitution tilings is part of the recent resurgence of interest in tilings that are not assumed to be of finite local complexity, but still possess rich structure, hierarchies and symmetries.  See among others \cite{Frank1, Frank2, Frettloh Richards, Frank-Robinson, Frank-Sadun1, Frank-Sadun (ILC fusion), Sadun - book} and \cite{Lee-Solomyak}, and the earlier \cite{Danzer, Kenyon1, Kenyon2, Kenyon3} and \cite{Sadun - generalized Pinwhell}. It is our hope that this new class of tilings and the examples it produces will become an object of study in the community of aperiodic order, as there are still many interesting questions to consider, both of geometric and of dynamical flavor.

The construction of incommensurable tilings and the associated multiscale tiling spaces may prove interesting in relation to various other research directions. Incommensurable $\alpha$-Kakutani substitution schemes, studied in \cite{Yotam Kakutani} in the context of uniform distribution of Kakutani sequences of partitions \cite{Kakutani}, generate one dimensional tilings. Every tile is a segment, and by identifying the endpoints of segments with point masses, the unique translation invariant measure on the tiling dynamical space defines a new type of point process on the real line, see \cite{Baake Birkner Moody,Baake-Lenz} for more on spectral theory, diffraction and point processes in the context of aperiodic order. Another example is the relation between the substitution semi-flow and the theory of hyperbolic dynamics and dynamical zeta functions, see \cite{Parry Pollicott book}. The study of uniform distribution and discrepancy of sequences of partitions is also closely related \cite{Yotam Kakutani}, as is the study of self-similar non-lattice fractal strings and sprays, see \cite{Lapidus} and references therein. 

\subsection*{Acknowledgments}
We would like to thank Barak Weiss for his ideas, suggestions and support from the very early stages of this project. We thank Boris Solomyak for many fruitful discussions throughout the years, as well as Pat Hooper, Avner Kiro, Zemer Kosloff, Lorenzo Sadun and Rodrigo Trevi\~{n}o. We thank Lars Blomberg and Uri Grupel for producing helpful numerical results. Finally we thank the anonymous referee for the many valuable comments and suggestions, and in particular for generously contributing the arguments that now appear as Proof 2 of Theorem \ref{thm:not_BD_to_Z^d}.

A large portion of this work was done while the first author was an Orzen Postdoctoral Fellow at the Hebrew University of Jerusalem, and he is grateful for the support of the David and Rosa Orzen Endowment Fund, and the ISF grant No. 1570/17. We are grateful to Dirk Frettl\"oh and to Neil Sloane for including our tilings in the Tiling Encyclopedia \cite{Frettloh-Harriss} and in the On-Line Encyclopedia of Integer Sequences (OEIS) \cite{OEIS}, respectively.

\section{Substitution schemes and the substitution semi-flow}\label{sec:Settings and definitions}

We begin with an introduction to multiscale substitution schemes and their associated graphs, and with detailed definitions of the basic objects required for the definition and study of multiscale substitution tilings, which will be defined in \S\ref{sec:Construction of multiscale substitution tilings}.

\subsection{Tiles and multiscale substitution schemes}
A \textit{tile} $T$ in $\R^d$ is a bounded Lebesgue measurable set of positive measure, which is denoted by $\vol T$ and referred to as the \textit{volume} of $T$, and with a boundary of measure zero. A \textit{tessellation} $\PP$ of a set $U\subset\R^d$ is a collection of tiles with pairwise disjoint interiors so that the union of their support is $U$. For the sake of clarity, a tessellation of a bounded set will be called a \textit{patch} and a tessellation of the entire space will be called a \textit{tiling}. Given a patch $\PP$, a tiling $\TT$ and a subset $B$ of their support, the sub-patch consisting of all tiles in $\PP$ that intersect $B$ is denoted by $[B]^\PP$, and the sub-patch $[B]^\TT$ is similarly defined. Note that we view tiles as embedded subsets of $\R^{d}$, though the location of the origin usually does not matter to us. We will specify the location of the origin when it is important.

\begin{definition}\label{def: substitution scheme}
A \textit{multiscale substitution scheme} $\sigma=(\tau_\sigma,\varrho_\sigma)$ in $\R^d$ consists of a finite list of labeled tiles $\tau_\sigma=(T_1,\ldots,T_n)$  in $\R^d$ called \textit{prototiles}, and a \textit{substitution rule} defining a tessellation $\varrho_\sigma (T_i)$ of each prototile $T_i$, so that every tile in $\varrho_\sigma (T_i)$ is a translation of a rescaled copy of a prototile in $\tau_\sigma$. We denote by $\omega_\sigma(T_i)$ the list of rescaled prototiles whose translations appear in the patch $\varrho_\sigma (T_i)$, presented as   

\begin{equation*}\label{eq:basic_scales}
\omega_\sigma(T_i)=\left(\alpha_{ij}^{\left(k\right)} T_j:\,j=1,\ldots,n,\,\,k=1,\ldots,k_{ij}\right),
\end{equation*} 
and referred to as the \textit{tiles of substitution}. Here $\alpha_{ij}^{\left(k\right)}$  are positive constants, and $k_{ij}$ is the number of \textit{tiles of type} $j$ in $\varrho_\sigma(T_i)$, that is, the number of rescaled copies of $T_j$ in $\varrho_\sigma (T_i)$. 
\end{definition}

We remark that for the sake of simplicity of presentation, we assume throughout that $T_i\neq \alpha T_j$ for any $i\neq j$ and all $\alpha>0$. In this case the geometry of the tile determines its label uniquely, while otherwise one must consider tiles as pairs consisting of sets in $\R^d$ and labels in $\{1,\ldots,n\}$, and the notation becomes cumbersome, see also Remark \ref{rem: labels topology}.

It is helpful to consider the analogy to jigsaw puzzles, where the prototiles $\tau_\sigma$ are puzzles to be
solved using the pieces in $\omega_\sigma$, and $\varrho_\sigma$ gives a solution $\varrho_\sigma(T_i)$ to each of the puzzles. We will refer to multiscale substitution schemes also as \textit{substitution schemes}, and occasionally simply as \textit{schemes}. Unless otherwise stated, all schemes are in $\R^d$ and $\tau_\sigma$ consists of $n$ prototiles $T_1,\ldots,T_n$.

\begin{figure}[ht!]
	\includegraphics[scale=0.6]{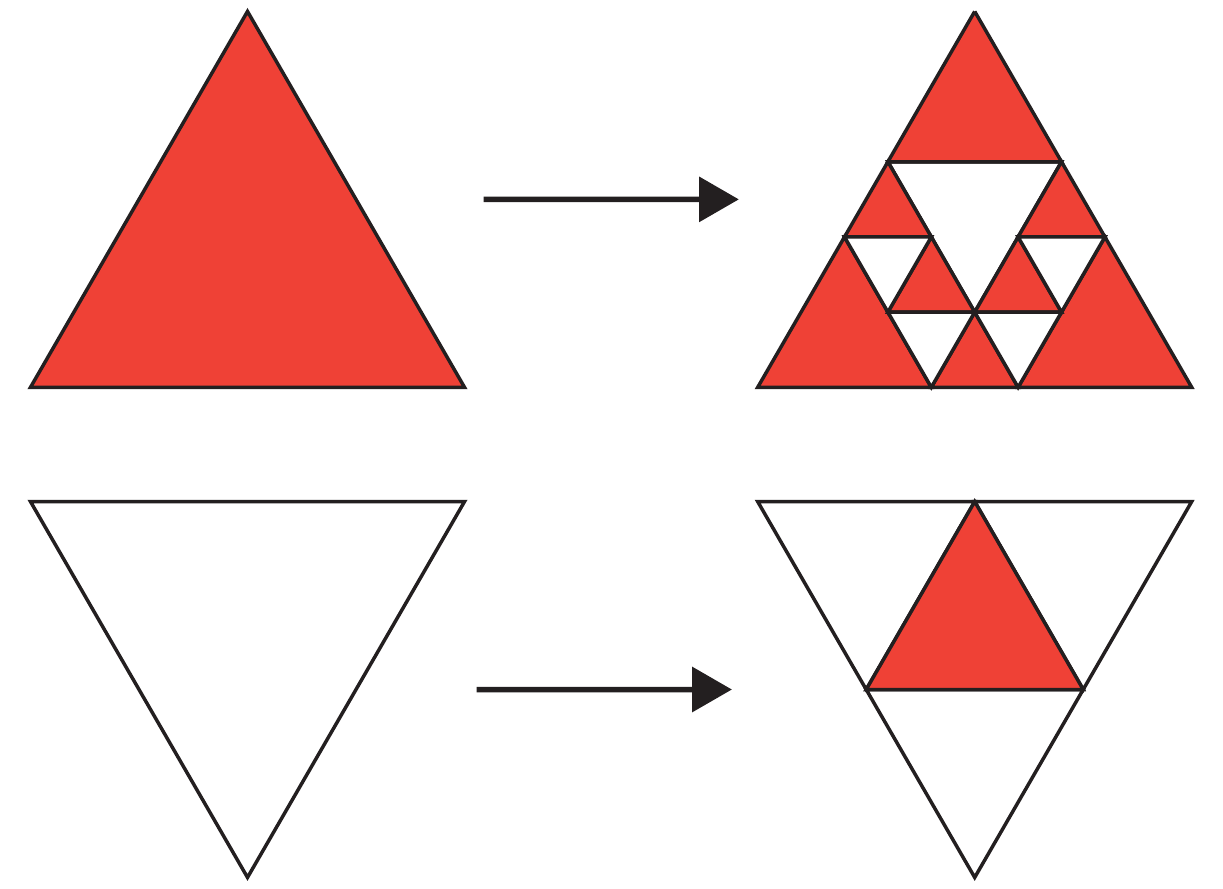}\caption{A multiscale substitution scheme on two triangles $U$ and $D$.}	\label{fig:triangle substitution rule}	
\end{figure}

\begin{example}\label{ex: triangle scheme}
	Figure \ref{fig:triangle substitution rule} describes a multiscale substitution scheme $\sigma$ in $\R^2$. The set of $n=2$ prototiles $\tau_\sigma=(U,D)$ consists of an equilateral triangle $U$ directed up and an equilateral $D$ triangle directed down, both of volume $1$. The tessellations $\varrho_\sigma(U),\varrho_\sigma(D)$ are illustrated on the right-hand side of the figure. For example, $\omega_\sigma(U)$ consists of three copies of $\frac{2}{5}U$, five copies of $\frac{1}{5}U$, four copies of  $\frac{1}{5}D$ and a single copy of $\frac{2}{5}D$. 
\end{example}

\begin{remark}\label{rem isometries instead of translations} Substitution schemes can also be defined so that tiles in $\varrho_\sigma(T_i)$ are only assumed to be isometric to the tiles of substitution, instead of the stronger restriction of being their translations. Examples include the substitution schemes that generate Sadun's generalized pinwheel tilings, see \cite{Sadun - generalized Pinwhell}, and we note that many of the constructions and results discussed below may be extended also to this more general construction. 
\end{remark}

\begin{definition}\label{def:equiv_schemes}
	A substitution scheme is \textit{normalized} if all prototiles are of unit volume. Two substitution schemes are said to be \textit{equivalent} if their prototile sets consist of the same tiles up to scale changes, and the tessellations of the prototiles prescribed by the substitution rules are the same up to an appropriate change of scales.
\end{definition}

 Geometric objects such as patches, tilings and sequences of partitions that are defined using equivalent substitution schemes are identical up to a rescaling by some positive constant.  Clearly, every equivalence class of schemes contains a unique normalized scheme. The geometric nature of our construction implies  the following result.

\begin{prop}\label{prop: volumes sum up to 1}
	Let $\sigma$ be a normalized substitution scheme. For every $i=1,\ldots,n$ the constants $\alpha_{ij}^k$ satisfy the following algebraic equations:
\begin{equation}\label{eq:volumes sum up to 1}
	\sum_{j=1}^{n}\sum_{k=1}^{k_{ij}}\left(\alpha_{ij}^{(k)}\right)^{d}=\sum_{j=1}^{n}\sum_{k=1}^{k_{ij}}\vol\left(\alpha_{ij}^{(k)}T_{j}\right)=\vol(T_{i})=1.\notag
\end{equation} 
\end{prop}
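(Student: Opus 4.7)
The proposition is essentially a bookkeeping statement about volumes, so the plan is short. The equality $\vol(T_i)=1$ is immediate from the assumption that $\sigma$ is normalized. The middle equality is also immediate: rescaling by a factor $\alpha>0$ in $\R^d$ multiplies $d$-dimensional Lebesgue measure by $\alpha^d$, and since $\vol(T_j)=1$ for every prototile (by normalization), we have $\vol(\alpha_{ij}^{(k)} T_j) = (\alpha_{ij}^{(k)})^d$.

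The substantive content is the first equality. The plan is to invoke the definition of tessellation: $\varrho_\sigma(T_i)$ is a collection of tiles with pairwise disjoint interiors whose union is $T_i$. Each tile in $\varrho_\sigma(T_i)$ is by definition a translation of some $\alpha_{ij}^{(k)} T_j$, so its volume equals $\vol(\alpha_{ij}^{(k)} T_j) = (\alpha_{ij}^{(k)})^d$. The boundary of each tile has Lebesgue measure zero (this is part of the definition of \emph{tile} given just above), so countable (in fact finite) additivity of Lebesgue measure applies cleanly to the disjoint union of the interiors, giving
\begin{equation*}
\vol(T_i) \;=\; \sum_{j=1}^{n}\sum_{k=1}^{k_{ij}} \vol\bigl(\alpha_{ij}^{(k)} T_j\bigr).
\end{equation*}
Combining this with the two observations above yields the stated chain of equalities.

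There is no real obstacle here; the only thing to be slightly careful about is that the tessellation is by \emph{translates} of the rescaled prototiles rather than the rescaled prototiles themselves, but translation invariance of Lebesgue measure makes this harmless. No deeper tool (Perron--Frobenius, graph theory, or the semi-flow structure introduced later) is needed for this particular statement, which is why it can be placed early as a basic normalization identity.
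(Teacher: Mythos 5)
Your proof is correct and matches the paper's (implicit) reasoning: the paper states this proposition as an immediate consequence of the geometric definitions, and your argument — normalization, scaling of Lebesgue measure by $\alpha^d$, translation invariance, and additivity over a tessellation whose tiles have boundaries of measure zero — is exactly the bookkeeping the paper has in mind.
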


Given a substitution scheme $\sigma$, the constants $\alpha_{ij}^{\left(k\right)}$ associated with the equivalent normalized scheme are called the \textit{constants of substitution}.  From here on, \textbf{all substitution schemes are assumed to be normalized}. 

\subsection{Graphs associated with multiscale substitution schemes}

Denote by $G=\left(\mathcal{V},\mathcal{E},l\right)$ a directed weighted
multigraph with a set of vertices $\mathcal{V}$ and a set of weighted
edges $\mathcal{E}$, with positive weights which are regarded as
lengths. A \textit{path} in $G$ is a directed walk on the edges of
$G$ that originates and terminates at vertices of $G$. A \textit{metric path} in $G$ is a directed walk on edges of $G$ that does not necessarily
originate or terminate at vertices of $G$. An edge of weight $a$
is equipped with a linear parametrization by the interval $[0,a]$,
and the parametrization is used to define the \textit{path distance}
$l$ on edges, paths and metric paths in $G$. In our terminology an edge is assumed to contain its terminal vertex but not its initial one, that is, every vertex is seen as a point contained in each of its incoming edges. The path of length zero with initial vertex $i$ is assumed to consist only of the vertex $i$.

We now define the graph associated with a substitution scheme. 

\begin{definition}	\label{def: associated graph}Given a substitution scheme $\sigma$, the \textit{associated graph} $G_\sigma$ is the following directed weighted graph. The vertices $\mathcal{V}=\left\{ 1,\ldots,n\right\}$ are defined according to the prototiles $\tau_\sigma=(T_1,\ldots,T_n)$, where the vertex $i\in\mathcal{V}$ is associated with the prototile $T_{i}\in\tau_\sigma$. The  edges $\mathcal{E}$ are defined according to the tiles of substitution in $\omega_\sigma$, where the edge $\varepsilon\in\mathcal{E}$ associated with the tile $\alpha T_{j}\in\omega_\sigma(T_{i})$ has initial vertex $i$, terminal vertex $j$ and is of length 
	\begin{equation}
	l(\varepsilon):=\log\tfrac{1}{\alpha}.\notag
	\end{equation}	
\end{definition}
Note that $G_\sigma$ depends only on the elements of $\omega_\sigma(T_i)$ for $T_i\in\tau_\sigma$, and not on the specific configuration in which they appear in the patches $\varrho_\sigma(T_i)$. In other words, $G_\sigma$ can be thought of as the abelianization of $\sigma$, just as the substitution matrix is the abelianization of a fixed scale scheme, see e.g. \cite{BaakeGrimm}. We also remark that if $\sigma$ is a fixed scale scheme, the lengths of all the edges in $G_\sigma$ are the same and the adjacency matrix of $G_\sigma$, when thought of as a combinatorial graph, is precisely the substitution matrix of $\sigma$.

\begin{example} \label{ex: square graph}
	The graph associated with the square substitution scheme described in the introduction is illustrated in Figure \ref{fig:squaregraph}. It consists of a single vertex associated with the single prototile $S$, and self loops associated with the tiles of substitution. There is a single loop of length $\log\frac{5}{3}$ associated with the larger square $\frac{3}{5}S\in\omega_\sigma(S)$ and sixteen distinct loops of length $\log5$ associated with the sixteen smaller squares $\frac{1}{5}S\in\omega_\sigma(S)$.
\end{example}

\begin{figure}[ht!]
	\includegraphics[scale=1.7]{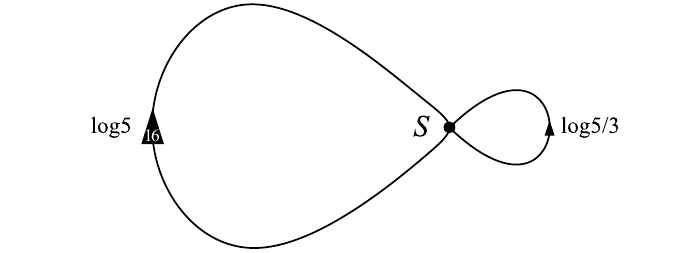}\caption{The graph associated with the  substitution scheme on the square $S$. The large arrow stands for $16$ distinct loops of length $\log5$.}	\label{fig:squaregraph}	
\end{figure}

\begin{example} \label{ex: triangle graph}
	Figure \ref{fig:triangle graph} illustrates the graph associated with the triangle substitution scheme described in Figure \ref{fig:triangle substitution rule}. It consists of $n=2$ vertices associated with the prototiles $(U,D)$. Edges associated with tiles in $\omega_\sigma(U)$ initiate from the vertex on the left, and edges associated with tiles that are rescaled copies of $D$ terminate at the vertex on the right.
\end{example}	

\begin{figure}[ht!]
	\includegraphics[scale=1.7]{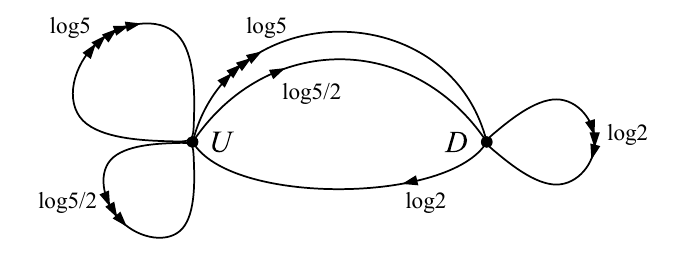}\caption{The graph associated with the substitution scheme on the triangles $U$ and $D$. Multiple arrows stand for multiple distinct edges.}	\label{fig:triangle graph}	
\end{figure}

\begin{remark}\label{rem:graphs of equivalent schemes}
Given a substitution scheme $\sigma$, the graph associated with any equivalent scheme can be derived from $G_\sigma$ by sliding its vertices along its paths so that the lengths of all closed paths is not changed. For a general discussion and additional examples of graphs associated with substitution schemes see  \cite[\S4]{Yotam Kakutani}. 
\end{remark}

\subsection{The substitution semi-flow and the generating patches} 

We now introduce important elements in the definition and study of  multiscale substitution tilings. 

\begin{definition}\label{def: substitution flow}
	Let $\sigma$ be a substitution scheme. The \textit{substitution semi-flow} $F_t(T_i)$, where $t\in\R \ge0$ is referred to as \textit{time}, defines a family of patches in the following way. At $t=0$, set $F_0(T_i)=T_i$, which is a patch consisting of a single tile. As $t$ increases, inflate the patch by a factor $e^{t}$, and substitute tiles of volume larger than $1$ according to the substitution rule  $\varrho_\sigma$.  Equivalently, $F_t(T_i)$ is the patch supported on $e^tT_i$, which is the result of the repeated substitution of $e^tT_i$ and all subsequent tiles with volume greater than $1$ in $e^tT_i$, until all tiles in the patch are of unit volume or less.
	
	Fix a position of $T_i$ so that the origin of $\R^d$ is an interior point, and denote 
	\begin{equation}
	\PPP_{i}:=\left\{F_{t}(T_{i}):\,t\in\R^+\right\},
	\end{equation}
	where $\R^+:=\{t\ge0:\,t\in\R\}$. Note that $\PPP_{i}$ exhausts $\R^d$. The patches $\PPP_\sigma:=\bigcup_{i=1}^{n}\PPP_{i}$ are called the \textit{generating patches} of the scheme $\sigma$. 
\end{definition}

\begin{example}
	Consider the substitution semi-flow associated with the substitution scheme on the two triangles $U,D$ as illustrated in Figure \ref{fig:triangle substitution rule}. The patches illustrated in Figure \ref{fig:new triangle generating patches} are the first elements of $\PPP_\sigma$ with the property that a tile of unit volume appears in the patch. The times $t\in\R^+$ in which they appear are also given, and note that the patch $F_t(U)$ is supported on a triangle of side length $e^t$. 
\end{example}

\begin{figure}[ht!]
	\includegraphics[scale=0.135]{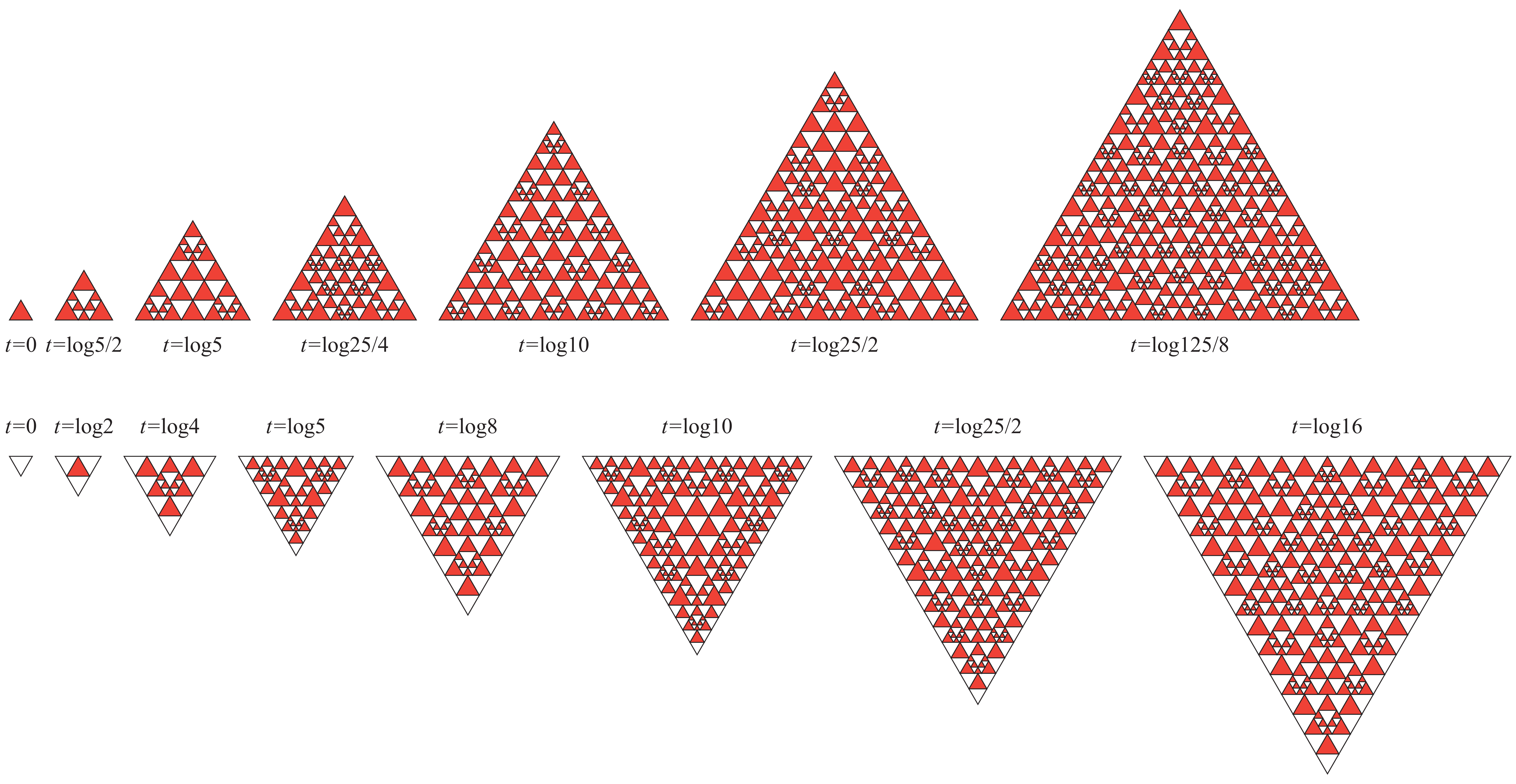}\caption{The patches $F_t(U)$ and $F_t(D)$ for the first values of $t$ for which the patches contain tiles of unit volume.\label{fig:new triangle generating patches}}	
\end{figure}

A very useful observation is that the substitution semi-flow can be modeled by the flow along the edges of the associated graph $G_\sigma$, and that tiles in a patch in $\PPP_\sigma$ correspond to metric paths in $G_\sigma$. This correspondence is summarized in the following proposition, which follows directly from our definitions.

\begin{prop}\label{prop: correspondence of paths and tiles}
	Let $\sigma$ be a substitution scheme with an associated graph $G_\sigma$. Let $T_i\in\tau_\sigma$, fix $t\in\R^+$ and consider a tile $T$ in the patch $F_t(T_i)$. 
	\begin{enumerate}
		\item $T$ corresponds to a unique metric path $\gamma_T$ in $G_\sigma$ that originates at vertex $i$ and is of length $t$. The edges in $\gamma_T$ are determined according to the sequence of ancestors of $T$ under the substitution semi-flow. 
		\item If $T$ is of type $j$ and scale $\alpha=e^{-\delta}$, then $\gamma_T$ terminates at a point on an edge $\varepsilon$ with terminal vertex $j$, and the termination point is of distance $\delta=\log\frac{1}{\alpha}$ from $j$.  
		\item If $\gamma_T$ is of length $t$ and terminates exactly at vertex $j$, then $T$ is a tile of type $j$ and unit volume.  
	\end{enumerate} 
\end{prop}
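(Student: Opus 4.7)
The plan is to unwind the recursive definition of the substitution semi-flow and read the path $\gamma_T$ directly off the genealogy of $T$, proceeding by induction on the number $m \ge 0$ of substitution events in that genealogy. In the base case $m = 0$ we have $t = 0$ and $T = T_i$, and $\gamma_T$ is the trivial length-zero path at vertex $i$, so parts (1)--(3) hold immediately via the conventions on zero-length paths fixed just before the proposition.

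For the inductive step I would first reconstruct the unique chain of ancestors $T_i = T^{(0)}, T^{(1)}, \ldots, T^{(m)} = T$, where $T^{(r)}$ is the rescaled copy of some prototile $T_{i_r}$ appearing in $\omega_\sigma(T_{i_{r-1}})$ with relative scaling constant $\alpha_r = \alpha_{i_{r-1} i_r}^{(k_r)}$ that was selected at the $r$-th substitution event. This chain, and hence the associated concatenation $\gamma_T = \varepsilon_1 \cdots \varepsilon_m$ in $G_\sigma$, where $\varepsilon_r$ is the edge from $i_{r-1}$ to $i_r$ of length $l(\varepsilon_r) = \log(1/\alpha_r)$, is uniquely determined by $T$ and $\sigma$; this gives the uniqueness and the identification of edges with the ancestor data asked for in part (1).

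It then remains to match total length and endpoint position. Letting $s_r$ denote the time of the $r$-th substitution event, we have $s_1 = 0$ since $T_i$ is substituted at $t = 0$, and since a freshly created ancestor of initial scale $\alpha_r$ is itself substituted precisely when its inflated copy reaches unit volume, the definition of $F_t$ gives $s_{r+1} - s_r = \log(1/\alpha_r) = l(\varepsilon_r)$. Summing, the edges $\varepsilon_1, \ldots, \varepsilon_{m-1}$ are traversed completely by time $s_m$ and the remaining time $t - s_m$ is spent on $\varepsilon_m$, so $\gamma_T$ has total length $t$ as required. Since the current scale of $T$ at time $t$ is $\alpha = e^{t - s_m}\alpha_m$, one computes
\[
\delta = \log\tfrac{1}{\alpha} = \log\tfrac{1}{\alpha_m} - (t - s_m) = l(\varepsilon_m) - (t - s_m),
\]
which is precisely the distance from the endpoint of $\gamma_T$ to the terminal vertex $j = i_m$ along $\varepsilon_m$. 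This proves part (2), and part (3) follows as the special case $\delta = 0$.

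I do not expect any serious obstacle; the statement is essentially a dictionary translating the semi-flow into the language of $G_\sigma$. The only mildly delicate points are the boundary situations at $t = 0$ and at exact substitution times $t = s_r$, which are absorbed by the already-fixed conventions that a length-zero path consists of a single vertex, that edges contain only their terminal vertices, and that substitution occurs precisely when volume reaches $1$.
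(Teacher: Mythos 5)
Your proof is correct: the paper offers no separate argument for this proposition, stating only that it ``follows directly from our definitions,'' and your induction on the number of substitution events, with the timing bookkeeping $s_{r+1}-s_r=\log(1/\alpha_r)=l(\varepsilon_r)$ and the scale computation $\delta=l(\varepsilon_m)-(t-s_m)$, is exactly the careful unwinding of those definitions. The boundary conventions you invoke (zero-length paths, edges containing only their terminal vertex, tiles of unit volume not yet substituted) are precisely the ones the paper fixes for this purpose, so nothing further is needed.
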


\begin{cor}
		For every $t\in\R^+$, the patch $F_t(T_i)\in \PPP_i$ corresponds to the family of all metric paths of length $t$ that originate at vertex $i$, and every path in $G_\sigma$ that originates at vertex $i$ and is of length $s>t$ is the continuation of exactly one of these paths. 
	
\end{cor}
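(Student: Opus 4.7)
The plan is to upgrade the injective correspondence of Proposition \ref{prop: correspondence of paths and tiles} to a bijection, and then derive the continuation property from the linear parametrization of edges by length.

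For surjectivity, I would start with an arbitrary metric path $\gamma$ of length $t$ in $G_\sigma$ originating at vertex $i$ and construct a tile $T\in F_t(T_i)$ with $\gamma_T=\gamma$. Decompose $\gamma$ as a concatenation of complete edge traversals $\varepsilon_1,\ldots,\varepsilon_r$ followed by a (possibly degenerate) initial segment $\eta$ of an edge $\varepsilon_{r+1}$, and set $t_m = l(\varepsilon_1)+\cdots+l(\varepsilon_m)$ for $m=0,\ldots,r$, so that $t_0 = 0 \le t_1 \le \cdots \le t_r \le t$ and $t - t_r = l(\eta)$. Running the substitution semi-flow from $T_i$, the initial inflation produces tiles indexed by the outgoing edges at $i$; the tile $T^{(1)}$ picked out by $\varepsilon_1$ is an ancestor of unit volume at time $t_1$ of type given by the terminal vertex of $\varepsilon_1$. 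Iterating this selection along $\varepsilon_2,\ldots,\varepsilon_r$ produces at time $t_r$ a unit-volume tile $T^{(r)}$ sitting at the terminal vertex of $\varepsilon_r$, which is then substituted and inflated for the remaining time $t-t_r$. The descendant $T\in F_t(T_i)$ selected by the outgoing edge $\varepsilon_{r+1}$ has log-scale $-(t-t_r) = -l(\eta)$, so by Proposition \ref{prop: correspondence of paths and tiles}(2) its associated metric path terminates at the point of $\varepsilon_{r+1}$ at distance $l(\eta)$ from its initial vertex, namely $\gamma$. Together with the injectivity statement in Proposition \ref{prop: correspondence of paths and tiles}(1) this gives the desired bijection.

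For the continuation property, I would use only the linear parametrization of edges. A metric path $\gamma'$ of length $s>t$ originating at $i$ has a unique initial sub-path $\gamma$ of length $t$ (also originating at $i$), obtained by stopping at the point at path-distance $t$ along $\gamma'$. Conversely, once $\gamma$ is fixed, its extensions to length $s$ are determined by a choice of continuation beyond the endpoint of $\gamma$, and $\gamma'$ is the unique such extension. Applying the bijection of the first part, $\gamma$ corresponds to a unique tile in $F_t(T_i)$, so $\gamma'$ is the continuation of exactly one of the paths enumerated by $F_t(T_i)$.

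I do not expect a real obstacle here: the argument is essentially an unpacking of the definitions of $F_t$ and $G_\sigma$, tracking the fact that the log-scale of a tile decreases linearly with time at the same rate that a walker traverses an edge. The only point that requires mild care is the boundary behavior at vertices of $G_\sigma$, which is handled by the convention (stated just before Definition \ref{def: associated graph}) that an edge contains its terminal vertex but not its initial one, so that each unit-volume ancestor in the substitution process is identified unambiguously with a single terminal vertex rather than being double-counted.
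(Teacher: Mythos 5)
Your proposal is correct and matches the paper's reasoning: the paper states this corollary without a separate proof, treating it as an immediate consequence of Proposition \ref{prop: correspondence of paths and tiles} and the definitions of $F_t$ and $G_\sigma$, which is precisely the unpacking you carry out (including the observation that the edge-contains-its-terminal-vertex convention prevents double-counting at unit-volume tiles).
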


\begin{example}\label{ex: Kakutani 1/3}
	A very simple but fundamental family of multiscale substitution schemes are the so-called $\alpha$-Kakutani schemes in $\R$, for $\alpha\in(0,1)$, which can be shown to generate the $\alpha$-Kakutani sequences of partitions of the unit interval, first introduced in \cite{Kakutani}. 
	The unit interval $I$ is the single prototile, and it is substituted by two intervals, one of length $\alpha$ and the other of length $1-\alpha$. 
	For $\alpha=\frac{1}{3}$, the associated graph in this example consists of a single vertex associated with the unit interval $I$, and two loops - the longer one, of length $\log3$, is associated with the interval $\frac{1}{3}I\in\varrho_\sigma(I)$, and the shorter, of length $\log\frac32$, is associated with $\frac{2}{3}I\in\varrho_\sigma(I)$. We remark that Frank and Sadun's one dimensional fusion tiling that was extensively studied in \cite[\S A.5]{Frank-Sadun (ILC fusion)} and mentioned in the introduction can be generated by the $\frac13$-Kakutani scheme.   

	Figure \ref{fig: Kakutani metric paths} concerns with the $\frac{1}{3}$-Kakutani substitution scheme, and illustrates the first three elements of $\PPP_\sigma$ with the property that a tile (interval) of unit volume appears in the patch, together with the metric paths associated with the tiles comprising the patches. The right-most interval in each patch corresponds to the top path beneath it. Note that the patch on the left is $F_0(I)$ and so the single tile in it corresponds to the single metric path of zero length. Also note that tiles of volume $1$ correspond to metric paths that terminate at the vertex.  
\end{example}

\begin{figure}[ht!]
\includegraphics[scale=0.55]{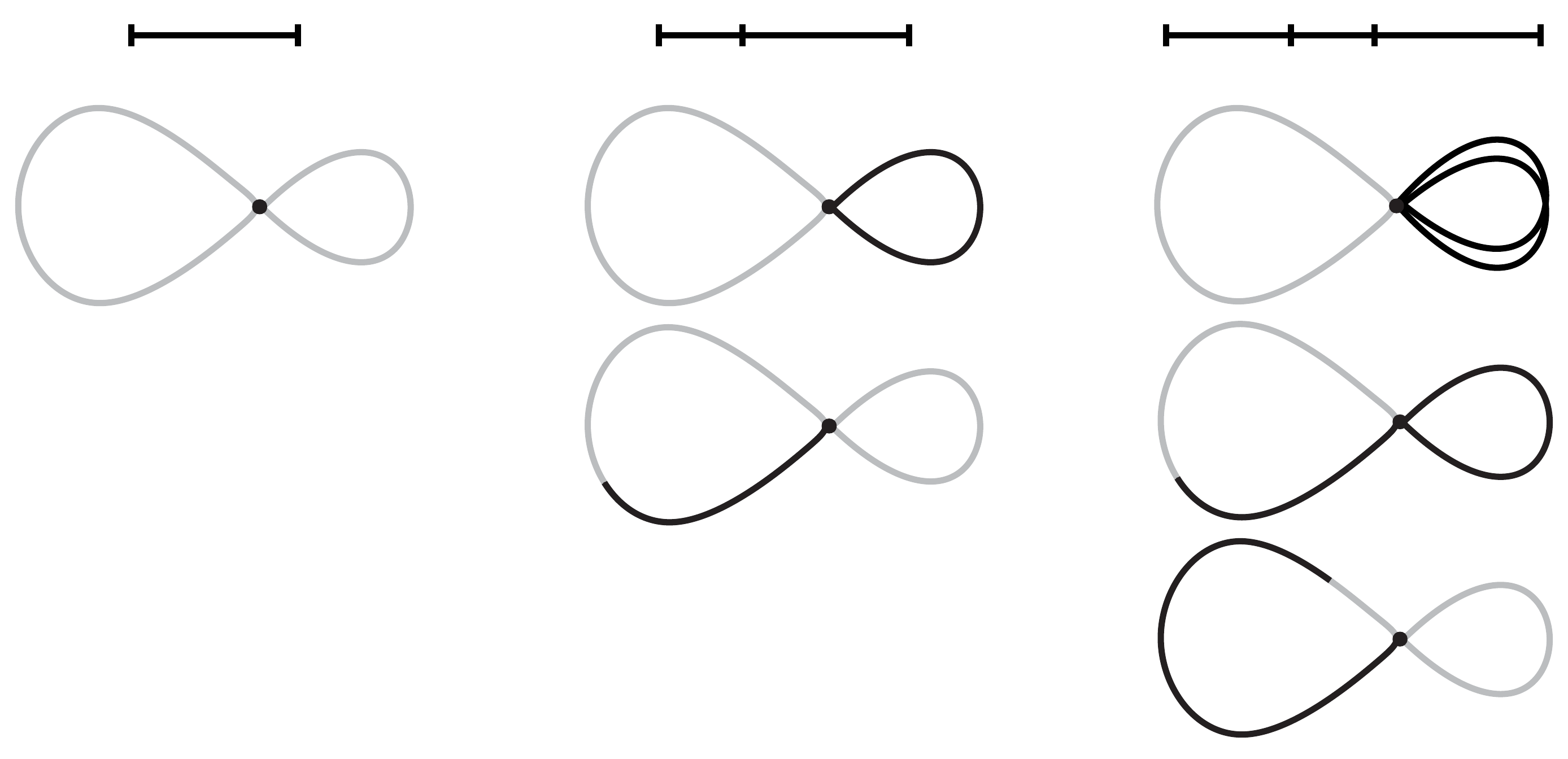}\caption{The patches $F_0(I)$, $F_{\log3/2}(I)$ and $F_{2\log3/2}(I)$ and the metric paths associated with the tiles that comprise them.\label{fig: Kakutani metric paths}}	
\end{figure}

\section{Incommensurable substitution schemes}\label{sec: incommensurability}
The following fundamental properties of substitution schemes are easier to present in terms of the substitution semi-flow and the associated graph. 

\begin{definition}
A substitution scheme $\sigma$ is \textit{irreducible} if  for any $i,j=1,\ldots,n$ there exists $t>0$ so that $F_t(T_i)$ contains a tile of type $j$. 
\end{definition}

In terms of the associated graph $G_\sigma$, this is equivalent to the statement that $G_\sigma$ is \textit{strongly connected}, that is, for any $i,j\in\mathcal{V}=\{1,\ldots,n\}$ there exists a path with initial vertex $i$ and terminal vertex $j$. We remark that this definition coincides with the definition of this notion in the fixed scale setup, see e.g. \cite[\S 2.4]{BaakeGrimm}. From here on \textbf{all schemes are assumed to be irreducible}.

\begin{definition}
	A substitution scheme $\sigma$ is \textit{incommensurable} if there exist $T_i,T_j\in\tau_\sigma$ and two tiles of type $i$ and $j$ in patches in $\PPP_i$ and $\PPP_j$, respectively, which are of unit volume at times $t_1$ and $t_2$, with $t_1\notin\Q t_2$. Otherwise, the scheme is called \textit{commensurable}. 
\end{definition}
	 
In terms of the associated graph $G_\sigma$, incommensurability is equivalent to having two closed paths in $G_\sigma$ of lengths $a$ and $b$, with $a\not\in\Q b$, in which case $G_\sigma$  is said to be an \emph{incommensurable graph}. This follows from part $(3)$ of Proposition \ref{prop: correspondence of paths and tiles}, because a tile $T$ of type $i$ and unit volume in $F_t(T_i)$ corresponds to a closed path in $G_\sigma$ with initial and terminal vertex $i$ and with length $t$. For equivalent definitions and more on incommensurability see \S \ref{subsec:incommensurabiliy}. 

In view of Remark \ref{rem:graphs of equivalent schemes}, incommensurability does not depend on the choice of representative of the substitution scheme equivalence class, in the sense of Definition \ref{def:equiv_schemes}. Note that examples of incommensurable schemes are quite easy to come up with. In fact, incommensurability can be thought of as typical property of substitution schemes, in the sense that for a naive choice of substitution rules, the resulting scheme is incommensurable. 
 
\begin{example}\label{ex: irreducibility and incommensurability of square scheme}
	The substitution schemes illustrated in Figures \ref{fig: square patch} and \ref{fig:triangle substitution rule} are both normalized, since all prototiles are assumed to be of unit volume. In addition they are both irreducible and incommensurable. This can be easily verified by the graphs illustrated in Figures \ref{fig:squaregraph} and \ref{fig:triangle graph}, as both graphs are strongly connected, and both contain pairs of loops of incommensurable lengths.
\end{example}

\begin{remark}
	Commensurable schemes include all \textit{fixed scale} substitution schemes, which are the schemes in which all constants of substitution are equal. The Rauzy fractal scheme introduced in \cite{Rauzy} can be viewed as an example of a commensurable multiscale substitution scheme on a single prototile, which is not of fixed scale. For a further discussion and illustrated examples see \cite{Yotam Kakutani}. 
\end{remark}

\subsection{Equivalent definitions of incommensurability}\label{subsec:incommensurabiliy}  As we focus in the coming sections on incommensurable substitution schemes, we now present some useful equivalent conditions to incommensurability of graphs and schemes.

\begin{lem}\label{lem: Incommensurable equivalent definitions}
	Let $G$ be a strongly connected directed weighted graph. The following are equivalent:
	\begin{enumerate}
		
		\item The graph $G$ is incommensurable.
		\item Every vertex in $G$ is contained in two closed paths of incommensurable lengths.
		\item The set of lengths of closed paths in $G$ is not a uniformly discrete\footnote{A set $S$ in $\R^d$ is \emph{uniformly discrete} if $\inf\{\dist(x,y):\,x,y\in S\}>0$, where $\dist$ is Euclidean distance.} subset of $\R$.
	\end{enumerate}
\end{lem}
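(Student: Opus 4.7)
The plan is to establish the cycle $(2) \Rightarrow (1) \Rightarrow (2) \Rightarrow (3) \Rightarrow (1)$, where only the first implication is immediate from the definition of incommensurable graphs. For $(1) \Rightarrow (2)$, I would fix an arbitrary vertex $w$ and let $c_1, c_2$ be closed paths of incommensurable lengths $a, b$ based at $v_1, v_2$, as provided by $(1)$. Strong connectivity of $G$ yields paths $P_i \colon w \to v_i$ and $Q_i \colon v_i \to w$ of lengths $p_i, q_i$, so that $\gamma_i := P_i Q_i$ and $\gamma_i' := P_i c_i Q_i$ are closed paths at $w$ of lengths $L_i = p_i + q_i$ and $L_i' = p_i + \ell(c_i) + q_i$. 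If every pair of closed paths at $w$ were commensurable, the set of their lengths would lie in $\lambda \Q$ for some fixed $\lambda > 0$; then $a = L_1' - L_1 \in \lambda \Q$ and $b = L_2' - L_2 \in \lambda \Q$, contradicting $a/b \notin \Q$.

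For $(2) \Rightarrow (3)$, I would pick closed paths at a common vertex $w$ of incommensurable lengths $a, b$ furnished by $(2)$. Concatenating $M$ copies of the first with $N$ copies of the second at $w$ yields a closed path of length $Ma + Nb$ for every choice of integers $M, N \geq 0$. By Kronecker's theorem the group $\Z a + \Z b$ is dense in $\R$, so for any $\varepsilon > 0$ there exist $p, q \in \Z$, not both zero, with $0 < pa + qb < \varepsilon$. Choosing $M, N$ large enough that $M + p \geq 0$ and $N + q \geq 0$, the two closed-path lengths $Ma + Nb$ and $(M+p)a + (N+q)b$ both lie in the closed-path length set, and differ by $pa + qb \in (0, \varepsilon)$; the irrationality of $a/b$ forces $pa + qb \neq 0$, so the two lengths are distinct. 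Hence the length set is not uniformly discrete.

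For $(3) \Rightarrow (1)$, I would argue the contrapositive. If $G$ is commensurable, every pair of closed-path lengths has rational ratio, so in particular the finitely many simple directed cycle lengths $\ell_1, \ldots, \ell_k$ in $G$ all lie in $\lambda \Q$ for a single $\lambda > 0$. Writing $\ell_i = \lambda m_i / Q$ with $m_i \in \Z_{>0}$ and a common denominator $Q \in \Z_{>0}$, every closed walk in $G$ has length of the form $\sum_i n_i \ell_i$ for non-negative integers $n_i$, obtained from the standard decomposition of the walk's (in-out balanced) edge multiset into edge-disjoint simple directed cycles. Consequently every closed-path length lies in $(\lambda/Q)\, \Z_{\geq 0}$, which is uniformly discrete, and $(3)$ fails. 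The main obstacle is justifying this Eulerian cycle decomposition cleanly; all remaining steps reduce to concatenations of paths and a direct invocation of Kronecker density.
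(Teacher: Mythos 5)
Your proposal is correct, and it is organized differently from the paper, which proves $(1)\Leftrightarrow(2)$ and $(1)\Leftrightarrow(3)$ separately rather than running the cycle $(1)\Rightarrow(2)\Rightarrow(3)\Rightarrow(1)$. The genuinely different step is $(1)\Rightarrow(2)$: you argue by contradiction, observing that if all closed paths through a fixed vertex $w$ had pairwise commensurable lengths then those lengths would lie in $\lambda\Q$ for a single $\lambda>0$, and then recover $a=L_1'-L_1$ and $b=L_2'-L_2$ as differences of such lengths, forcing $a/b\in\Q$. The paper instead constructs, from the two given incommensurable closed paths and a closed path of length $c$ through all vertices, an explicit incommensurable pair among the lengths $a+c$, $a+2c$, $b+c$; this yields the slightly stronger conclusion that one single pair of incommensurable closed paths passes through every vertex, whereas your per-vertex contradiction is shorter and fully suffices for statement $(2)$. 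Your $(2)\Rightarrow(3)$ is in substance the paper's $(1)\Rightarrow(3)$ (density of $\Z a+\Z b$, which you realize by concatenating powers of the two loops at a common vertex; the paper does not even need a common vertex, since it just compares the lengths $qa$ and $pb$ of repeated traversals of the two loops). Your $(3)\Rightarrow(1)$ is the same contrapositive as the paper's, with the key point made explicit: the paper only asserts that every closed-path length is an integer combination of a finite set of lengths, while you justify this via the decomposition of the in--out balanced edge multiset of a closed walk into edge-disjoint simple directed cycles; that decomposition is standard and correct, so the step you flag as a possible obstacle is not a gap.
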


\begin{proof}
	$\left(1\right)\Longleftrightarrow\left(2\right)$ It
	is enough to show that there exist two closed paths of incommensurable
	lengths that pass through all vertices of $G$. Let $\alpha$ and
	$\beta$ be two closed paths of lengths $a,b\in\R$ with $a\notin\Q b$, and assume that $\alpha$ passes through vertex $i$ and $\beta$ passes through vertex $j$. Since $G$ is strongly connected, there exists
	a closed path $\gamma$ that passes through all the vertices in $G$,
	and we denote the length of this path by $c$. 
	
	Using the closed paths $\alpha, \beta$ and $\gamma$ we can construct closed paths that pass through all vertices of $G$ and are of lengths $a+c, a+2c$ and $b+c$. By direct calculation
	\[
	\frac{a+c}{b+c}\in\mathbb{Q} \Longrightarrow \frac{a+2c}{b+c}\notin\mathbb{Q},
	\]
	for otherwise $\frac{a}{b}\in\Q$, which is a contradiction. We conclude that either
	the paths of lengths $a+c$ and $b+c$ or those of lengths $a+2c$ and $b+c$ constitute a pair of closed paths
	of incommensurable lengths that pass through all vertices of $G$.  The converse is trivial.
	
	$\left(1\right)\Longleftrightarrow\left(3\right)$ If there exist two closed paths in $G$ of
	lengths $a,b$ such that $a\not\in\mathbb{Q}b$, then for every $\varepsilon>0$ there exist $p,q\in\mathbb{N}$
	such that $\left|aq-pb\right|<\varepsilon$, and so the set of lengths
	of closed paths in $G$ is not uniformly discrete. 
	
	Conversely, assume that $G$ is commensurable, that is, $a\in\Q b$ for any two lengths $a,b$ of closed paths in $G$. Since $G$ is a finite graph, there is a finite set $L$ of lengths for which the length of any closed path in $G$ is a linear
	combination with integer coefficients of elements in $L$. It follows that there exists some $c>0$ so that every closed path is of length which is an integer multiple of $c$, and so the set of lengths of closed paths is uniformly discrete. 
\end{proof}

\begin{definition}
	Given a substitution scheme $\sigma$, denote
	\begin{equation}\label{eq:S_i-to-j}
	\SSS_{i\rightarrow j}:=\left\{ t\in\R^+\,:\,\text{a tile of type \ensuremath{j} and unit volume appears in }F_{t}\left(T_i\right)\right\},
	\end{equation}
	and set $\SSS_i:=\bigcup_{j=1}^n \SSS_{i\rightarrow j}$. 
\end{definition}

Irreducibility of $\sigma$ implies that the sets $\SSS_{i\rightarrow j}$ are all infinite. 
Note that by Proposition \ref{prop: correspondence of paths and tiles}, an equivalent definition is
	\begin{equation*}\label{eq:S_i-to-j_graph}
	\SSS_{i\rightarrow j}=\left\{ t\in\R^+\,:\,\text{a path of length } t, \text{origin \ensuremath{i} and termination \ensuremath{j} appears in }G_\sigma \right\}.
	\end{equation*}

\begin{example}
	The times $t$ that appear in Figure \ref{fig:new triangle generating patches} are the smallest elements in the sets $\SSS_1$ and $\SSS_2$, where $T_1$ is the triangle $U$, and $T_2$ is the triangle $D$. 
	
\end{example}

\begin{lem}	\label{lem:lengths of loops are getting mroe and more dense}Let $\sigma$
	be an irreducible substitution scheme, let $T_i\in\tau_\sigma$ and let $s_{1}<s_{2}<\ldots$
	be an increasing enumeration of the set $\SSS_{i\rightarrow i}$. Then $\sigma$ is incommensurable if and only if
	\begin{equation}
	\lim_{m\to\infty}s_{m+1}-s_{m}=0. \notag
	\end{equation}
\end{lem}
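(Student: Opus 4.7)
The plan is to reduce the statement to a density property of numerical semigroups of the form $\{ma + nb : m,n \in \N_0\}$ and then quote Lemma~\ref{lem: Incommensurable equivalent definitions} in both directions. Throughout, I would use the graph-theoretic translation (part (3) of Proposition~\ref{prop: correspondence of paths and tiles}) that identifies $\SSS_{i\to i}$ with the set of lengths of closed paths in $G_\sigma$ that start and end at vertex $i$.

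For the easy direction ($\sigma$ commensurable $\Rightarrow$ gaps do not vanish), I would argue by contrapositive: if $\sigma$ is commensurable, then by Lemma~\ref{lem: Incommensurable equivalent definitions}(3) the set of all closed-path lengths in $G_\sigma$ is uniformly discrete, hence contained in $c\Z$ for some $c>0$ (as noted at the end of that lemma's proof). The set $\SSS_{i\to i}$ is a subset of that set, so $s_{m+1}-s_m\ge c$ for every $m$, which prevents the limit from being zero.

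For the forward direction, I would first invoke Lemma~\ref{lem: Incommensurable equivalent definitions}(2) to obtain two closed paths through vertex $i$ of incommensurable lengths $a,b>0$ (with $a/b\notin\Q$). Concatenating these closed paths yields closed paths based at $i$ of every length in
\[
A := \{ma+nb : m,n\in\N_0\}\subseteq \SSS_{i\to i}.
\]
Since $\SSS_{i\to i}\supseteq A$, it suffices to show that the consecutive gaps of $A$ (in its increasing enumeration) tend to zero; a superset only closes gaps further. The core claim to prove is thus: if $a,b>0$ and $a/b\notin\Q$, then the gaps of $A$ tend to $0$. Fix $\eps>0$ and consider the sequence $(nb\bmod a)_{n\ge 0}$. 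By equidistribution (or equivalently the three-distance theorem), this sequence is eventually $\eps$-dense in $[0,a)$; choose $N_0$ so that $\{nb\bmod a:0\le n\le N_0\}$ is $\eps$-dense in $[0,a)$. Then for every $T\ge N_0 b + a$ and every $n\in\{0,\ldots,N_0\}$, the arithmetic progression $\{nb+ma:m\ge 0\}$ meets the window $[T,T+a)$ in a unique point $p_n$ with $p_n\equiv nb\pmod{a}$. Hence $\{p_n:0\le n\le N_0\}\subseteq A\cap[T,T+a)$ is $\eps$-dense in $[T,T+a)$, which forces every gap of $A$ inside $[T,T+a)$ to be at most $2\eps$. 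Letting $T\to\infty$ and $\eps\to 0$ yields $s_{m+1}-s_m\to 0$.

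The main obstacle is purely the density argument in the forward direction; once one has the two incommensurable closed-path lengths from Lemma~\ref{lem: Incommensurable equivalent definitions}(2), the rest of the argument is structural. The computation with the window $[T,T+a)$ is elementary but must be written carefully so as to handle the endpoint (the gap between the largest element of $A$ below $T$ and the smallest above $T$, and the analogous issue at $T+a$), which I would address by taking overlapping windows of length $a$ or by invoking equidistribution on a slightly wider interval so that the $\eps$-dense set covers a neighborhood of both endpoints.
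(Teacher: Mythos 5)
Your proposal is correct and follows essentially the same route as the paper: both obtain two closed paths through vertex $i$ of incommensurable lengths $a,b$ from Lemma \ref{lem: Incommensurable equivalent definitions}, observe that all non-negative integer combinations $ma+nb$ lie in $\SSS_{i\rightarrow i}$, and use Kronecker/Weyl density of the multiples of one length modulo the other to make these combinations eventually $\varepsilon$-dense (the paper writes the elements explicitly as $Nb+(ma-k_m b)$ rather than via your window argument, which is only a cosmetic difference), with the converse in both cases reduced to the uniform-discreteness characterization of commensurability in that lemma.
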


\begin{proof}
	Assume that $\sigma$ is incommensurable, and let $\varepsilon>0$. By Lemma \ref{lem: Incommensurable equivalent definitions}, the associated
	graph $G_\sigma$ has two closed paths $\alpha,\beta$ through vertex $i$, which are of lengths $a<b$ with $a\notin\Q b$. Therefore, there exists some $M\in\N$ so that the set 
	\[
	\left\{ ma(\text{mod }b)\,:\,m\in\left\{ 1,\ldots,M\right\} \right\}
	\]
	is $\varepsilon$-dense in $[0,b)$.
	
	For each $m=1,\ldots,M$ let $k_{m}\in\N$ be such that $0\le ma-k_{m}b<b$. Since $a<b$ we have $k_{m}<M$ for every $m$. Then the set 
	\[
	\left\{ Mb+\left(ma-k_{m}b\right)\,:\,m\in\left\{ 1,\ldots,M\right\} \right\} 
	\]
	is $\varepsilon$-dense in $[Mb,\left(M+1\right)b$), and hence the set 
	\[
	A:=\left\{ Nb+\left(ma-k_{m}b\right)\,:\,m\in\left\{ 1,\ldots,M\right\} ,M\le N\in\N\right\} 
	\]
	is $\varepsilon$-dense in the ray $[Mb,\infty)$. On the other hand, $A\subset\SSS_{i\rightarrow i}$, since $Nb+\left(ma-k_{m}b\right)$
	is the length of a closed path in $G_\sigma$ that consists of $m$ walks along
	$\alpha$ and $N-k_{m}>0$ walks along $\beta$. Since $\varepsilon$ is arbitrary, the assertion follows. 
	
	The converse follows directly from Lemma \ref{lem: Incommensurable equivalent definitions}.
\end{proof}

\begin{cor}\label{cor:SSS_(i-to-j)_becomes_dense}
	Fix $T_i,T_j\in\tau_\sigma$, and let $s_{1}<s_{2}<\ldots$
	be an increasing enumeration of the set $\SSS_{i\rightarrow j}$. Then $\sigma$ is incommensurable if and only if
	\begin{equation}
	\lim_{m\to\infty}s_{m+1}-s_{m}=0. \notag
	\end{equation}
\end{cor}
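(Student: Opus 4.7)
The corollary is the extension of Lemma \ref{lem:lengths of loops are getting mroe and more dense} from the diagonal case $j=i$ to arbitrary pairs $(i,j)$. My plan is to reduce to the diagonal case by exploiting irreducibility to connect $i$ and $j$ in $G_\sigma$ via fixed paths, and combining this with the elementary observation that enlarging a subset of $\R^+$ can only shrink consecutive gaps.

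\textbf{Forward direction.} Assume $\sigma$ is incommensurable. By irreducibility, there is a path $\pi$ in $G_\sigma$ from vertex $i$ to vertex $j$ of some length $c \ge 0$. Given any closed path $\gamma$ at $i$ of length $s \in \SSS_{i\rightarrow i}$, the concatenation $\gamma \pi$ is a path from $i$ to $j$ of length $s+c$, which by Proposition \ref{prop: correspondence of paths and tiles}(3) shows that $s+c \in \SSS_{i\rightarrow j}$. Hence
\[
\SSS_{i\rightarrow i} + c \;\subseteq\; \SSS_{i\rightarrow j}.
\]
By Lemma \ref{lem:lengths of loops are getting mroe and more dense}, the consecutive gaps in $\SSS_{i\rightarrow i}$ tend to zero, and the same is true for the translate $\SSS_{i\rightarrow i}+c$. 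A short pigeonhole argument then shows that whenever $A \subseteq B$ are infinite subsets of $\R^+$ and the consecutive gaps in $A$ tend to zero, the same holds for $B$: each gap in $B$ past a given threshold is dominated by the gap of $A$ containing it. Applying this with $A = \SSS_{i\rightarrow i}+c$ and $B = \SSS_{i\rightarrow j}$ gives the desired conclusion.

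\textbf{Reverse direction.} Conversely, suppose $\lim_{m\to\infty}(s_{m+1}-s_m)=0$ in an enumeration of $\SSS_{i\rightarrow j}$. Choose, again by irreducibility, a path $\pi'$ from $j$ to $i$ in $G_\sigma$ of length $c'\ge 0$. Concatenating any path from $i$ to $j$ of length $t\in \SSS_{i\rightarrow j}$ with $\pi'$ produces a closed path at $i$ of length $t+c'$, so
\[
\SSS_{i\rightarrow j} + c' \;\subseteq\; \SSS_{i\rightarrow i}.
\]
The same monotonicity observation now transfers the vanishing gap condition to $\SSS_{i\rightarrow i}$, and Lemma \ref{lem:lengths of loops are getting mroe and more dense} yields that $\sigma$ is incommensurable.

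\textbf{Main obstacle.} There is no serious obstacle: the proof is a two-line reduction once the correspondence between unit-volume tiles and paths terminating at vertices (Proposition \ref{prop: correspondence of paths and tiles}(3)) and the concatenation identities $\SSS_{i\rightarrow i}+c \subseteq \SSS_{i\rightarrow j}$ and $\SSS_{i\rightarrow j}+c' \subseteq \SSS_{i\rightarrow i}$ are in hand. The only point deserving attention is the elementary monotonicity lemma about vanishing consecutive gaps under passing to supersets, which I would state and prove in one sentence within the argument.
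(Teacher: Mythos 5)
Your proof is correct and takes essentially the same route as the paper: the paper's own one-sentence argument is exactly your forward concatenation, observing that $s+t\in\SSS_{i\rightarrow j}$ for a fixed $t\in\SSS_{i\rightarrow j}$ and every $s\in\SSS_{i\rightarrow i}$, and then invoking Lemma \ref{lem:lengths of loops are getting mroe and more dense}. Your explicit handling of the converse (concatenating with a path from $j$ back to $i$) and the elementary superset-gap observation simply spell out what the paper leaves implicit in ``the assertion follows.''
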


\begin{proof}
	Irreducibility implies that for every $i,j\in\{1,\ldots,n\}$ there is some $t>0$ so that a tile of type $j$ and unit volume appears in $F_t(T_i)$. So $s+t\in\SSS_{i\rightarrow j}$ wherever $s\in\SSS_{i\rightarrow i}$, and the assertion follows.	
\end{proof}

\section{Construction of multiscale substitution tilings and tiling spaces}\label{sec:Construction of multiscale substitution tilings} 
Let $\CC(\R^d)$ be the space of closed subsets of the metric space $(\R^d,\dist)$, where $\dist$ is the Euclidean distance. Define a metric $D$ on $\CC(\R^d)$ by
\begin{equation}\label{eq:CF-metric}
D\left(A_{1},A_{2}\right):=\inf\left(\left\{ r>0:\begin{matrix}A_{1}\cap B(0,1/r)\subset A_{2}^{+r}\\
A_{2}\cap B(0,1/r)\subset A_{1}^{+r}
\end{matrix}\right\} \cup\{1\}\right),
\end{equation}
where $A^{+r}$ stands for the $r$-neighborhood
of the set $A\subset \R^d$, and $B(x,R)$ is the open ball of radius $R>0$ centered at $x\in\R^d$, both with respect to the metric $\dist$. The topology induced
by the metric $D$ is called the \textit{Chabauty--Fell topology}, and in the context of tiling spaces it is often called the \textit{local rubber topology}. It follows that $D$ is a complete metric on $\CC(\R^d)$, and the space $\left(\CC(\R^d),D\right)$ is compact, see e.g. \cite{de la Harpe}, \cite{Lenz-Stollmann} for this and more concerning this topology.

\ignore{
	The compactness follows from a standard diagonalization argument. Given a sequence $\left(F_{n}\right)$ of closed sets, let $F_{n}^{(m)}:=F_{n}\cap\overline{B\left(0,m\right)}$. The Hausdorff metric is compact on the collection of closed sets in the closed balls $\overline{B\left(0,m\right)}$. Then $F_{n}^{(1)}$ has a converging subsequence in $\overline{B\left(0,1\right)}$, which has a subsequence that converges in $\overline{B\left(0,2\right)}$, and so on. The diagonal of the infinite array of sequence is a converging subsequence of $\left(F_{n}\right)$. 
}

Given a patch $\PP$ or a tiling $\TT$ in $\R^d$, it can be identified with the union of the boundaries of its tiles, denoted by $\partial\PP$ and $\partial\TT$, respectively. With this identification, the elements of $\PPP_\sigma$, as well as tilings of unbounded regions, are viewed as elements of $\CC(\R^d)$, and the metric $D$ can be applied. The following result is straightforward from the definitions.

\begin{prop}\label{prop: translations of close subsets are close}
	Let $\TT_1,\TT_2\in \CC(\R^d)$ be two closed subsets of $\R^d$, and assume that 
	\begin{equation*}
	D(\TT_1,\TT_2)<\varepsilon
	\end{equation*}
	for some $\varepsilon>0$. If $v\in\R^d$ is a vector of Euclidean norm $\norm{v}\le\frac{1}{2\varepsilon}$, then
	\begin{equation*}
	D(\TT_1-v,\TT_2-v)<2\varepsilon.
	\end{equation*}
\end{prop}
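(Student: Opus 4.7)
The plan is to unpack the definition of the Chabauty--Fell metric carefully and then use the hypothesis $\|v\|\le\tfrac{1}{2\varepsilon}$ to see that a suitably chosen test radius $r<2\varepsilon$ witnesses the desired bound $D(\TT_1-v,\TT_2-v)<2\varepsilon$.

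First I would invoke the definition of the infimum in \eqref{eq:CF-metric}: since $D(\TT_1,\TT_2)<\varepsilon$, there exists $r_0<\varepsilon$ such that
\[
\TT_1\cap B(0,1/r_0)\subset\TT_2^{+r_0}\quad\text{and}\quad\TT_2\cap B(0,1/r_0)\subset\TT_1^{+r_0}.
\]
The goal is to produce some $r<2\varepsilon$ satisfying the analogous two containments for the translated sets $\TT_1-v$ and $\TT_2-v$.

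Next I would pick the test radius $r$ by the equation $1/r=1/r_0-1/(2\varepsilon)$. Since $r_0<\varepsilon$ we have $1/r_0>1/\varepsilon$, so $1/r>1/(2\varepsilon)$, i.e.\ $r<2\varepsilon$, as required; note also that $r>r_0$. The key observation is the set identity $(\TT_i-v)\cap B(0,1/r)=(\TT_i\cap B(v,1/r))-v$ together with the ball inclusion $B(v,1/r)\subset B(0,1/r_0)$, which holds because for $x\in B(v,1/r)$,
\[
\|x\|\le\|v\|+1/r\le \tfrac{1}{2\varepsilon}+1/r=1/r_0
\]
by our choice of $r$ and the bound on $\|v\|$.

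Given any $x\in(\TT_1-v)\cap B(0,1/r)$, the point $x+v$ then lies in $\TT_1\cap B(0,1/r_0)\subset\TT_2^{+r_0}$, so there is $y\in\TT_2$ with $\|x+v-y\|<r_0<r$. Setting $y':=y-v\in\TT_2-v$ gives $\|x-y'\|<r$, proving $(\TT_1-v)\cap B(0,1/r)\subset(\TT_2-v)^{+r}$. The reverse containment follows by the symmetric argument, so $r$ is an admissible value in the defining set of $D(\TT_1-v,\TT_2-v)$, yielding $D(\TT_1-v,\TT_2-v)\le r<2\varepsilon$. There is no real obstacle here; the only point requiring minor care is checking that the chosen $r$ is strictly less than $2\varepsilon$ (rather than merely $\le 2\varepsilon$), which is exactly where the strict inequality $r_0<\varepsilon$ supplied by the open-ended infimum gets used.
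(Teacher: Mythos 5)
Your argument is correct, and it is exactly the direct unpacking of the metric \eqref{eq:CF-metric} that the paper has in mind when it states this proposition without proof as ``straightforward from the definitions'': choosing the test radius $r$ via $1/r=1/r_0-1/(2\varepsilon)$ and using $B(v,1/r)\subset B(0,1/r_0)$ is the natural way to make that precise. The only (harmless) loose end is the degenerate case where $D(\TT_1,\TT_2)<\varepsilon$ holds solely because $1<\varepsilon$ and no admissible $r_0<\varepsilon$ exists, in which case the conclusion is trivial anyway since $D\le 1<2\varepsilon$.
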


Note that for any $t\in\R^+$ and $T_i\in\tau_\sigma$, the patch $F_t(T_i)$ contains finitely many tiles, all of which are of volume at most $1$. We deduce the following result from our definition of the substitution semi-flow.

\begin{prop}\label{prop:F_t is left-continuous}  
	Fix $i\in\{1,\ldots,n\}$ and let $t_0\in\R^+$.
	The function $t\mapsto D\left(F_t(T_i),F_{t_0}(T_i)\right)$, defined on $\R^+$, is left-continuous at $t_0$, that is, 
	\begin{equation*}
	\lim_{t\rightarrow t_0^-} D\left(F_t(T_i),F_{t_0}(T_i)\right)=0.
	\end{equation*} 
\end{prop}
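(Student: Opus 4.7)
The plan is to exploit the fact that the patch $F_t(T_i)$ evolves by a combination of uniform inflation and discrete substitution events, with only finitely many such events occurring in any bounded time interval. My aim is to show that for $t$ in a sufficiently small left-neighborhood of $t_0$, the patch $F_t(T_i)$ is obtained from $F_{t_0}(T_i)$ by the uniform scaling $x \mapsto e^{t - t_0} x$ about the origin, after which convergence in the Chabauty--Fell metric reduces to a routine estimate on bounded sets.

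To set this up, I would use the correspondence in Proposition \ref{prop: correspondence of paths and tiles}. Each tile $T$ appearing in $F_{t_0}(T_i)$ is traced through a chain of substitutions back to $T_i$, and associated with $T$ are two distinguished times: the \emph{completion time} $L_T \geq t_0$, at which $T$ would reach unit volume along the semi-flow, and the \emph{parent completion time} $L_T^{\mathrm{par}} < t_0$, at which its immediate ancestor was substituted and $T$ came into existence. The tile $T$ is present in $F_t(T_i)$ precisely for $t \in (L_T^{\mathrm{par}}, L_T]$, with volume $e^{d(t-L_T)}$. Since $F_{t_0}(T_i)$ is a finite collection, the set $\{L_T^{\mathrm{par}} : T \in F_{t_0}(T_i)\}$ is finite and strictly bounded above by $t_0$, so there exists $t^\star < t_0$ exceeding all its elements. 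A short descendant-tracking argument then confirms that for every $t \in (t^\star, t_0]$, the tiles of $F_t(T_i)$ coincide with those of $F_{t_0}(T_i)$, each appearing as a rescaled copy by the same factor $e^{t - t_0}$; consequently $F_t(T_i) = e^{t - t_0} F_{t_0}(T_i)$ as subsets of $\R^d$.

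The proof is then completed by a direct estimate using the definition of $D$ in \eqref{eq:CF-metric}: for any $r > 0$, whenever $|e^s - 1| \cdot \sup\{\|p\| : p \in F_{t_0}(T_i)\} < r$, the two inclusions in \eqref{eq:CF-metric} hold with parameter $r$ for the pair $(e^s F_{t_0}(T_i), F_{t_0}(T_i))$, so that $D(e^s F_{t_0}(T_i), F_{t_0}(T_i)) \leq r$. Since $F_{t_0}(T_i)$ is supported on the bounded set $e^{t_0} T_i$ the supremum above is finite, and specializing to $s = t - t_0 \to 0^-$ yields the claim. The main subtlety, and the point requiring the most care in the intermediate step, is the convention in Definition \ref{def: substitution flow} that tiles of volume exactly one remain unsubstituted: this is precisely what produces the half-open intervals $(L_T^{\mathrm{par}}, L_T]$ of presence and what accounts for $t \mapsto F_t(T_i)$ being left-continuous without being right-continuous at substitution times.
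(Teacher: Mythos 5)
Your argument is correct and is essentially the reasoning the paper intends: the proposition is stated there without a written proof, being deduced directly from the definition of the semi-flow together with the preceding observation that $F_{t_0}(T_i)$ consists of finitely many tiles of volume at most $1$. Your identification of the finitely many substitution times strictly below $t_0$, the resulting identity $F_t(T_i)=e^{t-t_0}F_{t_0}(T_i)$ on a left-neighborhood of $t_0$, and the elementary Chabauty--Fell estimate for small dilations of a bounded set supply exactly the omitted details, including the correct treatment of the convention that unit-volume tiles remain unsubstituted, which is what makes the map left- but not right-continuous.
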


\subsection{The multiscale tiling space}

	The \textit{multiscale tiling space} generated by a substitution scheme $\sigma$ is the space of all tilings $\TT$ of $\R^d$ with the property that every sub-patch of $\TT$ is a limit of translated sub-patches of elements in $\PPP_\sigma$ 
	with respect to the metric $D$. The multiscale tiling space is denoted by $\X^F_\sigma$ and its elements are called \textit{multiscale substitution tilings}. We will often refer to $\X^F_\sigma$ simply as the \textit{tiling space}. Following the terminology of \cite{Frank-Sadun (ILC fusion)}, 
	we refer to patches of tilings in $\X^F_\sigma$ that are sub-patches of elements of $\PPP_\sigma$ as \textit{legal} patches, and to the rest of the patches as \textit{admitted in the limit}.
	
The following Proposition \ref{prop:Equivalent def for X^F_sigma} provides an equivalent definition for the tiling space $\X^F_\sigma$.
\begin{prop}\label{prop:Equivalent def for X^F_sigma}
	Let $\sigma$ be a substitution 	scheme. Then 
	\begin{equation}\label{eq:another_def_of_the_tiling_space}
	\X^F_\sigma=\left\{ \TT=\lim_{k\to\infty}\PP_{k}+v_{k}:\PP_{k}\in\PPP_\sigma,\,v_{k}\in\R^d,\text{ and }\TT\text{ tiles }\R^d\right\},
	\end{equation}
	where limits are taken with respect to the metric $D$.
\end{prop}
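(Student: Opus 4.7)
The plan is to prove the two inclusions separately. Denote by $\mathcal{Y}$ the set appearing on the right-hand side of \equ{eq:another_def_of_the_tiling_space}; the goal is to show $\X^F_\sigma = \mathcal{Y}$.

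For $\mathcal{Y} \subseteq \X^F_\sigma$, I take a tiling $\TT = \lim_{k\to\infty} \PP_k + v_k$ with $\PP_k \in \PPP_\sigma$ and $v_k \in \R^d$. Given any bounded $B \subset \R^d$, the natural candidate approximants to $[B]^\TT$ are the sub-patches $[B]^{\PP_k + v_k}$, which coincide with the translated sub-patches $[B - v_k]^{\PP_k} + v_k$ and are thus translated sub-patches of elements of $\PPP_\sigma$ in the sense required. Since $\TT$ tiles $\R^d$ and $\PP_k + v_k \to \TT$ in the metric $D$, for all sufficiently large $k$ the support of $\PP_k + v_k$ contains a neighborhood of $B$ and the interior tile boundaries near $B$ converge to those of $\TT$. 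Unwinding the definition of $D$ from \equ{eq:CF-metric} then yields $[B]^{\PP_k + v_k} \to [B]^\TT$ in $D$, establishing $\TT \in \X^F_\sigma$.

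For $\X^F_\sigma \subseteq \mathcal{Y}$, I take $\TT \in \X^F_\sigma$. For each $k \in \N$ I apply the defining property to the sub-patch $[B(0,k)]^\TT$ to produce $\PP_k \in \PPP_\sigma$, $v_k \in \R^d$, and a bounded $B_k$ with
\begin{equation*}
D\bigl([B(0,k)]^\TT,\, [B_k]^{\PP_k} + v_k\bigr) < 1/k.
\end{equation*}
I then claim that in fact $\PP_k + v_k \to \TT$ in $D$, which is precisely the representation required for membership in $\mathcal{Y}$. The key observation is that $[B(0,k)]^\TT$ already covers $B(0,k)$ because $\TT$ tiles $\R^d$, so $\partial[B(0,k)]^\TT$ carries no ``outer boundary'' contribution inside $B(0,k)$; the $1/k$-closeness in $D$ therefore forces the support of the approximating sub-patch, and hence that of the full patch $\PP_k + v_k$, to cover all of $B(0,k)$ up to an $O(1/k)$-thickening. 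Inside this region the interior tile boundaries of $\PP_k + v_k$ coincide with those of $[B_k]^{\PP_k} + v_k$, so feeding this back into \equ{eq:CF-metric} gives $D(\TT, \PP_k + v_k) \le C/k$ for an absolute constant $C$, which yields $\TT \in \mathcal{Y}$ as $k \to \infty$.

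The main technical obstacle lies in the second inclusion: one must verify carefully that the exterior boundary of the support of the finite patch $[B_k]^{\PP_k} + v_k$ does not spoil the agreement with $\partial\TT$ inside $B(0,k)$, so that we may pass from the sub-patch approximant to the full translated generating patch without losing the rate of convergence. Once this geometric content has been extracted from the $1/k$-approximation, the remainder is routine, because a translate of a full element of $\PPP_\sigma$ is precisely the admissible object appearing in $\mathcal{Y}$. I would expect Proposition \ref{prop: translations of close subsets are close} to enter only as an auxiliary device, to handle the comparison of patches under the translations $v_k$ cleanly when those translations are themselves of uncontrolled size.
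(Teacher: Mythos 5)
Your route is the same as the paper's: for $\mathcal{Y}\subseteq\X^F_\sigma$ you approximate sub-patches of $\TT$ by sub-patches of the translated generating patches $\PP_k+v_k$, and for $\X^F_\sigma\subseteq\mathcal{Y}$ you approximate $\QQ_k=[B(0,k)]^\TT$ by translated sub-patches of elements of $\PPP_\sigma$, pass from the sub-patch to the full generating patch, and finish with a triangle-inequality/diagonal argument. These are exactly the two halves of the paper's proof, which likewise uses $D(\TT,\QQ_k)\le 1/k$ and then asserts that $D(\QQ_k,\PP_j+u_j)<1/k$ for $j$ large.

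The one place where you add substance beyond the paper is, however, not correct as stated, and it sits precisely at the delicate point. You claim that $1/k$-closeness in $D$ forces the support of the approximating sub-patch to cover $B(0,k)$ up to an $O(1/k)$-thickening. But $D$ only compares the unions of tile boundaries, and a sub-patch can omit an entire interior tile without its boundary set changing appreciably: the omitted tile's boundary is also carried by the surrounding tiles. Hence a sub-patch that is $1/k$-close to $\QQ_k$ may have holes the size of a whole tile of $\TT$, not merely of size $O(1/k)$. The danger you then need to exclude is that the parent patch $\PP_k$ fills such a hole with a genuine subdivision, contributing boundaries deep inside the hole (at distance comparable to the inradius of a tile from $\partial\TT$); your subsequent assertion that ``inside this region the interior tile boundaries of $\PP_k+v_k$ coincide with those of the sub-patch'' silently excludes exactly these holes, so the bound $D(\TT,\PP_k+v_k)\le C/k$ does not follow from what you have established. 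To be fair, the paper's own proof does not address this either -- it passes from the sub-patches $\RR_j$ to the full patches $\PP_j$ in one sentence -- so you have not fallen below the paper's level of detail, but your sketch should not be read as a proof of that step. A smaller issue of the same flavour affects the first inclusion: $[B]^{\PP_k+v_k}$ need not converge to $[B]^\TT$, because tiles grazing $\partial B$ can enter or leave the bracket in the limit; the standard remedy is to take, for a given sub-patch of $\TT$, the tiles of $\PP_k+v_k$ that approximate its tiles, rather than $[B]^{\PP_k+v_k}$ literally.
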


\begin{proof}
	Denote by $Y$ the right-hand side of \eqref{eq:another_def_of_the_tiling_space}. To see that $\X^F_\sigma\subset Y$,
	take $\TT\in \X^F_\sigma$ and set $\QQ_{k}=\left[B(0,k)\right]^{\TT}$, the patch that consists of all tiles in $\TT$ that intersect the ball of radius $k$ around the origin. On the one hand, clearly $D\left(\TT,\QQ_{k}\right)\le1/k$  by definition of the metric $D$. On the other hand, by definition of the space $\X^F_\sigma$,	for any $k\in\N$ the patch $\QQ_{k}$ is obtained as a limit of the form $\lim_{j\to\infty}\RR_{j}+u_{j}$, where $\RR_{j}$ are sub-patches of patches $\PP_{j}\in\PPP_\sigma$ and $u_{j}\in\R^d$. It follows that there exists $j_{k}\in\N$ such that $D\left(\QQ_{k},\PP_j+u_j\right)<1/k$ for any $j\ge j_k$. Combining the above we get $\TT=\lim_{k\to\infty}\PP_{j_{k}}+u_{j_{k}}$, and so $\TT\in Y$. 
	
	Conversely, every sub-patch of a limiting object of the form $\lim_{k\to\infty}\PP_k+v_k$
	for $\PP_k\in\PPP_\sigma,\,v_k\in\R^d$, is a limit of
	sub-patches of the patches $P_k+v_k$. Assuming additionally that
	$\lim_{k\to\infty}\PP_k+v_k$ tiles $\R^d$ implies that
	the limit is in $\X^F_\sigma$.
\end{proof}

\begin{cor}
	The space $\X^F_\sigma$ is a closed, non-empty subset of $\CC(\R^d)$. 
\end{cor}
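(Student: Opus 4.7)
The plan is to verify non-emptiness and closedness separately, both through the equivalent characterization \eqref{eq:another_def_of_the_tiling_space} from Proposition \ref{prop:Equivalent def for X^F_sigma}. The key geometric input I will need is a uniform two-sided volume bound on every tile appearing in any patch $F_t(T_i) \in \PPP_\sigma$: by Definition \ref{def: substitution flow} each such tile has volume at most $1$, while its volume is at least $\alpha_{\min}^d$ with $\alpha_{\min} = \min_{i,j,k}\alpha_{ij}^{(k)}$, since the smallest possible tile in a patch arises immediately after the substitution of a tile of volume $1$. Because tiles in $\PPP_\sigma$ are rescaled translates of the finitely many prototiles, this gives uniform upper and lower bounds on their diameters, and these bounds persist under $D$-limits.

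For non-emptiness, I would fix any prototile $T_i$, placed so that the origin is in its interior, and apply the compactness of $(\CC(\R^d),D)$ to the sequence $F_n(T_i) \in \PPP_\sigma$ to extract a convergent subsequence with limit $\TT \in \CC(\R^d)$. Since $F_t(T_i)$ tessellates $e^t T_i$ and $e^t T_i$ exhausts $\R^d$, the limit $\TT$ covers all of $\R^d$; combined with the uniform diameter bounds this shows that $\TT$ is a tiling of $\R^d$, and hence $\TT \in \X^F_\sigma$ by \eqref{eq:another_def_of_the_tiling_space}.

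For closedness, let $\TT_n \in \X^F_\sigma$ with $\TT_n \to \TT$ in $D$. Proposition \ref{prop:Equivalent def for X^F_sigma} provides, for each $n$, a sequence $\PP_k^{(n)} + v_k^{(n)} \to \TT_n$ with $\PP_k^{(n)} \in \PPP_\sigma$ and $v_k^{(n)} \in \R^d$. A diagonal selection produces indices $k_n$ with $D(\TT_n, \PP_{k_n}^{(n)} + v_{k_n}^{(n)}) < 1/n$, whence $\PP_{k_n}^{(n)} + v_{k_n}^{(n)} \to \TT$ by the triangle inequality. The same uniform diameter bounds then show that $\TT$ is a tiling of $\R^d$, and a second appeal to \eqref{eq:another_def_of_the_tiling_space} places $\TT$ in $\X^F_\sigma$.

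The main technical obstacle in both parts is confirming that a Chabauty--Fell limit of these tilings (or patches) really is a tiling, rather than a closed set with gaps, overlaps, or tiles that have collapsed to measure zero. This is precisely what the two-sided volume bound rules out: the upper bound ensures that every bounded ball is covered by only finitely many tiles of controlled size, so local convergence in $D$ yields actual covering in the limit, while the lower bound prevents tiles from shrinking to measure-zero sets and opening gaps.
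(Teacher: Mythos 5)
Your proposal is correct and follows essentially the same route as the paper: non-emptiness by extracting a convergent subsequence of patches in $\PPP_\sigma$ whose supports exhaust $\R^d$ using compactness of $(\CC(\R^d),D)$, and closedness by a diagonal selection $D(\TT_n,\PP_{k_n}^{(n)}+v_{k_n}^{(n)})<1/n$ together with Proposition \ref{prop:Equivalent def for X^F_sigma}. The only difference is that you explicitly verify, via the two-sided bounds on tile volumes and diameters coming from the legal scale intervals $(\beta_j^{\min},1]$, that the limiting closed set is genuinely a tiling — a step the paper's proof leaves implicit — and your bound $\alpha_{\min}^d$ and its justification are correct.
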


\begin{proof}
	To see that $\X^F_\sigma\neq\varnothing$ one simply takes a limit of a
	converging sequence of patches $\PP_k\in\PPP_\sigma$ whose supports exhaust $\R^d$. By compactness of the space $(\CC(\R^d),D)$ such sequences exist. To see that it is closed,
	let $\TT_{k}\in \X^F_\sigma$ be a sequence so that $\TT=\lim_{k\to\infty}\TT_k$.
	For each $k$ let $\PP_k\in\PPP_\sigma,\,v_k\in\R^d$
	be so that $D\left(\TT_{k,}\PP_k+v_k\right)<1/k$. Then $\TT=\lim_{k\to\infty}\PP_{k}+v_{k}$
	and so $\TT\in \X^F_\sigma$.
\end{proof}

The following proposition is a simple exercise in convergence of compact sets with respect to the Hausdorff metric, and establishes a first simple result about tiles that appear in multiscale substitution tilings.

\begin{prop}\label{prop:limiting tiles are similar to prototiles} 
	Let $\sigma$ be a substitution scheme. Every tile of every $\TT\in \X^F_\sigma$ is similar to one of the prototiles in $\tau_\sigma$. 

\end{prop}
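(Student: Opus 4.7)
The plan is to realize each tile of a tiling $\TT\in\X^F_\sigma$ as a Hausdorff limit of tiles taken from generating patches, and to identify the limit using compactness. The key preliminary observation, which follows directly from Definition \ref{def: substitution flow} and from $\sigma$ being normalized, is that every tile appearing in any patch in $\PPP_\sigma$ is a translate of $\alpha T_j$ for some $j\in\{1,\ldots,n\}$ and some $\alpha$ lying in the compact interval $[\alpha_{\min},1]$, where $\alpha_{\min}:=\min_{i,j,k}\alpha_{ij}^{(k)}$. Indeed, the substitution rule produces translates of rescaled prototiles, and the semi-flow substitutes a tile precisely when its volume reaches $1$, so scales lie in $[\alpha_{\min},1]$. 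In particular the diameters of all tiles appearing in all patches in $\PPP_\sigma$ are uniformly bounded by some constant $M$ depending only on $\tau_\sigma$.

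By Proposition \ref{prop:Equivalent def for X^F_sigma}, write $\TT=\lim_{k\to\infty}(\PP_k+v_k)$ with $\PP_k\in\PPP_\sigma$ and $v_k\in\R^d$. Fix a tile $T\in\TT$ and choose an interior point $p\in T$. Since $p$ has positive distance from $\partial\TT$ and $\partial(\PP_k+v_k)\to\partial\TT$ in the metric $D$, for all sufficiently large $k$ there is a unique tile $T_k$ of $\PP_k+v_k$ with $p$ in its interior, and $\dist(p,\partial T_k)$ is uniformly bounded away from zero. Writing $T_k=\alpha_k T_{j_k}+w_k$ with $\alpha_k\in[\alpha_{\min},1]$, $j_k\in\{1,\ldots,n\}$ and $w_k\in\R^d$, the inclusion $T_k\subset B(p,M)$ shows that $\{w_k\}$ is bounded. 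By compactness of $[\alpha_{\min},1]$, finiteness of the index set for $j_k$, and compactness of $\overline{B(p,M)}$, pass to a subsequence along which $\alpha_k\to\alpha_\infty$, $j_k=j_\infty$ is constant and $w_k\to w_\infty$. The tiles $T_k$ then converge in the Hausdorff metric to $T_\infty:=\alpha_\infty T_{j_\infty}+w_\infty$, which is a translate of a rescaled prototile.

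The remaining step is to identify $T_\infty$ with $T$. Since $p\in T_k$ for every $k$, one has $p\in T_\infty$; since $\partial T_k\subset\partial(\PP_k+v_k)$ for every $k$ and Chabauty--Fell convergence controls the boundaries inside any fixed compact region, the Hausdorff limit satisfies $\partial T_\infty\subset\partial\TT$. Thus $T_\infty$ is a set with boundary inside $\partial\TT$ whose interior contains $p$, and the tile of $\TT$ containing $p$ in its interior is forced to coincide with $T_\infty$. Consequently $T=\alpha_\infty T_{j_\infty}+w_\infty$ is similar (in fact, equal up to translation and rescaling) to the prototile $T_{j_\infty}\in\tau_\sigma$.

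The step I expect to be the main obstacle is this final identification of $T_\infty$ with $T$: the metric $D$ records only the convergence of the global boundary sets $\partial(\PP_k+v_k)\to\partial\TT$, and extracting from this the Hausdorff convergence of an individual matched tile requires working inside a fixed compact window around $p$. A careful implementation would pair the finitely many tiles of $\PP_k+v_k$ meeting $\overline{B(p,M+1)}$ with those of $[\overline{B(p,M+1)}]^\TT$ by proximity of their boundaries, verify that this pairing is eventually a bijection sending $T_k$ to $T$, and thereby upgrade the global Chabauty--Fell convergence of boundaries to Hausdorff convergence of the matched tiles.
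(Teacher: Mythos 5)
Your argument is correct and is exactly the route the paper has in mind: the paper offers no written proof, stating only that the proposition ``is a simple exercise in convergence of compact sets with respect to the Hausdorff metric,'' and your compactness argument (scales in a compact interval, finitely many types, bounded translations, then matching the limiting tile $T_\infty$ to the tile of $\TT$ containing $p$) is precisely that exercise carried out in detail. The delicate identification step you flag at the end is handled at the level of rigor the paper itself adopts, so no gap relative to the paper's treatment.
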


Notice that due to the nature of the metric $D$, even though in legal patches tiles of unit volume are not yet substituted, the tiling space $\X^F_\sigma$ contains tilings in which they are already subdivided, that is, instead of a translated copy of $T_i\in\tau_\sigma$ there appears a translated copy of the patch $\varrho_\sigma\left(T_i\right)$. This is only possible in tilings that are admitted in the limit, hence they are negligible for most of our purposes, and certainly there are legal patches that come arbitrarily close to them.

\begin{remark}\label{rem: labels topology}
	In view of Proposition \ref{prop:limiting tiles are similar to prototiles}, it is possible to assign labels (or equivalently colors) to every tile of every tiling in $\X^F_\sigma$. Under the assumption that distinct prototiles are geometrically different, that is, that $T_i\neq \alpha T_j$ for any $i\neq j$ and all $\alpha>0$, this can be done in a unique way. Relaxing this assumption, there may exist multiple possible labelings. In such a case we can define a \textit{space of labeled tilings} as a product space of $\X^F_\sigma$, and consider the natural metric on this product space, equipped with an additional discrete component that takes into account that labeled patches are close not only if they are close geometrically, but also only if their corresponding tiles have matching labels. Then we may restrict to the sub-space of all limits of translations of sub-patches of labeled elements of $\PPP_\sigma$ with respect to this refined metric, to define the \textit{labeled tiling space} generated by $\sigma$. We note that this is similar in nature to the \textit{space of Delone $\kappa$-sets} considered in \cite{Lee-Solomyak}. All results of this paper hold for such labeled tilings as well, with similar proofs, but the notations become rather cumbersome. Therefore, for simplicity of presentation, we assume that prototiles are geometrically different and proceed to work with the metric $D$ and the space $\X_\sigma^F$ as defined above.  
\end{remark}

\subsection{The substitution semi-flow on $\X_\sigma^F$ and stationary tilings} 
Consider a substitution scheme $\sigma$ and a tiling $\TT\in\X_\sigma^F$. By Proposition \ref{prop:limiting tiles are similar to prototiles}, every tile $T$ in $\TT$ is a rescaled copy of a prototile in $\tau_\sigma$. The substitution semi-flow $F_t$, which was defined for prototiles in Definition \ref{def: substitution flow}, can thus be naturally extended to tiles in $\TT$ and to patches in $\TT$, and so to the entire tiling $\TT$. 
It follows that for every $\TT\in\X_\sigma^F$ one has:
	\begin{enumerate}
		\item For any $t\in\R^+$, $F_t(\TT)\in\X_\sigma^F$.
		\item For any $t,s\in\R^+$, $F_t(F_s(\TT))=F_{t+s}(\TT)$.
	\end{enumerate}
Thus, the substitution semi-flow defines a semi-flow on the space $\X_\sigma^F$. Also note that for every $x\in\R^d$
	\begin{equation}\label{eq: flow on translation is translation of flow}
	F_s(\TT-x)=F_s(\TT)-e^sx,
	\end{equation}
which brings to mind the relationship between the geodesic and horospheric flows in homogeneous dynamics. Theorem \ref{thm:stationary tilings exist} below establishes the existence of non-trivial periodic orbits for the substitution semi-flow.

\begin{thm}\label{thm:stationary tilings exist}
	Let $\sigma$ be an irreducible substitution scheme. There exist an $s\in\R^+$ and a tiling $\SS\in\X^F_\sigma$ so that  $F_s(\SS)=\SS$.
\end{thm}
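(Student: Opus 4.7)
The plan is to construct $\SS$ as the limit of a nested sequence of legal patches obtained by iterating the substitution semi-flow on a carefully positioned prototile, taking $s$ to be the length of a closed path in the associated graph $G_\sigma$.

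First, using irreducibility I pick a vertex $i$ and a closed path $\gamma$ at $i$ of positive length $s$ in $G_\sigma$. By Proposition \ref{prop: correspondence of paths and tiles}, the path $\gamma$ corresponds to a tile of type $i$ and unit volume inside $F_s(T_i)$. Placing $T_i$ so that the origin lies in its interior, this sub-tile is a translated copy $T_i + v$ with $T_i + v \subset e^s T_i$ for some $v \in \R^d$. I then introduce the affine contraction $H \colon \R^d \to \R^d$ defined by $H(x) := (x + v)/e^s$, which has ratio $e^{-s} < 1$ and satisfies $H(T_i) = (T_i + v)/e^s \subset T_i$. Banach's fixed-point theorem yields a unique fixed point $q = v/(e^s - 1) \in T_i$.

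Next I reposition the prototile by setting $\PP_0 := T_i - q$ and defining inductively $\PP_k := F_{ks}(\PP_0)$. Using the transformation rule \eqref{eq: flow on translation is translation of flow}, a direct computation shows that the unit-volume sub-tile of $F_s(\PP_0)$ that corresponds to $\gamma$ is $T_i + v - e^s q = T_i - q = \PP_0$; hence $\PP_0 \subset F_s(\PP_0) = \PP_1$, and monotonicity of $F_s$ on patches yields an increasing chain $\PP_0 \subset \PP_1 \subset \PP_2 \subset \cdots$ whose $k$-th member is supported on $e^{ks}(T_i - q)$. Provided $q \in \mathrm{int}(T_i)$, the origin lies in the interior of $T_i - q$ and the expanding supports exhaust $\R^d$, so $\SS := \bigcup_{k \ge 0} \PP_k$ is a tiling of $\R^d$, which lies in $\X^F_\sigma$ by Proposition \ref{prop:Equivalent def for X^F_sigma}. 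Stationarity $F_s(\SS) = \SS$ then follows tile by tile: each tile of $\SS$ lies in some $\PP_k$, so its image under $F_s$ lies in $\PP_{k+1} \subset \SS$; conversely, every tile of $\SS$ lies in some $\PP_k \subset F_s(\PP_{k-1}) \subset F_s(\SS)$.

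I expect the main obstacle to be verifying the interior condition $q \in \mathrm{int}(T_i)$, since Banach only guarantees $q \in T_i$, and if $q$ lies on $\partial T_i$ the expansion $e^{ks}(T_i - q)$ need not exhaust $\R^d$. To handle this I would exploit the freedom in choosing $\gamma$: closed paths at $i$ form a rich family under concatenation with detours (guaranteed by strong connectivity), and the map $\gamma \mapsto q(\gamma)$ takes values in a geometrically rich subset of $T_i$, so one may always arrange the fixed point to be interior. As a fallback, one could replace the single seed $\PP_0$ by a small legal patch containing the origin in its interior and compatible with the same fixed-point structure under $F_s$.
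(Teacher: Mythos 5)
Your construction has, in skeleton, the same shape as the paper's proof: locate a unit-volume copy of $T_i$ inside $F_s(T_i)$ for some $s\in\SSS_{i\rightarrow i}$, position $T_i$ so that the corresponding fixed (control) point sits at the origin, observe that the patches $F_{ks}(T_i)$ are then nested, and take their union. The one point where your argument is genuinely incomplete is exactly the point you flag: interiority of the fixed point $q$. Banach's theorem only gives $q\in T_i$, and if $q\in\partial T_i$ the nested supports $e^{ks}(T_i-q)$ exhaust only a proper subset of $\R^d$, so the union is not a tiling of $\R^d$ and does not lie in $\X^F_\sigma$. Your proposed repairs do not close this: ``closed paths at $i$ form a rich family'' and ``$\gamma\mapsto q(\gamma)$ takes values in a geometrically rich subset of $T_i$'' are assertions, not arguments --- nothing you establish rules out that every closed path at $i$ yields a copy of $T_i$ glued to the boundary of the support, with $q(\gamma)\in\partial T_i$; and the fallback of seeding with a small legal patch is not worked out either.

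The paper closes this gap with a short argument you could adopt essentially verbatim: irreducibility yields infinitely many $s\in\SSS_{i\rightarrow i}$ for which the unit-volume translate $T_i+v$ inside $F_s(T_i)$ does not meet $\partial\left(e^sT_i\right)$. Indeed, for large $t$ the tiles of $F_t(T_i)$ touching $\partial\left(e^tT_i\right)$ occupy a vanishing proportion of the volume (all tiles have uniformly bounded diameter and $\partial T_i$ has measure zero), so one can pick a tile away from the boundary and flow further: its descendants remain away from the boundary, and irreducibility produces among them a tile of type $i$, which at the moment it reaches unit volume furnishes such an $s$. For such an $s$ your contraction satisfies $H(T_i)=e^{-s}(T_i+v)\subset\mathrm{int}(T_i)$, hence $q=H(q)\in\mathrm{int}(T_i)$ automatically, and the rest of your proof (nestedness, exhaustion of $\R^d$, membership in $\X^F_\sigma$ via Proposition \ref{prop:Equivalent def for X^F_sigma}, and the two-sided inclusion giving $F_s(\SS)=\SS$) goes through as written.
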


\begin{proof}
	Irreducibility of $\sigma$ implies that for every $i$ there are infinitely many $t\in\R^+$ for which the patch $F_t(T_i)$ contains a tile of type $i$ that does not share its boundary. It follows that there are infinitely many $s\in\SSS_{i\rightarrow i}$ for which the patch $F_s(T_i)$ contains a translate of $T_i$ that does not share its boundary. For any such $s\in\SSS_{i\rightarrow i}$ there exists a unique location in which $T_i$ can be initially positioned around the origin, so that the inflated patch $F_s(T_i)$ contains the patch  $T_i=F_0(T_i)$ as a sub-patch with support in that very same location. Note that under our assumption, this \emph{control point} is an interior point of $T_i$. In such a case also $F_{2s}(T_i)$ contains $F_s(T_i)$ as a sub-patch, and more generally the patch $F_{ks}(T_i)$ contains $F_{(k-1)s}(T_i)$ for every $k\in\N$, that is, the patches $F_{ks}(T_i)$ define a nested sequence of patches. Therefore, the union 
	\[
	\SS:=\bigcup_{k=0}^{\infty}F_{ks}(T_i)
	\]
	is a tiling in $\X_\sigma^F$, and it clearly satisfies $F_s(\SS)=\SS$.
\end{proof}

Let $\SS$ be as above, then for any $a\in\R^+$
\begin{equation}\label{eq:stationary_contains_all_legal_patches}
F_s(F_a(\SS))=F_a(F_s(\SS))=F_a(\SS)
\end{equation} 
and $F_a(\SS)=\bigcup_{k=0}^{\infty}F_{a+ks}(T_i)$, which leads us to the following definition. 

\begin{definition}
	A tiling $\SS\in\X_\sigma^F$ of the form 
	\begin{equation}\label{eq: stationary}
	\SS=\bigcup_{k=0}^{\infty}F_{a+ks}(T_i),
	\end{equation}
	where $s\in\SSS_{i\rightarrow i}$, $a\in\R^+$ and $F_{a+ks}(T_i)$ define a nested sequence of patches, is called a \textit{stationary tiling}. 
	As in the construction in the proof of Theorem \ref{thm:stationary tilings exist}, we assume throughout that the origin is an interior point of $T_i$. 
	We also assume that $s\in\SSS_{i\rightarrow i}$ is minimal in the sense that the associated closed path in $G_\sigma$ is a prime orbit, that is, not the concatenation of multiple copies of a single closed orbit. 
\end{definition}

We remark that in fact there are infinitely many values of $s\in\R^+$ for which there are tilings $\SS\in\X_\sigma^F$ that satisfy $F_s(\SS)=\SS$. Moreover, in view of \eqref{eq:stationary_contains_all_legal_patches}, for every $\PP\in\PPP_\sigma$ there exists a stationary tiling that contains a translate of $\PP$ as a sub-patch. In addition, every stationary tiling can be represented as 
	\begin{equation*}\label{eq: stationary2}
	\SS=\bigcup_{k=0}^{\infty}F_{ks}(T),
	\end{equation*}
   where $T$ is some rescaled copy of a prototile.

\begin{example}\label{ex: stationary squares}
	Once again we consider the square substitution scheme with $\tau_\sigma=(S)$, illustrated in Figure \ref{fig: square scheme} and discussed in previous examples. The large square in the middle of the patch $\varrho_\sigma(S)$ is associated with the single loop of length $\log\frac{5}{3}$ in the associated graph $G_\sigma$ illustrated in Figure \ref{fig:squaregraph}. It follows that for $s=\log\frac{5}{3}$ and the initial positioning of $S$ so that the origin is in the center of the square, we can define a sequence of patches $F_{ks}(S)=F_{k\log(5/3)}(S)$ with support $e^{ks}=(5/3)^k$ so that the patch $F_{ks}(S)$ contains $F_{(k-1)s}(S)$ for every $k\in\N$. 
	For example, the patches illustrated in Figure  \ref{fig:squares stationary} are the $k=0,1,\ldots,6$ elements of the nested sequence.
\end{example}

 \begin{figure}[ht!]
 \includegraphics[scale=0.16]{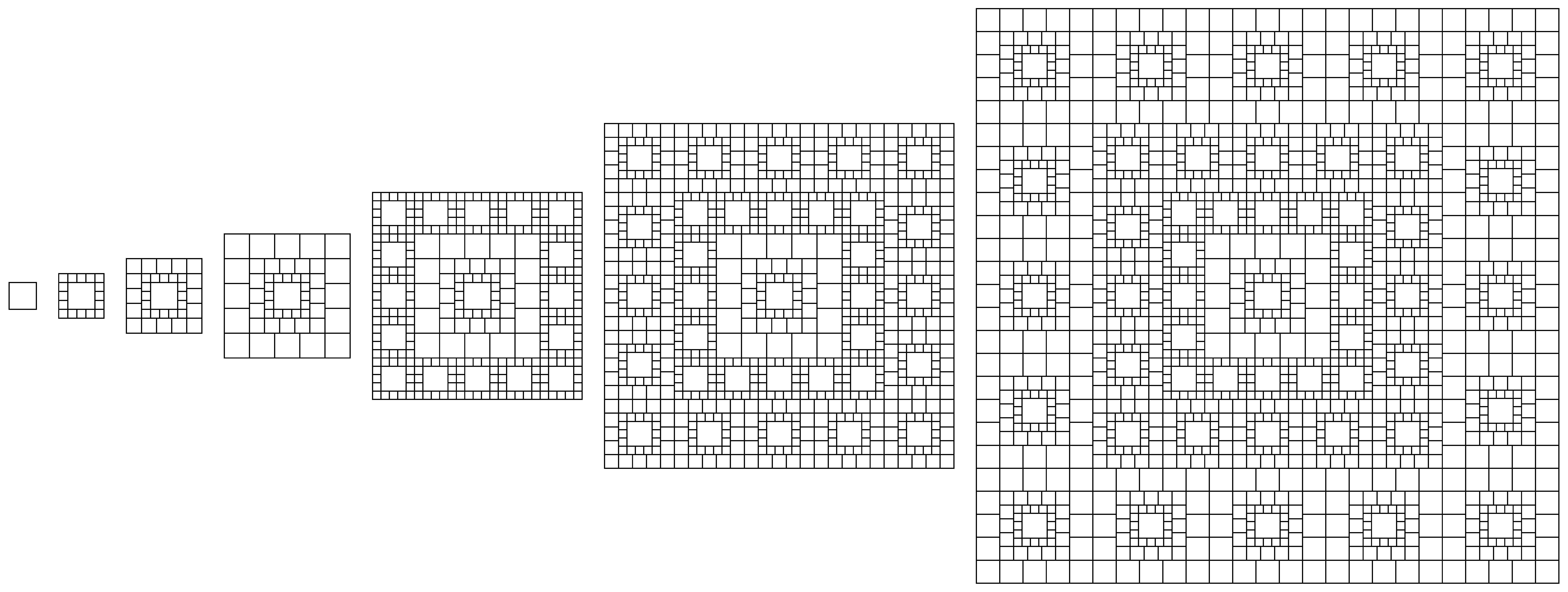}\caption{The patches $F_0(S), F_{1\cdot\log5/3}(S),\ldots,F_{6\cdot\log5/3}(S)$, the first seven elements of the nested sequence of patches that define a stationary tiling.\label{fig:squares stationary}}	
 \end{figure}  
 
 \begin{example}
 	The three patches illustrated in Figure \ref{fig:triangle stationary} are the $k=0,1$ and $2$ elements of the nested sequence of patches of the stationary tiling construction associated with the choice of the central copy of $\frac{1}{5}U$ in $\varrho_\sigma(U)$, where $\sigma$ is the substitution scheme on two triangles illustrated in Figure \ref{fig:triangle substitution rule}. 
 \end{example}

  \begin{figure}[ht!]
  	\includegraphics[scale=0.25]{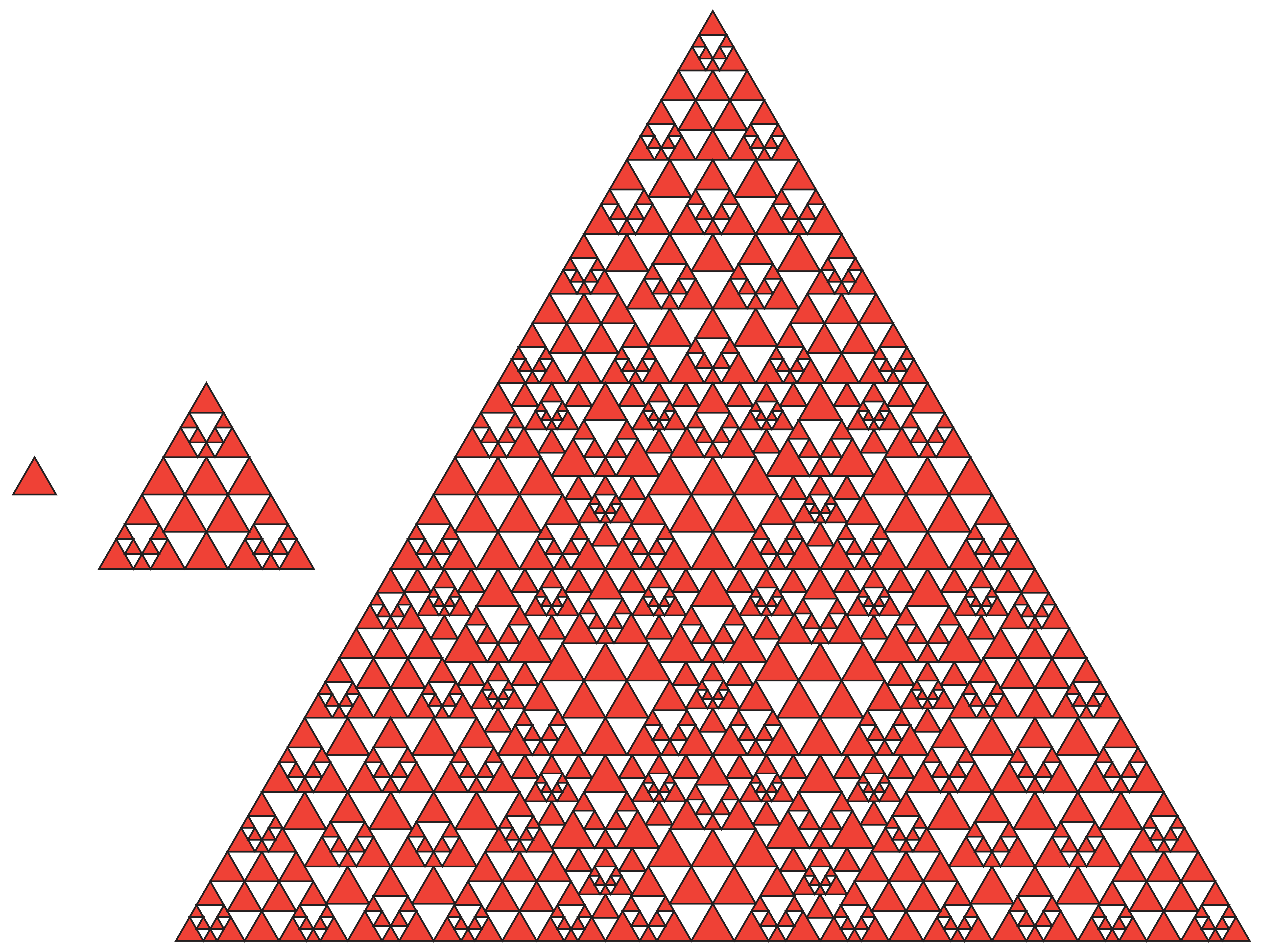}\caption{The patches $F_0(U)$, $F_{1\cdot\log5}(U)$ and $F_{2\cdot\log5}(U)$,  the first three elements of the nested sequence of patches that define a stationary tiling.\label{fig:triangle stationary}}	
  \end{figure} 

\begin{remark}\label{rem: stationary with isometries}
	Stationary tilings can be generated using the more general definition of substitution schemes in which isometries are allowed in the substitution rule, and not only translations, see Remark \ref{rem isometries instead of translations}. Assume $F_s(T_i)$ contains a copy of $\varphi(T_i)$, where $\varphi$ is an isometry of $\R^d$. The patch $\varphi^{-1}(F_s(T_i))$ contains a copy of $T_i$ as a sub-patch, and similarly as described above, we get a sequence of patches $\varphi^{-k}(F_{ks}(T_i))$ which define a stationary tiling $\TT$ in the sense that $\varphi^{-1}(F_s(\TT))=\TT$. 
	Generalized pinwheel tilings, that were introduced and studied in a most illuminating way in \cite{Sadun - generalized Pinwhell}, can be formulated as stationary tilings in this way, with $\varphi$ taken to be rotations by an appropriate angle.
\end{remark}

We end this section with a remark on tilings generated by commensurable schemes.

\begin{remark}\label{rem: commensurable spaces and fixed scale}
	Let $\sigma$ be a (not necessarily normalized) fixed scale scheme with scaling constant $\alpha$, then all edges in the associated graph $G_\sigma$ are of length $\log\frac{1}{\alpha}$. Define a type of substitution semi-flow $\widetilde{F_t}$ in which the tiles are substituted simultaneously when they all reach the volumes of the original prototiles they are associated with. The patches $F_{k\log(1/\alpha)}(T_i)$ are simply the patches $(1/\alpha)^{k}\varrho_\sigma^k(T_i)$, and all patches in $\PPP_{i}$ are inflations of such patches for some $k\in\N$. 
	
	Recall that in the classical theory of fixed scale tilings, the well studied tiling space, which we denote here by $\X_\sigma$, can be defined using the patches $(1/\alpha)^{k}\varrho_\sigma^k(T_i)$, see \cite{BaakeGrimm} and references within. It follows that in such a case, the space $\X^{\widetilde{F}}_\sigma$, defined using the semi-flow $\widetilde{F_t}$, can be expressed simply as the product $\X_\sigma\times\left(\alpha,1\right]$. It was shown in \cite{Yotam Kakutani} that Kakutani sequences of partitions generated by commensurable schemes can be represented as subsequences of generation sequences of partitions generated by fixed scale schemes. In our setting and language, this simply means that given a commensurable scheme $\sigma$ there exists a fixed scale scheme $\widetilde{\sigma}$ so that $\X^F_\sigma= \X^{\widetilde{F}}_{\widetilde{\sigma}}$, and so for the study of commensurable multiscale tilings, one should refer to results on standard fixed scale substitution tilings and the standard spaces of tilings. As we will see below, incommensurable tilings and multiscale tiling spaces differ from the classical setup in various ways. 
\end{remark}

\section{Scales and complexity in stationary tilings}\label{sec: scales and complexity}

From here on \textbf{all schemes are assumed to be incommensurable}. We show that an incommensurable stationary tiling has tiles in a set of scales which  is dense within a certain interval of scales. In particular, it is of infinite local complexity, see e.g. \cite{BaakeGrimm}.

\begin{definition}\label{def:possible_scales}
	Given a substitution scheme $\sigma$ and a prototile $T_j\in\tau_\sigma$, denote 
	\begin{equation}\label{eq:beta_j,min}
	\beta^{\min}_{j}:=\min\left\{\alpha:\ \exists i\in\{1,\ldots,n\}, \varrho_\sigma(T_i) \text{ contains a copy of } \alpha T_j \right\}.
	\end{equation}
	The interval $(\beta^{\min}_{j},1]$ is called the \emph{interval of legal scales of tiles of type $j$}.
	Note that the prototiles $T_i,T_j\in\tau_\sigma$ on the right-hand side of \eqref{eq:beta_j,min} are assumed to be of unit volume.  A \textit{tile of legal type and scale} is any translated copy of $\alpha T_j$ for $T_j\in\tau_\sigma$ and $\alpha\in(\beta^{\min}_{j},1]$. Note that tiles of type $j$ and scale $\beta^{\min}_{j}$ can appear in tilings that are admitted in the limit, but never in any legal patch or in a (translation of a) stationary tiling.
\end{definition}

Recall that a fixed scale substitution scheme is called \textit{primitive} if there exists $k\in\N$ so that all types of tiles belong to the patch that is the result of $k$ applications of the substitution rule on any initial prototile. The following theorem demonstrates how incommensurability takes the part of primitivity in multiscale substitution tilings. It also plays an important role in the proof of minimality of the tiling dynamical system in \S\ref{sec:Dynamics}.

\begin{thm}\label{thm:The_scales_are_dense_in_SS}
	Let $\sigma$ be an irreducible incommensurable substitution scheme, let $\SS\in\X_\sigma^F$ and $s\in\R^+$ be so that $\SS=F_s(\SS)$, and let $\varepsilon>0$. Then there exists $K\in\N$ so that for every integer $k\ge K$, a tile $T$ in $\SS$ and $j\in\{1,\ldots,n\}$, the set 
	\[\emph{Scales}(e^{ks}T,j):= \left\{\alpha: \text{the patch supported on } e^{ks}T \text{ in } F_{ks}(\SS)=\SS \text{ contains a copy of } \alpha T_j \right\}\]
	is $\varepsilon$-dense in $(\beta^{\min}_{j},1]$. In particular, the set of scales in which tiles of each type appear in stationary tilings is dense within their intervals of legal scales.
\end{thm}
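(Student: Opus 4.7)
The plan is to reduce the statement to a density assertion about paths in $G_\sigma$ via Proposition \ref{prop: correspondence of paths and tiles}, and then to invoke Corollary \ref{cor:SSS_(i-to-j)_becomes_dense}. First I would fix a tile $T = \alpha T_i + x$ in $\SS$ (the form guaranteed by Proposition \ref{prop:limiting tiles are similar to prototiles}, with $\alpha \in (\beta_i^{\min}, 1]$). Because $\SS = F_{ks}(\SS)$, the sub-patch $[e^{ks}T]^\SS$ is exactly the image of $T$ under the semi-flow $F_{ks}$. Since inflating $\alpha T_i$ by $e^{ks}$ and iteratively substituting yields the same geometry as inflating $T_i$ by $e^{ks+\log\alpha}$ and iteratively substituting, the multiset of types and scales appearing in this patch coincides, up to translation, with that of the patch $F_{t}(T_i)$, where $t := ks + \log\alpha$.

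Next I would apply Proposition \ref{prop: correspondence of paths and tiles} to identify tiles of type $j$ in $F_t(T_i)$ with metric paths of length $t$ in $G_\sigma$ originating at $i$ and terminating on an edge incoming to $j$, the scale of the corresponding tile being $e^{-\delta}$, where $\delta$ is the distance from the termination point to $j$. Any such path decomposes as a path in $G_\sigma$ from $i$ to some intermediate vertex $i'$ of length $s \in \SSS_{i \to i'}$, followed by a partial walk of length $L_{i'j} - \delta$ along an edge from $i'$ to $j$ of length $L_{i'j}$, so that
\begin{equation*}
\delta = s + L_{i'j} - t, \qquad s \in \SSS_{i \to i'} \cap [t - L_{i'j},\, t].
\end{equation*}
Restricting to a \emph{longest} incoming edge at $j$, necessarily of length $L^{\max}_j := \log(1/\beta_j^{\min})$ and originating at some $i_0$, the affine change of variable $s \mapsto s + L^{\max}_j - t$ identifies $\SSS_{i \to i_0} \cap [t - L^{\max}_j, t]$ with the set of admissible $\delta$'s in $[0, L^{\max}_j]$. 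By Corollary \ref{cor:SSS_(i-to-j)_becomes_dense}, incommensurability of $\sigma$ forces consecutive gaps of $\SSS_{i \to i_0}$ to tend to zero, so for each $\eta > 0$ there exists $t_0 = t_0(i, i_0, \eta)$ such that for $t - L^{\max}_j \geq t_0$ this intersection is $\eta$-dense in $[t - L^{\max}_j, t]$. Thus $\{\delta\}$ is $\eta$-dense in $[0, L^{\max}_j]$, and since $\delta \mapsto e^{-\delta}$ is $1$-Lipschitz on $[0,\infty)$, the resulting set of scales is $\eta$-dense in $[\beta_j^{\min}, 1]$, hence in $(\beta_j^{\min}, 1]$.

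Finally, uniformity over tiles $T \subset \SS$ and types $j \in \{1, \ldots, n\}$ is automatic: there are only finitely many triples $(i, i_0, j)$, and the scales of tiles in $\SS$ are uniformly bounded below by $\min_{i'} \beta_{i'}^{\min} > 0$. Choosing $\eta < \varepsilon$ and then $K \in \N$ large enough that $Ks + \log(\min_{i'} \beta_{i'}^{\min}) - L^{\max}_j \geq t_0(i, i_0, \eta)$ for every triple simultaneously produces the desired threshold, and the ``in particular'' statement follows by letting $\varepsilon \to 0$. The main bookkeeping step I expect to need care is the clean identification of $[e^{ks}T]^{\SS}$ with $F_{ks + \log\alpha}(T_i)$, and in particular the verification that no substitution initiated inside $e^{ks}T$ extends past its boundary; this is immediate from locality of the substitution rule, but deserves to be spelled out so that the reduction to paths in $G_\sigma$ is unambiguous.
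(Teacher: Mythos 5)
Your proposal is correct and is essentially the paper's own argument: you reduce the patch on $e^{ks}T$ to $F_{ks+\log\alpha}(T_i)$, single out the vertex $i_0$ that feeds $j$ at the minimal scale $\beta_j^{\min}$ (the paper's auxiliary prototile $T_\ell$, i.e.\ the longest incoming edge at $j$), and sweep the interval of scales using the vanishing gaps of $\SSS_{i\to i_0}$ from Corollary \ref{cor:SSS_(i-to-j)_becomes_dense}, with uniformity coming from the finitely many pairs and the uniform lower bound on tile scales. The only difference is cosmetic: you phrase the last step as a partial walk along that edge via Proposition \ref{prop: correspondence of paths and tiles}, where the paper phrases it as continuing the semi-flow for the leftover time, and both glosses handle the endpoint/epsilon bookkeeping at a comparable level of detail.
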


\begin{proof}
	Since every tile $T$ in $\SS$ is a translated copy of $\alpha T_i$ for some $T_i\in\tau_\sigma$ and some $\alpha\in  (\beta^{\min}_{i},1]$, the patch supported on $e^{ks}T$ is a translated copy of $F_{ks}(\alpha T_i)=F_{ks-\log(1/\alpha)}(T_i)$. Note that since $F_s(\SS)=\SS$, the set $e^{ks}T$ is indeed a support of a patch in $\SS$.
	
	Fix $\varepsilon>0$ and $j\in\{1\ldots,n\}$. We show that there exists $K_j\in\N$ so that $\text{Scales}(e^{ks}T,j)\cap (c,c+\varepsilon]\neq\varnothing$ for every $c\in(\beta^{\min}_{j},1-\varepsilon]$, every tile $T$ in $\SS$, and every integer $k\ge K_j$. Set $\ell=\ell(j)\in\{1\ldots,n\}$ so that $\varrho_\sigma(T_\ell)$ contains a copy of $\beta^{\min}_{j} T_j$. First observe that for every $c\in(\beta^{\min}_{j},1-\varepsilon]$ and $t\in\R^+$, the following implication holds:
	\begin{align}\label{eq:implication_in_'scales_are_dense'}
	F_{t}(T_i) &\text{ contains a copy of }T_\ell \qquad \Longrightarrow \\ \notag
	F_{t+\log c-\log \beta^{\min}_{j}}(T_i) &\text{ contains a copy of }F_{\log c-\log \beta^{\min}_{j}}(\beta^{\min}_{j}T_j) = cT_j 
	\end{align}

	For $i\in\{1\ldots,n\}$ let $s_{1}(i)<s_{2}(i)<\ldots$ 
	be an increasing enumeration of $\SSS_{i\rightarrow \ell}$, then by Corollary \ref{cor:SSS_(i-to-j)_becomes_dense} 
	\begin{equation}
	\lim_{m\to\infty}s_{m+1}(i)-s_{m}{(i)}=0.\notag
	\end{equation}
	It follows that for every $\delta>0$ there exists $K_j\in\N$ so that for every $k\ge K_j$, $i\in\{1,\ldots,n\}$, $\alpha\in(\beta_i^{\min},1]$ and $c\in(\beta_j^{\min},1-\varepsilon]$ there exists some $m\in\N$ with 
	\begin{equation}\label{eq:in_"scales are dense"2}
	ks-\log c+\log \beta^{\min}_{j}-\delta<s_m{(i)}+\log(1/\alpha)\le ks-\log c+\log \beta^{\min}_{j}.
	\end{equation}
	Put $h:=s_m(i)+\log(1/\alpha)+\log c-\log \beta^{\min}_{j}$, then \eqref{eq:in_"scales are dense"2} simply says $h\in(ks-\delta,ks]$.
	
	Suppose that $T$ is of type $i$ and scale $\alpha$. Since $s_m{(i)}\in\SSS_{i\rightarrow \ell}$, the patch $F_{s_m{(i)}+\log(1/\alpha)}(T)=F_{s_m{(i)}}(T_i)$ contains a copy of $T_\ell$. By the implication in  \eqref{eq:implication_in_'scales_are_dense'},  $F_h(T)$ contains a copy of $cT_j$. Write $\eta:=ks-h$, then $\eta<\delta$ and $F_{ks}(T)=F_\eta(F_h(T))$ contains a copy of $F_\eta(cT_j)=e^\eta cT_j$, where the equality holds whenever $\delta$ is small enough. In particular, for $\delta\le\log\left(1+\varepsilon/\beta\right)$, where $\beta:=\min_j\beta_j^{\min}$, we have $e^\eta c \in (c,c+\varepsilon]$, because $c\ge \beta$. Taking $K=\max_j K_j$ completes the proof.
\end{proof}

\subsection{Tile complexity}
Let $\sigma$ be an irreducible substitution scheme, and consider a stationary tiling $\SS=\bigcup_{k=0}^{\infty}F_{ks}(T)\in \X^F_\sigma	$. For every $k\ge0$, denote by $c_{\SS,j}(k)$ the number of distinct scales in which a tile of type $j$ appears in $F_{ks}(T)$, and set $c_{\SS}(k)=\sum_{j=1}^n c_{\SS,j}(k)$. So $c_{\SS}(k)$ is the number of distinct tiles up to translation in $F_{ks}(T)$, and $c_{\SS}(k)$ is called the \textit{tile complexity function} of $\SS$. Note that it is well defined because $s$ is assumed to be minimal with the property $F_s(\SS)=\SS$. Since $F_{ks}(T)$ is a nested sequence of patches, clearly $c_{\SS}(k)$ is non-decreasing.

\begin{thm}\label{thm:scale complexity}
	Let $\sigma$ be an irreducible substitution scheme and let $\SS=\bigcup_{k=0}^{\infty}F_{ks}(T)\in \X^F_\sigma	$ be a stationary tiling. If $c_{\SS}(\ell+1)=c_{\SS}(\ell)$ for some $\ell\in\N$, then $c_{\SS}(k)=c_{\SS}(\ell)$ for all $k\ge \ell$. Moreover, this is the case if and only if the scheme $\sigma$ is commensurable. 
\end{thm}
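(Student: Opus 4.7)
The plan is to introduce, for each $k\in\N$, the finite set
\[
\Pi_k\subset\{1,\ldots,n\}\times(0,1]
\]
of (type, scale) pairs of tiles appearing in the patch $F_{ks}(T)$, so that $c_\SS(k)=|\Pi_k|$, and to track its evolution. Two basic facts follow from the construction of the stationary tiling. First, the nesting $F_{ks}(T)\subset F_{(k+1)s}(T)$ forces the containment $\Pi_k\subset\Pi_{k+1}$. Second, the semi-flow identity $F_{(k+1)s}(T)=F_s(F_{ks}(T))$ together with the translation covariance of the semi-flow yields the recursion
\[
\Pi_{k+1}=\bigcup_{(i,\alpha)\in\Pi_k}\Phi(i,\alpha),
\]
where $\Phi(i,\alpha)$ denotes the (finite) set of (type, scale) pairs appearing in $F_s(\alpha T_i)$, a set that depends only on $(i,\alpha)$ and not on the position of the tile.

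Given this setup, the first assertion is immediate: if $\Pi_{\ell+1}=\Pi_\ell$, then applying the recursion to both sides gives $\Pi_{\ell+2}=\bigcup_{(i,\alpha)\in\Pi_{\ell+1}}\Phi(i,\alpha)=\bigcup_{(i,\alpha)\in\Pi_\ell}\Phi(i,\alpha)=\Pi_{\ell+1}=\Pi_\ell$, and induction yields $\Pi_k=\Pi_\ell$ for every $k\ge\ell$.

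For the ``only if'' direction of the characterization, suppose $\sigma$ is incommensurable. Theorem \ref{thm:The_scales_are_dense_in_SS} asserts that for every $j\in\{1,\ldots,n\}$ the set of scales of type-$j$ tiles appearing in $\SS=\bigcup_k F_{ks}(T)$ is dense in $(\beta^{\min}_j,1]$, an interval of positive length since normalization forces $\beta^{\min}_j<1$. Consequently $\bigcup_k\Pi_k$ is infinite, so the non-decreasing sequence $c_\SS(k)=|\Pi_k|$ tends to infinity and no $\ell$ can satisfy $c_\SS(\ell+1)=c_\SS(\ell)$.

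For the converse I would exploit the arithmetic structure of a commensurable graph. As in the proof of Lemma \ref{lem: Incommensurable equivalent definitions}, commensurability of $\sigma$ yields a constant $c>0$ such that every closed path in $G_\sigma$ has length in $c\Z$. A short concatenation argument, using the strong connectivity guaranteed by irreducibility, then shows that for the fixed initial vertex $i_0$ of $T$ and any fixed edge $\varepsilon$ into a vertex $j$, all paths from $i_0$ to a prescribed endpoint on $\varepsilon$ have lengths in a single coset of $c\Z$ determined by $\varepsilon$ and the endpoint. Since $ks\in c\Z$ and, by Proposition \ref{prop: correspondence of paths and tiles}, the scale $\alpha$ of a type-$j$ tile in $F_{ks}(T)$ is encoded in precisely such a terminating point, this congruence pins $\log(1/\alpha)$ down to a discrete subset of $(0,l(\varepsilon))$ for each of the finitely many candidate edges $\varepsilon$. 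Hence $\bigcup_k\Pi_k$ is finite, and the bounded non-decreasing integer sequence $c_\SS(k)$ must stabilize. This commensurable direction is the main obstacle of the argument, because the density of scales supplied by Theorem \ref{thm:The_scales_are_dense_in_SS} is unavailable in this regime and has to be replaced by direct graph-theoretic control on the path-length spectrum of $G_\sigma$.
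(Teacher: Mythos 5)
Your argument is correct. The first assertion (stabilization after one plateau) and the incommensurable direction (density of scales from Theorem \ref{thm:The_scales_are_dense_in_SS} forces $c_{\SS}(k)\to\infty$, so by the first assertion no plateau can occur) follow the same lines as the paper; your recursion $\Pi_{k+1}=\bigcup_{(i,\alpha)\in\Pi_k}\Phi(i,\alpha)$ is just a cleaner formalization of the paper's observation that the types and scales produced by $F_s$ depend only on the types and scales present, not on positions. Where you genuinely diverge is the commensurable direction. The paper does not argue on the graph at all: it invokes Remark \ref{rem: commensurable spaces and fixed scale} (resting on \cite[Theorem 7.2]{Yotam Kakutani}), by which a commensurable scheme admits an equivalent fixed scale scheme, so that all the patches $F_{ks}(T)$ are sub-patches of a single standard fixed scale substitution tiling, which has finite local complexity; hence $c_{\SS}(k)$ is bounded and must stabilize. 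You instead control the path-length spectrum of $G_\sigma$ directly: commensurability gives $c>0$ with all closed path lengths in $c\Z$ (as in Lemma \ref{lem: Incommensurable equivalent definitions}), strong connectivity then places all path lengths between two fixed vertices in a single coset of $c\Z$, and since $ks\in c\Z$ (and the scale of the starting tile $T$ only shifts the relevant coset by a fixed amount, a point worth stating explicitly if $T$ is a proper rescaling of a prototile), Proposition \ref{prop: correspondence of paths and tiles} confines $\log(1/\alpha)$ to a finite set on each of the finitely many edges, so $\bigcup_k\Pi_k$ is finite. Your route is more elementary and self-contained — it avoids importing the equivalence with fixed scale schemes and the machinery behind it — while the paper's route is shorter on the page and makes transparent the conceptual reason (commensurable tilings live inside classical finite-local-complexity tilings), at the cost of relying on an external result.
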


\begin{proof}
	Assume $c_{\SS}(\ell+1)=c_{\SS}(\ell)$. This means that every type and scale that appears in the patch $F_{(\ell+1) s}(T)$ also appears in the patch $F_{\ell s}(T)$. Since the tiles in $F_{(\ell+1) s}(T)$ are all sub-tiles in patches defined by applying $F_s$ to the tiles of $F_{\ell s}(T)$, we deduce that applying $F_s$ to the tiles of $F_{(\ell+1) s}(T)$ defines a patch with tiles of the same set of types and scales. Therefore, repeated applications of $F_s$ will always result with patches whose tiles are of the same types and scales, and so $c_{\SS}(k)=c_{\SS}(\ell)$  for every $k\ge\ell$.

	For the second part of the theorem, if $\sigma$ is incommensurable then Theorem \ref{thm:The_scales_are_dense_in_SS} in particular implies that $c_{\SS}(k)$ tends to infinity. Conversely, assume $c_{\SS}(k)$ is strictly increasing, then in particular it is unbounded as an increasing sequence of integers. If $\sigma$ is commensurable, then Remark \ref{rem: commensurable spaces and fixed scale} implies that there exists a standard substitution tiling $\mathcal{T}$ generated by a fixed scale substitution scheme, so that for any $k\in\N$ the patches $F_{ks}(T)$ are sub-patches of $\mathcal{T}$ (see also \cite[Theorem 7.2]{Yotam Kakutani}). Since the fixed scale substitution tiling $\mathcal{T}$ has only finitely many tiles up to translation, the complexity function $c_{\SS}(k)$ is bounded, contradicting our assumption. 
\end{proof}

Given a finite alphabet $\mathcal{A}$ and an infinite sequence $u\in\mathcal{A}^{\N}$,  the cardinality $p_k(u)$ of the set of \textit{words} in $u$ of length $k$ defines the \textit{complexity function} of $u$. The statement of Theorem \ref{thm:scale complexity} resembles that of the well-known result that a sequence $u\in\mathcal{A}^{\N}$ is periodic if and only if there exists $k\in\N$ for which $p_k(u)=p_{k+1}(u)$, see e.g. \cite[Proposition 4.11]{BaakeGrimm}. Non-periodic sequences with minimal complexity function $p_k(u)=k+1$ for all $k\in\N$ are called \textit{Sturmian sequences}. The following Proposition \ref{prop: Sturmian tilings} establishes the existence of ``Sturmian'' incommensurable stationary tilings.

\begin{figure}[ht!]
	\includegraphics[scale=3.5]{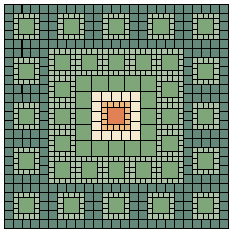}\caption{The patch $F_{5s}(S)$. Squares are colored according to the $c_{\SS}(5)=6$ distinct scales in which the appear in $F_{5s}(S)$.}	\label{fig:tile complexity}	
\end{figure}

\begin{prop}\label{prop: Sturmian tilings}
	There exist incommensurable stationary tilings so that for all $k\in\N$ we have $c_{\SS}(k+1)=c_{\SS}(k)+1$. 
\end{prop}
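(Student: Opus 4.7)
The plan is to exhibit an explicit stationary tiling $\SS$ satisfying $c_{\SS}(k)=k+1$ for every $k\ge 0$, from which the identity $c_{\SS}(k+1)=c_{\SS}(k)+1$ follows at once. The example of choice is the square substitution scheme of Figure~\ref{fig: square scheme} together with the stationary tiling constructed in Example~\ref{ex: stationary squares}, of period $s=\log(5/3)$ and with the origin placed at the center of the unit square $S$. Incommensurability of the scheme was already observed in Example~\ref{ex: irreducibility and incommensurability of square scheme} and amounts to the irrationality of $\log 3/\log 5$.

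The first step is to describe the action of $F_s$ on a single tile of scale $\beta\in(1/5,1]$. Inflation by $e^s=5/3$ turns $\beta S$ into a square of scale $5\beta/3$; if $\beta\le 3/5$ no further substitution occurs and the tile is simply replaced by one of scale $5\beta/3$, while if $\beta>3/5$ the inflated square is substituted according to $\varrho_\sigma$, producing exactly one central tile of scale $\beta$ together with sixteen tiles of scale $\beta/3$. This motivates the map $\phi\colon(1/5,1]\to(1/5,1]$ defined by $\phi(\beta)=\beta/3$ when $\beta>3/5$ and $\phi(\beta)=5\beta/3$ when $\beta\le 3/5$, which records the ``genuinely new'' scale produced at each step.

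Next I will prove by induction on $k$ that the set $A_k$ of scales appearing in $F_{ks}(S)$ is exactly the finite orbit $\{\phi^j(1):0\le j\le k\}$. The inductive step rests on the identity
\[
A_{k+1} \;=\; \phi(A_k)\;\cup\;\{\beta\in A_k : \beta>3/5\},
\]
and uses two features of the orbit structure: the scale $1=\phi^0(1)$ belongs to $A_k$ and satisfies $1>3/5$, so it is preserved under $F_s$ and never leaves the set; and $\phi(A_k)=\{\phi^j(1):1\le j\le k+1\}$ adjoins at most one new element to the previous orbit. Together these observations give $A_{k+1}=\{\phi^j(1):0\le j\le k+1\}$ and in particular $c_{\SS}(k)\le k+1$.

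The final step is to establish injectivity of the orbit, which upgrades the preceding inequality to an equality. Substituting $y=\log(1/\beta)$ transports $\phi$ to the map on $[0,\log 5)$ that sends $y\mapsto y+\log 3$ when $y<\log(5/3)$ and $y\mapsto y-\log(5/3)$ otherwise, and a direct check identifies this with the rotation $y\mapsto y+\log 3\pmod{\log 5}$ on the circle of circumference $\log 5$. Incommensurability of $\sigma$ is precisely the assertion that $\log 3/\log 5\notin\Q$, so the rotation is irrational and its forward orbit from $y_0=0$ visits distinct points. It follows that $|A_k|=k+1$, hence $c_{\SS}(k)=k+1$, proving the proposition. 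I expect the main obstacle to be the combinatorial bookkeeping of the induction, where one must verify that no unwanted scale coincidences arise and that the preserved scales above $3/5$ correspond to exactly the initial segment of the orbit, so that nothing previously observed is lost under $F_s$.
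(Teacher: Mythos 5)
Your proof is correct, and it follows the paper's choice of example (the square scheme with the stationary tiling of Example \ref{ex: stationary squares}, $s=\log(5/3)$) but organizes the argument differently. The paper's proof runs an induction on $c_{\SS}(k)$: the same dichotomy you use ($\alpha\le 3/5$ versus $\alpha>3/5$) shows that at most one new scale can appear at each step, and the lower bound (at least one new scale per step) is obtained by citing Theorem \ref{thm:scale complexity}, which rests on the density-of-scales result Theorem \ref{thm:The_scales_are_dense_in_SS}; the paper never identifies the scale set itself. You instead encode the tile-wise action of $F_s$ in the map $\phi$ on $(1/5,1]$, prove the exact recursion $A_{k+1}=\phi(A_k)\cup\{\beta\in A_k:\beta>3/5\}$, and conclude that $A_k$ is precisely the initial orbit segment $\{\phi^j(1):0\le j\le k\}$ of a map conjugate (via $y=\log(1/\beta)$) to the irrational rotation $y\mapsto y+\log 3 \pmod{\log 5}$; distinctness of orbit points then gives the lower bound directly, with no appeal to Theorems \ref{thm:The_scales_are_dense_in_SS} or \ref{thm:scale complexity}. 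Your route is thus more self-contained and more informative: it exhibits the scale set explicitly as a rotation orbit, which makes the ``Sturmian'' analogy literal, whereas the paper's argument is shorter given its earlier machinery and generalizes more readily since it does not need an exact description of the scale dynamics. Two small points worth making explicit in a final write-up: the boundary case $\beta=3/5$ (where inflation gives volume exactly $1$, which by Definition \ref{def: substitution flow} is not substituted) is consistent with your branch $\phi(\beta)=5\beta/3$, and in fact never occurs in the orbit of $1$ since $3$ and $5$ are multiplicatively independent; and the base case $A_0=\{1\}$, $A_1=\{1,1/3\}$ matches $c_{\SS}(0)=1$, $c_{\SS}(1)=2$ as read off Figure \ref{fig:squares stationary}.
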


\begin{proof}
	Let $\mathcal {S}$ be the stationary tiling with square tiles, constructed in Example \ref{ex: stationary squares} using the substitution scheme on the unit square $S$. Here $s=\log(5/3)$ and $T=S$, and we denote by $\mathcal{S}_k$ the patch $F_{k\log(5/3)}(S)$. Clearly $c_{\SS}(0)=1$, and from Figure \ref{fig:squares stationary} we deduce $c_{\SS}(1)=2$. Assuming, by induction, that $c_{\SS}(m)=m+1$ for all $1\le m<k$, we prove that $c_{\SS}(k)=k+1$. 
	
	Since $\SS$ is stationary, for every $m$ the set of $c_{\SS}(m)$ different scales that appear in $F_{ms}(T_i)$ contains the $c_{\SS}(m-1)$ scales that appear in $F_{(m-1)s}(T_i)$. In particular, using the induction hypothesis, the set of $k$ scales that appear in $\SS_{k-1}$ consists of the $k-1$ scales that appear in $\SS_{k-2}$, and one additional scale $\alpha\in(\frac{1}{5},1]$.  Therefore, squares in $\mathcal{S}_{k}$ that are descendants of squares of these $k-1$ scales in $\mathcal{S}_{k-1}$, appear in $k$ scales. Regarding the scale $\alpha$, we consider two cases: 	If $\alpha\le\frac{3}{5}$, then applying $F_s$ to squares of scale $\alpha$ results in squares of side length $e^s\alpha = \frac{5}{3}\alpha \le1$, and so at most one new scale appears in $\mathcal{S}_{k}$. Otherwise $\alpha>\frac{3}{5}$, and so applying $F_s$ to squares of scale $\alpha$ results in a patch consisting of a single central square of scale $\frac{3}{5}\cdot e^s\alpha =\alpha$, and $16$ smaller squares of scale  $\frac{1}{5}\cdot e^s\alpha $. And so once again, at most one new scale appears in $\mathcal{S}_{k}$. Since the scheme is incommensurable, by Theorem \ref{thm:scale complexity}, at least one new scale must appear in $\mathcal{S}_{k}$, finishing the proof.
\end{proof}

As an immediate corollary of Lemma \ref{lem:poly(log)_lemma} that is more naturally suited in \S \ref{sec: BD BL}, we deduce the following result.

\begin{prop}\label{prop: polynomial growth}
	The tile complexity function growth rate is at most polynomial, that is, for any stationary tiling $\SS$ there exists $m\in\N$ for which $c_{\SS}(k)=O\left(k^m\right)$, where $m$ depends on the parameters of the generating substitution scheme and on the parameter $s$ of the stationary tiling. 
\end{prop}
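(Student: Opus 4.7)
The plan is to bound $c_{\SS}(k)$ by a count of distinct metric-path data in the associated graph $G_\sigma$, and then apply a standard lattice-point estimate—the content of the forthcoming Lemma \ref{lem:poly(log)_lemma}.

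First, I would use Proposition \ref{prop: correspondence of paths and tiles} to encode each tile of type $j$ and scale $\alpha$ appearing in the patch $F_{ks}(T)$ by a metric path of length $ks$ in $G_\sigma$ that terminates on some incoming edge $\varepsilon$ to vertex $j$, at distance $\delta = \log(1/\alpha)$ from $j$. If $L$ denotes the length of the initial portion of the path ending at the initial vertex of $\varepsilon$, then $L = ks - l(\varepsilon) + \delta$, so the scale $\alpha$ is uniquely determined by the pair $(L,\varepsilon)$. Since $G_\sigma$ has only finitely many edges, $c_{\SS,j}(k)$ is bounded by a constant multiple of the number of distinct path lengths $L \le ks$ realizable in $G_\sigma$, and summing over the finitely many types $j$ reduces the problem to estimating the number of distinct achievable path lengths at most $ks$.

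For the combinatorial step, every length of a path in $G_\sigma$ is a non-negative integer combination $\sum_{i=1}^{N} n_i l_i$ of the finitely many edge weights $l_1, \ldots, l_N$. Hence the number of distinct values at most $ks$ is bounded by the number of lattice points $(n_1, \ldots, n_N) \in \Z_{\ge 0}^N$ with $\sum_i n_i l_i \le ks$, which is $O(k^N)$ by an elementary simplex volume estimate. This yields $c_{\SS}(k) = O(k^m)$ for some exponent $m$, depending on $\sigma$ through the structure of $G_\sigma$ (in particular the rank of the rational span of the edge weights) and on $s$. I expect no substantial obstacle: the path-to-tile correspondence is one-directional in the sense that several metric paths may yield the same tile, but this only strengthens the upper bound, and the lattice-point count is standard; the content of the argument is essentially the bookkeeping needed to feed the geometric complexity into the combinatorial estimate supplied by Lemma \ref{lem:poly(log)_lemma}.
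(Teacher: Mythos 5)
Your proof is correct and takes essentially the same route as the paper, which deduces the proposition as an immediate corollary of Lemma \ref{lem:poly(log)_lemma}: that lemma's proof uses the same path-to-tile correspondence of Proposition \ref{prop: correspondence of paths and tiles} and the same elementary count, there phrased as the number of multisets of $N$ edges distributed among the $\EEE$ edge classes, here as the equivalent lattice-point count of $(n_1,\ldots,n_N)\in\Z_{\ge0}^N$ with $\sum_i n_i l_i\le ks$. The only cosmetic difference is your parametrization of a tile's scale by the pair $(L,\varepsilon)$ and the counting of realizable path lengths, versus the paper's counting of translation classes via edge multisets; both give a polynomial bound with exponent of order $\EEE$.
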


\subsection{Patches and scales}
Recall that given a substitution scheme $\sigma$, a patch is called legal if it appears as a sub-patch of some element of $\PPP_\sigma$. We show here that if $\sigma$ is irreducible and incommensurable, then for every legal patch, every stationary tiling contains rescaled copies of the patch, with a dense set of scales.  

\begin{lem}\label{lem: legal patches appear in stoationary tilings}
	Let $\sigma$ be an irreducible incommensurable substitution scheme, and let $\PP$ be a legal patch. Then every stationary tiling $\SS\in\X_\sigma^F$ contains a translated copy of $\alpha\PP$ for some scale $\alpha>0$.
\end{lem}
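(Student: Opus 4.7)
The plan is to find a scaled copy of $F_\tau(T_j)$ as a sub-patch of $\SS$, from which the conclusion immediately follows. By the definition of a legal patch, $\PP$ is a sub-patch of $F_\tau(T_j)$ for some prototile $T_j\in\tau_\sigma$ and some $\tau\in\R^+$. Writing the stationary tiling as $\SS=\bigcup_{k\geq 0}F_{a+ks}(T_i)$, each $F_{a+ks}(T_i)$ appears as a legal sub-patch of $\SS$, so it suffices to find $\alpha > 0$ and $k \in \N$ such that $\alpha F_\tau(T_j)$ is a sub-patch of $F_{a+ks}(T_i)$ (up to translation).

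The first step is a local analysis of the flow $F_t(T_j)$ at time $\tau$. Substitutions occur only at a discrete set of times, so there exist consecutive substitution times $\tau'\leq \tau<\tau''$ with $\tau''-\tau'>0$. On the interval $[\tau',\tau'')$ the flow is pure inflation, hence for every $u\in[\tau',\tau'')$ we have
\[
F_u(T_j)=e^{u-\tau}\,F_\tau(T_j)
\]
as patches. In particular the sub-patch $\PP\subset F_\tau(T_j)$ gives a sub-patch $e^{u-\tau}\PP$ of $F_u(T_j)$.

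The second step embeds $F_u(T_j)$ inside $F_{a+ks}(T_i)$ using the path-tile correspondence of Proposition \ref{prop: correspondence of paths and tiles}. For any $r\in\SSS_{i\rightarrow j}$ with $r\leq a+ks$, the patch $F_r(T_i)$ contains a translated copy of $T_j$ as a unit-volume tile (corresponding to a path of length $r$ from $i$ to $j$ in $G_\sigma$), and the descendants of this tile under the flow for further time $u:=a+ks-r$ form a translated copy of $F_u(T_j)$ as a sub-patch of $F_{a+ks}(T_i)$. Thus it suffices to find $k\in\N$ and $r\in\SSS_{i\rightarrow j}$ with $u=a+ks-r\in[\tau',\tau'')$, i.e.\ with $r\in(a+ks-\tau'',\,a+ks-\tau']$.

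This is where incommensurability enters. By Corollary \ref{cor:SSS_(i-to-j)_becomes_dense}, consecutive differences in the increasing enumeration of $\SSS_{i\rightarrow j}$ tend to zero, so for all sufficiently large $k$ the interval $(a+ks-\tau'',\,a+ks-\tau']$ of fixed positive length $\tau''-\tau'$ must meet $\SSS_{i\rightarrow j}$. Picking such $k$ and $r$ and setting $\alpha:=e^{u-\tau}>0$, we obtain a translated copy of $\alpha F_\tau(T_j)$ as a sub-patch of $F_{a+ks}(T_i)\subset\SS$, and therefore a translated copy of $\alpha\PP$. The main technical point is the local identification $F_u(T_j)=e^{u-\tau}F_\tau(T_j)$ on a neighborhood of $\tau$; once this is secured, the rest of the argument is just bookkeeping of paths in $G_\sigma$ combined with the density statement of Corollary \ref{cor:SSS_(i-to-j)_becomes_dense}.
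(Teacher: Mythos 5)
Your argument is correct in substance, but it takes a more direct route than the paper. The paper also starts from the observation that there is an interval of times, of positive length, during which $F_t(T_j)$ contains a rescaled copy of $\PP$; however, it then invokes Theorem \ref{thm:The_scales_are_dense_in_SS} (density of tile scales in $\SS$) to find, via a ``reverse walk'' on $G_\sigma$, a tile $T$ of $\SS$ whose image $F_{ms}(T)$ lands in that interval of the flow applied to $T_j$. You instead work with the defining nested union $\SS=\bigcup_k F_{a+ks}(T_i)$ and apply Corollary \ref{cor:SSS_(i-to-j)_becomes_dense} directly to hit the interval $\bigl(a+ks-\tau'',\,a+ks-\tau'\bigr]$ with some $r\in\SSS_{i\rightarrow j}$, using the standard identification $F_{a+ks}(T_i)=F_{a+ks-r}\bigl(F_r(T_i)\bigr)\supset$ translate of $F_{a+ks-r}(T_j)$. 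Since Theorem \ref{thm:The_scales_are_dense_in_SS} is itself proved from Corollary \ref{cor:SSS_(i-to-j)_becomes_dense}, your proof is the more elementary of the two: it bypasses the dense-scales theorem entirely, at the cost of being tied to the explicit representation of $\SS$ as a nested union (which every stationary tiling has by definition, so nothing is lost).

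One small slip: with the paper's conventions the semi-flow is left-continuous (tiles are substituted only when their volume exceeds $1$, and edges of $G_\sigma$ contain their terminal but not their initial vertex), so the pure-inflation intervals are of the form $(\tau',\tau'']$, not $[\tau',\tau'')$. In particular, if $\tau$ happens to be a substitution time, your identity $F_u(T_j)=e^{u-\tau}F_\tau(T_j)$ fails for all $u>\tau$, since the unit-volume tiles of $F_\tau(T_j)$ are already subdivided in $F_u(T_j)$. This is harmless: replace your interval by $(\tau',\tau'']$ containing $\tau$ (equivalently, require $r\in[a+ks-\tau'',\,a+ks-\tau')$), which still has length $\tau''-\tau'>0$, and the density argument goes through unchanged; this matches the paper's choice of the half-open interval $(t_{\min},t_{\max}]$.
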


\begin{proof}
	Since $\PP$ is legal, there is $t\in\R^+$ so that $\PP$ is a sub-patch of $F_t(T_i)$ for some $T_i\in\tau_\sigma$. Since $\PP$ contains finitely many tiles, there are $t_{\min}<t\le t_{\max}$ so that every element of $\{F_t(T_i):\,t_{\min}<t\le t_{\max}\}$ contains a rescaled copy of $\PP$ as a sub-patch. We remark that in $F_{t_{\max}}(T_i)$ the dilation of $\PP$ is such that the tiles of maximal volume are of volume $1$, and that in $F_{t_{\min}}(T_i)$ an ancestor of at least one of the tiles of the patch $\PP$ appears at volume exactly $1$.
	
	Let $\SS\in\X_\sigma^F$ be a stationary tiling for which $F_s(\SS)=\SS$. Fix $m\in\N$ so that $ms>t_{\max}$. By Theorem \ref{thm:The_scales_are_dense_in_SS} the tiles in $\SS$ appear in a dense set of scales within the intervals of legal scales. In particular, by irreducibility, there exists a tile $T$ in $\SS$ so that $F_{ms-t_{\max}}(T)$ contains a translated copy of $\alpha T_i$ for some $\alpha\in(e^{t_{\min}-t_{\max}},1]$. This can be deduced by considering a segment associated with the interval of scales $(e^{t_{\min}-t_{\max}},1]$, which is a subset of an edge of the associated graph $G_\sigma$ that terminates at vertex $i\in\mathcal{V}$. This segment can be mapped to another segment on the edges of $G_\sigma$ by choosing a ``reverse walk'' of length $ms-t_{\max}$. Any point in the resulting segment corresponds to a choice of some rescaled prototiles, that is, some tile of legal type and scale. Since tiles appear in $\SS$ in a dense set of scales within all legal scales, it is guaranteed that indeed there is a tile $T$ in $\SS$ which is a copy of $\alpha T_i$ with $\alpha\in(e^{t_{\min}-t_{\max}},1]$.
	
	It follows that 
	$F_{ms}(T)=F_{t_{\max}}\left(F_{ms-t_{\max}}(T)\right)$ contains $F_{t_{\max}}(\alpha T_i)=F_{t_{\max}-\log(1/\alpha)}(T_i)$ as a sub-patch. Clearly
	\begin{equation*}
	t_{\min}\le t_{\max}-\log(1/\alpha)\le t_{\max},
	\end{equation*} 
	and so $F_{ms}(T)$ contains a rescaled copy of $\PP$. Since $F_{ms}(T)$ is a patch in $F_{ms}(\SS)=\SS$, the result follows. 
\end{proof}

Let $\SS\in\X_\sigma^F$ be a stationary tiling with $\SS=\bigcup_{k=0}^{\infty}F_{ks}(T)$. For any fixed patch $\PP_0$ in $\SS$, there exists a smallest $k\ge1$ for which $\PP_0$ is a sub-patch of $F_{ks}(T)$. The patch $\PP_0$ can also be considered in the context of the continuous family $\{F_t(T):\,t\in\R^+\}$. As in the proof of Lemma \ref{lem: legal patches appear in stoationary tilings}, we define a maximal non-trivial interval $t_{\PP_0}:=(t_{\min},t_{\max}]$ so that $ks\in t_{\PP_0}$, and such that for any $t\in t_{\PP_0}$ the patch $F_t(T)$ contains a rescaled copy of $\PP_0$ as a sub-patch. If the original patch $\PP_0$ is thought of as being of scale $1$, then these rescaled copies of $\PP_0$ in  $\{F_t(T):\,t\in t_{\PP_0}\}$ appear in scales within an interval $I'_{\PP_0}$ of scales that contains $1$, and by left continuity also some left neighborhood of $1$. 

Now let $\PP$ be a patch in $\SS$, consider all of its translated copies in $\SS$ and let $\left(\PP_j\right)_{j\ge1}$ be an enumeration of them. Each $\PP_j$ is a patch in $\SS$ and has its own interval of times $t_{\PP_j}$ and interval of scales $I'_{\PP_j}$ as described above. Clearly all the intervals of the form $I'_{\PP_j}$ have the same maximum, and the associated rescaled copies of these patches have maximal tiles of volume $1$. Also note that since there are only finitely many tiles in a patch, and since tile scales are bounded, the intervals $I'_{\PP_j}$ are all bounded uniformly from below. We can thus define  the \textit{interval of legal scales} for the patch $\PP$ in $\SS$ as the union of all these intervals, and denote it by  $I_{\PP}$. Clearly $I_{\PP}$ contains $1$ and a left neighborhood of $1$.

\begin{lem}\label{thm:The_scales_for_patches_are_dense}
	Let $\sigma$ be an irreducible incommensurable substitution scheme, and let $\SS\in\X_\sigma^F$ be a stationary tiling. Let $\PP$ be a patch in $\SS$, and let $I_\PP=(\beta^{\min}_\PP,\beta^{\max}_\PP]$ be the interval of scales in which $\PP$ appears in $\SS$. Then the set 
	\[
	\emph{Scales}(\SS,\PP):= \left\{\alpha:\ \SS \text{ contains a copy of } \alpha\PP \right\}\]
	is dense in $I_\PP$.
\end{lem}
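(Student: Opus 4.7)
The plan is to combine the continuous scaling behavior of the substitution semi-flow on the family $\{F_t(T_i)\}_t$ with the density of path lengths guaranteed by incommensurability via Corollary \ref{cor:SSS_(i-to-j)_becomes_dense}. Fix $\alpha_0 \in I_\PP$ and $\varepsilon > 0$; the goal is to locate a translated copy of $\alpha \PP$ in $\SS$ with $\alpha \in (\alpha_0 - \varepsilon, \alpha_0 + \varepsilon)$.

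By definition of $I_\PP$, I would first choose a translated copy $\PP_j$ of $\PP$ in $\SS$ with $\alpha_0 \in I'_{\PP_j}$, producing an ancestor prototile $T_i \in \tau_\sigma$ and a time $t_0 \in t_{\PP_j}$ such that $F_{t_0}(T_i)$ contains a translated copy of $\alpha_0 \PP$ at a specified location. Picking $\delta > 0$ small enough that $(t_0 - \delta, t_0 + \delta) \subseteq t_{\PP_j}$, no substitution touching this occurrence of $\PP$ happens during this interval, so the entire family $\{F_t(T_i)\}_{t \in (t_0 - \delta, t_0 + \delta)}$ contains a translated copy of $\alpha(t)\PP$ with $\alpha(t) = \alpha_0 e^{t - t_0}$. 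I would shrink $\delta$ further so that $\alpha(t) \in (\alpha_0 - \varepsilon, \alpha_0 + \varepsilon)$ throughout.

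The next step is to realize a translate of $F_t(T_i)$ as a sub-patch of $\SS$ for some $t$ in this window. Fix any tile $T$ in $\SS$, of type $i_T$ and scale $\gamma_T$; since $\SS = F_{ks}(\SS)$ for every $k$, the patch $F_{ks}(T)$ is a sub-patch of $\SS$, and it coincides with $F_{ks - \log(1/\gamma_T)}(T_{i_T})$ up to translation. Moreover, for any $t' \in \SSS_{i_T \to i}$, a unit-volume copy of $T_i$ sits inside $F_{t'}(T_{i_T})$, so if $t' + t = ks - \log(1/\gamma_T)$ then a translated copy of $F_t(T_i)$ appears as a sub-patch of $F_{ks}(T) \subseteq \SS$. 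The problem thus reduces to producing $k \in \N$ and $t' \in \SSS_{i_T \to i}$ such that $t := ks - \log(1/\gamma_T) - t' \in (t_0 - \delta, t_0 + \delta)$.

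This is exactly where incommensurability enters. By Corollary \ref{cor:SSS_(i-to-j)_becomes_dense}, the consecutive gaps in an increasing enumeration of $\SSS_{i_T \to i}$ tend to zero, so every interval of length $2\delta$ lying sufficiently far out in $\R^+$ meets $\SSS_{i_T \to i}$. Choosing $k$ large enough that $ks - \log(1/\gamma_T) - t_0$ exceeds the corresponding threshold, I pick the required $t'$ from the window of radius $\delta$ centered there, obtaining $t \in (t_0 - \delta, t_0 + \delta)$. The translated sub-patch of $F_t(T_i)$ inside $\SS$ then carries a translated copy of $\alpha(t)\PP$ into $\SS$ with $\alpha(t) \in (\alpha_0 - \varepsilon, \alpha_0 + \varepsilon)$, and since $\alpha_0$ and $\varepsilon$ were arbitrary, \emph{Scales}$(\SS,\PP)$ is dense in $I_\PP$. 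The principal technical point to verify, which I do not expect to be deep, is that the placement of the copy of $\alpha(t)\PP$ inside the realized translate of $F_t(T_i) \subset F_{ks}(T)$ agrees with the one inherited from $F_t(T_i)$ itself — this is what lets me transfer a copy of $\alpha(t)\PP$ into $\SS$ — and this follows from the compatibility of the semi-flow with translations recorded in \eqref{eq: flow on translation is translation of flow}.
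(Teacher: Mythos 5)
Your proposal is correct, but it reaches the lemma by a genuinely different route than the paper. The paper deduces the statement from Lemma \ref{lem: legal patches appear in stoationary tilings}: there one first uses Theorem \ref{thm:The_scales_are_dense_in_SS} (density of tile scales in $\SS$) together with a ``reverse walk'' on $G_\sigma$ to locate a tile of type $i$ in $\SS$ whose scale lies in a prescribed sub-interval, and then flows that tile by a fixed time so that the resulting patch, which lies in $F_{ms}(\SS)=\SS$, carries the copy of $\PP$ at the desired scale; density of the scales of copies of $\PP_j$ in $I'_{\PP_j}$ is read off from the density of tile scales. You bypass Theorem \ref{thm:The_scales_are_dense_in_SS} entirely: starting from an arbitrary tile $T$ in $\SS$, you use $F_{ks}(\SS)=\SS$ and the factorization $F_{ks}(T)=F_t\bigl(F_{t'}(T_{i_T})\bigr)$ with $t'\in\SSS_{i_T\to i}$, and tune $t=ks-\log(1/\gamma_T)-t'$ into the persistence window of a chosen occurrence of $\PP$ directly via Corollary \ref{cor:SSS_(i-to-j)_becomes_dense}; this is closer in spirit to the paper's proof of Lemma \ref{lem:Stationary tilings has dense orbit} than to the route through \S\ref{sec: scales and complexity}. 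Both arguments draw on the same incommensurability input (Corollary \ref{cor:SSS_(i-to-j)_becomes_dense}), but yours is more self-contained for this particular statement, whereas the paper's detour is economical because Theorem \ref{thm:The_scales_are_dense_in_SS} is needed elsewhere anyway (e.g.\ for minimality) and yields a statement uniform over all tiles and all large $k$, which your existence argument does not provide (and does not need here). Two small points to tidy: since $t_{\PP_j}=(t_{\min},t_{\max}]$ is half-open, the symmetric window $(t_0-\delta,t_0+\delta)$ is unavailable when $t_0=t_{\max}$, so either restrict to interior times (sufficient for density) or use the one-sided window $(t_0-\delta,t_0]$; and your time $t_0$ is measured from a prototile $T_i$ while the paper's $t_{\PP_j}$ is parametrized by the seed tile of $\SS$, a harmless shift by $\log(1/\gamma)$ that is worth making explicit.
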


\begin{proof}
	Let $\left(\PP_j\right)_{j\ge1}$ be an enumeration of translated copies of $\PP$ in $\SS$. Fix a patch $\PP_j$, and as in the discussion above let $t_{\PP_j}$ and   $I'_{\PP_j}$ be the associated intervals of times and of scales, respectively. 
	
	Assume $s\in\R^+$ is such that $F_s(\SS)=\SS$ and write $t_{\PP_j}=(t_{\min},t_{\max}]$. As in the proof of Lemma \ref{lem: legal patches appear in stoationary tilings}, if we pick $m\in\N$ so that $ms>t_{\max}$, then $F_{ms}(\SS)=\SS$ contains a rescaled copy of $\PP_j$. In fact, it follows from the proof of Lemma \ref{lem: legal patches appear in stoationary tilings}, that $\SS$ contains rescaled copies of $\PP_j$ with scales which are dense in the interval $I'_{\PP_j}$. Since $I_{\PP}$ is defined as the union of the intervals $I'_{\PP_j}$, the proof is complete.
\end{proof}

\begin{cor}\label{cor: every stationary tiline contains rescaled copies of legal patches}
	Every stationary tiling contains rescaled copies of every legal patch, and the patches appear in a dense set of scale.
\end{cor}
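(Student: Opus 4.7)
The plan is a direct combination of the two preceding lemmas. Given any legal patch $\PP$ and any stationary tiling $\SS\in\X_\sigma^F$, Lemma \ref{lem: legal patches appear in stoationary tilings} guarantees the existence of at least one scale $\alpha_0>0$ and a translation vector $v$ such that $\alpha_0\PP+v$ is a sub-patch of $\SS$. This already establishes the first assertion of the corollary.

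For the density assertion, I would reinterpret the rescaled copy $\PP':=\alpha_0\PP+v$ as a patch of $\SS$ in its own right. Because $\PP'$ is a patch in $\SS$, Lemma \ref{thm:The_scales_for_patches_are_dense} applies directly to $\PP'$ and yields that the set
\[
\mathrm{Scales}(\SS,\PP')\;=\;\{\alpha>0:\,\SS\text{ contains a translated copy of }\alpha\PP'\}
\]
is dense in the interval of legal scales $I_{\PP'}$. By the construction of $I_{\PP'}$ in the discussion preceding Lemma \ref{thm:The_scales_for_patches_are_dense}, this interval contains $1$ together with a left neighborhood of $1$, and in particular is non-trivial.

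It only remains to transfer density of scales from $\PP'$ to $\PP$. Since $\alpha\PP'$ is, up to translation, the same as $\alpha\alpha_0\PP$, one has the identity $\mathrm{Scales}(\SS,\PP)\supseteq \alpha_0\cdot \mathrm{Scales}(\SS,\PP')$. The map $x\mapsto\alpha_0 x$ is a homeomorphism of $\R^+$, so it carries the dense subset of $I_{\PP'}$ to a dense subset of the non-trivial interval $\alpha_0\cdot I_{\PP'}$. Hence $\mathrm{Scales}(\SS,\PP)$ contains a dense subset of a non-trivial interval of positive scales, proving the second assertion.

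I do not anticipate any real obstacle here, since all the technical content has been carried out in Lemmas \ref{lem: legal patches appear in stoationary tilings} and \ref{thm:The_scales_for_patches_are_dense}; the corollary merely records a bookkeeping combination of their conclusions together with the trivial scaling-invariance observation relating copies of $\PP$ to copies of $\alpha_0\PP$.
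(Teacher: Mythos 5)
Your proposal is correct and follows essentially the same route as the paper, which presents this corollary as an immediate consequence of Lemma \ref{lem: legal patches appear in stoationary tilings} (existence of one rescaled copy in any stationary tiling) combined with Lemma \ref{thm:The_scales_for_patches_are_dense} applied to that copy viewed as a patch of $\SS$. The final rescaling step transferring density from $\PP'$ to $\PP$ is the same trivial observation the paper leaves implicit.
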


It  will follow from Corollary \ref{cor: patch frequency for all tilings in the space} that the above holds for any tiling  $\TT\in\X_\sigma^F$.

\section{The tiling dynamical system}\label{sec:Dynamics}

A \textit{dynamical system} is a pair $\left(X,G\right)$ where $X$
is a topological space and $G$ is a group that
acts on $X$. The \textit{orbit} of a point $x\in X$ is the set
$\OO(x):=\left\{ g.x\,:\,g\in G\right\} $. A \textit{subsystem}
of $\left(X,G\right)$ is a closed, non-empty and $G$-invariant set
$Y\subset X$. Examples of subsystems include orbit closures $\overline{\OO(x)}$,
for $x\in X$. 

We study the dynamical system $\left(\X_\sigma^F,\mathbb{R}^{d}\right)$,
where the group $\R^{d}$ acts on tilings of $\R^{d}$ by translations,
and the topology on $\X_\sigma^F$ is determined by the metric $D$, defined
in \eqref{eq:CF-metric}. Traditionally, we call it the \textit{tiling dynamical system}.

\subsection{Minimality} 
Recall that a dynamical system $\left(X,G\right)$ is called \textit{minimal}
if it contains no proper subsystems, or equivalently, if every orbit is dense. 

\begin{thm}\label{thm: tiling space is minimal}
	Let $\sigma$ be an irreducible incommensurable substitution scheme in $\R^d$. Then the dynamical system $\left(\X_\sigma^F,\mathbb{R}^{d}\right)$ is minimal.
\end{thm}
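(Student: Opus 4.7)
The plan is to show that for any $\TT, \TT' \in \X_\sigma^F$ and any $\varepsilon > 0$, some translate $\TT - v$ lies within distance $\varepsilon$ from $\TT'$ in the metric $D$. Fix a radius $R > 2/\varepsilon$. Since $\TT' \in \X_\sigma^F$, Proposition \ref{prop:Equivalent def for X^F_sigma} together with the definition of the tiling space gives that the finite window $[B(0,R)]^{\TT'}$ is, up to an arbitrarily small error, a translate $\PP + u$ of a legal patch $\PP$. Writing $\PP$ as a sub-patch of some $F_{t_0}(T_{i_0}) \in \PPP_\sigma$, the problem reduces to producing an approximate translated copy of $\PP$ at unit scale inside $\TT$, at a prescribed location.

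To produce such a copy, I would invoke the representation $\TT = \lim_{k\to\infty} F_{t_k}(T_{j_k}) + v_k$ with $t_k \to \infty$, which must hold since the supports of the approximating patches exhaust $\R^d$. For the limit to contain an approximate copy of $\PP$ near a designated point, it suffices that $F_{t_k}(T_{j_k})$ contains such a copy for all large $k$. The correspondence between tiles and metric paths (Proposition \ref{prop: correspondence of paths and tiles}) translates this into a question about the graph $G_\sigma$: a translate of $\PP$ at scale $1$ appears in $F_{t_k}(T_{j_k})$ whenever there is a path of length $t_k - t_0$ from vertex $j_k$ terminating at vertex $i_0$, i.e., whenever $t_k - t_0 \in \SSS_{j_k \to i_0}$. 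The set $\SSS_{j_k \to i_0}$ is non-empty by irreducibility and, by incommensurability combined with Corollary \ref{cor:SSS_(i-to-j)_becomes_dense}, its gaps tend to zero. So, for $k$ large, some $\tau \in \SSS_{j_k \to i_0}$ is arbitrarily close to $t_k - t_0$, and the resulting sub-patch of $F_{t_k}(T_{j_k})$ is a copy of $\PP$ whose scale discrepancy $|e^{(t_k-t_0-\tau)} - 1|$ is absorbed in the $\varepsilon$ budget via the left-continuity of the semi-flow (Proposition \ref{prop:F_t is left-continuous}) and the observation on translations of close sets (Proposition \ref{prop: translations of close subsets are close}).

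The remaining task is positioning: the approximate copy of $\PP$ must be located so that, after translating $\TT$, it coincides with the window $[B(0,R)]^{\TT'}$ and is not too close to the boundary of $F_{t_k}(T_{j_k}) + v_k$, where the metric $D$ would lose information. I would argue that among the descendant lineages in $F_{t_k}(T_{j_k})$ passing through a tile of type $i_0$ at an appropriate scale, whose number grows unboundedly in $t_k$ by irreducibility, at least one such copy of $\PP$ can be placed in the interior of the supertile, at Euclidean distance greater than $R$ from the boundary of its support. Once this is verified, translating $\TT$ to align that copy of $\PP$ with $\PP + u$ yields the desired inequality $D(\TT-v,\TT') < \varepsilon$ by the triangle inequality applied to the three controlled distances.

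The main obstacle, I expect, is the positioning step: guaranteeing that an interior copy of $\PP$ exists requires a pigeonhole-type argument on the growth of the number of descendant tiles of type $i_0$ in large supertiles, combined with the density of scales supplied by Theorem \ref{thm:The_scales_are_dense_in_SS}, to ensure that among the many type-$i_0$ ancestors appearing in $F_{t_k}(T_{j_k})$ there is at least one whose own descendants include a translate of $\PP$ at (nearly) unit scale in the prescribed interior region. Once this geometric fact is available, the remainder of the argument is bookkeeping with the triangle inequality for $D$ and with Proposition \ref{prop: translations of close subsets are close} to control how translations affect the metric.
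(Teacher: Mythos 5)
Your overall reduction (approximate the window $[B(0,R)]^{\TT'}$ by a translated legal patch $\PP\subset F_{t_0}(T_{i_0})$, then look for an approximate copy of $\PP$ inside $\TT$) is sound, and the graph-theoretic step — using Corollary \ref{cor:SSS_(i-to-j)_becomes_dense} to find $\tau\in\SSS_{j_k\rightarrow i_0}$ close to $t_k-t_0$, so that $F_{t_k}(T_{j_k})$ contains a slightly dilated copy of $F_{t_0}(T_{i_0})$ — is essentially the computation in the paper's Lemma \ref{lem:Stationary tilings has dense orbit}. The genuine gap is in the localization step, and your diagnosis of it is aimed at the wrong target. Knowing that $F_{t_k}(T_{j_k})+v_k$ contains a copy of $\PP$ somewhere, even deep in the interior of its support and far from its boundary, does not let you conclude that $\TT$ contains (an approximation of) that copy: the convergence $F_{t_k}(T_{j_k})+v_k\to\TT$ in the metric $D$ only controls agreement on a bounded ball around the origin whose radius you do not get to choose relative to where your copy of $\PP$ sits. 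The support of $F_{t_k}(T_{j_k})$ has volume $e^{dt_k}$, and the copy produced by the path argument may lie entirely outside the window on which the approximant agrees with $\TT$; "distance greater than $R$ from the boundary of the supertile" is irrelevant, since $D$ loses information far from the origin, not near the patch boundary. A pigeonhole count of descendant lineages of type $i_0$ likewise gives many copies but no control on their positions, so it cannot place one inside the specific bounded region $B(-v_k,\mathrm{const})$ that matters.

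What is actually needed is a syndeticity statement: copies of (small dilates of) $\PP$, or of a large central patch of a stationary tiling, occur within \emph{every} ball of some fixed radius $R'$, uniformly. This is exactly how the paper closes the gap: it first proves that the orbit of a stationary tiling $\SS$ is dense (the easy direction, where positioning is free because one translates $\SS$), and then, for the hard inclusion $\SS\in\overline{\OO(\TT_1)}$, it uses the uniformity of Theorem \ref{thm:The_scales_are_dense_in_SS} over all tiles $T$ of $\SS$: every $(K+m)$-supertile $e^{(K+m)s}T$ supports a copy of $F_{ms-\eta}(T_i)$ with $\eta$ small, and since these supertiles tile space with uniformly bounded diameter, such a copy occurs in every $R$-ball of $\SS$. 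Only then does the $1/R$-closeness of $\TT_1$ to a translate of $\SS$ transfer such a copy into the window $[B(0,R)]^{\TT_1}$, where it can be aligned with the origin. Your argument, as written, is missing this relative-density ingredient; to repair it you would need to prove an analogous uniform statement (for instance, that for all large $t$ every $R'$-ball in $\supp F_t(T_i)$ contains an approximate near-unit-scale copy of $\PP$), which is substantially the content of the paper's two-step proof rather than a bookkeeping matter.
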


The following result is a key step in the proof of Theorem \ref{thm: tiling space is minimal}.
\begin{lem}
	\label{lem:Stationary tilings has dense orbit}
	Let $\sigma$ be an irreducible incommensurable substitution scheme in $\R^d$, and let $\SS=\bigcup_{k=0}^{\infty}F_{ks}(T_i)\in\X_\sigma^F$ be
	a stationary tiling. Then $\OO(\SS)$ is dense in $\X_\sigma^F$.
\end{lem}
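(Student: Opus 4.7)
The plan is to fix an arbitrary $\TT \in \X_\sigma^F$ and $\varepsilon > 0$ and to produce $w \in \R^d$ with $D(\SS - w, \TT) < \varepsilon$. The strategy is to approximate $\TT$ near the origin by a legal patch drawn from $\PPP_\sigma$, to embed a rescaled translated copy of that patch inside $\SS$ at a scale very close to $1$, and to then translate $\SS$ so that this copy lies near the origin; three approximation errors will combine through the triangle inequality for the Chabauty--Fell metric $D$.

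Concretely, fix a radius $R > 6/\varepsilon$. By Proposition \ref{prop:Equivalent def for X^F_sigma}, there exist $\PP \in \PPP_\sigma$ and $v \in \R^d$ with $D(\TT, \PP + v) < \varepsilon/3$. Let $\QQ$ be the finite sub-patch of $\PP$ consisting of every tile whose translate by $v$ meets $\overline{B(0,R)}$; then $\QQ$ is legal and $\QQ + v$ carries all the information about $\PP + v$ that is relevant inside $B(0,R)$.

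Next, invoking Corollary \ref{cor: every stationary tiline contains rescaled copies of legal patches} and the construction inside the proof of Lemma \ref{lem: legal patches appear in stoationary tilings}, which selects a suitable type-$i$ tile in $\SS$ via the graph-theoretic reverse-walk argument combined with the density of scales from Theorem \ref{thm:The_scales_are_dense_in_SS}, I can locate a rescaled translated copy of $\QQ$ inside $\SS$ at a scale $\alpha$ as close to $1$ as desired. More precisely, I arrange $\alpha \in (1 - \varepsilon/(3R), 1]$ and $u \in \R^d$ with $\alpha \QQ + u \subseteq \SS$. Setting $w := u - \alpha v$, the tiling $\SS - w$ then contains $\alpha(\QQ + v)$ as a sub-patch around the origin, while the boundaries of $\alpha(\QQ + v)$ differ from those of $\QQ + v$ by at most $(1-\alpha)R < \varepsilon/3$ inside $B(0, R)$ because every boundary point of $\QQ + v$ lies within distance $R$ of the origin.

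The triangle inequality
\begin{equation*}
D(\SS - w, \TT) \le D(\SS - w, \alpha(\QQ+v)) + D(\alpha(\QQ+v), \QQ + v) + D(\QQ + v, \TT)
\end{equation*}
then bounds the left-hand side by $\varepsilon$: the first term because $\alpha(\QQ+v)$ is an exact sub-patch of $\SS - w$ covering a large ball around the origin; the second by the scale estimate above; and the third because $\QQ + v$ inherits the $\varepsilon/3$-approximation of $\PP + v$ to $\TT$ inside $B(0,R)$, with any residual discrepancy controlled by $1/R$. The main obstacle is the quantitative claim that $\alpha$ can be pushed arbitrarily close to $1$; this is precisely where incommensurability is essential, via Theorem \ref{thm:The_scales_are_dense_in_SS}, whose density of scales takes over the role played by primitivity in the classical fixed-scale setting.
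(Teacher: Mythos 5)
Your argument is correct in substance, but it takes a genuinely different route from the paper's. The paper approximates $\TT$ by a whole generating patch $F_a(T_j)-v_\ell$ and then argues directly on the graph: by Lemma \ref{lem:lengths of loops are getting mroe and more dense} the gaps in $\SSS_{i\rightarrow i}$ tend to zero, so one can choose $b\in\SSS_{i\rightarrow j}$ and $s_m$ with $ks-b-s_m$ just below $a$; the identity $F_{ks}(T_i)=F_{ks-b-s_m}\left(F_{b+s_m}(T_i)\right)$ together with the nestedness $F_{ks}(T_i)\subset\SS$ then places a slight deflation of $F_a(T_j)$ inside $\SS$, and a translation finishes the proof. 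You instead truncate the approximant to the legal sub-patch $\QQ$ relevant near the origin and embed $\alpha\QQ$ into $\SS$ with $\alpha$ close to $1$ by invoking the machinery of \S\ref{sec: scales and complexity} (Lemma \ref{lem: legal patches appear in stoationary tilings}, Lemma \ref{thm:The_scales_for_patches_are_dense}, Corollary \ref{cor: every stationary tiline contains rescaled copies of legal patches}, all resting on Theorem \ref{thm:The_scales_are_dense_in_SS}); since those results precede and do not use minimality, there is no circularity, and your route is shorter given that machinery, while the paper's is more self-contained, needing only Lemma \ref{lem:lengths of loops are getting mroe and more dense}. Two points should be made explicit in your write-up. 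First, Corollary \ref{cor: every stationary tiline contains rescaled copies of legal patches} only asserts a dense set of scales, and Lemma \ref{thm:The_scales_for_patches_are_dense} normalizes patches as they sit in $\SS$; to place $\alpha$ in $(1-\varepsilon/(3R),1]$ you need that, with $\QQ$ normalized as a sub-patch of $F_a(T_j)\in\PPP_\sigma$, the time interval $(t_{\min},t_{\max}]$ from the proof of Lemma \ref{lem: legal patches appear in stoationary tilings} contains $a$, so the scales at which copies of $\QQ$ occur in $\SS$ are dense in an interval containing $1$ and a left neighborhood of $1$; you correctly identify this as the crux and it does follow from those proofs, but the renormalization step deserves a sentence. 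Second, boundary points of $\QQ+v$ lie within $R+\rho$ of the origin, where $\rho$ is the maximal prototile diameter, so the dilation bound should be $(1-\alpha)(R+\rho)$, and the claim that $\alpha(\QQ+v)$ covers a ball of radius $3/\varepsilon$ about the origin rests on the (standard, and equally glossed in the paper's own proof) fact that the support of a patch approximating a tiling with uniformly bounded tiles essentially covers the comparison ball; with these constants adjusted the triangle inequality closes as you intend.
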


\begin{proof}
	Let $\varepsilon>0$. We need to show that for every $\TT\in \X_\sigma^F$, there
	exists $u\in\R^{d}$ so that 
	\begin{equation}\label{eq: S-u and T are close} 
	D\left(\TT, \SS-u\right)<\varepsilon.
	\end{equation}
	First, in view of Proposition \ref{prop:Equivalent def for X^F_sigma}, the tiling $\TT$ may be described as the limit $\lim_{\ell\to\infty}\PP_\ell-v_\ell$, where $\PP_\ell\in\PPP_\sigma$ and $v_\ell\in\R^d$. Let $\PP_\ell\in\PPP_\sigma$ and $v_\ell\in\R^d$ be such that 
	\begin{equation}\label{eq: P-v and T are close}
	D(\TT,\PP_\ell-v_\ell)<\varepsilon/2.	
	\end{equation}
	Note that in particular, the support of $\PP_\ell-v_\ell$ contains the ball $B(0,2/\varepsilon)$.
	
	Write $\PP_\ell=F_a(T_j)$ for some $T_j\in\tau_\sigma$ and $a\in\R^+$. Let $b\in\SSS_{i\rightarrow j}$, 
	and let $s_{1}<s_{2}<\ldots$ be an increasing enumeration of the set $\SSS_{i\rightarrow i}$. By Lemma \ref{lem:lengths of loops are getting mroe and more dense}, for any $\delta>0$ there exists $M$ so that $s_{m+1}-s_{m}<\delta$,
	for every $m\ge M$. Choose a minimal $k\in\N$ such that $\left(ks-b-s_{M}\right)-a\ge0$.
	Since $s_m\rightarrow\infty$ and the gap between two sequential elements in $\SSS_{i\rightarrow i}$
	with indices greater than $M$ is smaller than $\delta$, there exists
	some $m\ge M$ for which
	\begin{equation*}\label{eq:in proving dense orbit of stationary (internal eq 1')}
	0\le a- \left(ks-b-s_{m}\right)<\delta
	\end{equation*}
	holds. Since the patch $\PP_\ell$ has finitely many tiles, for small enough values of $\delta$ the patch $\PP_\ell$ is a small inflation of the patch $F_{ks-b-s_{m}}\left(T_{j}\right)$, and by taking small values of $\delta$ the inflation can be made arbitrarily small. In particular, by appropriate choices of $\delta$ there exist choices of $m$ for which the two patches have arbitrarily small Hausdorff distance, and this is also true for their translations by $v_\ell$. Since the support of $\PP_\ell-v_\ell$ contains $B(0,2/\varepsilon)$, it follows that
	\begin{equation*}
	D(\PP_\ell-v_\ell, F_{ks-b-s_{m}}\left(T_{j}\right)-v_\ell)<\varepsilon/2.	
	\end{equation*}  
	
	By our choice of $b$ and $s_{m}$ we have $b+s_{m}\in \SSS_{i\rightarrow j}$,
	and so $F_{b+s_{m}}\left(T_{i}\right)$ contains a translated copy of $T_j$. Hence $F_{ks}\left(T_{i}\right)=F_{ks-b-s_{m}}\left(F_{b+s_{m}}\left(T_{i}\right)\right)$
	contains a translated copy of the patch $F_{ks-b-s_{m}}\left(T_{j}\right)$ as a sub-patch.
	In turn, $F_{ks}\left(T_{i}\right)-v_\ell$ contains a translated copy of the patch $F_{ks-b-s_{m}}\left(T_{j}\right)-v_\ell$ as a sub-patch. Therefore, 
	there exists some $w_{\ell}\in\R^{d}$ with 
	\[
	D\left(\PP_\ell-v_\ell, F_{ks}(T_{i})-v_\ell-w_{\ell}\right)<\varepsilon/2.
	\]
	Then setting $u:=v_{\ell}+w_{\ell}$, we have 
	\begin{equation} \label{eq: P-v and S-u are close}
		D\left(\PP_\ell-v_\ell, \SS-u\right)<\varepsilon/2.
	\end{equation} 
	Combining \eqref{eq: P-v and T are close} and \eqref{eq: P-v and S-u are close}, the triangle inequality implies \eqref{eq: S-u and T are close}, finishing the proof.
\end{proof}

\begin{proof}[Proof of Theorem \ref{thm: tiling space is minimal}]
	Given any $\TT_1,\TT_2\in \X_\sigma^F$, we show that $\TT_2\in \overline{\OO(\TT_1)}$. Let $\SS=\bigcup_{k=0}^{\infty}F_{ks}(T_i)$ be a stationary tiling, for some $s>0$ and $T_i\in\tau_\sigma$. By Lemma \ref{lem:Stationary tilings has dense orbit}, $\TT_2\in \overline{\OO(\SS)}$ and hence it suffices to show that $\SS\in \overline{\OO(\TT_1)}$. We show that for every $\varepsilon>0$ there exists $u\in\R^d$ such that 
	$D(\TT_1-u,S)<\varepsilon$. 
	
	Let $\varepsilon>0$. Fix $m\in\N$ such that $\supp(F_{ms}(T_i))\supset B(0,3/\varepsilon)$. By left-continuity (see Proposition \ref{prop:F_t is left-continuous}), there exists $0<\delta<\log(\varepsilon/2)$ so that for every $0\leq\eta<\delta$ the support of $F_{ms-\eta}(T_i)$ contains $B(0,2/\varepsilon)$ and
	\begin{equation}\label{eq:X^F_sigma is minimal (internal 1)}
	D\left(F_{ms-\eta}\left(T_{i}\right), F_{ms}\left(T_{i}\right)\right)<\varepsilon/2.
	\end{equation}
	We first show that there exists some $R=R(\varepsilon)>0$ so that for every $z\in\R^d$ the patch $[B(z,R)]^\SS$ contains a translated copy of the patch $F_{ms-\eta}(T_i)$, for some $0\leq\eta<\delta$. 
	By Theorem \ref{thm:The_scales_are_dense_in_SS}, in particular, there exists $K>0$ so that for every tile $T$ is $\SS$ the set 
	\[\left\{\alpha: \text{the patch supported on } e^{Ks}T \text{ in } \SS \text{ contains a copy of } \alpha T_i \right\}\]
	is $\delta$-dense in the interval $(\beta_i^{\min},1]$. In particular, for every tile $T$ in $\SS$, the patch supported on $e^{Ks}(T)$ contains a copy of $\alpha T_i$ for $\alpha\in(1-\delta,1]$, and hence the patch supported on $e^{(K+m)s}(T)$ contains a copy of $F_{ms-\eta}(T_i)$, where here $\eta=1-\alpha\in[0,\delta)$. Set 
	\[R=R(\varepsilon):=e^{(K+m)s}\cdot\max_j\{{\rm diameter}(T_j)\}.\] 
	Then every patch $[B(z,R)]^\SS$, for $z\in\R^d$, contains a translated copy of the patch $F_{ms-\eta}(T_i)$, for some $0\leq\eta<\delta$.
	
	Invoking Lemma \ref{lem:Stationary tilings has dense orbit} once again, we find some $z\in\R^d$ so that $D(\SS-z,\TT_1)<1/R$. This implies that $[B(z,R)]^\SS$ and $[B(0,R)]^{\TT_1}$ are $1/R$-close, with respect to the Hausdorff metric. Since $[B(z,R)]^\SS$ contains a copy of $F_{ms-\eta}(T_i)$, for $0\leq\eta<\delta$, the patch $[B(0,R)]^{\TT_1}$ contains a patch $\PP$ whose Hausdorff distance is less than $1/R$ from $F_{ms-\eta}(T_i)$. Recall that $F_{ms-\eta}(T_i)$ contains a ball of radius $2/\varepsilon$, then in particular there exists some $u\in B(0,R)$, with $B(u,2/\varepsilon)\subset B(0,R)$, such that 
	\begin{equation}\label{eq:X^F_sigma is minimal (internal 2)}
	D(\TT_1-u,F_{ms-\eta}(T_i))\le D(\PP-u, F_{ms-\eta}(T_i))<\varepsilon/2.	
	\end{equation}
	Combining  \eqref{eq:X^F_sigma is minimal (internal 1)} and \eqref{eq:X^F_sigma is minimal (internal 2)} we obtain $D(\TT_1-u,F_{ms}(T_i))<\varepsilon$. But since $\SS\supset F_{ms}(T_i)$, this implies $D(\TT_1-u,\SS)\le D(\TT_1-u,F_{ms}(T_i))<\varepsilon$, and the proof is complete. 
\end{proof}

\subsection{Geometric interpretation of dynamical properties}
A finite local complexity tiling $\TT$ of $\R^{d}$ is called \textit{repetitive} if for every $r>0$ there is a radius $R:=R(r)>0$,
so that for every $x\in\R^{d}$ the patch $\left[B(x,R)\right]^{\TT}$
contains translated copies of all the patches of the form $\left[B(y,r)\right]^{\TT}$,
$y\in\R^{d}$ (see e.g. \cite[Section 5.3]{BaakeGrimm}, 
\cite{Solomyak-dynamics}, and also \cite{Frettloh Richards} and \cite{Lagarias-Pleasants} for a clearer distinction between similar, common, definitions of repetitivity).

Recall that a set $S\subset\R^{d}$ is called \textit{relatively dense}, or \textit{syndetic} in the topological dynamics setup, if there exists some $R>0$ so that $S$ intersects every ball of radius $R$ in $\R^d$. A relatively dense set with parameter $R$ is called \textit{$R$-dense}. Note that repetitivity means that for every $r>0$ and every $y\in\R^{d}$, the set 
\[
Z_{y}:=\left\{ t\in\R^{d}\,:\,\left[B(y,r)\right]^{\TT}-y=\left[B(t,r)\right]^{\TT}-t\right\} 
\]
of \textit{return times} to $\left[B(y,r)\right]^{\TT}$ is relative dense. Since $\left[B(y,r)\right]^{\TT}-y=\left[B(0,r)\right]^{\TT-y}$,
the set $Z_{y}$ is simply the collection of $t\in\R^{d}$ for which $\TT-y$ and $\TT-t$ agree on the ball of radius $r$ around the origin.

An immediate corollary of Theorems \ref{thm: tiling space is minimal} and \ref{thm:The_scales_are_dense_in_SS} is that if $\sigma$ is incommensurable then every $\TT\in \X_\sigma^F$ is of infinite local complexity. Since repetitivity is clearly impossible for such tilings, we consider the following suitable variant.

\begin{definition}
	\label{def:almost repetitive}Given $\varepsilon>0$, a tiling $\TT\in \X_\sigma^F$
	is called \textit{$\varepsilon$-repetitive} if there exists some
	$R=R(\varepsilon)>0$ such that for every $y\in\R^{d}$ the set 
	\[
	A_{y}:=\left\{ x\in\R^{d}\,:\,D(\TT-y,\TT-x)<\varepsilon\right\}
	\]
	of return times to the $\varepsilon$-neighborhood of $\TT-y$ is $R$-dense. $\TT$ is \textit{almost repetitive} if it is
	$\varepsilon$-repetitive for every $\varepsilon>0$ (compare \cite[Definitions 2.13 and 3.5]{Frettloh Richards}).

\end{definition}

Recall that two tilings $\TT,\TT'$ of $\R^{d}$ are called \textit{locally
	indistinguishable} (LI)
if $\TT$ and $\TT'$ have the same collection of patches of compact
support (see e.g. \cite[Definition 5.5]{BaakeGrimm}, \cite[Definition 1.6]{Lagarias-Pleasants}).
This notion induces an equivalence relation on tilings of $\R^{d}$
and one denotes by LI($\TT$) the equivalence class of the tiling
$\TT$. As in the case of repetitivity, we consider the suitable variant of local indistinguishability.
\begin{definition}
	\label{def:ALI}Given $\varepsilon>0$, we say that two tilings $\TT, \TT'\in \X_\sigma$
	are \textit{$\varepsilon$-locally indistinguishable} ($\varepsilon$-LI),  if for
	every $y\in\R^{d}$ there exist $x_{1},x_{2}\in\R^{d}$ such that
	\[
	D\left(\TT-y,\TT'-x_{1}\right),D\left(\TT'-y,\TT-x_{2}\right)<\varepsilon.
	\]
	$\TT,\TT'\in \X_\sigma^F$ are called \textit{almost locally indistinguishable}
	(ALI),  if they are
	$\varepsilon$-LI for every $\varepsilon>0$ (compare \cite[Definition 3.9]{Frettloh Richards}).
	We denote by ALI($\TT$) the collection of tilings in $\X_\sigma^F$
	that are \text{ALI} with $\TT$.
\end{definition}

A standard theorem in the theory of tilings, or discrete patterns,
states that repetitivity, having a closed LI equivalence
class, and having a minimal orbit closure with respect to translations
are all equivalent properties (see e.g. \cite[Theorem 5.4]{BaakeGrimm},
\cite[Theorem 3.2]{Lagarias-Pleasants}). A variant of this theorem
for $r$-separated point sets in a setting that resembles the one studied here is given
in \cite[Theorem 3.11]{Frettloh Richards}.  Our proof of the multiscale substitution tilings variant is similar, and included here due to small differences in the definitions.  
\begin{thm}
	\label{thm:almost repetitive iff minimal iff ALI} 	Let $\sigma$ be an irreducible incommensurable substitution
	scheme and let $\TT\in \X_\sigma^F$. The following are
	equivalent:
\end{thm}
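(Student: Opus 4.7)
The theorem almost certainly asserts the equivalence of the three natural conditions $(1)$ $\TT$ is almost repetitive, $(2)$ $\overline{\OO(\TT)} = \X_\sigma^F$, and $(3)$ $\mathrm{ALI}(\TT) = \X_\sigma^F$. My plan is to establish the cycle $(1) \Rightarrow (2) \Rightarrow (3) \Rightarrow (1)$, adapting the classical repetitivity/minimality/LI argument (cf.\ \cite[Theorem 3.11]{Frettloh Richards}, \cite[Theorem 3.2]{Lagarias-Pleasants}) to the Chabauty--Fell metric $D$.

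For $(3) \Rightarrow (2)$, I would take any $\TT' \in \mathrm{ALI}(\TT) = \X_\sigma^F$ and apply the second $\varepsilon$-LI condition with $y = 0$: this produces, for every $\varepsilon > 0$, some $x_2 \in \R^d$ with $D(\TT', \TT - x_2) < \varepsilon$, exhibiting $\TT'$ as a limit of translates of $\TT$ and hence as an element of $\overline{\OO(\TT)}$. For $(2) \Rightarrow (3)$, given $\TT' \in \X_\sigma^F$, $y \in \R^d$ and $\varepsilon > 0$, I would invoke Theorem \ref{thm: tiling space is minimal} to ensure that $\TT'$ also has dense orbit, which yields $x_1$ with $D(\TT' - x_1, \TT - y) < \varepsilon$; the second ALI condition follows symmetrically from the density of $\OO(\TT)$.

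The main implication is $(1) \Rightarrow (2)$. My plan is to show that almost repetitivity forces $\overline{\OO(\TT)}$ to be a minimal subsystem of $\X_\sigma^F$; combined with Theorem \ref{thm: tiling space is minimal}, this yields $\overline{\OO(\TT)} = \X_\sigma^F$. Suppose for contradiction that $\overline{\OO(\TT)}$ is not minimal; then there exists $\TT' \in \overline{\OO(\TT)}$ with $\TT \notin \overline{\OO(\TT')}$, so some $\eta > 0$ satisfies $D(\TT, \TT' - x) \ge \eta$ for every $x \in \R^d$. Fix a sequence $z_n \in \R^d$ with $\TT - z_n \to \TT'$. Applying $\varepsilon$-repetitivity with $\varepsilon = \eta/4$ and $y = 0$, there is a radius $R = R(\eta/4)$ such that for each $n$ some $x_n \in B(z_n, R)$ satisfies $D(\TT, \TT - x_n) < \eta/4$. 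Passing to a subsequence so that $x_n - z_n \to v$ with $\|v\| \le R$, we obtain
\[
\TT - x_n = (\TT - z_n) - (x_n - z_n) \to \TT' - v,
\]
and continuity of $D$ gives $D(\TT, \TT' - v) \le \eta/4 < \eta$, a contradiction.

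The main obstacle is that $D$ is not translation-invariant, so passing to limits after translating by converging vectors must be handled carefully. This is taken care of by Proposition \ref{prop: translations of close subsets are close}, which guarantees that translating two $\varepsilon$-close tilings by a vector of norm at most $1/(2\varepsilon)$ preserves $2\varepsilon$-closeness; together with compactness of $B(0,R)$ this legitimizes the subsequential limit in the contradiction argument.
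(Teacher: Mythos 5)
The equivalence you set out to prove is not the one the theorem asserts. The paper's three conditions are: (1) the orbit closure $\overline{\OO(\TT)}$ is minimal; (2) ${\rm ALI}(\TT)$ is closed in $\left(\X_\sigma^F,D\right)$; (3) $\TT$ is almost repetitive. You replaced (1) and (2) by ``$\overline{\OO(\TT)}=\X_\sigma^F$'' and ``${\rm ALI}(\TT)=\X_\sigma^F$'', thereby dropping the closedness of ${\rm ALI}(\TT)$ altogether, and you then invoked Theorem \ref{thm: tiling space is minimal} to handle the implications involving the whole space. But the theorem is the multiscale analogue of the classical repetitivity/LI/minimality equivalence (\cite[Theorem 3.2]{Lagarias-Pleasants}, \cite[Theorem 3.11]{Frettloh Richards}) and is proved intrinsically for a single tiling, without appealing to minimality of $\X_\sigma^F$; once you import Theorem \ref{thm: tiling space is minimal}, your versions of (2) and (3) hold for every $\TT$ unconditionally, so your ``equivalence'' is vacuous and, in particular, your carefully argued ``main implication'' (almost repetitive $\Rightarrow$ dense orbit) needs no repetitivity at all under that hypothesis.

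Moreover, even on your own formulation the cycle does not close: you prove (1)$\Rightarrow$(2), (2)$\Rightarrow$(3) and (3)$\Rightarrow$(2), but no implication ever returns to (1), so almost repetitivity is never derived from the other conditions --- and that is exactly where the content lies. In the paper this is the implication ``${\rm ALI}(\TT)$ closed $\Rightarrow$ $\TT$ almost repetitive'': assuming $\varepsilon$-repetitivity fails for some $y$, one takes patches $\left[B(x_k,k)\right]^{\TT}$ containing no translate $\varepsilon$-close to $\TT-y$, extracts a limit $\TT'$ of $\TT-x_k$, and uses $\overline{\OO(\TT)}={\rm ALI}(\TT)$ (which requires the closedness hypothesis) to produce a translate of $\TT'$ that is $\varepsilon$-close to $\TT-y$, a contradiction; the companion implication (1)$\Rightarrow$(2) is proved by showing $\overline{\OO(\TT)}={\rm ALI}(\TT)$ directly from minimality of the orbit closure. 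Neither of these appears in your proposal. The one substantive argument you do give --- almost repetitivity implies minimality of $\overline{\OO(\TT)}$, via the $R$-dense return set, compactness of $B(0,R)$ and Proposition \ref{prop: translations of close subsets are close} --- is sound and is essentially the paper's (3)$\Rightarrow$(1), but by itself it does not establish the theorem. To repair the proposal you should restore ``${\rm ALI}(\TT)$ is closed'' as one of the three conditions and supply the two missing implications without using Theorem \ref{thm: tiling space is minimal}.
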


\begin{enumerate}
	\item The orbit closure $\overline{\OO(\TT)}:=\overline{\left\{ \TT+x\,:\,x\in\R^{d}\right\} }$
	of $\TT$ is minimal. 
	\item ALI($\TT$) is closed in $\left(\X_\sigma^F, D\right)$. 
	\item $\TT$ is almost repetitive.
\end{enumerate}

\begin{proof}
	$(1)\Rightarrow(2):$ It suffices to show that $\overline{\OO(\TT)}=\text{ALI}(\TT)$.
	First, clearly $\overline{\OO(\TT)}\supset\mbox{ALI}(\TT)$, because if $\TT'\in\text{ALI}(\TT)$, putting $y=0$ in Definition \ref{def:ALI} implies that for every $k\in\N$ there exists some $z_{k}\in\R^{d}$ with $D\left(\TT-z_{k},\TT' \right)<1/k$,
	and so $\TT' \in\overline{\OO(\TT)}$. 
	
	Let $\TT'\in\overline{\OO(\TT)}$, $\varepsilon>0$ and $y\in\R^{d}$. Then  $\TT'-y\in\overline{\OO(\TT)}$, hence there exists some $x_{1}\in\R^{d}$ so that $D\left(\TT'-y,\TT-x_{1}\right)<\varepsilon$.
	By minimality $\TT$, and hence $\TT-y$, belong to $\overline{\OO(\TT')}$, so there exists
	some $x_{2}\in\mathbb{R}^{d}$ with $D\left(\TT-y,\TT'-x_{2}\right)<\varepsilon$.
	We conclude that $\TT'\in\text{ALI}(\TT)$.
	
	$(2)\Rightarrow(3):$ Assume that $(3)$ does not hold. Then there
	is some $\varepsilon>0$ for which $\TT$ is not $\varepsilon$-repetitive.
	Namely, there is some $y\in\R^{d}$ so that the set of return times
	to the $\varepsilon$-neighborhood of $\TT-y$ is not $R$-dense for any $R>0$. Then there is a sequence $x_{k}\in\R^{d}$ for which the sequence
	of patches $\left[B(x_{k},k)\right]^{\TT}$ does not contain
	a patch that, after translation, is $\varepsilon$-close to $\TT-y$. The inclusion 
	\[
	\overline{\OO(\TT)}\supset{\rm ALI}(\TT)\supset\OO(\TT),
	\]
	combined with the assumption that ALI($\TT$) is closed, implies that $\overline{\OO(\TT)}={\rm ALI}(\TT)$.
	
	Let $\TT'\in \X_\sigma^F$ be a limit of some subsequence of $\left[B(x_{k},k)\right]^{\TT}-x_{k}$.
	Then no translation of $\TT'$ is $\varepsilon$-close to $\TT-y$.
	Since $\TT'$ is a limit of the sequence $\TT-x_{k}$, then
	$\TT'\in\overline{\OO(\TT)}={\rm ALI}(\TT)$. This means that there is some
	$x\in\R^{d}$ so that $D\left(\TT-y,\TT'-x\right)<\varepsilon$,
	a contradiction. 
	
	$(3)\Rightarrow(1):$ Let $\TT'\in\overline{\OO(\TT)}$ and let $\varepsilon>0$, then there exist $y_{k}\in\R^{d}$ with $D\left(\TT-y_{k},\TT'\right)<1/k$.
	By $(3)$, $\TT$ is $\frac{\varepsilon}{2}$-repetitive. In
	particular, for $y=0$ in Definition \ref{def:almost repetitive},
	there is some $R>0$ for which the set 
	\[
	A_{0}=\left\{ x\in\R^{d}\,:\,D\left(\TT,\TT-x\right)<\varepsilon/2\right\} 
	\]
	is $R$-dense, and we may assume that $R>2/\varepsilon$. Fix $K\ge2R$.
	Since 
	\begin{equation}\label{eq: T-y_K and T' are close}
	D\left(\TT-y_{K},\TT'\right)<1/K\le1/2R,
	\end{equation}
	by the definition of $D$, the patches supported on the $2R$-ball around the origin in $\TT'$
	and in $\TT-y_{K}$ are $1/2R$-close. Since $A_{0}$ is $R$-dense
	and $R>2/\varepsilon$, there is some $x_{0}\in B(y_{K},R)$ so that $D\left(\TT,\TT-x_{0}\right)<\varepsilon/2$ and 
	\begin{equation}\label{eq: ball around x_0 is in ball around y_K}
	B(x_{0},2/\varepsilon)\subset B(y_{K},2R).
	\end{equation}
	Set $x_{1}=x_{0}-y_{K}$. Since $||x_{1}||<R$,
	combining \eqref{eq: T-y_K and T' are close} and \eqref{eq: ball around x_0 is in ball around y_K} implies that 
	\begin{equation*}
	D\left(\TT-x_{0},\TT'-x_{1}\right)=D\left(\TT-y_K-x_1,\TT'-x_{1}\right)<\varepsilon/2,
	\end{equation*}
	and by the triangle inequality this yields
	\[
	D\left(\TT,\TT'-x_{1}\right)\le D\left(\TT,\TT-x_{0}\right)+D\left(\TT-x_{0},\TT'-x_{1}\right)<\varepsilon.
	\]
	It follows that $\TT\in\overline{\OO(\TT')}$, and minimality is established.
\end{proof}

\begin{cor}
	Let $\sigma$ be an irreducible incommensurable substitution
	scheme. Then every $\TT\in \X_\sigma^F$ is almost repetitive, and every
	$\TT,\TT'\in \X_\sigma^F$ are ${\rm ALI}$-equivalent. 
\end{cor}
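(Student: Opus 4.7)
The plan is to derive both assertions directly from the preceding theorem on minimality of $(\X_\sigma^F,\R^d)$ together with the equivalence theorem characterizing almost repetitivity.

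First, by Theorem \ref{thm: tiling space is minimal}, the system $(\X_\sigma^F,\R^d)$ is minimal, so for every $\TT\in\X_\sigma^F$ we have $\overline{\OO(\TT)}=\X_\sigma^F$. In particular, $\overline{\OO(\TT)}$ is a minimal subsystem, that is, condition $(1)$ of Theorem \ref{thm:almost repetitive iff minimal iff ALI} holds for every $\TT$. Invoking the implication $(1)\Rightarrow(3)$ of that theorem immediately yields that every $\TT\in\X_\sigma^F$ is almost repetitive.

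For the second statement, I would read off the identification $\overline{\OO(\TT)}=\mathrm{ALI}(\TT)$ that was established inside the proof of $(1)\Rightarrow(2)$ of Theorem \ref{thm:almost repetitive iff minimal iff ALI}. Since $\overline{\OO(\TT)}=\X_\sigma^F$ by minimality, we obtain $\mathrm{ALI}(\TT)=\X_\sigma^F$ for every $\TT\in\X_\sigma^F$. Hence any two $\TT,\TT'\in\X_\sigma^F$ satisfy $\TT'\in\mathrm{ALI}(\TT)$, i.e.\ they are ALI-equivalent.

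There is essentially no obstacle here: the work has been done in Theorems \ref{thm: tiling space is minimal} and \ref{thm:almost repetitive iff minimal iff ALI}. The only point worth being careful about is that the equality $\overline{\OO(\TT)}=\mathrm{ALI}(\TT)$ used in the last step is not stated as a standalone lemma but is contained in the proof of $(1)\Rightarrow(2)$; one could alternatively invoke $(1)\Rightarrow(2)$ to conclude that $\mathrm{ALI}(\TT)$ is closed and translation-invariant, and then use minimality of $\X_\sigma^F$ together with $\OO(\TT)\subset\mathrm{ALI}(\TT)$ to conclude $\mathrm{ALI}(\TT)=\X_\sigma^F$. Either route gives the corollary in a couple of lines.
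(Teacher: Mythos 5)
Your proposal is correct and follows exactly the route the paper intends: the corollary is stated without proof precisely because it is immediate from combining Theorem \ref{thm: tiling space is minimal} (minimality, so $\overline{\OO(\TT)}=\X_\sigma^F$ for every $\TT$) with Theorem \ref{thm:almost repetitive iff minimal iff ALI}, including the identification $\overline{\OO(\TT)}=\mathrm{ALI}(\TT)$ established in the proof of $(1)\Rightarrow(2)$. Both of your suggested routes for the ALI statement are valid, so there is nothing to add.
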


\subsection{Supertiles}
Consider an irreducible incommensurable substitution scheme $\sigma$ in $\R^d$. Minimality of $\left(\X_\sigma^F,\R^d\right)$, combined with the construction of stationary tilings, allows for the existence of the powerful hierarchical structure known as supertiles on all tilings in the multiscale tiling space $\X_\sigma^F$.

\begin{definition}\label{def: supertiles}
	Let $\SS\in\X_\sigma^F$ and $s\in\R^+$ be so that $F_s(\SS)=\SS$. For every $m\in\N$ we have $F_{ms}(\SS)=\SS$. We denote by $\SS^{-m}$ the tiling whose tiles are $\{e^{ms}T\: :\: T\text{ is a tile in } \SS \}$. These tiles are called \textit{order $m$ supertiles} of $\SS$, or simply \textit{$m$-supertiles}, and are denoted by $T^{(m)}$. The type of the supertile $e^{ms}T$ is inherited from the type of $T$, and for every $\ell\ge m$ every $\ell$-supertile can be decomposed into a union of $m$-supertiles. 
\end{definition}

As a consequence of minimality, every tiling $\TT \in \X^F_\sigma$ of $\R^d$ inherits a hierarchical structure of supertiles from any given stationary tiling. 
Given $\TT \in \X^F_\sigma$ and a stationary tiling $\SS$ with $F_s(\SS)=\SS$, for $s>0$, fix $m\in \N$ and $v_n\in\R^d$ so that $\TT = \lim_{n\to\infty} \SS + v_n$. Since $\partial \SS^{-m} \subset \partial \SS$ and $\partial \SS +v_n \to \partial \TT$, the sequence $\partial \SS^{-m} +v_n$ is convergent and the limit is a subset of $\partial \TT$, which defines $m$-supertiles of $\TT$.  Note that by \eqref{eq: flow on translation is translation of flow}, for any tiling in the $\R^d$-orbit of a stationary tiling $F_s(\SS)=\SS$, supertiles are of the form $e^{ms}T$ for some tile $T$ of legal type and scale.

\begin{example}
	Figure \ref{fig:Newtrianglessupertiles} illustrates tiles, thought of as $0$-supertiles, within $1$-supertiles and a single $2$-supertile, in a stationary tiling generated by the scheme on two triangles illustrated in Figure \ref{fig:triangle substitution rule}.
	
\end{example}

\begin{figure}[ht!]
	\includegraphics[scale=0.25]{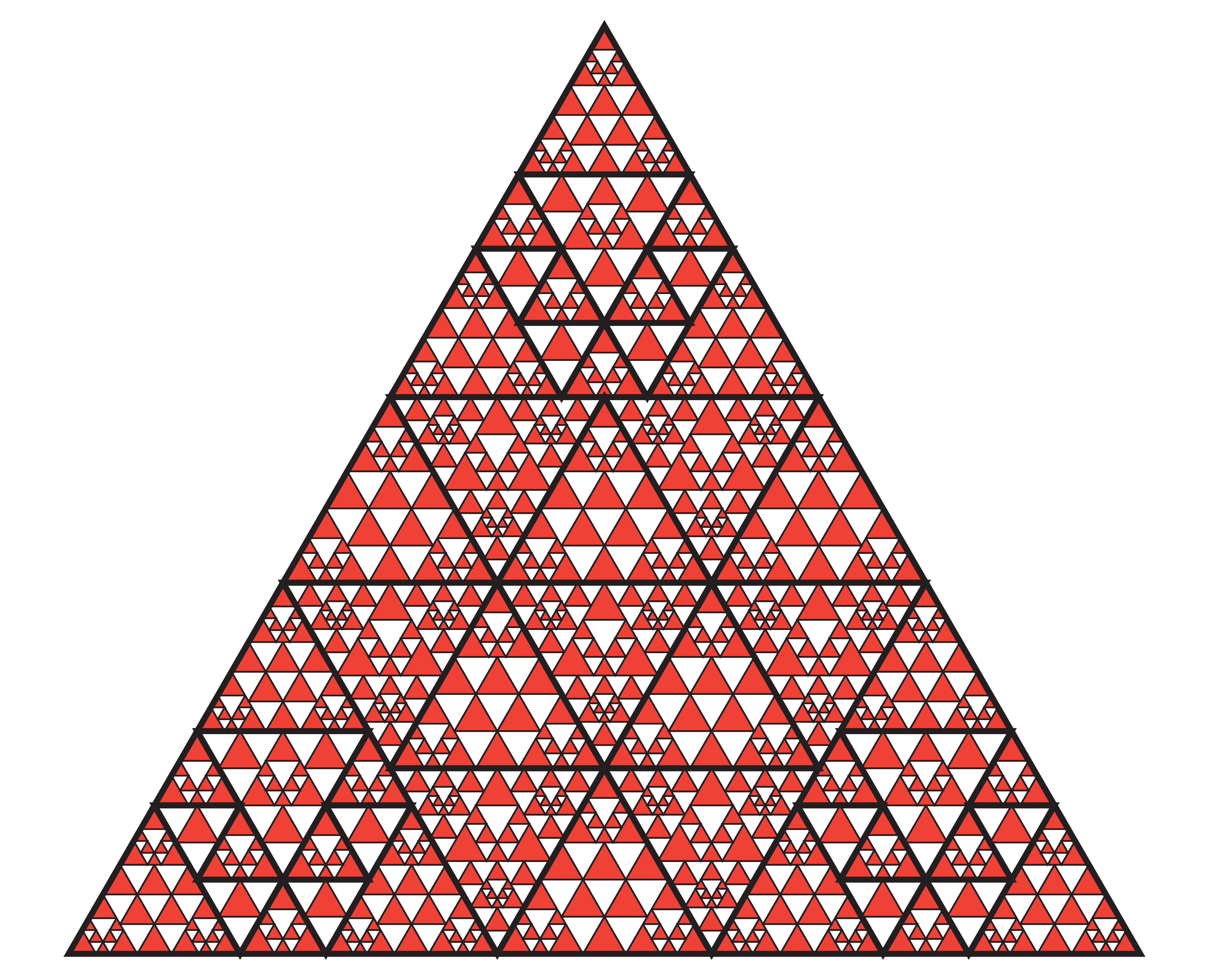}\caption{Tiles as $0$-supertiles, within $1$-supertiles (in bold boundaries), all within a single $2$-supertile. Compare also with Figure \ref{fig:triangle stationary}.}	\label{fig:Newtrianglessupertiles}	
\end{figure}

\section{Explicit tile counting formulas}\label{sec: Frequencies}

This section contains various explicit asymptotic counting formulas for tiles in incommensurable multiscale tilings, according to their types and scales. Results on the distribution of metric paths on incommensurable graphs \cite{Yotam graphs} play an important role.

\begin{definition}
	Let $G$ be a directed weighted graph with a set of vertices $\mathcal{V}=\left\{ 1,\ldots,n\right\} $.
	Let $i,j\in\mathcal{V}$ be a pair of vertices in $G$, and assume
	that there are $k_{ij}\geq0$ edges $\varepsilon_{1},\ldots,\varepsilon_{k_{ij}}$
	with initial vertex $i$ and terminal vertex $j$. The \textit{graph
		matrix function} of $G$ is the matrix valued function $M:\mathbb{C}\rightarrow M_{n}\left(\mathbb{C}\right)$, its $(i,j)$ entry the exponential polynomial
	\[
	M_{ij}\left(s\right)=e^{-s\cdot l\left(\varepsilon_{1}\right)}+\cdots+e^{-s\cdot l\left(\varepsilon_{k_{ij}}\right)}.
	\]
	If $i$ is not connected to $j$ by an edge, put $M_{ij}\left(s\right)=0$. Note that the restriction of $M$ to $\mathbb{R}$ maps $\R$ to real valued matrices.
\end{definition}

Note that given a substitution scheme $\sigma$, the $(i,j)$ entry of $M_\sigma(s)$, where $M_\sigma$ is the graph matrix function of the associated graph $G_\sigma$, is given by 
\begin{equation*}
\left(\alpha_{ij}^{(1)}\right)^s+\cdots+ \left(\alpha_{ij}^{(k_{ij})}\right)^s.
\end{equation*}

\subsection{The number of tiles of given types and scales}

First we need the following counting result on metric paths in graphs associated with incommensurable schemes.

\begin{thm}
	\label{thm: graph path counting in the case of multiscale}Let $\sigma$
	be a normalized irreducible incommensurable scheme in $\R^d$, and let $G_\sigma$ be the associated graph. Let $I$ be an interval
	that is contained in an edge $\varepsilon\in\mathcal{E}$ with initial
	vertex $h\in\mathcal{V}$, and assume that $I$ is of length $\delta>0$
	and of distance $\alpha_{0}$ from the vertex $h$. Then the number of
	metric paths in $G_\sigma$ of length $x$, with initial fixed vertex $i\in\mathcal{V}$,
	that terminate at a point in $I$, grows as 
	\[
	e^{-\alpha_{0}d}\frac{1-e^{-\delta d}}{d}q_{h}e^{dx}+o\left(e^{dx}\right),\quad x\rightarrow\infty,
	\]
	independent of $i$, where $q_{h}\cdot\boldsymbol{1}$ is column $h$
	of the rank 1 matrix
	\begin{equation*}
	Q_\sigma=\frac{{\rm adj}\left(I-M_\sigma\left(d\right)\right)}{-{\rm \tr}({\rm adj}\left(I-M_\sigma\left(d\right)\right)\cdot M_\sigma^{\prime}\left(d\right))},		
	\end{equation*} 
	with $\boldsymbol{1}=(1,\ldots,1)^T\in\R^n$, and  $M_\sigma$ is the graph matrix function of $G_\sigma$.
\end{thm}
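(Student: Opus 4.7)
The plan is to specialize the asymptotic counting theorem for metric paths on incommensurable weighted graphs from \cite{Yotam graphs} to the exponent $s=d$, and then perform a short explicit integration along the edge $\varepsilon$.

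The first key observation is that the normalization of $\sigma$ singles out the value $s=d$. Indeed, by Proposition \ref{prop: volumes sum up to 1} and Definition \ref{def: associated graph},
$$\sum_{j=1}^{n}\sum_{k=1}^{k_{ij}}e^{-d\,l(\varepsilon_{ij}^{(k)})}=\sum_{j=1}^{n}\sum_{k=1}^{k_{ij}}(\alpha_{ij}^{(k)})^{d}=1$$
for every $i$, so every row of $M_\sigma(d)$ sums to $1$. Combined with strong connectedness of $G_\sigma$, the Perron--Frobenius theorem shows that $1$ is a simple eigenvalue of $M_\sigma(d)$ with right eigenvector $\boldsymbol{1}$, so $s=d$ is a simple zero of the entire function $s\mapsto \det(I-M_\sigma(s))$, and the rank of $\mathrm{adj}(I-M_\sigma(d))$ is one with every column a scalar multiple of $\boldsymbol{1}$.

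The second step decomposes the count. Using the convention that every edge contains its terminal vertex but not its initial one, any metric path of length $x$ from $i$ terminating on $\varepsilon$ at distance $\alpha_{0}+u$ from $h$ factors uniquely as a combinatorial path $i\to h$ of length $x-\alpha_{0}-u$ followed by an initial portion of $\varepsilon$ of length $\alpha_{0}+u$. If $N_{i\to h}(y)$ counts combinatorial paths from $i$ to $h$ of length at most $y$, the quantity in the theorem equals $N_{i\to h}(x-\alpha_{0})-N_{i\to h}(x-\alpha_{0}-\delta)$. Summing a geometric series over edge concatenations gives
$$\int_{0}^{\infty}e^{-sy}\,dN_{i\to h}(y)=\bigl[(I-M_\sigma(s))^{-1}\bigr]_{ih}=\frac{\mathrm{adj}(I-M_\sigma(s))_{ih}}{\det(I-M_\sigma(s))},$$
which by Jacobi's formula $\frac{d}{ds}\det(I-M_\sigma(s))=-\tr(\mathrm{adj}(I-M_\sigma(s))M_\sigma'(s))$ has a simple pole at $s=d$ with residue $(Q_\sigma)_{ih}=q_{h}$, independent of $i$ by the rank-one structure noted above.

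The final step invokes the Wiener--Ikehara Tauberian theorem of \cite{Yotam graphs} to pass from this residue statement to $N_{i\to h}(y)=(q_{h}/d)e^{dy}+o(e^{dy})$, and then subtracts:
$$\frac{q_{h}}{d}\bigl(e^{d(x-\alpha_{0})}-e^{d(x-\alpha_{0}-\delta)}\bigr)=e^{-\alpha_{0}d}\,\frac{1-e^{-\delta d}}{d}\,q_{h}\,e^{dx}.$$
The main obstacle is the Tauberian step, which requires that $s=d$ be the \emph{only} zero of $\det(I-M_\sigma(s))$ on the line $\Re(s)=d$. This is precisely where incommensurability enters essentially rather than technically: by the non-lattice criterion of Lemma \ref{lem: Incommensurable equivalent definitions}, incommensurability of $G_\sigma$ rules out additional zeros on the critical line, whereas a commensurable scheme would produce a lattice of zeros there and genuine oscillatory corrections in place of the clean asymptotic stated.
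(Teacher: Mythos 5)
Your proposal is correct and follows essentially the same route as the paper: both factor each metric path as a combinatorial path from $i$ to $h$ followed by a segment of $\varepsilon$, identify the relevant Laplace transform with $\mathrm{adj}(I-M_\sigma(s))_{ih}/\det(I-M_\sigma(s))$, use normalization and Perron--Frobenius to place the simple dominant pole at $s=d$ with the rank-one adjugate giving independence of $i$, and defer to the incommensurability analysis of \cite{Yotam graphs} together with the Wiener--Ikehara theorem for the absence of other poles with $\mathrm{Re}(s)\ge d$ and the resulting asymptotics. The only deviation --- applying the Tauberian theorem to the cumulative count $N_{i\to h}(y)$ and then differencing at $x-\alpha_0$ and $x-\alpha_0-\delta$, instead of building the factor $e^{-\alpha_0 s}(1-e^{-\delta s})/s$ into the transform of the interval count as the paper does --- is cosmetic.
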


\begin{proof}[Sketch of proof]
	The proof follows from a slight adjustment of the proof of Theorem 1 in \cite{Yotam graphs}, and so we only give an outline of the proof. A main tool is the Wiener-Ikehara Tauberian theorem, by which the exponential growth rate of our counting function follows from special properties of its Laplace transform, and in particular the location of its poles, see \cite{Montgomery Vaughan}. Roughly speaking, if there exists $\lambda\in\R$ for which the Laplace transform has a simple pole at $s=\lambda$, and there are no other poles in the half plane $\text{Re}(s)\ge\lambda$, then the counting function grows exponentially with exponent $\lambda$.
	
	The counting function we are interested in can be written as
	\begin{equation}\label{eq: counting function}
		B_{i,I}\left(x\right)=\sum_{\gamma\in\Gamma\left(i,h\right)}\chi_{(l\left(\gamma\right)+\alpha_{0},l\left(\gamma\right)+\alpha_{0}+\delta)}(x),
	\end{equation}
	where $\Gamma\left(i,h\right)$ is the set of paths in $G_\sigma$ with initial vertex
	$i$ and terminal vertex $h$. Note that although \eqref{eq: counting function} is written as if $I$ is assumed to be open, the arguments and results presented here are not changed if $I$ is assumed to contain one or both of its endpoints, as the Laplace transform is not changed if only countably many values of the counting function are varied. A direct computation, details of which can be found in \cite{Yotam graphs}, shows that the Laplace transform is
	\begin{equation*}\label{eq: Laplace tranform}
		\mathcal{L}\left\{ B_{i,I}\left(x\right)\right\} \left(s\right):=\int_{0}^{\infty}B_{i,I}\left(x\right)e^{-xs}dx=e^{-\alpha_{0}s}\frac{1-e^{-\delta s}}{s}\cdot\frac{\left({\rm adj}\left(I-M_\sigma\left(s\right)\right)\right)_{ih}}{\det\left(I-M_\sigma\left(s\right)\right)}.
	\end{equation*}

	As a result of Proposition \ref{prop: volumes sum up to 1}, since $G_\sigma$ is associated with a substitution scheme, the value of $\lambda$, which due to the form of the Laplace transform is the maximal real value for which the spectral radius of $M_\sigma(\lambda)$ is exactly $1$, is the dimension $d$, see \cite[Lemma 8.1]{Yotam Kakutani}. The Perron-Frobenius eigenvalue of the non-negative matrix $M_\sigma(d)$ is  indeed $1$, and a corresponding eigenvector can be chosen to be the vector of volumes of prototiles in $\tau_\sigma$. Since $\sigma$ is normalized,  this vector is $\boldsymbol{1}$, from which it follows that $Q_\sigma$ is a positive matrix. Since the columns of $Q_\sigma$ are multiples of the eigenvector $\boldsymbol{1}$, all the rows of $Q_\sigma$ are identical, from which the independence of $i$ stems in the statement of the result. 
	
	The rest of the proof concerns the careful analysis of the poles of the Laplace transform, and more specifically the zeros of the exponential polynomial $\det (I-M_\sigma(s))$, to which the bulk of \cite{Yotam graphs} is dedicated. The steps taken there provide proof that there are no poles in the half plane $\text{Re}(s)\ge d$, except for a simple pole located at $s=d$, establishing the properties required for the application of the Wiener-Ikehara theorem.
\end{proof}

\begin{thm}\label{thm: type and scale frequencies}
	Let $\sigma$
	be an irreducible incommensurable scheme in $\R^d$. For every
	$j=1,\ldots,n$ and $0\leq a<b\le 1$, the number of tiles of type $j$
	in $F_{t}(T_i)$ with scale in $[a,b]$, or equivalently with volume in $[a^d,b^d]$, grows as
	\[
	\varphi_{j,[a,b]}e^{dt}+o\left(e^{dt}\right),\quad t\rightarrow\infty,
	\]
	independent of $i$, where 
	$	\varphi_{j,[a,b]}:=\sum_{h=1}^{n}c_{hj,[a,b]}q_{h}$ and
	$q_h$ is as in Theorem \ref{thm: graph path counting in the case of multiscale},
	\begin{equation}\label{eq:a_hj}
	c_{hj,[a,b]}:=\frac{1}{d}\sum_{k=1}^{k_{hj}}\left(\alpha_{hj}^{\left(k\right)}\right)^{d}\left(\left(\eta_{hj}^{\left(k\right)}(a)\right)^{-d}-\left(\mu_{hj}^{\left(k\right)}(b)\right)^{-d}\right),
	\end{equation}
	$\alpha_{hj}^{\left(k\right)}$ are the constants of substitution as in Definition \ref{def: substitution scheme}, and 
	\begin{equation}\label{eq:eta_and_mu_hj}
	\eta_{hj}^{\left(k\right)}(a):=\max\left\{ a,\alpha_{hj}^{\left(k\right)}\right\} ,\,\,\mu_{hj}^{\left(k\right)}(b):=\max\left\{ b,\alpha_{hj}^{\left(k\right)}\right\}.
	\end{equation}
\end{thm}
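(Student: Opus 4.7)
The plan is to reduce the tile-counting problem to a counting problem for metric paths on $G_\sigma$, to which Theorem~\ref{thm: graph path counting in the case of multiscale} applies edge by edge. By Proposition~\ref{prop: correspondence of paths and tiles}, tiles in $F_t(T_i)$ are in bijection with metric paths of length $t$ starting at vertex $i$; a tile of type $j$ and scale $\alpha = e^{-\delta}$ corresponds to a path whose terminus lies on some edge $\varepsilon$ with terminal vertex $j$, at path distance $\delta = \log(1/\alpha)$ from $j$. Fixing a prototile index $j$ and a scale window $[a,b] \subset (0,1]$, our task is therefore to count paths of length $t$ starting at $i$ and ending on the union, over all edges terminating at $j$, of the subarcs whose distance from $j$ lies in $[\log(1/b), \log(1/a)]$.

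First I would fix $h \in \{1,\ldots,n\}$ and an edge $\varepsilon = \varepsilon_{hj}^{(k)}$ from $h$ to $j$ of length $l(\varepsilon) = \log(1/\alpha_{hj}^{(k)})$. Points of $\varepsilon$ correspond to tile scales in $(\alpha_{hj}^{(k)},1]$, so the intersection with $[a,b]$ is the (possibly empty) scale range $[\eta_{hj}^{(k)}(a),\, \mu_{hj}^{(k)}(b)]$, with $\eta$ and $\mu$ as in~\eqref{eq:eta_and_mu_hj}. Converting scales to distances, this intersection corresponds to a subinterval $I_{hj}^{(k)} \subset \varepsilon$ whose distance from the initial vertex $h$ is
\begin{equation*}
\alpha_0 \;=\; l(\varepsilon) - \log\tfrac{1}{\eta_{hj}^{(k)}(a)} \;=\; \log\tfrac{\eta_{hj}^{(k)}(a)}{\alpha_{hj}^{(k)}},
\end{equation*}
and whose length is
\begin{equation*}
\delta \;=\; \log\tfrac{\mu_{hj}^{(k)}(b)}{\eta_{hj}^{(k)}(a)}.
\end{equation*}
When $[a,b]\cap(\alpha_{hj}^{(k)},1] = \varnothing$ (i.e.\ $\alpha_{hj}^{(k)} \ge b$) we have $\eta = \mu = \alpha_{hj}^{(k)}$, so $\delta = 0$ and $I_{hj}^{(k)}$ contributes nothing.

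Next, Theorem~\ref{thm: graph path counting in the case of multiscale} supplies the asymptotic count of paths of length $t$ from $i$ terminating on $I_{hj}^{(k)}$:
\begin{equation*}
e^{-\alpha_0 d}\,\frac{1-e^{-\delta d}}{d}\, q_h\, e^{dt} + o(e^{dt}).
\end{equation*}
Substituting the formulas for $\alpha_0$ and $\delta$ above and simplifying gives
\begin{equation*}
e^{-\alpha_0 d}(1-e^{-\delta d}) \;=\; \left(\tfrac{\alpha_{hj}^{(k)}}{\eta_{hj}^{(k)}(a)}\right)^{\!d}\left(1 - \left(\tfrac{\eta_{hj}^{(k)}(a)}{\mu_{hj}^{(k)}(b)}\right)^{\!d}\right)
\;=\; (\alpha_{hj}^{(k)})^d\left[(\eta_{hj}^{(k)}(a))^{-d} - (\mu_{hj}^{(k)}(b))^{-d}\right],
\end{equation*}
which is exactly the summand appearing in~\eqref{eq:a_hj}. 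The error $o(e^{dt})$ is uniform over the finite family of edges.

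Finally, since distinct edges correspond to disjoint possibilities for the terminal arc, the total count is obtained by summing over all pairs $(h,k)$ with $h \in \{1,\ldots,n\}$ and $k \in \{1,\ldots,k_{hj}\}$, yielding
\begin{equation*}
\sum_{h=1}^n \frac{q_h}{d} \sum_{k=1}^{k_{hj}} (\alpha_{hj}^{(k)})^d\left[(\eta_{hj}^{(k)}(a))^{-d} - (\mu_{hj}^{(k)}(b))^{-d}\right] e^{dt} + o(e^{dt})
\;=\; \sum_{h=1}^n c_{hj,[a,b]}\, q_h\, e^{dt} + o(e^{dt}),
\end{equation*}
as claimed. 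Independence of $i$ is inherited directly from the corresponding independence in Theorem~\ref{thm: graph path counting in the case of multiscale}. The only potentially delicate point is the case analysis encoded in the $\eta,\mu$ definitions (when $[a,b]$ partially overlaps an edge's legal scale range, or misses it entirely); but this is precisely what the formulas~\eqref{eq:eta_and_mu_hj} were designed to accommodate, and no further work is needed beyond checking that the edge-by-edge intervals reduce correctly in the boundary cases.
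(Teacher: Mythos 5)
Your proof is correct and follows essentially the same route as the paper: identify tiles of type $j$ and scale in $[a,b]$ with metric paths of length $t$ terminating on suitable subintervals of the edges ending at vertex $j$, apply Theorem \ref{thm: graph path counting in the case of multiscale} to each such interval, and sum over edges. The only cosmetic difference is that you encode the paper's three-case analysis ($\alpha<a<b$, $a\le\alpha<b$, $a<b\le\alpha$) in a single formula via $\eta_{hj}^{(k)}(a)$ and $\mu_{hj}^{(k)}(b)$, which yields the same values of $\alpha_0$ and $\delta$ in each case.
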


Although the formulas given in Theorem \ref{thm: type and scale frequencies} seem complicated, in fact they are easily evaluated in examples, as will be demonstrated in Example \ref{ex: frequencies for triangles} at the end of this section.

\begin{proof}
	Tiles of type $j$ in $F_{t}\left(T_i\right)$ correspond to metric paths of length $t$ that terminate on edges with end point $j$. Let $\varepsilon\in\mathcal{E}$ be such an edge in $G_\sigma$, and assume $\varepsilon$ is of length $l\left(\varepsilon\right)=\log\frac{1}{\alpha}$, corresponding to tiles of scale in $(\alpha,1]$. Let $\delta$ and $\alpha_0$ be as in the statement of Theorem \ref{thm: graph path counting in the case of multiscale}.	There are three distinct cases:
	\begin{enumerate}
		\item If $\alpha<a<b$ then 
		\begin{equation*}
		\begin{split}
		\alpha_0&=\log\tfrac{1}{\alpha}-\log\tfrac{1}{a}=\log\tfrac{a}{\alpha}\\
		\delta&=\log\tfrac{1}{\alpha}-\log\tfrac{1}{b}-\alpha_0=\log\tfrac{b}{a}.
		\end{split}
		\end{equation*}
		Therefore 
		\begin{equation*}
		e^{-\alpha_{0}d}\frac{1-e^{-\delta d}}{d}=\frac{1}{d}\alpha^d\left(a^{-d}-b^{-d}\right).
		\end{equation*}
		\item If $a\le \alpha<b$ then $\alpha_0=0$ and
		\begin{equation*}
		\delta=\log\tfrac{1}{\alpha}-\log\tfrac{1}{b}-\alpha_0=\log\tfrac{b}{\alpha}.
		\end{equation*}
		Therefore 
		\begin{equation*}
		e^{-\alpha_{0}d}\frac{1-e^{-\delta d}}{d}=\frac{1}{d}\alpha^d\left(\alpha^{-d}-b^{-d}\right).
		\end{equation*}
			\item If $a<b\le \alpha$ then $\alpha_0=0$ and $\delta=0$ and there is no contribution to the counting.
	\end{enumerate}

	Summing the contributions from all edges in $G_\sigma$ that terminate at the vertex $j$, we establish the required formula.
\end{proof}

Note that by Theorem \ref{thm: graph path counting in the case of multiscale} and its proof, the exact same formula arises for scales in the interval $[a,b], (a,b]$ or $[a,b)$. In particular, $\varphi_{j,[a,a]}=0$ and we deduce the following corollary.

\begin{cor}\label{cor: number of tiles of fixed scale in o(e^{dt})}
	For every $j=1,\ldots,n$ and $\alpha\in(\beta_j^{\min},1]$, the number of tiles of type $j$
	and scale exactly $\alpha$ in $F_{t}(T_i)$ is $o\left(e^{dt}\right)$ as $t$ tends to infinity.
\end{cor}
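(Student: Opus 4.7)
The plan is to derive the corollary as a direct consequence of Theorem~\ref{thm: type and scale frequencies} combined with a simple continuity-sandwich argument. The key observation is that the formula for $\varphi_{j,[a,b]}$ is continuous in the endpoints $a,b$ and degenerates to zero when $a=b$, because the contribution $c_{hj,[a,b]}$ is built from differences of the form $(\eta_{hj}^{(k)}(a))^{-d}-(\mu_{hj}^{(k)}(b))^{-d}$, and the functions $\eta_{hj}^{(k)}$ and $\mu_{hj}^{(k)}$ coincide once $a=b$.

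More precisely, I would fix $j\in\{1,\ldots,n\}$ and $\alpha\in(\beta_j^{\min},1]$, and for any sufficiently small $\varepsilon>0$ set
\[
a_\varepsilon:=\max\{\alpha-\varepsilon,\beta_j^{\min}\},\qquad b_\varepsilon:=\min\{\alpha+\varepsilon,1\},
\]
so that $0\le a_\varepsilon<b_\varepsilon\le 1$ and Theorem~\ref{thm: type and scale frequencies} applies to the interval $[a_\varepsilon,b_\varepsilon]$. Let $N_\alpha(t)$ be the number of tiles of type $j$ and scale exactly $\alpha$ in $F_t(T_i)$, and let $N_\varepsilon(t)$ be the corresponding count for scales in $[a_\varepsilon,b_\varepsilon]$. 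Since scale $\alpha$ lies in $[a_\varepsilon,b_\varepsilon]$, we trivially have $0\le N_\alpha(t)\le N_\varepsilon(t)$, and Theorem~\ref{thm: type and scale frequencies} gives $N_\varepsilon(t)=\varphi_{j,[a_\varepsilon,b_\varepsilon]}e^{dt}+o(e^{dt})$, whence
\[
\limsup_{t\to\infty}\frac{N_\alpha(t)}{e^{dt}}\le \varphi_{j,[a_\varepsilon,b_\varepsilon]}.
\]

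The remaining step is to send $\varepsilon\to 0^+$ and verify that $\varphi_{j,[a_\varepsilon,b_\varepsilon]}\to 0$. Inspecting \eqref{eq:a_hj} and \eqref{eq:eta_and_mu_hj}, each term $(\eta_{hj}^{(k)}(a))^{-d}-(\mu_{hj}^{(k)}(b))^{-d}$ is jointly continuous in $(a,b)$, and at $a=b=\alpha$ both $\eta_{hj}^{(k)}(\alpha)$ and $\mu_{hj}^{(k)}(\alpha)$ equal $\max\{\alpha,\alpha_{hj}^{(k)}\}$, so the difference vanishes. Thus $c_{hj,[a_\varepsilon,b_\varepsilon]}\to 0$ for every $h$, and the finite linear combination $\varphi_{j,[a_\varepsilon,b_\varepsilon]}=\sum_h c_{hj,[a_\varepsilon,b_\varepsilon]}q_h$ also tends to $0$. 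Combining with the previous display and $N_\alpha(t)\ge 0$ yields $\lim_{t\to\infty}N_\alpha(t)/e^{dt}=0$, i.e.\ $N_\alpha(t)=o(e^{dt})$.

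No step here is a genuine obstacle; the only minor subtlety is remembering to truncate the interval at $1$ (and at $\beta_j^{\min}$) so that the hypotheses of Theorem~\ref{thm: type and scale frequencies} are met, and to confirm continuity of the coefficient in the endpoints, which is immediate from the explicit formula.
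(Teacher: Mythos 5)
Your argument is correct, and it is in essence the same derivation the paper gives: Corollary \ref{cor: number of tiles of fixed scale in o(e^{dt})} is read off from Theorem \ref{thm: type and scale frequencies} together with the observation that the leading coefficient vanishes on a degenerate interval. The only difference is in how that observation is exploited. The paper notes (via the Laplace-transform proof of Theorem \ref{thm: graph path counting in the case of multiscale}) that the asymptotic formula is insensitive to whether the interval of scales contains its endpoints, so the counts for $[a,b]$ and $(a,b]$ share the same leading term and their difference, which is exactly the number of tiles of scale $a$, is $o(e^{dt})$; equivalently $\varphi_{j,[a,a]}=0$. You instead sandwich the exact-scale count by the count over a shrinking interval $[a_\varepsilon,b_\varepsilon]\ni\alpha$ of positive length, apply the theorem as stated, and let $\varepsilon\to0^+$ using continuity of $c_{hj,[a,b]}$ in the endpoints from \eqref{eq:a_hj}--\eqref{eq:eta_and_mu_hj}. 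Your route has the small advantage of using only the closed-interval statement of Theorem \ref{thm: type and scale frequencies}, with no appeal to the endpoint-insensitivity remark buried in the proof of the path-counting theorem, at the modest cost of the extra $\limsup$-and-$\varepsilon$ step; the paper's route is shorter once that remark is granted. Both are complete proofs.
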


The results stated above can be described in terms of supertiles.

\begin{cor}\label{cor: counting in supertiles} 
	Under the notation of Theorem \ref{thm: type and scale frequencies}, for every tile $T$ of legal type and scale (see Definition \ref{def:possible_scales}), the number of tiles of type $j$ in $F_t(T)$ with scale in $[a,b]$ grows as
	\begin{equation*}
	\varphi_{j,[a,b]}\vol\left(e^{t}T\right)+o\left(\vol\left(e^{t}T\right)\right),\quad t\rightarrow\infty.
	\end{equation*}
	In particular, for every sequence $\left(T^{(m)}\right)_{m\ge0}$  of supertiles of growing order in $\TT\in\X_\sigma^F$, the number of tiles of type $j$ with scale in $[a,b]$ in the patch $[T^{(m)}]^\TT$,  grows as
	\begin{equation*}
		\varphi_{j,[a,b]}\vol\left(T^{(m)}\right)+o\left(\vol\left(T^{(m)}\right)\right),\quad m\rightarrow\infty.
	\end{equation*} 
\end{cor}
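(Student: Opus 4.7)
The plan is to reduce both statements to Theorem \ref{thm: type and scale frequencies} by means of a single time-shift argument that absorbs the scale of $T$.

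First I would observe that if $T$ is a tile of legal type and scale, say a translation of $\alpha T_i$ for some $T_i \in \tau_\sigma$ and $\alpha \in (\beta_i^{\min}, 1]$, then by the very definition of the substitution semi-flow (which inflates continuously and substitutes only when the volume crosses $1$), running the flow on $\alpha T_i$ for time $\log(1/\alpha)$ produces a translate of $T_i$, and from then on the evolution coincides with $F_\bullet(T_i)$. Thus, for every $t \ge \log(1/\alpha)$,
\[
F_t(T) \;=\; F_{t-\log(1/\alpha)}(T_i)
\]
as patches, up to the fixed translation identifying $T$ with $\alpha T_i$. Applying Theorem \ref{thm: type and scale frequencies} to the right-hand side, the number of tiles of type $j$ with scale in $[a,b]$ inside $F_t(T)$ grows as
\[
\varphi_{j,[a,b]}\, e^{d(t-\log(1/\alpha))} + o(e^{dt}) \;=\; \alpha^d \varphi_{j,[a,b]}\, e^{dt} + o(e^{dt}), \qquad t \to \infty.
\]
Since $\sigma$ is normalized, $\vol(e^t T) = e^{dt}\vol(T) = \alpha^d e^{dt}$, so the main term equals $\varphi_{j,[a,b]}\vol(e^t T)$ and the error is $o(\vol(e^t T))$, which is the first asymptotic.

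For the supertile statement, I would invoke the description of supertiles given immediately after Definition \ref{def: supertiles}: in any tiling $\TT \in \X^F_\sigma$ and for any order $m$, a supertile $T^{(m)}$ has the form $e^{ms} T$ for some tile $T$ of legal type and scale, and the sub-patch $[T^{(m)}]^\TT$ coincides up to translation with $F_{ms}(T)$. Substituting $t = ms$ into the first asymptotic and letting $m \to \infty$ (so that $\vol(T^{(m)}) \to \infty$) then yields
\[
\varphi_{j,[a,b]}\vol(T^{(m)}) + o\bigl(\vol(T^{(m)})\bigr),
\]
as required.

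The only delicate point—more a bookkeeping issue than a genuine obstacle—is uniformity of the error term when the sequence $(T^{(m)})_m$ has base tiles of different prototile types and different scale factors $\alpha$. This is handled by the fact that there are only finitely many prototile types and that $\alpha$ is bounded below by $\min_i \beta_i^{\min} > 0$, combined with the assertion in Theorem \ref{thm: type and scale frequencies} that both the main and error terms are independent of the initial vertex; thus the implied constant in $o(\cdot)$ can be chosen uniformly across all legal initial tiles.
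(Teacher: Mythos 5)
Your proposal is correct and follows essentially the same route as the paper: the identity $F_t(T)=F_{t-\log(1/\alpha)}(T_i)$, an application of Theorem \ref{thm: type and scale frequencies}, and the observation that $e^{d(t-\log(1/\alpha))}=\alpha^d e^{dt}=\vol(e^tT)$, with the supertile statement obtained by writing $T^{(m)}=e^{ms}T$ for a tile $T$ of legal type and scale. Your extra remark on uniformity over the finitely many types and the lower bound on scales is a fair elaboration of what the paper leaves implicit.
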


\begin{proof}
	If $T$ is a rescaled copy of $\alpha T_i$, then 
	\[
	F_t(T)=F_t(\alpha T_i)=F_{t-\log(1/\alpha)}(T_i),
	\]
	and by Theorem \ref{thm: type and scale frequencies} the number of tiles we are interested in grows as 
\[
\varphi_{j,[a,b]}e^{d(t-\log(1/\alpha))}+o\left(e^{dt}\right),\quad t\rightarrow\infty.
\]
But $e^{d(t-\log(1/\alpha))}=e^{dt}\alpha=\vol(e^tT)$, and the result follows.
\end{proof}

We remark that Corollary \ref{cor: counting in supertiles} gives an alternative proof that the scales in which tiles appear in incommensurable tilings are dense within the intervals of legal scales, see Theorem \ref{thm:The_scales_are_dense_in_SS}. For the next results, recall that for a tiling $\TT$ and a set $B\subset\R^d$ we denote $[B]^{\TT}$ to be the patch of all tiles in $\TT$ that intersect $B$.

\begin{cor}\label{cor: frequency of types}
	Let $\left(T^{(m)}\right)_{m\ge0}$  be a sequence of supertiles of growing order in $\TT\in\X_\sigma^F$. The number of tiles of type $j$ in $[T^{(m)}]^\TT$
	grows as 
	\[
	\varphi_{j,(\beta_j^{\min},1]}\vol\left(T^{(m)}\right)+o\left(\vol\left(T^{(m)}\right)\right),\quad m\rightarrow\infty
	\]
	with 
	\begin{equation}\label{eq: number of tiles of type j}
	\varphi_{j,(\beta_j^{\min},1]}=\sum_{h=1}^{n}\frac{1}{d}\sum_{k=1}^{k_{hj}}\left(1-\left(\alpha_{hj}^{\left(k\right)}\right)^{d}\right)q_{h}.
	\end{equation}
	In particular, the total number of tiles in $[T^{(m)}]^\TT$ grows as 
	\begin{equation}\label{eq: total number of tiles in supertiles}
	\sum_{j=1}^{n}\varphi_{j,(\beta_j^{\min},1]}\vol\left(T^{(m)}\right)+o\left(\vol(T^{(m)})\right),\quad t\rightarrow\infty,
	\end{equation}
	independent of $i$.
\end{cor}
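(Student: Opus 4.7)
The plan is to specialize Corollary \ref{cor: counting in supertiles} to the maximal legal scale interval for each prototile type. By Definition \ref{def:possible_scales}, no tile of type $j$ appearing in any legal patch or in (a patch obtained as a limit from) any stationary tiling has scale outside $(\beta_j^{\min},1]$, so the number of tiles of type $j$ in $[T^{(m)}]^\TT$ coincides with the number of tiles of type $j$ in $[T^{(m)}]^\TT$ of scale in $(\beta_j^{\min},1]$. Applying Corollary \ref{cor: counting in supertiles} with $a=\beta_j^{\min}$ and $b=1$ therefore gives
\[
\#\{T\in [T^{(m)}]^\TT:T\text{ of type }j\}=\varphi_{j,(\beta_j^{\min},1]}\vol(T^{(m)})+o\bigl(\vol(T^{(m)})\bigr),\quad m\to\infty.
\]

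It then remains to evaluate $\varphi_{j,(\beta_j^{\min},1]}=\sum_h c_{hj,(\beta_j^{\min},1]} q_h$ explicitly. The key observation is that the very definition \eqref{eq:beta_j,min} forces $\alpha_{hj}^{(k)}\ge \beta_j^{\min}$ for every constant of substitution appearing in $\omega_\sigma(T_h)$, while normalization gives $\alpha_{hj}^{(k)}\le 1$. Hence in the formulas \eqref{eq:eta_and_mu_hj} one has $\eta_{hj}^{(k)}(\beta_j^{\min})=\alpha_{hj}^{(k)}$ and $\mu_{hj}^{(k)}(1)=1$, and substituting into \eqref{eq:a_hj} collapses the bracket to
\[
c_{hj,(\beta_j^{\min},1]}=\frac{1}{d}\sum_{k=1}^{k_{hj}}\left(\alpha_{hj}^{(k)}\right)^{d}\left(\left(\alpha_{hj}^{(k)}\right)^{-d}-1\right)=\frac{1}{d}\sum_{k=1}^{k_{hj}}\left(1-\left(\alpha_{hj}^{(k)}\right)^{d}\right),
\]
which, after multiplication by $q_h$ and summation over $h$, is precisely \eqref{eq: number of tiles of type j}.

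The total-tile asymptotic \eqref{eq: total number of tiles in supertiles} is then obtained by summing the type-$j$ asymptotics over $j=1,\ldots,n$, using that a finite sum of $o(\vol(T^{(m)}))$ error terms is again of the same order. The "independent of $i$" clause is inherited directly from the analogous clause in Theorem \ref{thm: type and scale frequencies} through Corollary \ref{cor: counting in supertiles}: the coefficients $\varphi_{j,[a,b]}$ depend only on the graph data of $\sigma$, and in particular on the rank-one matrix $Q_\sigma$, not on the initial vertex underlying any stationary tiling from which the supertile structure on $\TT$ was inherited. There is no genuine obstacle here; the corollary is essentially an algebraic simplification of Corollary \ref{cor: counting in supertiles} at the natural endpoints, and the only point that requires attention is the harmless bookkeeping that scale-exactly-$\beta_j^{\min}$ tiles contribute nothing (cf.\ Corollary \ref{cor: number of tiles of fixed scale in o(e^{dt})}), so that using the half-open interval $(\beta_j^{\min},1]$ produces the same leading-order count as $[0,1]$.
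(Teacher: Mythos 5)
Your proposal is correct and follows essentially the same route as the paper: the paper's proof is exactly the observation that $\eta_{hj}^{(k)}$ evaluates to $\alpha_{hj}^{(k)}$ and $\mu_{hj}^{(k)}(1)=1$ in \eqref{eq:a_hj}, collapsing the formula of Theorem \ref{thm: type and scale frequencies} (via Corollary \ref{cor: counting in supertiles}) to \eqref{eq: number of tiles of type j}. The only cosmetic difference is that you plug in $a=\beta_j^{\min}$ while the paper uses $a=0$; since every $\alpha_{hj}^{(k)}\ge\beta_j^{\min}$, both choices give the same $\eta$ values and hence the same coefficient.
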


\begin{proof}
	Simply note that the parameters in \eqref{eq:eta_and_mu_hj} satisfy $\eta^{(k)}_{hj}(0)=\alpha^{(k)}_{hj}$ and $\mu^{(k)}_{hj}(1)=1$, for all $i,j,k$. 
\end{proof}

Manipulations of the formulas given above allow for computations of other quantities. For example, we give below the asymptotic formula for the relative number of tiles of a given type and scales within a fixed interval of scales, within all tiles in supertiles. Various other quantities can be similarly derived. 

\begin{cor}\label{cor: relative number of tiles of type and scale}
	For every
	$j=1,\ldots,n$ and $0\leq a<b\le 1$, the relative number of tiles of type $j$
	with scale in $[a,b]$ within the total number of tiles in a patch supported on an $m$-supertile, tends to
	\begin{equation*}
		\frac{\varphi_{j,[a,b]}}{\sum_{j=1}^n\varphi_{j,(\beta_j^{\min},1]}}+o\left(1\right),\quad m\rightarrow\infty,
	\end{equation*}
	where $\varphi_{j,[a,b]}$ and $\varphi_{j,(\beta_j^{\min},1]}$ are as in Theorem \ref{thm: graph path counting in the case of multiscale} and Corollary \ref{cor: frequency of types}, respectively.
\end{cor}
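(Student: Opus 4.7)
The plan is to derive the asymptotic ratio directly from the two counting formulas already established in Corollaries \ref{cor: counting in supertiles} and \ref{cor: frequency of types}. By Corollary \ref{cor: counting in supertiles}, the number of tiles of type $j$ with scale in $[a,b]$ appearing in the patch $[T^{(m)}]^\TT$ is $\varphi_{j,[a,b]}\vol(T^{(m)}) + o(\vol(T^{(m)}))$ as $m\to\infty$, while by Corollary \ref{cor: frequency of types} the total number of tiles in $[T^{(m)}]^\TT$ equals $\left(\sum_{j=1}^n \varphi_{j,(\beta_j^{\min},1]}\right)\vol(T^{(m)}) + o(\vol(T^{(m)}))$.

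Taking the quotient of these two expressions and factoring $\vol(T^{(m)})$ out of numerator and denominator yields
\[
\frac{\varphi_{j,[a,b]}+o(1)}{\sum_{j=1}^n\varphi_{j,(\beta_j^{\min},1]}+o(1)}, \quad m\to\infty,
\]
where the $o(1)$ terms are genuinely $o(1)$ because $\vol(T^{(m)})$ tends to infinity as $m$ grows (supertiles are inflations of legal tiles by factors $e^{ms}$, so their volumes are $e^{dms}$ times the volume of the base tile). To conclude that this quotient converges to the claimed limit, it suffices to verify that the denominator is bounded away from zero. This follows from the positivity of the matrix $Q_\sigma$ noted in the sketch of Theorem \ref{thm: graph path counting in the case of multiscale}: each $q_h$ is strictly positive, and by irreducibility at least one edge of $G_\sigma$ terminates at each vertex $j$, so each $\varphi_{j,(\beta_j^{\min},1]}$, as expressed in \eqref{eq: number of tiles of type j}, is a strictly positive combination of the $q_h$.

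I do not anticipate any substantive obstacle: the corollary is essentially an arithmetic consequence of the two deeper counting results it invokes. The only point worth a brief remark is the non-vanishing of the denominator, which follows directly from irreducibility together with the Perron--Frobenius analysis already carried out. Consequently, the whole argument amounts to citing the two prior corollaries, dividing, and observing that the leading coefficient in the denominator is strictly positive.
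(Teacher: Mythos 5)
Your argument is correct and is exactly the (implicit) route of the paper, which states this corollary as an immediate consequence of Corollaries \ref{cor: counting in supertiles} and \ref{cor: frequency of types} obtained by dividing the two asymptotics. Your extra observation that the denominator is strictly positive (since each $q_h>0$, each $\alpha_{hj}^{(k)}<1$, and irreducibility provides an incoming edge at every vertex) is the only point the paper leaves unstated, and you have justified it correctly.
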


\begin{example}\label{ex: frequencies for triangles}
	Let $\sigma$ be the substitution scheme in $\R^2$  on $T_1=U$ and $T_2=D$ the two equilateral triangles, as described in Figure \ref{fig:triangle substitution rule}. By a direct computation 
	\begin{equation*}
	Q_\sigma=\frac{{\rm adj}\left(I-M_\sigma\left(2\right)\right)}{-{\rm \tr}({\rm adj}\left(I-M_\sigma\left(2\right)\right)\cdot M_\sigma^{\prime}\left(2\right))}=\frac{1}{\frac{4}{25}\log 2+\frac{1}{4}\log 5}\begin{pmatrix}
	\frac{1}{4} & \frac{8}{25} \\ \frac{1}{4} & \frac{8}{25}
	\end{pmatrix}
	\end{equation*}
	and so 
	\begin{equation*}
	q_1=\frac{1}{4}\cdot \frac{1}{\frac{4}{25}\log 2+\frac{1}{4}\log 5}\quad\text{and}\quad q_2=\frac{8}{25}\cdot \frac{1}{\frac{4}{25}\log 2+\frac{1}{4}\log 5}.
	\end{equation*}

	Consider rescaled copies of $U$ (tiles of type $1$) with scales within the interval $\left[\frac{3}{5},\frac{4}{5}\right]$. Since $\alpha\le\frac{1}{2}$ for any scale $\alpha$ in which tiles appear in the substitution scheme $\sigma$, we have $\eta=\frac{3}{5}$ and $\mu=\frac{4}{5}$ in \eqref{eq:eta_and_mu_hj}. Plugging this into \eqref{eq:a_hj} we get
	
	\begin{equation*}
	c_{11,\left[\frac{3}{5},\frac{4}{5}\right]}=\frac{119}{288}\quad\text{and}\quad c_{21,\left[\frac{3}{5},\frac{4}{5}\right]}=\frac{175}{1152},	
	\end{equation*}
	
	and so\begin{equation*}
	\varphi_{1,\left[\frac{3}{5},\frac{4}{5}\right]}=c_{11,\left[\frac{3}{5},\frac{4}{5}\right]}q_1+c_{21,\left[\frac{3}{5},\frac{4}{5}\right]}q_2\approx 0.296.
	\end{equation*}

	We now use the above computation to derive a couple of results about tiles that are rescaled copies of $U$ with scales in the interval $\left[\frac{3}{5},\frac{4}{5}\right]$. First, according to Corollary \ref{cor: counting in supertiles}, for every tiling $\TT\in\X_\sigma^F$ and every sequence $\left(T^{(m)}\right)_{m\ge0}$  of supertiles of growing order in $\TT$, the number of such tiles in the patch supported on $T^{(m)}$ grows approximately as
	\begin{equation*}
	0.296\cdot\vol\left(T^{(m)}\right)+o\left(\vol\left(T^{(m)}\right)\right),\quad m\rightarrow\infty.
	\end{equation*}
	Next, using \eqref{eq: number of tiles of type j} we have
	\begin{equation*}
	\varphi_{1,(\beta_1^{\min},1]}\approx 2.016\quad\text{and}\quad \varphi_{2,(\beta_2^{\min},1]}\approx 1.841,
	\end{equation*}

	and by Corollary \ref{cor: relative number of tiles of type and scale} we deduce that the relative number of such tiles within the total number of tiles in a patch supported on an $m$-supertile, tends to 
	\begin{equation*}
	\frac{\varphi_{1,\left[\frac{3}{5},\frac{4}{5}\right]}}{\varphi_{1,(\beta_1^{\min},1]}+\varphi_{2,(\beta_2^{\min},1]}}	+o(1)\approx 0.076+o(1),\quad m\rightarrow\infty.
	\end{equation*}
	
\end{example}

\subsection{The volume occupied by tiles of given types and scales}

For the main results of this section we turn to probabilistic results on graphs. 

\begin{thm}
	\label{thm: prob graph path counting in the case of multiscale-1}Let $\sigma$
	be a normalized irreducible incommensurable scheme in $\R^d$, and let $G_\sigma$ be the associated graph. For any $i\in\mathcal{V}$
	and $\varepsilon\in\mathcal{E}$ with initial vertex $i$ let $p_{i\varepsilon}$ be the probability that a walker who is passing
	through vertex $i$ chooses to continue his walk through the edge $\varepsilon$, and assume that the sum of the probabilities over all edges originating
	at any vertex is equal to $1$. Let $I$ be an interval that is contained in some edge $\varepsilon\in\mathcal{E}$ with initial vertex $h\in\mathcal{V}$, and assume that $I$ is of length $\delta>0$ and distance $\alpha_{0}$ from the vertex $h$. Then the probability that a walker originating at vertex $i\in\mathcal{V}$ and advancing at unit speed is on the interval $I$ after walking along a metric path of length $x$, tends to 
	\[
	p_{h\varepsilon}\delta q_{h}+o(1),\quad x\rightarrow\infty,
	\]
	independent of $i$ and of $\alpha_0$, where $q_{h}$ is as in Theorem \ref{thm: graph path counting in the case of multiscale}.
\end{thm}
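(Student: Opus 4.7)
The plan is to recast the claim as a Markov renewal question and then invoke the Laplace--transform Tauberian framework already exploited in the proof of Theorem~\ref{thm: graph path counting in the case of multiscale}. The walker is located on $I$ at time $x$ if and only if, at some prior time $y\in [x-\alpha_{0}-\delta,\,x-\alpha_{0})$, it arrived at vertex $h$ and then selected the edge $\varepsilon$ carrying $I$, since the subsequent progress along $\varepsilon$ is deterministic and at unit speed. Writing $N_{i,h}(y)$ for the expected number of visits to $h$ in $[0,y]$ by a walker originating at $i$, the probability in question therefore equals
\[
p_{h\varepsilon}\bigl(N_{i,h}(x-\alpha_{0})-N_{i,h}(x-\alpha_{0}-\delta)\bigr),
\]
so the theorem reduces to the increment statement $N_{i,h}(y+\delta)-N_{i,h}(y)\to \delta q_{h}$ as $y\to\infty$, with the limit independent of $i$ and $\alpha_{0}$.

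Conditioning on the walker's first step gives the matrix--valued renewal equation
\[
N_{i,h}(y)=\delta_{ih}\chi_{[0,\infty)}(y)+\sum_{j=1}^{n}\sum_{\varepsilon:i\to j} p_{i\varepsilon}\,N_{j,h}\bigl(y-l(\varepsilon)\bigr),
\]
whose Laplace--Stieltjes transform collapses to $\widetilde N(s)=(I-W(s))^{-1}$, where the probability--weighted analogue of the graph matrix function is $W(s)_{ij}:=\sum_{\varepsilon:i\to j}p_{i\varepsilon}e^{-s\,l(\varepsilon)}$. With the natural volume--weighted probabilities $p_{i\varepsilon}=(\alpha_{ij}^{(k)})^{d}$ (which by Proposition~\ref{prop: volumes sum up to 1} indeed sum to $1$ at every vertex), one has $W(s)=M_{\sigma}(s+d)$, so $W(0)=M_{\sigma}(d)$ has Perron--Frobenius eigenvalue $1$ with right eigenvector $\boldsymbol{1}$, and $\det(I-W(s))$ has a zero at $s=0$.

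The analytic heart of the proof then mirrors Theorem~\ref{thm: graph path counting in the case of multiscale}: incommensurability, via the analysis of exponential--polynomial zeros carried out in \cite{Yotam graphs}, shows that $s=0$ is the unique pole of $\widetilde N$ in the closed right half--plane $\{\operatorname{Re}(s)\ge 0\}$ and that it is simple. Jacobi's formula for the derivative of a determinant identifies the residue matrix at $s=0$ as
\[
Q=\frac{\adj(I-W(0))}{-\tr\bigl(\adj(I-W(0))\,W'(0)\bigr)}=Q_{\sigma},
\]
and the rank--one structure of $\adj(I-M_{\sigma}(d))$ already noted in the proof of Theorem~\ref{thm: graph path counting in the case of multiscale} forces the $(i,h)$ entry to depend only on $h$ and equal the constant $q_{h}$. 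Applying the Wiener--Ikehara Tauberian theorem to the non--decreasing functions $N_{i,h}$, together with the non--lattice form of the key renewal theorem (whose non--lattice hypothesis is precisely incommensurability of the edge lengths), then yields $N_{i,h}(y+\delta)-N_{i,h}(y)\to \delta q_{h}$ and, multiplying through by $p_{h\varepsilon}$, the claim.

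The main obstacle is exactly this analytic step: ruling out further poles of $\widetilde N$ on the imaginary axis and upgrading the coarse $N_{i,h}(y)=q_hy+o(y)$ conclusion of Wiener--Ikehara to the pointwise increment limit. Both points depend essentially on incommensurability and are best handled by direct appeal to the machinery of \cite{Yotam graphs} rather than by reproving it, which is why the argument can run in parallel to, rather than duplicate, that of Theorem~\ref{thm: graph path counting in the case of multiscale}.
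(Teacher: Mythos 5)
Your argument is correct and shares its analytic core with the paper, but it is packaged differently. The paper's proof is a one-line reduction: it adjusts Theorem 2 of \cite{Yotam graphs} exactly as Theorem 1 there was adjusted in the proof of Theorem \ref{thm: graph path counting in the case of multiscale}, i.e., it runs the Laplace-transform/Wiener--Ikehara machinery directly on the occupation probability. You instead recast the problem as a Markov renewal question: the identity $\Pr(\text{on } I \text{ at time } x)=p_{h\varepsilon}\bigl(N_{i,h}(x-\alpha_0)-N_{i,h}(x-\alpha_0-\delta)\bigr)$ (valid since once the walker enters $\varepsilon$ inside the window it cannot revisit $h$ before time $x$, so at most one qualifying visit occurs and expectation equals probability) reduces everything to a Blackwell-type increment limit for the expected visit function, which you then extract from the matrix renewal equation, the identification $W(s)=M_\sigma(s+d)$, and the pole/residue analysis of \cite{Yotam graphs} --- or, alternatively, from the non-lattice Markov renewal theorem, whose non-arithmeticity hypothesis is precisely incommensurability. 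What your route buys is an explicit probabilistic reduction that the paper (and its citation of \cite{Yotam graphs}) leaves implicit, plus a second, purely renewal-theoretic way to close the argument; the cost is some redundancy, and you are right to flag that Wiener--Ikehara applied to $N_{i,h}$ alone only yields $N_{i,h}(y)\sim q_h y$, so the increment limit genuinely rests on the zero-free-line/non-lattice input rather than on the Tauberian step. One further observation: your identification of the residue with $Q_\sigma$ uses the volume-weighted probabilities $p_{i\varepsilon}=\alpha^d$ (legitimate by Proposition \ref{prop: volumes sum up to 1}); for an arbitrary admissible choice of probabilities the same renewal computation gives $p_{h\varepsilon}\delta\,\pi_h/\sum_j \pi_j m_j$, with $\pi$ the stationary vector of the transition matrix and $m_j$ the mean outgoing edge length, and this coincides with $p_{h\varepsilon}\delta q_h$ for $q_h$ as in Theorem \ref{thm: graph path counting in the case of multiscale} exactly in the volume-weighted case --- which is the only case in which the theorem is later applied --- so your specialization is the correct reading of the statement rather than a gap.
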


\begin{proof}
	The proof is very similar to that of Theorem \ref{thm: graph path counting in the case of multiscale} given above for counting paths terminating in given intervals. Here we adjust Theorem 2 in \cite{Yotam graphs} in the same way Theorem 1 in \cite{Yotam graphs} is adjusted for the proof of Theorem \ref{thm: graph path counting in the case of multiscale}.
\end{proof}

\begin{thm}
	Let $\sigma$ be an irreducible incommensurable scheme in $\R^d$. For every
	$j=1,\ldots,n$ and $0\leq a<b\le 1$, the volume covered by tiles of type $j$
	in $F_{t}(T_{i})$ with scale in $[a,b]$, or equivalently with volume in $[a^d,b^d]$, grows as 
	\[
	\nu_{j,[a,b]}e^{dt}+o\left(e^{dt}\right),\quad t\rightarrow\infty,
	\]
	where $\nu_{j,[a,b]}:=\sum_{h=1}^{n}d_{hj,[a,b]}q_{h}$ and $q_h$ is as in Theorem \ref{thm: graph path counting in the case of multiscale}, 
	\[
	d_{hj,[a,b]}:=\sum_{k=1}^{k_{hj}}\left(\alpha_{hj}^{\left(k\right)}\right)^{d}\log\tfrac{\eta_{hj}^{\left(k\right)}(a)}{\mu_{hj}^{\left(k\right)}(b)},
	\]
	with $\eta_{hj}^{\left(k\right)}(a)$ and $\mu_{hj}^{\left(k\right)}(b)$ as in \eqref{eq:eta_and_mu_hj}, and independent of $i$.
\end{thm}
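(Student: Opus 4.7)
The plan is to adapt the probabilistic framework underlying Theorem \ref{thm: prob graph path counting in the case of multiscale-1}, in the same spirit that Theorem \ref{thm: type and scale frequencies} was derived from the counting version in Theorem \ref{thm: graph path counting in the case of multiscale}. The starting observation is that if $V_{j,[a,b]}(t)$ denotes the total volume of tiles of type $j$ with scale in $[a,b]$ appearing in $F_t(T_i)$, then $V_{j,[a,b]}(t)/e^{dt}$ is precisely the probability that a point chosen uniformly in $e^{t}T_i$ lies in such a tile. The proof reduces to computing this probability asymptotically by encoding the uniform measure on $F_t(T_i)$ as a random walk on the associated graph $G_\sigma$.

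First I would equip $G_\sigma$ with the random walk that selects the edge $\varepsilon$ associated with $\alpha_{hj}^{(k)}T_j\in\omega_\sigma(T_h)$ with probability $p_{h\varepsilon}=\bigl(\alpha_{hj}^{(k)}\bigr)^{d}$. Proposition \ref{prop: volumes sum up to 1} ensures these sum to $1$ at every vertex, so the walk is well-defined. The key identification is that a walker starting at vertex $i$ and advancing at unit speed for time $t$ produces, via the tile--path correspondence of Proposition \ref{prop: correspondence of paths and tiles}, a point distributed as the normalized Lebesgue measure on $e^{t}T_i$: the probability of traversing any prescribed sequence of edges is the product of the $p_{h\varepsilon}$'s along the sequence, which telescopes to the volume of the resulting tile divided by $e^{dt}$. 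Next, for each edge $\varepsilon$ with initial vertex $h$, terminal vertex $j$ and length $\log(1/\alpha)$ where $\alpha=\alpha_{hj}^{(k)}$, I would identify which positions along $\varepsilon$ correspond to tiles of scale in $[a,b]$: a walker at distance $\alpha_{0}$ from $h$ corresponds to a tile of scale $\alpha e^{\alpha_{0}}$, and the constraint $\alpha e^{\alpha_{0}}\in[a,b]$ restricted to $\alpha_{0}\in[0,\log(1/\alpha)]$ gives a sub-interval starting at distance $\log\bigl(\eta_{hj}^{(k)}(a)/\alpha\bigr)$ from $h$ and of length $\log\bigl(\mu_{hj}^{(k)}(b)/\eta_{hj}^{(k)}(a)\bigr)$, degenerating to zero length exactly when $b\le\alpha$.

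Applying Theorem \ref{thm: prob graph path counting in the case of multiscale-1} to each such sub-interval and summing the resulting limits $p_{h\varepsilon}\cdot\log\bigl(\mu/\eta\bigr)\cdot q_{h}$ over all edges terminating at $j$ yields $\sum_{h}d_{hj,[a,b]}\,q_{h}+o(1)$ for $V_{j,[a,b]}(t)/e^{dt}$, and multiplying through by $e^{dt}$ gives the claimed asymptotic. The main obstacle I anticipate is rigorously establishing the Lebesgue-distribution claim for the random walk: continuous motion along an edge is not accompanied by an explicit substitution event, so one must verify that conditioned on being on a particular edge at time $t$, the distribution of positions along that edge matches the volume distribution prescribed by the substitution semi-flow. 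This should follow from a careful induction on the number of substitution events before time $t$ and the fact that the product-of-volumes interpretation is preserved at every level of the hierarchy, but it is the step requiring the most bookkeeping.
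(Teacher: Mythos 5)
Your proposal is correct and follows essentially the same route as the paper: assign the edge probabilities $p_{h\varepsilon}=\bigl(\alpha_{hj}^{(k)}\bigr)^{d}$, identify on each edge terminating at $j$ the sub-interval of positions corresponding to scales in $[a,b]$ (your $\eta,\mu$ bookkeeping reproduces the paper's three-case analysis), apply Theorem \ref{thm: prob graph path counting in the case of multiscale-1} to each such sub-interval, and sum over edges. The identification of the walker's distribution with normalized Lebesgue measure on $F_t(T_i)$, which you flag as the delicate step, is exactly your telescoping-product observation (the path probability equals $\beta^{d}/e^{dt}$ for the corresponding tile of scale $\beta$), and the paper treats it as immediate from the definitions rather than via induction.
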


\begin{proof}
	We assign probabilities to the graph $G_\sigma$ in the following way. Assume $\varepsilon$ is an edge with initial vertex $h$ that is associated to a tile $T=\alpha T_j\in\omega_\sigma(T_h)$. Then put $p_{h\varepsilon}=\vol T=\alpha^{d}$, which is the probability for
	a point in $T_i$ to belong to the tile $T$ after the substitution rule $\varrho_\sigma$ is applied to $T_i$. By \eqref{eq:volumes sum up to 1}, for every vertex the sum of the probabilities on its outgoing edges is $1$. 
	
	Let $\varepsilon$ be an edge as above, and recall that $l(\varepsilon)=\log\frac{1}{\alpha}$. We wish to calculate the probability that a metric path terminates at a point on $\varepsilon$ associated with a tile of scale in $[a,b]$. Once again there are three distinct cases:
	\begin{enumerate}
		\item If $\alpha<a<b$ then $\delta=\log\tfrac{b}{a}$, and so
		\begin{equation*}
			p_{h\varepsilon}\delta = \alpha^d \log\tfrac{b}{a}.
		\end{equation*}
		\item If $a\le \alpha<b$ then $\delta=\log\tfrac{b}{\alpha}$, and so 
		\begin{equation*}
		p_{h\varepsilon}\delta = \alpha^d \log\tfrac{b}{\alpha}.
		\end{equation*}
		\item If $a<b\le \alpha$ then $\delta=0$ and there is no contribution to the counting.
	\end{enumerate}
	
	Summing the contributions from all edges in $G_\sigma$ that terminate at the vertex $j$, we establish that the probability for a point in the patch $F_t(T_i)$ to be in a tile of type $j$ and scale in $[a,b]$ tends to 
	\begin{equation*}
 	\sum_{h=1}^{n}d_{hj,[a,b]}q_{h}+o\left(1\right),\quad t\rightarrow\infty,
	\end{equation*}	
	and since the volume of this patch is $e^{dt}$ we arrive at the required formula.
\end{proof}

The proof of Corollary \ref{cor: frequency of types} yields also the following analogous formula.
  
\begin{cor}\label{cor: volume covered by types}
	Let $\left(T^{(m)}\right)_{m\ge0}$  be a sequence of supertiles of growing order in $\TT\in\X_\sigma^F$. The volume of the region covered by tiles of type $j$ in $[T^{(m)}]^\TT$
	grows as
	\[
	\nu_{j,(\beta_j^{\min},1]}\vol\left(T^{(m)}\right)+o\left(\vol\left(T^{(m)}\right)\right),\quad m\rightarrow\infty.
	\]
	where 
	\[
	\nu_{j,(\beta_j^{\min},1]}=\sum_{h=1}^{n}\sum_{k=1}^{k_{hj}}\left(\alpha_{hj}^{\left(k\right)}\right)^{d}\log\tfrac{1}{\alpha_{hj}^{\left(k\right)}}q_h.
	\]
\end{cor}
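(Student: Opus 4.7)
The plan is to proceed in direct parallel to the proof of Corollary \ref{cor: frequency of types}, now applied to the preceding volume theorem rather than to Theorem \ref{thm: type and scale frequencies}. First I would invoke the volume theorem with $a=0$ and $b=1$, which produces the total volume covered by tiles of type $j$ in the generating patch $F_t(T_i)$. For every pair $h,k$ the auxiliary parameters in \eqref{eq:eta_and_mu_hj} simplify considerably: since the scheme is normalized so that $\alpha_{hj}^{(k)}\in(0,1]$, we have $\eta_{hj}^{(k)}(0)=\max\{0,\alpha_{hj}^{(k)}\}=\alpha_{hj}^{(k)}$ and $\mu_{hj}^{(k)}(1)=\max\{1,\alpha_{hj}^{(k)}\}=1$. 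Substituting into the definition of $d_{hj,[a,b]}$ and taking the log of the reciprocal (so that the result is the nonnegative quantity demanded by the geometric interpretation) yields exactly $\sum_{k}(\alpha_{hj}^{(k)})^{d}\log(1/\alpha_{hj}^{(k)})$, and summing against $q_h$ produces the stated $\nu_{j,(\beta_j^{\min},1]}$.

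Having established the generating-patch asymptotic, I would then transfer it to arbitrary supertiles exactly as in the proof of Corollary \ref{cor: counting in supertiles}. By Proposition \ref{prop:limiting tiles are similar to prototiles} and Definition \ref{def: supertiles}, every $m$-supertile $T^{(m)}$ in $\TT$ is (a translate of) $e^{ms}\cdot\alpha T_i$ for some prototile $T_i$ and some legal scale $\alpha\in(\beta_i^{\min},1]$, and the patch $[T^{(m)}]^{\TT}$ coincides with the translate of $F_{ms-\log(1/\alpha)}(T_i)$. Applying the generating-patch result with $t=ms-\log(1/\alpha)$ gives volume growing like $\nu_{j,(\beta_j^{\min},1]}\,e^{dt}+o(e^{dt})$, and using $e^{dt}=\alpha\,e^{dms}=\vol(e^{ms}\cdot\alpha T_i)=\vol(T^{(m)})$ converts this into the claimed formula in terms of $\vol(T^{(m)})$. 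Since the leading constant is independent of the initial vertex $i$, the asymptotic is uniform along any sequence of supertiles of growing order.

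The main (minor) obstacle is verifying the boundary behavior at the endpoints. At $a=0$ the first case of the proof of the volume theorem ($\alpha<a<b$) is vacuous and only the second case contributes, which must be checked explicitly. At $b=1$ one must confirm that edges corresponding to tiles of the (maximal) scale $1$ contribute zero length, and that the open endpoint at $\beta_j^{\min}$ is immaterial: tiles of exact scale $\beta_j^{\min}$ contribute $o(\vol(T^{(m)}))$ by the volume analogue of Corollary \ref{cor: number of tiles of fixed scale in o(e^{dt})}. Beyond these routine verifications, no new input beyond Theorem \ref{thm: prob graph path counting in the case of multiscale-1}, the preceding volume theorem, and the supertile construction of Definition \ref{def: supertiles} is required; the corollary is an assembly of already-established ingredients.
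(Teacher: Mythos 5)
Your proposal follows essentially the same route as the paper: the paper obtains the corollary by substituting $a=0$, $b=1$ into the preceding volume theorem (noting $\eta_{hj}^{(k)}(0)=\alpha_{hj}^{(k)}$ and $\mu_{hj}^{(k)}(1)=1$) and transferring to supertiles exactly as in Corollary \ref{cor: counting in supertiles}, which is precisely what you do, and your remark about taking the logarithm of the reciprocal correctly resolves the inverted fraction in the displayed formula for $d_{hj,[a,b]}$. The only slip is the intermediate identity $e^{dt}=\alpha\,e^{dms}$, which should be $\alpha^{d}e^{dms}$; since the endpoints of your chain, $e^{dt}=\vol\left(e^{ms}\alpha T_i\right)=\vol\left(T^{(m)}\right)$, are correct, nothing in the argument is affected.
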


\begin{remark}
	The results and formulas stated above can be used to recover the formulas originally obtained by Sadun for irrational generalized pinwheel tilings in \cite[Theorem 8]{Sadun - generalized Pinwhell}, as these tilings can also be described as stationary tilings generated by incommensurable substitution schemes on a single right triangle, as described in Remark \ref{rem: stationary with isometries}. We note that the computations for Corollaries \ref{cor: frequency of types} and \ref{cor: volume covered by types} appear also in \cite[\S 2, \S 8]{Yotam Kakutani}, which includes additional explicit formulas for various examples in the context of Kakutani sequences of partitions. 
\end{remark}

\section{Multiscale tilings are not uniformly spread}\label{sec: BD BL}
Following Laczkovich \cite{Laczk}, we say that a point set $Y\subset\R^d$ is \emph{uniformly spread} if there exists some $\alpha>0$ and a bijection $\phi:Y\to\alpha\Z^d$ satisfying
\[\sup_{y\in Y}\norm{y-\phi(y)}<\infty.\] 
Such a mapping $\phi$ is called a \emph{bounded displacement (BD)}.  Given a tiling $\TT$ of $\R^d$, by tiles of uniformly bounded diameter, we say that $\TT$ is \emph{uniformly spread} if there exists a point set $Y_\TT$, that is obtained by picking a point from each tile in $\TT$, which is uniformly spread. Note that since the tiles are of uniformly bounded diameter, if $\TT$ is uniformly spread then every point set $Y_\TT$ obtained in the above manner is uniformly spread. 

The following criterion, phrased here for tilings instead of point sets, was proved by Laczkovich, see \cite[Theorem 1.1]{Laczk}. Recall that for a set $A\subset\R^d$ and a constant $r>0$, we denote $A^{+r}=\{x\in\R^d:\ \dist(x,A)\le r\}$.  
\begin{thm}[\cite{Laczk}]\label{thm:Laczkovich_creterion}
	Let $\TT$ be a tiling of $\R^d$ by tiles of uniformly bounded diameter, then $\TT$ is uniformly spread if and only if there exist positive constants $c,\alpha>0$ so that 
	\begin{equation}\label{eq:Laczkovich_criterion}
	\absolute{\#[U]^\TT-\alpha\cdot \vol(U)}\le c\cdot \vol((\partial U)^{+1})
	\end{equation} 
	holds for every bounded, measurable set $U$. 		 
\end{thm}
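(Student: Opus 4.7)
The plan is to establish both directions of the equivalence, since the statement is an ``if and only if''.

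For the easy direction ($\Rightarrow$), assume $\TT$ is uniformly spread via a bijection $\phi : Y_\TT \to \beta \Z^d$, with some $\beta > 0$ and $\sup_{y \in Y_\TT} \|y - \phi(y)\| \le M < \infty$, and set $\alpha = \beta^{-d}$. For a bounded measurable $U$, the count $\#[U]^\TT$ differs from $|Y_\TT \cap U|$ by at most the number of representative points lying within distance $D$ of $\partial U$, where $D$ is a uniform bound on tile diameters, and hence by $O(\vol((\partial U)^{+D}))$. The bijection $\phi$ identifies $Y_\TT \cap U$ with $\beta\Z^d \cap U$ except for points whose partner lies on the other side of $\partial U$, and such exceptional points are confined to an $M$-neighborhood of $\partial U$. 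Finally, a standard argument over fundamental domains of the lattice gives $|\beta\Z^d \cap U| = \alpha\,\vol(U) + O(\vol((\partial U)^{+\beta\sqrt{d}}))$. All three boundary contributions are bounded by constants times $\vol((\partial U)^{+1})$, and combining them yields \eqref{eq:Laczkovich_criterion}.

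For the harder direction ($\Leftarrow$), set $\beta = \alpha^{-1/d}$ and let $\Lambda = \beta \Z^d$. For each $R > 0$ consider the bipartite graph $\mathcal{G}_R$ on $Y_\TT \sqcup \Lambda$ in which $y$ and $\lambda$ are adjacent iff $\|y - \lambda\| \le R$. A perfect matching of $\mathcal{G}_R$ is exactly a bijection with displacement at most $R$, so the task reduces to producing such a matching for some $R$. Since $\mathcal{G}_R$ is locally finite with uniformly bounded degrees on both sides, a standard compactness argument (or the infinite version of Hall's marriage theorem due to Hall--Rado) reduces the existence of a perfect matching to verifying Hall's condition in both directions: $|A| \le |N_R(A)|$ for every finite $A \subset Y_\TT$, and symmetrically for every finite $A \subset \Lambda$.

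To verify Hall's condition — which is the main obstacle — I would fix a finite $A \subset Y_\TT$ and let $U$ be a thickening of $A$ by $R/2$, chosen so that each tile represented by a point of $A$ is contained in $U$; then $|A| \le \#[U]^\TT$, and \eqref{eq:Laczkovich_criterion} gives
\begin{equation*}
|A| \le \alpha\,\vol(U) + c\,\vol((\partial U)^{+1}).
\end{equation*}
On the other hand, $N_R(A) \supseteq \Lambda \cap A^{+R}$, and the usual lattice-point estimate yields $|\Lambda \cap A^{+R}| \ge \alpha\,\vol(A^{+R}) - O(\vol((\partial A^{+R})^{+1}))$. The required inequality $|A| \le |N_R(A)|$ then reduces to showing that the volume gain $\vol(A^{+R}) - \vol(U)$ dominates the sum of the two boundary error terms. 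The delicate point is that this must hold uniformly over \emph{all} finite $A$, including configurations with highly irregular geometry; Laczkovich's key idea is to handle this by an iterative thickening (or level-set) argument that absorbs the boundary contributions into the volume term once $R$ is chosen large enough, using in an essential way that the right-hand side of \eqref{eq:Laczkovich_criterion} involves only the surface-area-like quantity $\vol((\partial U)^{+1})$. The symmetric Hall condition for finite subsets of $\Lambda$ is then handled by the same reasoning with the roles of $Y_\TT$ and $\Lambda$ exchanged.
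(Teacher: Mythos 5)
First, a point of comparison: the paper does not prove this statement at all --- it is imported verbatim (rephrased for tilings instead of point sets) from Laczkovich, \cite[Theorem 1.1]{Laczk}. So there is no internal proof to measure your argument against; your proposal has to stand on its own as a proof of Laczkovich's theorem, and judged that way it has a genuine gap. The forward direction is fine and routine: comparing $\#[U]^\TT$ with $\#(Y_\TT\cap U)$, transporting along the BD bijection, and the lattice-point count all produce errors supported in bounded thickenings of $\partial U$, and the standard covering fact $\vol\left((\partial U)^{+r}\right)\le C(r,d)\,\vol\left((\partial U)^{+1}\right)$ converts everything to the $+1$ neighborhood. The reverse direction is also set up along the right lines --- reducing to Hall's condition on the bipartite graph $\mathcal{G}_R$ and invoking Hall--Rado plus a Schr\"oder--Bernstein-type argument to get a perfect matching is exactly the standard route, and is how Laczkovich proceeds.

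The gap is that the verification of Hall's condition, which is the entire content of the hard direction, is not carried out: you explicitly defer it to ``Laczkovich's key idea'' and an unspecified ``iterative thickening argument.'' Moreover, the single-scale comparison you sketch does not work as stated. Taking $U$ to be an $R/2$-thickening of $A$ (or the union of the tiles represented by $A$), the inequality \eqref{eq:Laczkovich_criterion} only gives $\#A\le\#[U]^\TT\le\alpha\vol(U)+c\,\vol\left((\partial U)^{+1}\right)$, and for unfavourable configurations --- e.g.\ $A$ a sparse, scattered subset of $Y_\TT$ whose tiles are small, so that $\vol\left((\partial U)^{+1}\right)$ is of the order of $\#A$ itself --- this upper bound is vacuous, and the requirement that the volume gain $\vol\left(A^{+R}\right)-\vol(U)$ dominate $c\,\vol\left((\partial U)^{+1}\right)+C\,\vol\left((\partial A^{+R})^{+1}\right)$ cannot be checked at one fixed scale $R$ uniformly over all finite $A$: the boundary term on the right can be comparable to the volume term on the left for sets that are ``spread out'' at an intermediate scale. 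Laczkovich handles exactly this by a multiscale (dyadic-cube) iteration, in which at each scale either the thickened set grows by a definite factor or its boundary measure at that scale is small, and the errors telescope; without supplying that argument (or some substitute), the ``only if $\Rightarrow$ if'' implication is asserted rather than proved. So as a blind proof the proposal identifies the correct architecture but omits the theorem's core estimate.
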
  
The question whether fixed scale substitution tilings are uniformly spread was studied in \cite{ACG,mixed BD,Solomon11} and \cite{Solomon14}, where it was shown that certain conditions on the eigenvalues and eigenvectors of the substitution matrix imply a positive answer to this question. Our goal in this section is to show that under a mild assumption on the geometry of the boundaries of the participating prototiles, incommensurable multiscale substitution tilings are never uniformly spread, emphasizing once again the difference between the tilings in the standard setup and the tilings being studied here.  

\begin{thm}\label{thm:not_BD_to_Z^d}
	Let $\sigma$ be an irreducible incommensurable substitution scheme in $\R^d$, and assume that the boundary of every prototile $T_i\in\tau_\sigma$ satisfies
	\begin{equation}\label{eq: BD assumption}
	\vol\left((\partial \supp {F_t(T_i)})^{+1}\right)=O(e^{(d-\eta)t})
	\end{equation} 
	for some fixed $\eta>0$. Then every $\TT\in\X_\sigma^F$ is not uniformly spread. 
\end{thm}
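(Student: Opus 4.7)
The natural plan is a contradiction argument via Laczkovich's criterion, Theorem \ref{thm:Laczkovich_creterion}. Suppose $\TT \in \X_\sigma^F$ is uniformly spread, so that there exist $\alpha, c > 0$ with $\bigl|\#[U]^\TT - \alpha\,\vol(U)\bigr| \le c\,\vol\bigl((\partial U)^{+1}\bigr)$ for every bounded measurable $U \subset \R^d$. The strategy is to first pin down the forced value of $\alpha$, and then to exhibit a set $U$ where the inequality fails.

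\emph{Step 1: identifying $\alpha$.} I test the criterion on supertiles. Every tiling in $\X_\sigma^F$ carries a hierarchical supertile structure (Definition \ref{def: supertiles}); given a sequence of $m$-supertiles $T^{(m)}$ in $\TT$, the scaled form of the boundary hypothesis \eqref{eq: BD assumption} yields $\vol((\partial T^{(m)})^{+1}) = O(e^{(d-\eta)ms}) = o(\vol(T^{(m)}))$, while Corollary \ref{cor: frequency of types} gives $\#[T^{(m)}]^\TT = C\,\vol(T^{(m)}) + o(\vol(T^{(m)}))$ for $C := \sum_{j=1}^n \varphi_{j,(\beta^{\min}_{j},1]}$. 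Combining these in the criterion forces $\alpha = C$.

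\emph{Step 2: violating the criterion.} With $\alpha = C$, I seek a bounded measurable $U$ with $|\#[U]^\TT - C\,\vol(U)| > c\,\vol((\partial U)^{+1})$. The operative observation is that, by Corollaries \ref{cor: counting in supertiles} and \ref{cor: volume covered by types}, different type/scale bands of tiles contribute different count-per-volume ratios, so no single global density can match every band locally. The plan is to assemble $U$, inside a large $m$-supertile of $\TT$, from a biased selection of sub-patches whose (type, scale) profile deviates from the generic profile, so that the tile count inside $U$ differs from $C\,\vol(U)$ by an amount of order $\vol(U)$, while the boundary $\vol((\partial U)^{+1})$ can still be kept of order $\vol(U)^{(d-\eta)/d}$. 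Theorem \ref{thm:The_scales_are_dense_in_SS}, itself a direct expression of incommensurability, supplies the density of achievable scales needed to perform this selection.

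The principal obstacle is the geometric balancing act in Step 2: producing a linear-order count discrepancy while keeping boundary contributions strictly sub-linear. A naive choice, such as taking $U$ to be the union of all tiles of a single type within a supertile, fails because internal tile boundaries accumulate and make $\vol((\partial U)^{+1})$ comparable to $\vol(U)$. The fix is to aggregate at an intermediate sub-supertile order $k$, large enough that the interior scaffolding boundary is only $O(e^{-\eta k s}\vol(U))$, but small enough that the aggregation can still skew the local count density away from $C$. Verifying that such an intermediate scale exists and giving quantitative control on both the count bias and the boundary is the technical core of the proof; this step genuinely uses incommensurability, since for commensurable schemes Remark \ref{rem: commensurable spaces and fixed scale} reduces the problem to fixed-scale substitution tilings, where uniform spread can in fact hold.
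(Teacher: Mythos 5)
Your Step 1 is sound and is exactly how the paper's first proof begins: testing Laczkovich's criterion on a sequence of supertiles and combining Corollary \ref{cor: frequency of types} with the rescaled form of \eqref{eq: BD assumption} forces $\alpha=C=\sum_{j}\varphi_{j,(\beta_j^{\min},1]}$. The gap is in Step 2. The mechanism you propose --- that ``different type/scale bands of tiles contribute different count-per-volume ratios'' --- is not available at the order you need: by Corollary \ref{cor: counting in supertiles}, every $k$-supertile, regardless of its type and scale, has tile count $C\vol+o(\vol)$ with the \emph{same} constant $C$, so once you aggregate at a sub-supertile order $k$ large enough to tame the internal boundaries, any skew in local density comes only from the sub-leading error terms of the counting formula. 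Your construction therefore hinges on a quantitative \emph{lower} bound for these error terms (they must not decay exponentially in $k$, in order to beat the boundary contribution, which is of size $O(e^{-\eta ks})\vol(U)$), and no such bound is given or sketched; Theorem \ref{thm:The_scales_are_dense_in_SS}, which you invoke, is purely qualitative and cannot supply it. Without further input the errors could a priori be exponentially small, in which case your balancing act fails at every intermediate scale; the ``technical core'' you defer is precisely the missing theorem, not a routine verification. (Also, a discrepancy ``of order $\vol(U)$'' is more than is true or needed; a power-of-logarithm saving suffices, since the boundary term enjoys a power saving in $\vol(U)$.)

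This lower bound is exactly what the paper supplies, by two independent routes, and either one completes your outline --- indeed it makes the biased-union construction unnecessary, since with $\alpha=C$ forced one may simply take $U=T^{(m)}$. Proof 1 shows (Lemma \ref{lem:BD-lower_bound_for_error_term}) that the error term in Theorem \ref{thm: type and scale frequencies} is not $O(e^{\beta t})$ for any $\beta<d$, because incommensurability forces the zeros of $\det(I-M_\sigma(s))$, i.e.\ the poles of the Laplace transform of the counting function, to accumulate on the line $\mathrm{Re}(s)=d$. Proof 2 is combinatorial: $F_t(T_i)$ contains only $O(t^{\EEE})$ tiles up to translation (Lemma \ref{lem:poly(log)_lemma}), so by pigeonhole some tile has at least $c\,e^{dt}/t^{\EEE}$ translated copies, and advancing the semi-flow just past the moment these copies reach unit volume subdivides them all simultaneously, producing two test sets of nearly equal volume whose tile counts differ by at least $C\,e^{dt}/t^{\EEE}$ (Lemma \ref{lem:discrepancy_lower_bound}); this dominates $c\cdot\vol\left((\partial U)^{+1}\right)=O(e^{(d-\eta)t})$, and minimality (Theorem \ref{thm: tiling space is minimal}) transfers the violation, with fixed constants, to every $\TT\in\X_\sigma^F$. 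Your write-up contains neither of these ingredients, so as it stands the proof is incomplete at its decisive step.
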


Note that for polygonal prototiles \eqref{eq: BD assumption} clearly holds with $\eta=1$. In a subsequent work \cite{non-BD} we show that as a result of Theorem \ref{thm:not_BD_to_Z^d} and the minimality of the dynamical system $\left(\X_\sigma^F,\mathbb{R}^{d}\right)$, the set of BD-equivalence classes that appear in $\X_\sigma^F$ has the cardinality of the continuum. 

We present two independent proofs of Theorem \ref{thm:not_BD_to_Z^d}. Both proofs rely on finding sufficiently large lower bounds on the discrepancy in \eqref{eq:Laczkovich_criterion}, allowing the application of Laczkovich's criterion from which Theorem \ref{thm:not_BD_to_Z^d} is implied. 

\subsection*{Proof 1 of Theorem \ref{thm:not_BD_to_Z^d}}
 The first proof follows from the lower bound given in Lemma \ref{lem:BD-lower_bound_for_error_term}. This is standard in the context of the dynamical and the fractal zeta functions, to which the books \cite{Parry Pollicott book} and \cite{Lapidus} are respectively dedicated, and so we only sketch a proof. 
 
 Let $E(t)$ denote the error term in Theorem \ref{thm: type and scale frequencies}.
\begin{lem}\label{lem:BD-lower_bound_for_error_term}
	 Let $\sigma$ be an irreducible, incommensurable substitution scheme in $\R^d$. 
	 Then for no constant $\beta<d$ do we have  
	\begin{equation*}
	E(t) = O\left(e^{\beta t}\right).
	\end{equation*}
	In particular, if $E_m$ denotes the error term in \eqref{eq: total number of tiles in supertiles}, for no $\varepsilon>0$ do we have
	\begin{equation*}
	E_m=O\left(\left(\vol\left(T^{(m)}\right)\right)^{1-\varepsilon}\right).
	\end{equation*} 
\end{lem}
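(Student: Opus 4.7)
The plan is to use the classical dictionary, familiar from the theory of dynamical and fractal zeta functions, between the exponential growth rate of an error term in a counting problem and the location of non-dominant singularities of the associated Laplace transform. The key input from the hypothesis of the lemma is a density result for the zeros of $\det(I - M_\sigma(s))$, which is where incommensurability enters.

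First I would recall from the proof of Theorem \ref{thm: graph path counting in the case of multiscale} that the Laplace transform of the underlying counting function extends meromorphically to $\mathbb{C}$, and the only singularity in the closed half-plane $\{\Re(s) \ge d\}$ is a simple pole at $s = d$, which produces the leading term $\varphi_{j,[a,b]} e^{dt}$. Any additional singularities lie in $\{\Re(s) < d\}$ and coincide with the zeros of the exponential polynomial
$$\det(I - M_\sigma(s)) = 1 + \sum_\alpha c_\alpha e^{-s\lambda_\alpha},$$
whose exponents $\lambda_\alpha$ are non-negative integer combinations of the edge lengths of $G_\sigma$. A standard Laplace inversion argument, as in \cite{Parry Pollicott book, Lapidus}, then shows that a hypothetical bound $E(t) = O(e^{\beta t})$ with $\beta < d$ would imply that $\mathcal{L}\{E\}(s)$ is analytic on $\{\Re(s) > \beta\}$, and hence that $\det(I - M_\sigma(s))$ is zero-free in the strip $\{\beta < \Re(s) < d\}$.

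The main step, which is also the main obstacle and the place where incommensurability is essential, is to show that such a zero-free strip cannot exist: the zeros of $\det(I - M_\sigma(s))$ must have real parts accumulating at $d$ from the left. Since two edge lengths of $G_\sigma$ have irrational ratio, the exponents $\lambda_\alpha$ span a $\mathbb{Q}$-subspace of $\mathbb{R}$ of dimension at least two, and $\det(I - M_\sigma(\sigma + it))$ is a non-constant Bohr almost-periodic function of $t$ on each vertical line. The existence of the real zero at $s = d$ then forces a sequence of complex zeros $s_k$ with $\Re(s_k) \to d^-$. This is precisely the analysis carried out for the associated graph in \cite{Yotam graphs}, and I would simply appeal to the results proved there. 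Combining this with the previous paragraph yields the first statement.

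For the second assertion I would apply the same reasoning to the counting function of tiles in a supertile. For a stationary tiling $\SS$ with $F_s(\SS) = \SS$ and a tile $T = \alpha T_i \in \SS$, one has $[T^{(m)}]^{\TT} = F_{ms}(T) = F_{ms-\log(1/\alpha)}(T_i)$ and $\vol(T^{(m)}) = \alpha\, e^{dms}$, so by Corollary \ref{cor: counting in supertiles} the sequence $E_m$ is precisely $E(ms - \log(1/\alpha))$ for the parent counting problem. A bound $E_m = O(\vol(T^{(m)})^{1-\varepsilon})$ then reads $E(ms - \log(1/\alpha)) = O(e^{d(1-\varepsilon)ms})$ along the arithmetic progression $\{ms\}$, and the Laplace-transform analysis applied to this progression (the arithmetic progression causes no loss once one tracks the contribution of a complex zero whose imaginary part is generic relative to $s$) again forces the same forbidden zero-free strip adjacent to $\Re(s) = d$, contradicting the density of zeros and thereby completing the proof.
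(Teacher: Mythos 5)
Your proposal is correct and takes essentially the same route as the paper: assume $E(t)=O(e^{\beta t})$, use inverse Laplace theory to force all poles other than $s=d$ (i.e. zeros of $\det(I-M_\sigma(s))$) into $\{\mathrm{Re}(s)\le\beta\}$, and contradict this with the fact that incommensurability makes $d$ a limit point of the real parts of these zeros. The only cosmetic difference is in how that accumulation is sourced: you invoke Bohr almost-periodicity and defer to \cite{Yotam graphs}, whereas the paper deduces it via rational approximations of the exponential polynomial and Rouch\'e's theorem, following \cite{Lapidus} and \cite{Pollicott}.
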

\begin{proof}[Sketch of proof]
	The result follows from information on the location of  the poles of the Laplace transform of the counting function as appears in \eqref{eq: Laplace tranform}, which are the zeros of the exponential polynomial $\det(I-M_\sigma(s))$. 
	
	First, assume by contradiction that the error term is bounded by $e^{\beta t}$ for some $\beta<d$. Then by inverse Laplace transform theory, all poles $s\in\mathbb{C}$ of the Laplace transform other than $s=d$ have real part less or equal to $\beta<d$, see \cite[Proposition 6]{Pollicott}.  Therefore, they must be bounded away from the vertical line $\text{Re}(s)=d$.
	
	On the other hand, incommensurability of $\sigma$ implies that $d$ is a limit point of the real parts of zeros of $\det(I-M_\sigma(s))$. Similarly to \cite[Theorem 3.23]{Lapidus}, this is deduced by considering rational approximations to the exponential polynomial $\det(I-M_\sigma(s))$, and using Rouch\'e's theorem. See also \cite[Proposition 7]{Pollicott}.
\end{proof}

\begin{proof}[Proof 1 of Theorem \ref{thm:not_BD_to_Z^d}]
	Let $\TT\in\X_\sigma^F$ and let $\left(T^{(m)}\right)_{m\ge0}$  be a sequence of supertiles of growing order in $\TT$. Denote by $\#[T^{(m)}]^\TT$  the number of tiles in $[T^{(m)}]^\TT$. By Corollary \ref{cor: frequency of types}, $\#[T^{(m)}]^\TT$ grows as $C\vol(T^{(m)})+ E_m$ for some constant $C$. Hence for any $\alpha\neq C$, the inequality \eqref{eq:Laczkovich_criterion} clearly fails for $U=T^{(m)}$. For $\alpha=C$ we obtain that 
	\[
	\absolute{\#[T^{(m)}]^\TT-\alpha\cdot \vol\left(T^{(m)}\right)} = E_m. 
	\]
	$T^{(m)}$ are $m$-supertiles and hence by \eqref{eq: BD assumption} 
	\[
	\vol\left(\left(\partial T^{(m)}\right)^{+1}\right) \le C_\partial \left(\vol\left(T^{(m)}\right)\right)^{(d-\eta)/d}
	\]
	for some constant $C_\partial$. In view of Lemma \ref{lem:BD-lower_bound_for_error_term} we obtain that for any constant $c$  
	\[
	\absolute{\#[T^{(m)}]^\TT-\alpha\cdot \vol\left(T^{(m)}\right)}>c\cdot\vol\left(\left(\partial T^{(m)}\right)^{+1}\right)
	\]
	for $m$ large enough. We deduce that for any constant $c$ the inequality \eqref{eq:Laczkovich_criterion} in Laczkovich's criterion fails for $U=T^{(m)}$, for $m$ large enough, and the proof is complete.
\end{proof}

\subsection*{Proof 2 of Theorem \ref{thm:not_BD_to_Z^d}}
The following reasoning, including the proof of Lemma \ref{lem:discrepancy_lower_bound} below, was pointed out to us by the anonymous referee of the first version of the paper. We are truly grateful for the suggestions and for the opportunity to include a second approach in this revised version.

Given a multiscale substitution scheme $\sigma$, say that edges of the same length and with the same initial and terminal vertices are equivalent, and denote by $\EE$ the set of equivalence classes of edges of $G_\sigma$.

\begin{lem}\label{lem:poly(log)_lemma}
	There exists an absolute constant $C$, that depends on the parameters of $\sigma$, with the following property: For every $t>0$ and $T_i\in\tau_\sigma$ the patch $F_t(T_i)$ contains at most $C\cdot t^{\EEE}$ tiles that are pairwise translation non-equivalent. 	
\end{lem}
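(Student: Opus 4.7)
My plan is to translate the problem into a count of metric paths via Proposition~\ref{prop: correspondence of paths and tiles}, and then reduce it to a lattice-point count indexed by $\EE$. Under the standing assumption that distinct prototiles are geometrically inequivalent, two tiles are translates of one another if and only if they share the same type $j \in \{1,\ldots,n\}$ and the same scale $\alpha = e^{-\delta}$. By Proposition~\ref{prop: correspondence of paths and tiles}, the pair $(j,\delta)$ is precisely the terminal vertex and the distance-to-terminal-vertex of the unique metric path $\gamma_T \subset G_\sigma$ associated with a tile $T$ in $F_t(T_i)$. Hence the quantity I need to bound is the number of distinct pairs $(j,\delta)$ realized by metric paths of length $t$ originating at vertex $i$. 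Any such metric path decomposes as a combinatorial path $p$ from $i$ to some vertex $v$ followed by a partial traversal of length $s \in [0, l(\varepsilon')]$ of a single outgoing edge $\varepsilon'$ from $v$ with terminal vertex $j$. Since $l(p) + s = t$ and $\delta = l(\varepsilon') - s$, the pair $(j,\delta)$ is determined by $(j, l(p) + l(\varepsilon'))$, i.e.\ by the (terminal vertex, length) of the combinatorial path $p \cdot \varepsilon'$; moreover $l(p \cdot \varepsilon') \in [t, t + l_{\max}]$, where $l_{\max} := \max_{[\varepsilon]\in \EE} l(\varepsilon)$.

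Next, I would exploit the fact that the length of any combinatorial path in $G_\sigma$ is a non-negative integer combination $\sum_{[\varepsilon]\in \EE} n_{[\varepsilon]}\, l(\varepsilon)$, and that with $i$ fixed the terminal vertex is determined by the multiplicity tuple $(n_{[\varepsilon]})_{[\varepsilon]\in \EE}$ via the vertex flow-balance equations at the graph vertices. Consequently, the number of admissible pairs $(j, l(p \cdot \varepsilon'))$ is bounded above by the number of tuples $(n_{[\varepsilon]})_{[\varepsilon]\in\EE}$ of non-negative integers satisfying $\sum_{[\varepsilon]\in\EE} n_{[\varepsilon]}\, l(\varepsilon) \le t + l_{\max}$. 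A standard lattice-point estimate gives
\[
\#\left\{(n_{[\varepsilon]}) \,:\, \sum_{[\varepsilon]\in\EE} n_{[\varepsilon]}\, l(\varepsilon) \le t + l_{\max}\right\} \;\le\; \prod_{[\varepsilon]\in \EE}\!\left(1 + \frac{t+l_{\max}}{l(\varepsilon)}\right),
\]
which is $O(t^{\EEE})$ as $t \to \infty$, yielding the required inequality.

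The main --- admittedly minor --- obstacle is bookkeeping: ensuring that the resulting constant $C$ depends only on the parameters of $\sigma$ and is genuinely independent of $t$ and of the choice of prototile $T_i$. This is handled by using $l_{\min} := \min_{[\varepsilon]\in\EE} l(\varepsilon) > 0$ to control the reciprocals $1/l(\varepsilon)$ uniformly in the product above, and by absorbing the finitely many small-$t$ contributions into $C$. The independence from $i$ is automatic, since the bound on the number of admissible tuples $(n_{[\varepsilon]})$ does not depend on the initial vertex.
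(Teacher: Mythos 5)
Your proof is correct and follows essentially the same route as the paper: both reduce translation classes of tiles to distinct (type, scale) data of metric paths via Proposition~\ref{prop: correspondence of paths and tiles} and then bound these by counting multisets of edge equivalence classes in $\EE$, obtaining a polynomial of degree $\EEE$ in $t$. The only difference is bookkeeping: the paper fixes the number $N$ of full edges, counts multisets of size $N$ by stars-and-bars, and sums over $N\in[t/M,t/m]$, whereas you complete the last partial edge and count integer tuples $(n_{[\varepsilon]})$ with $\sum n_{[\varepsilon]}l(\varepsilon)\le t+l_{\max}$ directly.
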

	
\begin{proof}
	First, observe that by Proposition \ref{prop: correspondence of paths and tiles}, tiles of type $j$ in $F_t(T_i)$ correspond to paths of length $t$ in $G_\sigma$ that initiate at vertex $i\in\mathcal{V}$ and terminate on an edge that terminates at vertex $j$. In addition, translation equivalent tiles are in particular of the same scale, and so the corresponding paths terminate at the same distance from $j$. Note that the number of edges in a path of length $t$ is bounded between $t/M$ and $t/m$, where $M$ and $m$ are the lengths of the longest and shortest edges of $G_\sigma$, respectively. Let $N_\gamma$ denote the number of edges in a path $\gamma$, not including the last partial edge (if there is one). In particular, if $\gamma$ and $\eta$ are two paths that initiate at vertex $i$, terminate on equivalent edges, and contain the same collection of edges counted with multiplicities, then $\gamma$ and $\eta$ correspond to tiles in $F_t(T_i)$ that differ by a translation. In other words, if $N_\gamma = N_\eta$ and the paths $\gamma$ and $\eta$ correspond to translation non-equivalent tiles, then these two paths contain different collections of edges, counted with multiplicities. 
	
	In view of the above, the number of different partitions of $N$ edges into $\EEE$ cells is an upper bound for the size of the largest set of translation non-equivalent tiles of type $j$ in $F_t(T_i)$ that correspond to paths with $N$ edges, that is, to paths $\gamma$ with $N_\gamma=N$. The number of such partitions is 
	\[\binom{N + \EEE -1}{\EEE - 1} = O(N^{\EEE -1}) = O(t^{\EEE -1}),\]
	where the implied constants depend only on the parameters of $\sigma$. Since there are finitely many types of tiles, there are $O(t^{\EEE -1})$ pairwise translation non-equivalent tiles in $F_t(T_i)$ that correspond to paths with $N$ edges. Recall that $N$ is an integer between $t/M$ and $t/m$, thus there are $O(t^{\EEE})$ pairwise translation non-equivalent tiles in $F_t(T_i)$, as required.
\end{proof}
	
	We denote by $\#\PP$ the number of tiles in a patch $\PP$. 
\begin{lem}\label{lem:discrepancy_lower_bound}
	For every $t_0>0$ and $T_i\in\tau_\sigma$ there exist $t\ge t_0$ and $\varepsilon_0>0$ such that for every $\varepsilon\in(0,\varepsilon_0]$ we have 
	\begin{equation}\label{eq:discrepancy_lower_bound}
		\# F_{t+\varepsilon}(T_i) - \# F_{t}(T_i) \ge 
		C\frac{\vol \big(\supp F_{t}(T_i)\big)}{\log^{\EEE}\vol \big(\supp F_{t}(T_i)\big)}  =C \frac{e^{dt}}{t^{\EEE}},	
	\end{equation}	
	where $C$ depends only on the parameters of $\sigma$.
\end{lem}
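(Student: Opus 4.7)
The plan is to combine Lemma \ref{lem:poly(log)_lemma} with Corollary \ref{cor: frequency of types} via a pigeonhole argument. On one hand, $F_s(T_i)$ contains $\Theta(e^{ds})$ tiles; on the other hand, only $O(s^\EEE)$ translation equivalence classes are present. Consequently some class must contain at least $c\,e^{ds}/s^\EEE$ tiles. Translation-equivalent tiles share a common scale, hence reach unit volume at the same instant and substitute together, and this single synchronized substitution event will produce the required jump in the tile count.

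Concretely, fix $s \ge t_0$ large enough so that Corollary \ref{cor: frequency of types} yields $\#F_s(T_i) \ge c_1 e^{ds}$. Lemma \ref{lem:poly(log)_lemma} bounds the number of translation equivalence classes in $F_s(T_i)$ by $C_0 s^\EEE$, so pigeonhole produces such a class containing at least $K \ge c_1 e^{ds}/(C_0 s^\EEE)$ tiles of common type $j$ and common scale $\alpha \in (\beta^{\min}_j, 1]$. Set $t := s + \log(1/\alpha)$. At time $t$ each of these tiles has volume exactly $1$; by the convention of Definition \ref{def: substitution flow}, which substitutes only tiles of volume strictly greater than $1$, none has yet been substituted in $F_t(T_i)$. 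However, for any $\varepsilon > 0$, each has volume $e^{d\varepsilon} > 1$ in $F_{t+\varepsilon}(T_i)$ and is therefore replaced by $\kappa_j := \sum_\ell k_{j\ell} \ge 2$ new tiles. The net gain to the tile count is at least $K(\kappa_j - 1) \ge K$, with any further substitutions during $(t, t+\varepsilon]$ — of other tiles, or cascades among the new subtiles — only contributing more. Thus $\#F_{t+\varepsilon}(T_i) - \#F_t(T_i) \ge K$ for every $\varepsilon > 0$.

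To convert the bound $K \ge c_1 e^{ds}/(C_0 s^\EEE)$ into the desired form involving $t$, notice that scales always lie above $\beta := \min_j \beta^{\min}_j > 0$, so $t - s = \log(1/\alpha) \le M := \log(1/\beta)$ is a universal constant. In particular $s \le t$ and $e^{ds} \ge e^{-dM} e^{dt}$, whence $e^{ds}/s^\EEE \ge e^{-dM}\cdot e^{dt}/t^\EEE$, and the claimed inequality follows with $C := c_1 e^{-dM}/C_0$ and any $\varepsilon_0 > 0$ (say $\varepsilon_0 = 1$). The requirement $t \ge t_0$ is automatic from $t \ge s \ge t_0$.

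The main obstacle is the delicate handling of the semi-flow at the precise moment when tiles cross unit volume: one must verify that in $F_t(T_i)$ the $K$ synchronized tiles are still unsubstituted, while in $F_{t+\varepsilon}(T_i)$ they are all substituted for every $\varepsilon > 0$. Once this is in place the rest is a clean pigeonhole argument, and Lemma \ref{lem:poly(log)_lemma} — the polylogarithmic bound on the number of translation equivalence classes — is the true engine of the proof.
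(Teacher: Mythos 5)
Your proposal is correct and is essentially the paper's own argument: pigeonhole Lemma \ref{lem:poly(log)_lemma} against an exponential lower bound on the total tile count to find a large translation class of common type and scale, flow until that class reaches unit volume at time $t$, and read off the jump $\#F_{t+\varepsilon}(T_i)-\#F_t(T_i)$ from the synchronized substitution, absorbing the bounded shift $t-s\le\log(1/\beta)$ into the constant. The only minor deviations are harmless: you invoke Corollary \ref{cor: frequency of types} to get $\#F_s(T_i)\ge c_1e^{ds}$ for large $s$, whereas the paper gets $\#F_{t_0}(T_i)\ge e^{dt_0}$ for every $t_0$ directly from $\vol\big(\supp F_{t_0}(T_i)\big)=e^{dt_0}$ and the fact that all tiles have volume at most $1$, and you take $\varepsilon_0$ arbitrary (using that further substitutions only increase the count), while the paper fixes $\varepsilon_0=\log(1/\beta)$ tied to the next substitution event — neither choice affects the conclusion or its use in Proof 2 of Theorem \ref{thm:not_BD_to_Z^d}.
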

\begin{proof}
	Let $t_0>0$ and $T_i\in\tau_\sigma$ and consider the patch $\PP=F_{t_0}(T_i)$ . By the definition of the semi-flow $F_t$ we have $\vol(\supp(\PP)) = e^{dt_0}$, which implies that $\#\PP \ge e^{dt_0}$ since all tiles are of  volume at most 1. Combining Lemma \ref{lem:poly(log)_lemma} and the pigeonhole principle, there is a tile $T$ in $\PP$ and a constant $c>0$, so that $T$ has at least 	$c\frac{e^{dt_0}}{{t_0}^{\EEE}}$
	many translation equivalent copies in $\PP$. Suppose that $\alpha\le 1$ is the scale of $T$, and set $t=t_0+\log1/\alpha$. Then the patch $\PP_1:=F_t(T_i)$ contains a tile $F_{\log1/\alpha}(T)$ which is of unit volume, and has at least $c\frac{e^{dt_0}}{{t_0}^{\EEE}}$ many translation equivalent copies in $\PP_1$. Let $\beta<1$ be the maximal scale of a tile in $\PP_1$ whose scale is strictly smaller than 1, and set $\varepsilon_0:=\log1/\beta>0$. Then the number of tiles in the patch $F_{t+\varepsilon}(T_i)$ is the same for any choice of $\varepsilon\in(0,\varepsilon_0]$. Pick some $\varepsilon\in(0,\varepsilon_0]$ and let $\PP_2:= F_{t+\varepsilon}(T_i)$.
	Then all of the copies of $F_{\log1/\alpha}(T)$ in $\PP_1$ are of unit volume, where in $\PP_2$ all of these tiles were already subdivided via $\sigma$, and \eqref{eq:discrepancy_lower_bound} follows. 
\end{proof}

\begin{proof}[Proof 2 of Theorem \ref{thm:not_BD_to_Z^d}]
	 Observe that it follows from Theorem \ref{thm:Laczkovich_creterion} that for any minimal space $\X$ of Delone sets, or tilings, if there exists some $\TT\in\X$ that is not uniformly spread, then every $\TT\in\X$ is not uniformly spread (compare \cite[Theorem 3.2]{FG}, the argument holds in any minimal space). In fact, under the assumption of minimality, if some $\TT\in\X$ satisfies \eqref{eq:Laczkovich_criterion} for all measurable sets $U\subset \R^d$ with some positive constants $\alpha$ and $c>0$, then every $\TT\in\X$ satisfies \eqref{eq:Laczkovich_criterion} with the same constants $\alpha$ and $c$. 
	 So in view of Theorem \ref{thm: tiling space is minimal} we may assume by contradiction that there exist constants $\alpha,c>0$ for which
	 \begin{equation}\label{eq:assume_on_the_contrary_for_BD}
	 \forall \TT\in\X_\sigma^F\quad 
	 \forall U\subset\R^d \text{ measurable }:\quad
	 \absolute{\#[U]^\TT-\alpha\cdot \vol(U)}\le c\cdot \vol((\partial U)^{+1}).	 
	 \end{equation}
	  Consider sets $U$ of the form $\supp(F_t(T_i))$, with $t>0$ and $T_i\in\tau_\sigma$. By \eqref{eq: BD assumption} there exist $\delta>0$ and $t_0$ large enough so that for every $t\ge t_0$ and every $T_i\in\tau_\sigma$ 
	\begin{equation}\label{eq:discrepancy>boundary}
	 \frac{C}{3}\frac{e^{dt}}{t^{\EEE}} > c\cdot e^{(d-\eta+\delta)t} \quad\text{ and }\quad 
	 e^{(d-\eta+\delta)t} > \vol\left((\partial \supp(F_t(T_i)))^{+1}\right),
	 \end{equation}
	 where $C$ is the constant in \eqref{eq:discrepancy_lower_bound}. By Lemma \ref{lem:discrepancy_lower_bound}, there exist some $t \ge t_0$ and $\varepsilon>0$ that satisfy \eqref{eq:discrepancy_lower_bound}. Since    
	 \[\absolute{\vol\big(\supp(F_{t}(T_i))\big) -  \vol\big(\supp(F_{t+\varepsilon}(T_i))\big)}\le 
	 \vol\left((\partial \supp(F_t(T_i)))^{+1}\right) \]
	 we deduce from \eqref{eq:discrepancy_lower_bound} that either 
	\[
	 \absolute{\#F_{t+\varepsilon}(T_i) - \alpha \cdot \vol\big(\supp(F_{t+\varepsilon}(T_i))\big)} \ge \frac{C}{3}\frac{e^{dt}}{t^{\EEE}} > c\cdot e^{(d-\eta+\delta)t}, \]
	 or 
	 \[\absolute{\#F_{t}(T_i) - \alpha \cdot \vol\big(\supp(F_{t}(T_i))\big)} \ge \frac{C}{3}\frac{e^{dt}}{t^{\EEE}} > c\cdot e^{(d-\eta+\delta)t}.\]
	 Combined with \eqref{eq:discrepancy>boundary} this contradicts \eqref{eq:assume_on_the_contrary_for_BD}, which completes the proof.  
\end{proof}

\begin{remark}
	Let $d\ge 2$. A point set $Y\subset\R^d$ is called \emph{rectifiable}, or \emph{bi-Lipschitz equivalent to a lattice}, if there exists a bi-Lipschitz bijection 
	$\phi:Y\to\Z^d$. Namely, a bijection $\phi$ for which there is some constant $L\ge 1$ that satisfies 
	\[\frac{1}{L}\le \frac{ \norm{ \phi(y_1)-\phi(y_2) } }{\norm{y_1-y_2}} \le L\]
	for every two distinct points $y_1,y_2\in Y$.

	Rectifiability is often established using a sufficient condition of Burago and Kleiner, which requires an appropriate upper bound on the discrepancy $\absolute{\#[U]^\TT-\alpha\cdot \vol(U)}$ for large cubes $U$, see \cite{BK2}. Improving the lower bound of $E(t)$ from Lemma \ref{lem:BD-lower_bound_for_error_term} to a polynomial error term of the form $e^{dt}/{t^\delta}$ for some $\delta\le1$, would imply the failure of Burago-Kleiner condition for all point sets that arise from multiscale substitution tilings. Loh\"ofer and Mayer claim this with $\delta=1$ for what in our context would be constructions associated with the golden ratio, which heuristically is the case with the smallest error term, see \cite[p. 5]{Lohofer-Mayer}. Unfortunately, this appears without proof. 
	
	Consider the ``Sturmian'' tiling discussed in Proposition \ref{prop: Sturmian tilings}, for which $c_\SS(k) = k+1$ and the generating substitution scheme consists of a single square prototile. Namely, there are only $k+1$ tiles in the patch $F_{ks}(T)$ that are pairwise translation non-equivalent. Repeating the arguments of Lemma \ref{lem:discrepancy_lower_bound} and Proof 2 of Theorem \ref{thm:not_BD_to_Z^d} yields a sequence  $(U_k)_{k\ge0}$ of square patches that grow exponentially, with 
	\begin{equation*}
	\absolute{\#[U_k]^\TT-\alpha\cdot \vol(U_k)}= \Omega\left(\frac{\vol(U_k)}{\log(\vol(U_k))}\right).	
	\end{equation*} 
	This shows in particular that the Burago-Kleiner sufficient condition fails for this tiling! 
	
	We remark that the existence of non-rectifiable point sets in $\R^d$ is a non-trivial result, see \cite{BK1} and \cite{McMullen}. Finding concrete non-rectifiable examples, which currently include only those described in \cite{CortezNavas} and in \cite{Garber}, is a very interesting problem.

\end{remark}

\section{Uniform patch frequencies}\label{sec: patches}

This section is dedicated to the study of patch frequencies in incommensurable tilings. We define a multiscale tiling variant of patch frequency, where instead of counting appearances of patches up to translation equivalence as done in standard constructions, we group together patches that are dilations of each other. This is crucial in order to establish the existence of positive uniform patch frequencies for dilations of legal patches, which is key in our proof of unique ergodicity of incommensurable tiling dynamical system in \S\ref{sec: unique ergodicity}. 

Our proof follows the framework suggested in appendix A.1 of \cite{Lee-Moody-Solomyak2} for the existence of uniform patch frequencies of fixed scale substitution tilings. Indeed, some of the steps are identical to those of Lee, Moody and Solomyak, while others require new ideas and results that strongly depend on the incommensurability of the underlying substitution scheme. In particular, the assumption of  primitivity and the theory of Perron-Frobenius are replaced by our results on patches in incommensurable tilings and their scales developed in Sections \ref{sec: scales and complexity} and \ref{sec: Frequencies}.

\begin{definition}
	A sequence $\left(A_q\right)_{q\ge1}$ of bounded measurable subsets of $\R^d$  is \textit{van Hove} if
	\begin{equation*}
	\lim_{q\to\infty} \frac{\vol \left ( (\partial A_q)^{+r}\right )}{\vol(A_q)}=0
	\end{equation*}
	for all $r > 0$, where as before $A^{+r}=\{x\in \R^d:\ \dist(x,A)\le r\}$. In addition, we 
	denote
	\begin{equation*}
	A^{-r} := \{x \in A:\ \dist(x,\partial A) \ge r\}
	\end{equation*}
	for any $r > 0$ and a bounded set $A\subset \R^d$. 
\end{definition}

Let $\sigma$ be an irreducible incommensurable substitution scheme in $\R^d$. By our definition of tiles and by Proposition \ref{prop:limiting tiles are similar to prototiles}, all tiles in a tiling $\TT\in\X_\sigma^F$ have boundary of measure zero, and so every sequence of supertiles with growing order is van Hove. Given a patch $\PP$ in a tiling $\TT\in\X_\sigma^F$, a bounded interval $I\subset\R$ and a bounded set $A\subset \R^d$, denote
\begin{equation*}\label{eq:L_P,I_and_N_P,I}
\begin{split}
L_{\PP,I}(A,\TT)&:=\#\{g\in\R^d:\ \exists \alpha\in I \text{ s.t. } g+\alpha \PP\subset \TT, (g+\supp (\alpha \PP))\subset A\},\\
N_{\PP,I}(A,\TT)&:=\#\{g\in\R^d:\ \exists \alpha\in I \text{ s.t. } g+\alpha \PP\subset \TT, (g+\supp (\alpha \PP))\cap A\neq \emptyset\},
\end{split}
\end{equation*}
where once again $\#B$ denotes the number of elements in a finite set $B$.

\begin{thm}\label{thm: uniform patch frequencies}
	Let $\sigma$ be an irreducible incommensurable substitution scheme in $\R^d$, and let $\SS\in\X_\sigma^F$ be a stationary tiling. Let $\PP$ be a patch in $\SS$ and let $I$ be a bounded interval that contains $1$ and a left neighborhood of $1$. Then for every van Hove sequence $\left(A_q\right)_{q\ge1}$ in $\R^d$
	\begin{equation}\label{eq: frequencies}
	\emph{freq}(\PP,I,\SS):=\lim_{q\to\infty}\frac{L_{\PP,I}(A_q+h,\SS)}{\vol(A_q)}
	\end{equation}
	exists uniformly in $h\in\R^d$, and is positive.
\end{thm}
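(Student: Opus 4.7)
The plan is to reduce counting rescaled copies of the patch $\PP$ to a tile-counting problem handled by the results of Section \ref{sec: Frequencies}, then transfer the asymptotics from supertiles of $\SS$ to arbitrary van Hove sets, with uniformity in $h$ following from the stationary and almost-repetitive structure of $\SS$.

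First, I would express $L_{\PP,I}(F_{ms}(T),\SS)$ in terms of the tile-counting machinery of Section \ref{sec: Frequencies}. Since $\PP$ is a sub-patch of some $F_{k_0 s}(T)$, rescaled copies of $\PP$ with scale $\alpha$ in the interval of legal scales $I_{\PP}$ (introduced before Lemma \ref{thm:The_scales_for_patches_are_dense}) are in bijection with occurrences of a specific ancestor configuration inside $\SS$ and, in graph-theoretic terms, correspond to metric paths in $G_\sigma$ that pass through a fixed internal structure. An adaptation of Theorem \ref{thm: type and scale frequencies} restricted to paths compatible with this configuration yields
\begin{equation*}
L_{\PP,I}\bigl(F_{ms}(T),\SS\bigr) \;=\; c_{\PP,I}\,\vol\bigl(F_{ms}(T)\bigr) + o\bigl(\vol(F_{ms}(T))\bigr), \qquad m\to\infty,
\end{equation*}
where $c_{\PP,I}>0$: since both $I$ and $I_{\PP}$ contain a left neighborhood of $1$, the intersection $I\cap I_{\PP}$ has positive Lebesgue measure, and by Lemma \ref{thm:The_scales_for_patches_are_dense} the rescaled copies of $\PP$ in $\SS$ realize a dense set of scales inside $I_\PP$, making the analogue of the positivity statement in Theorem \ref{thm: type and scale frequencies} applicable.

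Second, for a general van Hove sequence $(A_q)$ and a shift $h\in\R^d$, I would choose an order $m=m(q)\to\infty$ slowly enough that the maximum diameter $R_{m}$ of an $m$-supertile satisfies $\vol\bigl((\partial A_q)^{+R_{m}}\bigr) = o(\vol A_q)$. Using that $\SS$ is tiled by $m$-supertiles (Definition \ref{def: supertiles}), I would partition the rescaled copies of $\PP$ with scale in $I$ that meet $A_q+h$ into those lying inside an $m$-supertile fully contained in $A_q+h$ and those meeting the boundary layer $(\partial(A_q+h))^{+R_{m}}$. The first group is controlled by applying the first step inside each supertile via Corollary \ref{cor: frequency of types}, producing a bulk contribution of $c_{\PP,I}\vol(A_q) + o(\vol A_q)$; the boundary group is $o(\vol A_q)$ by the van Hove property.

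The main obstacle is the uniformity in $h$, which in the standard fixed-scale framework follows from repetitivity but here must be handled using almost repetitivity of $\SS$ (Theorem \ref{thm:almost repetitive iff minimal iff ALI}) together with the type-independence of the asymptotic counts. The point is that Corollary \ref{cor: frequency of types} and its patch-level adaptation above give a leading-order density that depends only on the volume of the supertile, not on its type or position, so the bulk count over the supertiles of $\SS$ entirely inside $A_q+h$ equals $c_{\PP,I}\vol(A_q)$ up to a lower-order term which can be bounded uniformly in $h$ by the maximal supertile diameter $R_{m(q)}$ and the van Hove estimate. Combining the bulk and boundary estimates yields the existence and positivity of the limit in \eqref{eq: frequencies} uniformly in $h\in\R^d$.
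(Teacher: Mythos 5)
There is a genuine gap, and it sits exactly where the real work of the theorem lies: your first step asserts that rescaled copies of $\PP$ in $F_{ms}(T)$ ``correspond to metric paths in $G_\sigma$ that pass through a fixed internal structure,'' so that an ``adaptation'' of Theorem \ref{thm: type and scale frequencies} yields $L_{\PP,I}(F_{ms}(T),\SS)=c_{\PP,I}\vol(F_{ms}(T))+o(\vol(F_{ms}(T)))$. This is not justified and, as a bijection, is false. First, the same patch $\PP$ (up to translation) occurs in $\SS$ in several distinct ancestor configurations with different intervals of admissible scales --- this is precisely why the paper enumerates the translated copies $\PP_j$ and their intervals $I'_{\PP_j}$ before Lemma \ref{thm:The_scales_for_patches_are_dense}. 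Second, and more seriously, occurrences of $\alpha\PP$ can straddle the boundaries of supertiles of arbitrarily high order: the tiles making up such an occurrence need not have any common ancestor below the root, so the occurrence is not encoded by a single metric path together with a fixed terminal configuration. The graph $G_\sigma$ is only the abelianization of $\sigma$ --- it records types and scales of tiles but no adjacency or configuration data --- so the path-counting theorems of Section \ref{sec: Frequencies} count tiles, not patch occurrences, and they cannot be ``restricted to paths compatible with a configuration'' without an entirely new argument. Since your second step (supertile decomposition of $A_q+h$ plus the van Hove estimate) takes the supertile-level asymptotics as input, the proof has no working engine.

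For comparison, the paper never counts patch occurrences by paths. It first proves crude two-sided bounds and a boundary estimate (Lemma \ref{lem:A.4 in LMS}, with Lemma \ref{thm:The_scales_for_patches_are_dense} supplying the lower bound in place of primitivity), and then establishes the existence of the supertile frequency $c_{\PP,I}$ in Lemma \ref{lem: uniform patch frequency for supertiles} by a limsup/liminf sandwich: decompose a $q$-supertile into $m$-supertiles, observe that $\beta\mapsto L_{\PP,I}(e^{ms}\beta A_i)$ is piecewise constant in the scale $\beta$ (finitely many intervals $I_{i,\ell}$), apply Theorem \ref{thm: type and scale frequencies} only to count the $m$-supertiles $n_{i,\ell,q}$ of each type and scale class, and absorb the straddling occurrences into the $N_{\PP,I}$ boundary terms, which are an $\varepsilon$-fraction of the bulk. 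Existence of the limit then follows without ever computing $c_{\PP,I}$ from path statistics, and uniformity in $h$ comes from uniformity of the supertile-level limit over tiles $T$ (Corollary \ref{cor: Lemma A.4 for supertiles is uniform}), not from almost repetitivity, which your argument invokes but does not actually need or use. A further, smaller, issue: you let $m=m(q)\to\infty$ with $q$, which would require uniform control of the error rates in your (unproved) step one; the paper instead fixes $m$ large, passes to the limit in $q$, and only then lets $\varepsilon\to0$. To repair your proposal you would either have to prove a genuine patch-level counting theorem on $G_\sigma$ augmented with configuration data (a substantial new result), or replace step one by the sandwich argument of Lemma \ref{lem: uniform patch frequency for supertiles}.
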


Let $s\in\R^+$ be so that $\SS=F_s(\SS)$. Since $\SS$ is fixed, we simplify notation and set $L_{\PP,I}(A):=L_{\PP,I}(A,\SS)$ and $N_{\PP,I}(A):=N_{\PP,I}(A,\SS)$.

\begin{lem}\label{lem:A.4 in LMS}
	Let $A\subset\R^d$ be a bounded set and let $\left(A_q\right)_{q\ge1}$ be a van Hove sequence in $\R^d$. Let $\PP$ be a patch in $\SS$ and let $I\subset\R$ be a bounded interval that contains $1$ and a left neighborhood of $1$. Then 
	\begin{enumerate}
		\item there exists $c_1>0$, which depends only on $\sigma$, so that 
		\begin{equation*}
		L_{\PP,I}(A)\le c_1 \vol(A).
		\end{equation*}
		\item there exist $c_2,q_0>0$, which depend on $\PP,I$ and $\left(A_q\right)_{q\ge1}$, so that for all $q\ge q_0$
		\begin{equation*}
				L_{\PP,I}(A_q+h)\ge c_2 \vol(A_q).
		\end{equation*}
		\item $\lim_{q\rightarrow \infty}N_{\PP,I}(\partial(A_q+h))/L_{\PP,I}(A_q+h)=0$ uniformly in $h\in\R^d$. 
	\end{enumerate}
\end{lem}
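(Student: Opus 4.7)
For part (1), I would use a witness-tile argument. Fix any tile $T_0\in\PP$. A placement $g+\alpha\PP\subset\SS$ with $g+\supp(\alpha\PP)\subset A$ determines the witness tile $g+\alpha T_0$ of $\SS$ lying in $A$, and distinct placements (differing in $g$ or $\alpha$) produce distinct witness tiles. Every tile of $\SS$ is a translate of $\alpha T_j$ for some prototile $T_j$ and scale $\alpha\ge\beta^{\min}:=\min_j\beta^{\min}_j>0$, so each witness tile has volume at least $(\beta^{\min})^d$. Since witness tiles are pairwise interior-disjoint subsets of $A$, summing volumes gives $L_{\PP,I}(A)\le\vol(A)/(\beta^{\min})^d$, so $c_1=(\beta^{\min})^{-d}$ depends only on $\sigma$.

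For part (2), the plan is to adapt the counting framework of \S\ref{sec: Frequencies} from tiles to placements of the fixed legal patch $\PP$ at scales in an interval. Choose a subinterval $[a,b]\subset I\cap I_\PP$ of positive length, which exists since both $I$ and $I_\PP$ contain a left neighborhood of $1$. Pick $\alpha_0\in(a,b)$ and, using Lemma \ref{lem: legal patches appear in stoationary tilings}, a pair $(i^*,\tau^*)$ so that $\alpha_0\PP$ is a sub-patch of $F_{\tau^*}(T_{i^*})$. For $\eta$ in a small positive interval $[0,\delta]$ no tile of $\alpha_0\PP$ reaches volume one under pure inflation by time $\eta$, so $F_{\tau^*+\eta}(T_{i^*})$ contains a copy of $\alpha_0 e^\eta\PP$ with $\alpha_0 e^\eta\in[a,b]$. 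Hence each substitution event of type $i^*$ occurring in the semi-flow generating $F_t(T_i)$ at time $t-\tau^*-\eta$ for some $\eta\in[0,\delta]$ yields a placement of $\alpha\PP$ at some $\alpha\in[a,b]$, and such events correspond to paths in $G_\sigma$ from $i$ to $i^*$ of length in a specified interval of length $\delta$. The same Laplace-transform / Wiener--Ikehara analysis underlying Theorem \ref{thm: graph path counting in the case of multiscale} gives that the count of such paths grows as $C_\PP e^{dt}+o(e^{dt})$ with some $C_\PP>0$ independent of $i$, and via Corollary \ref{cor: counting in supertiles} every $m$-supertile $T^{(m)}$ of $\SS$ contains at least $(C_\PP-o(1))\vol(T^{(m)})$ such placements with support inside $T^{(m)}$.

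Next, I would fix $m$ large enough that this lower bound is at least $C_\PP\vol(T^{(m)})/2$ uniformly across supertile types and scales. The collection $\SS^{-m}$ tiles $\R^d$ with supertile diameter at most $D_m:=e^{ms}\max_j\mathrm{diam}(T_j)$, and the interior supertiles $\{T^{(m)}\in\SS^{-m}:T^{(m)}\subset A_q+h\}$ cover all of $A_q+h$ except a subset of $(\partial A_q)^{+D_m}+h$, which by the van Hove property has volume $o(\vol(A_q))$ as $q\to\infty$. Placements inside different supertiles are distinct, so summing yields $L_{\PP,I}(A_q+h)\ge(C_\PP/4)\vol(A_q)=:c_2\vol(A_q)$ for $q$ large, uniformly in $h$ by translation invariance.

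Part (3) follows from parts (1) and (2). Every placement counted by $N_{\PP,I}(\partial(A_q+h))$ has support of diameter at most $R:=(\sup I)\cdot\mathrm{diam}(\supp\PP)$, and since the support meets $\partial(A_q+h)=\partial A_q+h$, it lies in $(\partial A_q)^{+R}+h$; applying (1) to this set yields $N_{\PP,I}(\partial(A_q+h))\le c_1\vol((\partial A_q)^{+R})$. Combined with the lower bound from (2), the ratio in (3) is at most $c_1\vol((\partial A_q)^{+R})/(c_2\vol(A_q))$, which tends to $0$ by the van Hove property and uniformly in $h$ by translation invariance. The main obstacle is the exponential counting claim in part (2): one must verify that the count of paths from $i$ to $i^*$ in $G_\sigma$ with length in a fixed interval (rather than terminating in an interval on a single edge, as in Theorem \ref{thm: graph path counting in the case of multiscale}) has the expected exponential rate $d$, which requires re-running the Laplace transform analysis on the modified counting function and confirming that its only pole of largest real part remains the simple pole at $s=d$ guaranteed by Proposition \ref{prop: volumes sum up to 1}.
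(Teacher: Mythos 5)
Your parts (1) and (3) are essentially the paper's proof: the witness-tile volume bound for (1), and for (3) the inclusion $N_{\PP,I}(\partial(A_q+h))\le L_{\PP,I}\left((\partial A_q)^{+R}+h\right)\le c_1\vol\left((\partial A_q)^{+R}\right)$ combined with (2) and the van Hove property. Part (2) is where you genuinely diverge. The paper gets the lower bound softly: by Lemma \ref{thm:The_scales_for_patches_are_dense} (ultimately Theorem \ref{thm:The_scales_are_dense_in_SS}) and stationarity there is an order $\ell$ such that \emph{every} $\ell$-supertile of $\SS$ contains a copy of $\alpha\PP$ with $\alpha\in I$, and then $L_{\PP,I}(A_q+h)$ is bounded below by the number of $\ell$-supertiles inside $A_q+h$, which the van Hove property makes comparable to $\vol(A_q)$. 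You instead rebuild the bound from the quantitative counting machinery of \S\ref{sec: Frequencies}, counting substitution events of a fixed type in a short time window; this works (and even produces an explicit constant), but it is heavier, and the "main obstacle" you flag is not actually one: paths from $i$ that end exactly at vertex $i^*$ with length in an interval of length $\delta$ smaller than the shortest edge are in bijection with metric paths of full length terminating on an initial segment of a fixed outgoing edge of $i^*$, so Theorem \ref{thm: graph path counting in the case of multiscale} (equivalently Theorem \ref{thm: type and scale frequencies} with scales near $1$, via Corollary \ref{cor: counting in supertiles}) applies verbatim, with no re-running of the Laplace/Wiener--Ikehara analysis; the uniformity over supertile types and scales that you assert is the content of Corollary \ref{cor: Lemma A.4 for supertiles is uniform}. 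Two small points to tighten: the existence of a legal rescaling $\alpha_0\PP$ with $\alpha_0$ in your chosen subinterval follows from the density statement of Lemma \ref{thm:The_scales_for_patches_are_dense} (or the construction of $I_\PP$) rather than from Lemma \ref{lem: legal patches appear in stoationary tilings}, which only yields \emph{some} scale; and since $L_{\PP,I}$ counts translation vectors $g$ rather than pairs $(g,\alpha)$, you should note (e.g.\ by taking the window $\delta$ small relative to an interior ball of $\supp\PP$) that placements arising from distinct events cannot share the same $g$ with different scales --- the paper's supertile count has the same implicit step, so this is a refinement rather than a gap.
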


\begin{proof}
	We follow the proof of Lemma A.4 in \cite{Lee-Moody-Solomyak2}. Parts (1) and (3) are identical but are short and so we include them here. Lemma \ref{thm:The_scales_for_patches_are_dense} replaces primitivity in part (2).
	\begin{enumerate}
		\item Select a tile from the patch $\PP$. Distinct patches of the form $\alpha \PP$ in $\SS$ will have distinct selected tiles, therefore 
		\begin{equation*}
		L_{\PP,I}(A)\le V_{\min}^{-1}\vol(A),
		\end{equation*}
		where $V_{\min}$ is the infimum over tile volumes in $\SS$.

		\item By Lemma \ref{thm:The_scales_for_patches_are_dense}, and since $\SS$ is stationary, there exists an $\ell\in \N$ so that for every tile $T$ in $\SS$, the patch $F_{\ell s}(T)$ contains a copy of the patch $\alpha \PP$ as a sub-patch, for some $\alpha\in I$.
		It follows that for any set $A$, the number $L_{\PP,I}(A)$ is at least the number of $\ell$-supertiles with support contained in $A$. Since tiles in $\SS$ are of volume at most $1$, the volume of an $\ell$-supertile is at most $e^{\ell sd}$. Let $r$ be the supremum of the diameters of $\ell$-supertiles. We have obtained
		\begin{equation*}\label{eq: bound L from below for van hove}
		\begin{split}
		L_{\PP,I}(A_q+h)&\ge e^{-\ell sd}\vol(A_q^{-r}+h)\\
		&=e^{-\ell sd}\vol(A_q^{-r})\\
		&\ge e^{-\ell sd}\left(\vol(A_q)-\vol\left((\partial A_q)^{+r}\right)\right).
		\end{split}
		\end{equation*}
		Since $\left(A_q\right)_{q\ge1}$ is van Hove, the proof is complete.
		 
		\item Let $r$ denote the supremum of the diameters of the supports of $\alpha \PP$, for $\alpha\in I$. In view of parts (1) and (2) of this lemma
		\begin{equation*}
		\frac{N_{\PP,I}(\partial A_q +h)}{L_{\PP,I}(A_q +h)}\le \frac{L_{\PP,I}\left((\partial A_q)^{+r}+h\right)}{L_{\PP,I}(A_q +h)}\le \frac{c_1\vol((\partial A_q)^{+r} +h)}{c_2\vol(A_q +h)}=\frac{c_1\vol((\partial A_q)^{+r})}{c_2\vol(A_q)}\rightarrow 0,
		\end{equation*}
		uniformly in $h\in\R^d$, as required.\qedhere
	\end{enumerate}
\end{proof}

Consider  \eqref{eq: bound L from below for van hove} for $q$-supertiles, that is, $A_q=e^{qs}\supp T$, where $T$ is a tile in $\SS$. Since there are only finitely many types of tiles and the set of scales in which they appear is bounded, we obtain the following. 

\begin{cor}\label{cor: Lemma A.4 for supertiles is uniform}
Under the assumptions of Lemma \ref{lem:A.4 in LMS}, if $A_q=e^{qs}\supp T$, where $T$ is a tile in $\SS$, then there exists a constant for which \emph{(2)} holds for all tiles $T$ in $\SS$, and the convergence in \emph{(3)} is uniform in the choice of $T$.
\end{cor}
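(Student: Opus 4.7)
\bigskip

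\textbf{Plan of proof for Corollary \ref{cor: Lemma A.4 for supertiles is uniform}.} The plan is to revisit the chain of inequalities that produced parts (2) and (3) in the proof of Lemma \ref{lem:A.4 in LMS} and to show that, when $A_q = e^{qs}\supp T$ for a tile $T$ of $\SS$, the implied constants and the rate of convergence depend on $T$ only through data that varies in a compact family, namely the finite list of prototile shapes and the bounded range of admissible scales. Concretely, write $T = \alpha T_j$ with $j \in \{1,\dots,n\}$ and $\alpha \in (\beta_j^{\min},1]$. Then $A_q = \lambda_q \supp T_j$, where $\lambda_q := e^{qs}\alpha$ and $\vol(A_q) = \lambda_q^{d}$ because $\sigma$ is normalized. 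Setting $\beta := \min_j \beta_j^{\min} > 0$, the factor $\lambda_q$ lies in $[e^{qs}\beta,\,e^{qs}]$, so $\lambda_q \to \infty$ as $q \to \infty$ uniformly in the choice of $T$.

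The key geometric input I would use is that, for each prototile $T_j$, there is a constant $C_j > 0$ (depending only on $T_j$ and $r$) such that
\begin{equation*}
\vol\!\left((\partial (\lambda \supp T_j))^{+r}\right) \le C_j \, \lambda^{d-1} \qquad \text{for all } \lambda \ge 1.
\end{equation*}
This follows because $(\partial(\lambda \supp T_j))^{+r} = \lambda(\partial \supp T_j)^{+r/\lambda} \subset \lambda(\partial \supp T_j)^{+r}$ (for $\lambda \ge 1$), whose volume scales as $\lambda^{d-1}$ times the fixed $r$-thickening of $\partial \supp T_j$. Taking $C := \max_j C_j$, which is finite because there are only finitely many prototiles, gives the uniform bound $\vol((\partial A_q)^{+r}) \le C \, \lambda_q^{d-1}$.

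Second, I would plug this into the displayed estimate established inside the proof of Lemma \ref{lem:A.4 in LMS}(2):
\begin{equation*}
L_{\PP,I}(A_q + h) \;\ge\; e^{-\ell s d}\bigl(\vol(A_q) - \vol((\partial A_q)^{+r})\bigr) \;\ge\; e^{-\ell s d} \lambda_q^{d}\!\left(1 - \frac{C}{\lambda_q}\right).
\end{equation*}
Since $\lambda_q \ge e^{qs}\beta$, there exists $q_0$, independent of $T$, such that $C/\lambda_q \le 1/2$ for every $q \ge q_0$. Hence $L_{\PP,I}(A_q + h) \ge \tfrac{1}{2}e^{-\ell s d}\,\vol(A_q)$ for all $q \ge q_0$, all $h \in \R^d$, and all choices of tile $T$, which gives the uniform version of (2). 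For (3), the chain of inequalities from the original proof now yields
\begin{equation*}
\frac{N_{\PP,I}(\partial A_q + h)}{L_{\PP,I}(A_q + h)} \;\le\; \frac{c_1}{c_2}\cdot\frac{\vol((\partial A_q)^{+r})}{\vol(A_q)} \;\le\; \frac{c_1\,C}{c_2\,\lambda_q},
\end{equation*}
and the right hand side tends to zero uniformly in $h$ and in $T$, since $\lambda_q \ge e^{qs}\beta \to \infty$.

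I do not expect a serious obstacle in carrying out this argument; everything rests on the two compactness-type facts that there are finitely many prototile shapes and that scales of tiles in $\SS$ are uniformly bounded away from $0$ and from $\infty$. The only point that requires mild care is the boundary-thickening bound, which uses that $r$-thickening of $\partial(\lambda \supp T_j)$ grows like $\lambda^{d-1}$ for large $\lambda$; this is where one should verify that the constants $C_j$ can indeed be chosen independently of $\lambda$ once $\lambda \ge 1$, which is immediate from the scaling relation noted above.
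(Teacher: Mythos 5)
Your overall strategy is the paper's: rerun the proof of Lemma \ref{lem:A.4 in LMS} with $A_q=e^{qs}\supp T$ and extract uniformity from the two facts that there are only finitely many prototile types and that scales of tiles in $\SS$ lie in the bounded intervals $(\beta^{\min}_j,1]$. There is, however, a genuine flaw in the intermediate estimate you rely on, namely $\vol\bigl((\partial(\lambda\supp T_j))^{+r}\bigr)\le C_j\,\lambda^{d-1}$ for all $\lambda\ge1$. Tiles in this paper are only bounded measurable sets whose boundary has Lebesgue measure zero; no Minkowski-content or polygonal regularity of $\partial\supp T_j$ is assumed here (the boundary-growth hypothesis \eqref{eq: BD assumption} is an \emph{extra} assumption imposed only in \S\ref{sec: BD BL} for Theorem \ref{thm:not_BD_to_Z^d}). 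For a compact null set one only knows $\vol\bigl((\partial\supp T_j)^{+\epsilon}\bigr)\to0$ as $\epsilon\to0$, possibly with arbitrarily slow rate, so $\vol\bigl((\partial(\lambda\supp T_j))^{+r}\bigr)=\lambda^{d}\,\vol\bigl((\partial\supp T_j)^{+r/\lambda}\bigr)$ need not be $O(\lambda^{d-1})$. Moreover, the justification you give is itself incorrect: the containment $(\partial(\lambda\supp T_j))^{+r}\subset\lambda(\partial\supp T_j)^{+r}$ bounds the volume by $\lambda^{d}\,\vol\bigl((\partial\supp T_j)^{+r}\bigr)$, which scales like $\lambda^{d}$, not $\lambda^{d-1}$, so it cannot yield the claimed bound even for nice tiles.

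The good news is that the corollary needs no rate at all, so your argument is repairable and then coincides with the paper's. Writing $T$ as a translate of $\alpha T_j$ with $\alpha\in(\beta^{\min}_j,1]$ and $\lambda_q:=e^{qs}\alpha\ge e^{qs}\beta$ (with $\beta:=\min_j\beta^{\min}_j$), one has, since $\sigma$ is normalized, $\frac{\vol\left((\partial A_q)^{+r}\right)}{\vol(A_q)}=\vol\bigl((\partial\supp T_j)^{+r/\lambda_q}\bigr)\le\vol\bigl((\partial\supp T_j)^{+re^{-qs}/\beta}\bigr)$, and the right-hand side tends to $0$ as $q\to\infty$ because each $\partial\supp T_j$ is a compact null set; uniformity in $T$ follows since there are finitely many $j$ and the bound no longer depends on $\alpha$. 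Feeding this into your two displayed chains of inequalities gives the uniform version of part (2) (choose $q_0$ independent of $T$ so that the ratio is at most $1/2$, say) and the uniform convergence in part (3). This is precisely the paper's one-line argument, which invokes only ``finitely many types and bounded scales'' together with the van Hove property, rather than any $\lambda^{d-1}$ boundary bound.
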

	
The main step of the proof of Theorem \ref{thm: uniform patch frequencies} is given by the following Lemma.

\begin{lem}\label{lem: uniform patch frequency for supertiles}
	Let $\SS, \PP$ and $I$ be as above. Then 
	\begin{equation}\label{eq:C_P,I}
	c_{\PP,I}:=\lim_{q\to\infty}\frac{L_{\PP,I}(e^{qs}\supp T)}{\vol(e^{qs}\supp T)}>0
	\end{equation}
	exists uniformly in tiles $T$ in $\SS$.
\end{lem}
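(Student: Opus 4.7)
The plan is to reduce to showing that, for each $T_i\in\tau_\sigma$, the ratio $\Lambda_i(t)/e^{dt}$ tends as $t\to\infty$ to a positive limit that is independent of $i$, where $\Lambda_i(t):=L_{\PP,I}(F_t(T_i),\SS)$. Since $\SS=F_s(\SS)$, the patch of $\SS$ supported on $e^{qs}\supp T$ equals $F_{qs}(T)$, and for $T=\alpha T_i$ this equals $F_{qs-\log(1/\alpha)}(T_i)$ once $qs\ge\log(1/\alpha)$. Because $\vol(e^{qs}\supp T)=e^{dqs}\vol(T)=e^{d(qs-\log(1/\alpha))}$, uniform convergence of $\Lambda_i(t)/e^{dt}$ to a constant $c_{\PP,I}$ transfers directly to the uniform convergence of $c_T(q)$ claimed in \eqref{eq:C_P,I}. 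Boundedness of $c_T(q)$ from above and below away from $0$ is already in hand: the upper bound is Lemma \ref{lem:A.4 in LMS}(1) and the positive lower bound is the content of Corollary \ref{cor: Lemma A.4 for supertiles is uniform} applied to the van Hove sequence $A_q=e^{qs}\supp T$.

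To produce convergence I would exploit substitution self-similarity. For any fixed $p\in\N$ and $t\ge ps$, the semi-flow identity yields the decomposition $F_t(T_i)=\bigcup_{T'\in F_{ps}(T_i)}F_{t-ps}(T')$. Splitting occurrences of $\alpha\PP$ (for $\alpha\in I$) inside $F_t(T_i)$ according to whether they lie in a single sub-patch $F_{t-ps}(T')$ gives
\begin{equation*}
\Lambda_i(t)=\sum_{T'\in F_{ps}(T_i)}\Lambda_{i(T')}\bigl(t-ps-\log(1/\beta(T'))\bigr)+E(p,t),
\end{equation*}
where $i(T')$ and $\beta(T')$ are the type and scale of $T'$, and $E(p,t)$ counts boundary-crossing occurrences. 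Applying Lemma \ref{lem:A.4 in LMS}(3) to each sub-supertile $e^{t-ps}\supp T'$ (which forms a van Hove sequence as $t\to\infty$) and summing yields $E(p,t)=o(e^{dt})$ for every fixed $p$.

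Dividing by $e^{dt}$ and setting $\lambda_i(t):=\Lambda_i(t)/e^{dt}$ turns the identity into a weighted average
\begin{equation*}
\lambda_i(t)=\sum_{T'\in F_{ps}(T_i)}\beta(T')^{d}e^{-dps}\,\lambda_{i(T')}\bigl(t-ps-\log(1/\beta(T'))\bigr)+o(1),
\end{equation*}
whose weights sum to $\vol(T_i)=1$ by Proposition \ref{prop: volumes sum up to 1}. The crucial use of incommensurability enters now: by Theorem \ref{thm: type and scale frequencies} and Corollary \ref{cor: counting in supertiles}, as $p\to\infty$ the joint distribution of types and scales among the tiles of $F_{ps}(T_i)$, weighted by $\beta^{d}$, converges to a limiting density that is \emph{independent of the starting type} $i$. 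This is what replaces primitivity and Perron--Frobenius in the fixed-scale theory of Lee--Moody--Solomyak. Using uniform boundedness of $\lambda_i$, I would compare $\lambda_i(t)$ and $\lambda_{i'}(t')$ for large $t,t'$ by unfolding each via the identity to a common depth $p$ and observing that the same limiting weight distribution governs both sides; combined with the density of legal scales supplied by Lemma \ref{thm:The_scales_for_patches_are_dense}, this contracts the oscillation $\sup_{i,t\ge T_{0}}\lambda_i(t)-\inf_{i,t\ge T_{0}}\lambda_i(t)$ to zero, forcing uniform convergence of $\lambda_i(t)$ to a single positive constant $c_{\PP,I}$.

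I expect the third paragraph to contain the main obstacle. The self-consistency identity readily \emph{pins down} the limit assuming it exists, but upgrading this to an actual convergence statement requires a genuine contraction estimate on the oscillation of $\lambda_i(t)$. The difficulty is that the sum ranges over a continuum of scales $\beta(T')$ rather than a finite Perron--Frobenius spectrum, so the mixing must be extracted quantitatively from the path-counting results of Section \ref{sec: Frequencies}; translating that quantitative mixing into a genuine Cauchy estimate, uniform in $i$, is the technical core of the argument.
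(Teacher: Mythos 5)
Your reduction and the decomposition identity in the second paragraph are fine (and the boundary estimate via Lemma \ref{lem:A.4 in LMS}(3) is exactly how the paper controls boundary effects), but the third paragraph is a genuine gap, as you yourself concede. With $p$ fixed, the identity $\lambda_i(t)=\sum_{T'}\beta(T')^{d}e^{-dps}\lambda_{i(T')}(t-ps-\log(1/\beta(T')))+o(1)$ is a self-consistency relation with a \emph{bounded} number of summands, each an unknown value of $\lambda$ at a large time; it pins down the limit if it exists but gives no convergence. Theorem \ref{thm: type and scale frequencies} says nothing about the type/scale statistics of $F_{ps}(T_i)$ for fixed $p$, so invoking it to get an $i$-independent limiting distribution requires first sending $p\to\infty$ and then $t\to\infty$, and the interchange of these limits is precisely the missing contraction estimate on the oscillation of $\lambda_i$. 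No such estimate is supplied, and extracting one "quantitatively from the path-counting results" would essentially amount to redoing the Wiener--Ikehara analysis for the patch-counting function itself rather than using the results already proved.

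The paper avoids this by turning the decomposition around, so that no contraction argument is needed. Fix a small scale $m$ (chosen so that, by Corollary \ref{cor: Lemma A.4 for supertiles is uniform}, boundary occurrences contribute at most an $\varepsilon$-fraction) and decompose the growing $q$-supertile $e^{qs}\supp T$ into $m$-supertiles, whose \emph{number} grows with $q$. The key observation you are missing is that the per-piece count $L_{\PP,I}(e^{ms}\beta A_i)$ is a piecewise constant function of the scale $\beta\in(\beta^{\min}_{i},1]$, with finitely many pieces $I_{i,1},\dots,I_{i,M_i}$: since $F_{ms}(\beta T_i)=F_{ms-\log(1/\beta)}(T_i)$ and $\SSS_i$ meets the interval $\{ms-\log(1/\beta):\beta\in(\beta^{\min}_{i},1]\}$ in a finite set, only finitely many substitution events occur as $\beta$ varies. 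This reduces the continuum of scales to finitely many classes, and the number $n_{i,\ell,q}$ of $m$-supertiles of type $i$ with scale in $I_{i,\ell}$ is a plain tile count inside $F_{(q-m)s-\log(1/\alpha)}(T_j)$, to which Theorem \ref{thm: type and scale frequencies} applies directly, giving $n_{i,\ell,q}/\vol(e^{qs}\supp T)\to\varphi_{i,I_{i,\ell}}e^{-msd}$ independently of the type of $T$. Sandwiching $L_{\PP,I}(e^{qs}\supp T)$ between $\sum_{i,\ell}L_{\PP,I}(e^{ms}\beta_{i,\ell}A_i)n_{i,\ell,q}$ and $(1+\varepsilon)$ times the same sum (the boundary term), one gets $\limsup-\liminf\le\varepsilon\,\widetilde{c}$ with $\widetilde{c}$ independent of $m$ and $q$, whence the limit exists and is independent of $T$; positivity comes from Lemma \ref{lem:A.4 in LMS}(2), as you noted. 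So the incommensurability input enters only through the already-proved tile-frequency asymptotics, and the convergence is a $\limsup$--$\liminf$ sandwich rather than a mixing/contraction argument.
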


\begin{proof}
	Fix $\varepsilon>0$. By Lemma \ref{lem:A.4 in LMS} and Corollary \ref{cor: Lemma A.4 for supertiles is uniform}, there exists $m_0\in\N$ so that for every $m\ge m_0$ and every tile $T$ in $\SS$
	\begin{equation}\label{eq: N<eL for supertiles}
	N_{\PP,I}(\partial e^{ms}\supp T)\le \varepsilon\, L_{\PP,I}(e^{ms}\supp T).
	\end{equation}

	Choose a tile $T$ in $\SS$, and assume it is of type $j$ and scale $\alpha$, that is, $T$ is a translated copy of $\alpha T_{j}$, and set $A=\supp T$. For $q>m>m_0$, consider the decomposition of the $q$-supertile $e^{qs}T$ into $m$-supertiles. The support of an $m$-supertile of type $i$ is of the form $e^{ms}\beta A_i$, where $A_i$ is a translation of the support of the prototile $T_i\in\tau_\sigma$, and $\beta\in(\beta_i^{\min}, 1]$ is the scale in which the corresponding tile appears in $F_{(q-m)s}(\alpha T_{j})$. 
	
	Observe that the set $\left\{F_{ms}(\beta T_i):\,\beta\in(\beta^{\min}_{i},1]\right\}$ consists of a finite number of patches up to rescaling, because $F_{ms}(\beta T_i)=F_{ms-\log(1/\beta)}(T_i)$ and \[\SSS_i\cap\left\{ms-\log(1/\beta):\, \beta\in(\beta^{\min}_{i},1]\right\}
	\]
	is a finite set. Therefore, there is a finite number of tile substitutions under the semi-flow $F_t(T_i)$ for $ms-\log(1/\beta^{\min}_{i})<t\le ms$. It follows that $L_{\PP,I}(e^{ms}\beta A_i)$ is a piecewise constant function of the variable $\beta\in(\beta^{\min}_{i},1]$. Namely, there is a finite partition of the interval $(\beta^{\min}_{i},1]$ into $M_i$ sub-intervals $\{I_{i,1},\ldots,I_{i,M_i}\}$, depending only on $\PP, I$ and $m$, so that for every choice of representatives $\left\{\beta_{i,\ell}\in I_{i,\ell}:\,\ell=1,\ldots,M_i\right\}$ the equality
	\begin{equation*}
	L_{\PP,I}(e^{ms}\beta A_i)=L_{\PP,I}(e^{ms}\beta_{i,\ell} A_i).
	\end{equation*}
	holds for every $\beta\in I_{i,\ell}$ .
	 
	 Denote by $n_{i,\ell,q}$ the number of $m$-supertiles with support $e^{ms}\beta A_i$ with $\beta\in I_{i,\ell}$ in the decomposition of $e^{qs}T$ into $m$-supertiles. Together with  \eqref{eq: N<eL for supertiles}, we have
	 \begin{equation}\label{eq:double bound of L(A) for supertiles}
	 \begin{split}
	 \sum_{i=1}^n\sum_{\ell=1}^{M_i}L_{\PP,I}(e^{ms}\beta_{i,\ell} A_i)n_{i,\ell,q}&\le L_{\PP,I}(e^{qs}A)\\
	 &\le
	 \sum_{i=1}^n\sum_{\ell=1}^{M_i}L_{\PP,I}(e^{ms}\beta_{i,\ell} A_i)n_{i,\ell,q}+N_{\PP,I}(\partial e^{ms}\beta_{i,\ell} A_i)n_{i,\ell,q}\\ &\le(1+\varepsilon)\sum_{i=1}^n\sum_{\ell=1}^{M_i}L_{\PP,I}(e^{ms}\beta_{i,\ell} A_i)n_{i,\ell,q}.
	 \end{split}
	 \end{equation}

	Note that $n_{i,\ell,q}$ is exactly the number of tiles of type $i$ and scale in the interval $I_{i,\ell}$ in the patch $F_{(q-m)s-\log(1/\alpha)}(T_{j})$. By Theorem \ref{thm: type and scale frequencies}, there exists a constant $\varphi_{i,I_{i,\ell}}>0$ independent of $j$ so that
	\begin{equation*}
	\lim_{q\to\infty}\frac{n_{i,\ell,q}}{e^{((q-m)s-\log(1/\alpha))d}}=\varphi_{i,I_{i,\ell}}.
	\end{equation*}
	Since $\vol(e^{qs}A)=e^{(qs-\log(1/\alpha))d}$, this yields
\begin{equation}\label{eq:gamma_i,l_over_vol(e^qsA)}	
	\lim_{q\to\infty}\frac{n_{i,\ell,q}}{\vol(e^{qs}A)}=\varphi_{i,I_{i,\ell}}e^{-msd}.
\end{equation}
	
	Dividing \eqref{eq:double bound of L(A) for supertiles} by $\vol(e^{qs}A)$ and letting $q\rightarrow\infty$ as in \eqref{eq:gamma_i,l_over_vol(e^qsA)}, we obtain
	\begin{equation}\label{eq:limsup-liminf}
	\limsup_{q\rightarrow\infty}\frac{L_{\PP,I}(e^{qs}A)}{\vol(e^{qs}A)}-\liminf_{q\rightarrow\infty}\frac{L_{\PP,I}(e^{qs}A)}{\vol(e^{qs}A)}\le\varepsilon \sum_{i=1}^n\sum_{\ell=1}^{M_i}L_{\PP,I}(e^{ms}\beta_{i,\ell} A_i)\varphi_{i,I_{i,\ell}}e^{-msd}.	
	\end{equation} 
	By $(1)$ of Lemma \ref{lem:A.4 in LMS}, and since tiles in $\SS$ are of volume at most $1$, we have 
	\begin{equation*}
	L_{\PP,I}(e^{ms}\beta_{i,\ell} A_i)\le c_1\vol(e^{ms}\beta_{i,\ell} A_i)\le c_1e^{msd}.
	\end{equation*}
	The right-hand side of \eqref{eq:limsup-liminf} is thus bounded by
	\begin{equation*}
	\varepsilon \sum_{i=1}^n\sum_{\ell=1}^{M_i}c_1\varphi_{i,I_{i,\ell}}\le \varepsilon \,\widetilde{c},
	\end{equation*}
	with $\widetilde{c}$ independent of $q$ and of $m$. Since $\varepsilon>0$ is arbitrarily, the limit in \eqref{eq:C_P,I} exists. This limit, denoted by $c_{\PP,I}$, satisfies 
	\begin{equation*}
	\sum_{i=1}^n\sum_{\ell=1}^{M_i}L_{\PP,I}(e^{ms}\beta_{i,\ell} A_i)\varphi_{i,I_{i,\ell}}e^{-msd}\le c_{\PP,I}\le (1+\varepsilon)\sum_{i=1}^n\sum_{\ell=1}^{M_i}L_{\PP,I}(e^{ms}\beta_{i,\ell} A_i)\varphi_{i,I_{i,\ell}}e^{-msd},
	\end{equation*}
	independent of the choice of tile $T\in\SS$. By (2) of Lemma \ref{lem:A.4 in LMS}, for large enough values of $m$ the left-hand side is positive, and so $c_{\PP,I}$ is positive.
\end{proof}

The proof of Theorem \ref{thm: uniform patch frequencies} now follows by the same arguments as those given in detail in the proof of Lemma A.6 in \cite{Lee-Moody-Solomyak2}.  

\begin{proof}[Sketch of proof of Theorem \ref{thm: uniform patch frequencies}]
Consider the decomposition of  $\R^d$ into the $m$-supertiles $e^{ms}T$ for $T$ in $\SS$. $L_{\PP,I}(A_q+h)$ can be approximated by sums of $L_{\PP,I}(e^{ms}\supp T)$, using Lemma \ref{lem: uniform patch frequency for supertiles}. The van Hove property is then used to show that boundary effects are negligible. 
\end{proof}

In fact, the arguments used above can be used to establish the existence of uniform patch frequencies for general tilings $\TT\in\X_\sigma^F$. 

\begin{cor}\label{cor: patch frequency for all tilings in the space}
	Let $\TT\in\X_\sigma^F$. For any legal patch $\PP$ in $\TT$ and a bounded interval $I\subset\R$ that contains a left neighborhood of $1$
	\begin{equation*}
	\emph{freq}(\PP,I,\TT)=\emph{freq}(\PP,I,\SS)>0,
	\end{equation*}
	and in particular, Corollary \ref{cor: every stationary tiline contains rescaled copies of legal patches} holds for $\TT$. Non-legal patches have zero frequency. 
\end{cor}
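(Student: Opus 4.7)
The plan is to transfer the uniform patch frequency result of Theorem \ref{thm: uniform patch frequencies} from a stationary tiling $\SS$ to an arbitrary $\TT \in \X_\sigma^F$ by exploiting minimality of the tiling dynamical system (Theorem \ref{thm: tiling space is minimal}) together with the fact that the convergence in Theorem \ref{thm: uniform patch frequencies} is uniform in the translation parameter $h$.

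Fix a stationary $\SS$ with $F_s(\SS)=\SS$, a legal patch $\PP$, a bounded interval $I$ containing $1$ and a left neighborhood of $1$, and a van Hove sequence $(A_q)_{q\ge 1}$. Let $r$ bound the diameter of $\supp(\alpha\PP)$ over $\alpha\in I$. By Lemma \ref{lem:Stationary tilings has dense orbit}, for each $q$ I can choose $v_q\in\R^d$ with $D(\SS-v_q,\TT)$ arbitrarily small. Chabauty--Fell closeness forces the tile structure of $\SS-v_q$ on $A_q^{+r}$ to become combinatorially identical to that of $\TT$ for $q$ large, with matched tile scales differing by $\delta_q=o_q(1)$. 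Hence every occurrence $g+\alpha\PP\subset\TT$ with support in $A_q$ has a combinatorial twin $g+\alpha'\PP\subset\SS-v_q$ with $|\alpha-\alpha'|\le\delta_q$, and conversely. For each $\varepsilon>0$, denoting by $I_\varepsilon$ the $\varepsilon$-contraction and by $I^{+\varepsilon}$ the $\varepsilon$-dilation of $I$, and controlling the near-boundary contributions by part (3) of Lemma \ref{lem:A.4 in LMS}, I obtain the sandwich
\[
L_{\PP,I_\varepsilon}(A_q+v_q,\SS)-o(\vol A_q)\;\le\;L_{\PP,I}(A_q,\TT)\;\le\;L_{\PP,I^{+\varepsilon}}(A_q+v_q,\SS)+o(\vol A_q).
\]
Dividing by $\vol(A_q)$, letting $q\to\infty$ and invoking the uniformity in $h=v_q$ of Theorem \ref{thm: uniform patch frequencies}, and then letting $\varepsilon\to 0$, gives $\emph{freq}(\PP,I,\TT)=\emph{freq}(\PP,I,\SS)$ (which is positive). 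The last passage uses that $J\mapsto\emph{freq}(\PP,J,\SS)$ is atomless, a consequence of Corollary \ref{cor: number of tiles of fixed scale in o(e^{dt})}: a copy of $\alpha\PP$ at a fixed scale contains a tile of some fixed type and scale, whose density in $\SS$ is zero.

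For a non-legal patch $\PP$ the decisive observation is that every stationary tiling $\SS=\bigcup_{k\ge 0}F_{ks+a}(T_i)$ is a nested union of elements of $\PPP_\sigma$, and therefore contains only legal sub-patches. Hence $L_{\PP,I}(B,\SS)=0$ for every bounded $B$, and the upper half of the sandwich above immediately yields $L_{\PP,I}(A_q,\TT)=o(\vol A_q)$, i.e.\ zero frequency. The main technical obstacle throughout is the mismatch between the exact-count definition of $L_{\PP,I}$ and the merely approximate character of Chabauty--Fell convergence: since tiles in $\TT$ may have scales not realized in $\SS$, the transfer from $\TT$-counts to $\SS$-counts is not an equality but only the two-sided asymptotic bound displayed above, and closing the gap as $\varepsilon\to 0$ crucially depends on atomlessness supplied by Corollary \ref{cor: number of tiles of fixed scale in o(e^{dt})}.
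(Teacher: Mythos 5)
Your route is genuinely different from the paper's: the paper equips an arbitrary $\TT\in\X_\sigma^F$ with the supertile hierarchy of \S\ref{sec:Dynamics} and compares the counts $L_{\PP,I}$ on supertiles of $\TT$ with the limit $c_{\PP,I}$ of Lemma \ref{lem: uniform patch frequency for supertiles}, treating general van Hove sets by boundary estimates, whereas you try to transfer counts directly from $\SS$ to $\TT$ via minimality and Chabauty--Fell approximation. The central step of your argument, the sandwich inequality, does not follow from that approximation. Closeness of $\SS-v_q$ and $\TT$ on a large region at best matches the two tilings tile by tile, with equal types and with each tile's scale and position perturbed by some $\delta_q$. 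But $L_{\PP,J}$ counts \emph{exact} occurrences $g+\alpha\PP$, that is, sub-patches that are a translate of a single dilate of $\PP$. If $g+\alpha\PP\subset\TT$, the matched tiles in $\SS-v_q$ form only an \emph{approximate} copy of $\PP$: their relative scales and relative positions are perturbed independently, so in general there exist no $g'$ and $\alpha'$ with $g'+\alpha'\PP\subset\SS-v_q$, and such a configuration contributes to $L_{\PP,J}(\cdot\,,\SS)$ for no interval $J$, however much $I$ is enlarged to $I^{+\varepsilon}$. The same objection defeats the lower bound, since exact occurrences in $\SS-v_q$ transfer only to approximate ones in $\TT$. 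Contracting or dilating the interval and invoking atomlessness (via Corollary \ref{cor: number of tiles of fixed scale in o(e^{dt})} or Theorem \ref{thm: zero uniform patch frequency for patches}) only compensates for drift of the overall dilation parameter, not for the loss of exactness; this is precisely the infinite-local-complexity difficulty that the paper's supertile argument is designed to avoid, because the patch of $\TT$ supported on an $m$-supertile is controlled exactly by the hierarchy rather than by metric approximation of individual occurrences.

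Two further points. The frequency must be uniform in $h$, and the metric $D$ only controls a neighborhood of the origin, so the approximating translate must be chosen depending on $A_q+h$, with the matching error then depending on $h$; this can be arranged but is not delivered by a single $v_q$ as written. In the non-legal case, your claim that $L_{\PP,I}(B,\SS)=0$ for all bounded $B$ needs that no dilate $\alpha\PP$ with $\alpha\in I$ is legal: an occurrence counted by $L_{\PP,I}$ is a copy of $\alpha\PP$, and its being a sub-patch of $\SS$ shows that $\alpha\PP$ is legal, not that $\PP$ is. The paper instead argues that non-legal patches must intersect boundaries of supertiles of arbitrarily large order and concludes by the van Hove property of supertile sequences.
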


\begin{proof}
	We only sketch the proof, since it follows from standard arguments about supertiles as in the proof of \cite[Theorem 4.11 (ii)]{Lee-Solomyak}. If $\PP$ is a legal patch and $I$ is an interval as above, then for any tiling $\TT\in\X_\sigma^F$ the associated frequency in supertiles approaches $c_{\PP,I}$, and when considering general sets the difference amounts to boundary effects. In the non-legal case, since such patches must intersect the boundaries of supertiles of arbitrarily large order, the van Hove property  of sequences of supertiles deem the frequencies negligible. 
\end{proof}

\ignore{
\begin{proof}[Proof of Theorem \ref{thm: uniform patch frequencies}]
	Let $m\in\N$ be large, and consider the decomposition of the space $\R^d$ into $m$-supertiles defined by $\SS$. Given $h\in\R$, and integers $q>m$, denote
	\begin{equation}\label{eq:G_m,q_and_H_m,q}
	\begin{split}
	G_{m,q}&:=\left\{A:\ e^{ms}A\cap (A_q+h)\neq \emptyset, A=\supp T, T\in\SS \right\}\\
	H_{m,q}&:=\left\{A:\ e^{ms}A\subset (A_q+h), A=\supp T, T\in\SS \right\}.
	\end{split}
	\end{equation} 
	Clearly
	\begin{equation}\label{eq: bounding L_P for van hove}
	\sum_{A\in H_{m,q}}L_{\PP,I}(e^{ms}A)\le L_{\PP,I}(A_q+h)\le \sum_{A\in G_{m,q}}\left(L_{\PP,I}(e^{ms}A)+N_{\PP,I}(\partial (e^{ms}A))\right),
	\end{equation}

	Fix $\varepsilon>0$. By Corollary \ref{cor: Lemma A.4 for supertiles is uniform} and Lemma \ref{lem: uniform patch frequency for supertiles}, there exists a sufficiently large $m$ so that 
	\begin{equation}\label{eq: lemmas combined to bound L and N}
	\left|\frac{L_{\PP,I}(e^{ms}A)}{\vol(e^{ms}A)}-c_{\PP,I}\right|<\varepsilon\quad \text{ and } \quad N_{\PP,I}(\partial (e^{ms}A))<\varepsilon\, L_{\PP,I}(e^{ms}A).
	\end{equation} 
	for any tile support $A$. By \eqref{eq: bounding L_P for van hove} and \eqref{eq: lemmas combined to bound L and N} we get
	\begin{equation}\label{eq:bounds_for_L_P,I(A_q+h)_using_vol(e^msA)}
	(c_{P,I}-\varepsilon)\sum_{A\in H_{m,q}}\vol(e^{ms}A)\le L_{\PP,I}(A_q+h)\le (1+\varepsilon)(c_{\PP,I}+\varepsilon)\sum_{A\in G_{m,q}}\vol(e^{ms}A).
	\end{equation} 
	Denote by $R_m$ the supremum of diameters of $m$-supertiles. By the van Hove property
	\begin{equation}\label{eq: supertiles volume inequalities}
	\begin{split}
	\sum_{A\in H_{m,q}}\vol(e^{ms}A)&\ge\vol(A_q^{-R_m})\ge(1-\varepsilon)\vol(A_q)\\
	\sum_{A\in G_{m,q}}\vol(e^{ms}A)&\le\vol(A_q^{+R_m})\le(1+\varepsilon)\vol(A_q),
	\end{split}
	\end{equation}
	for sufficiently large $q$. Combining \eqref{eq:bounds_for_L_P,I(A_q+h)_using_vol(e^msA)} and \eqref{eq: supertiles volume inequalities}, we obtain for all large $q$
	\begin{equation*}
	(c_{\PP,I}-\varepsilon)(1-\varepsilon)\le\frac{L_{\PP,I}(A_q+h)}{\vol(A_q)}\le(c_{\PP,I}+\varepsilon)(1+\varepsilon)^2.	
	\end{equation*}
	Since $\varepsilon$ is arbitrary, this implies \eqref{eq: frequencies} with $\text{freq} (\PP,I,\SS)=c_{\PP,I}$.
\end{proof}
}

We end this section by showing that in the classical sense of uniform patch frequency, all patches in tiling $\X_\sigma^F$ have zero frequency. 

\begin{thm}\label{thm: zero uniform patch frequency for patches}
	Let $\sigma$ be an irreducible incommensurable substitution scheme in $\R^d$, let $\SS\in\X_\sigma^F$ be a stationary tiling, and let $\PP$ be a patch in $\TT\in\X_\sigma^F$. For any van Hove sequence $\left(A_q\right)_{q\ge1}$ in $\R^d$
	\begin{equation*}
		\emph{freq}(\PP,\TT):=\lim_{q\to\infty}\frac{L_{\PP}(A_q+h)}{\vol(A_q)}=0
	\end{equation*} 
	uniformly in $h\in\R^d$, where 
	\begin{equation*}
		L_{\PP}(A):=L_{\PP,\{1\}}=\#\{g\in\R^d:\ g+ \PP\subset \TT, (g+\supp (\PP))\subset A\}.
	\end{equation*}
\end{thm}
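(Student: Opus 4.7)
The plan is to bound $L_\PP(A)$ by the number of occurrences in $A$ of a single tile of $\PP$ at its fixed type and scale, and then to exploit the vanishing of $\varphi_{j,[\alpha,\alpha]}$ from Theorem \ref{thm: type and scale frequencies} to conclude that such counts are $o(\vol)$. The main obstacle will be propagating this estimate uniformly from supertiles to arbitrary van Hove sets.

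First, I would fix any tile $T_0\in\PP$; by Proposition \ref{prop:limiting tiles are similar to prototiles}, $T_0$ is similar to some prototile $T_{j_0}\in\tau_\sigma$ with a well-defined scale $\alpha_0$. For every $g$ contributing to $L_\PP(A)$, the tile $g+T_0\subset g+\PP$ is a tile of type $j_0$ and scale $\alpha_0$ in $\TT$ whose support lies in $A$, and distinct $g$'s produce distinct such tiles. Denoting by $N_{j_0,\alpha_0}(A,\TT)$ the number of tiles of type $j_0$ and scale exactly $\alpha_0$ in $\TT$ whose support lies in $A$, this yields $L_\PP(A)\le N_{j_0,\alpha_0}(A,\TT)$.

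Next, I would invoke the hierarchy of supertiles on $\TT$ granted by minimality (Theorem \ref{thm: tiling space is minimal}) together with Definition \ref{def: supertiles}, and apply Corollary \ref{cor: counting in supertiles} with $a=b=\alpha_0$ to any sequence $(T^{(m)})_{m\ge0}$ of supertiles of growing order. Inspection of \eqref{eq:a_hj} shows $\varphi_{j_0,[\alpha_0,\alpha_0]}=0$, which is equivalent to Corollary \ref{cor: number of tiles of fixed scale in o(e^{dt})} and reflects the incommensurability of $\sigma$. Consequently
\[
N_{j_0,\alpha_0}(\supp T^{(m)},\TT)=o(\vol(T^{(m)})),\quad m\to\infty,
\]
and I would argue that this $o$-estimate is uniform over the $m$-supertiles of $\TT$: the underlying Wiener--Ikehara analysis in the proof of Theorem \ref{thm: graph path counting in the case of multiscale} is independent of the initial vertex, and tile scales are bounded below by $\min_j\beta_j^{\min}$, so the relative error $N_{j_0,\alpha_0}(\supp T^{(m)},\TT)/\vol(T^{(m)})$ tends to zero uniformly as $m\to\infty$.

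Finally, I would propagate the estimate to an arbitrary van Hove sequence $(A_q)$ and arbitrary $h\in\R^d$ by mimicking the supertile decomposition from the proof of Theorem \ref{thm: uniform patch frequencies}: tiling $\R^d$ by the $m$-supertiles of $\TT$, one bounds
\[
L_\PP(A_q+h)\le\sum_{T^{(m)} \text{ meets } A_q+h} N_{j_0,\alpha_0}(\supp T^{(m)},\TT).
\]
Given $\eps>0$, one first chooses $m$ large enough that each interior supertile contributes at most $\eps\vol(T^{(m)})$, giving a total interior contribution of at most $\eps\vol(A_q)$; the contribution from the supertiles straddling $\partial(A_q+h)$ is at most $O(\vol((\partial A_q)^{+R_m}))$, which is $o(\vol(A_q))$ by the van Hove property as $q\to\infty$ (with $m$ now fixed). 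Hence $\limsup_{q\to\infty}L_\PP(A_q+h)/\vol(A_q)\le\eps$ uniformly in $h$, and since $\eps$ was arbitrary the frequency vanishes. The main delicate point is the uniformity in the second step, since it is what permits commuting the two limits $m\to\infty$ and $q\to\infty$ in the supertile decomposition.
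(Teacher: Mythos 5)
Your proposal is correct and follows essentially the same route as the paper: bound $L_\PP$ by the count of a single tile of $\PP$ at its fixed type and exact scale, invoke Corollary \ref{cor: number of tiles of fixed scale in o(e^{dt})} to get an $o(\vol)$ bound on supertiles, and then transfer to arbitrary van Hove sequences via the supertile decomposition and boundary estimates as in Theorem \ref{thm: uniform patch frequencies}. Your explicit treatment of the uniformity over $m$-supertiles is a point the paper leaves to ``standard approximation arguments,'' but it is the same argument.
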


\begin{proof}
	The proof follows from the fact that for every $\varepsilon>0$ there exists $m_0\in\N$ so that for every $m\ge m_0$ 
	\begin{equation*}\label{eq:L_P(e^msA)/vol(e^msA)_is_small}
	\frac{L_{\PP}(e^{ms}A)}{\vol(e^{ms}A)}<\varepsilon
	\end{equation*}
	holds, for every set $A$ that is the support of a tile $T$. This is true because the patch $\PP$ contains at least one tile, say a translated copy of $\beta T_i$ for some prototile $T_i\in\tau_\sigma$ and $\beta\in(\beta^{\min}_{i},1]$. Clearly, $L_{\PP}(e^{ms}A)$ is smaller than the number of copies of $\beta T_i$ contained in $F_{ms}(A)$, which is $o(\vol(e^{ms}A))$ by Corollary \ref{cor: number of tiles of fixed scale in o(e^{dt})}. As in Theorem \ref{thm: uniform patch frequencies}, the proof now follows from standard approximation arguments for supertiles and the van Hove property.
	\end{proof}

\section{Unique ergodicity}\label{sec: unique ergodicity}

This section is dedicated to the proof of unique ergodicity of incommensurable tiling dynamical systems.

\begin{thm}\label{thm: X is uniquely ergodic}
	Let $\sigma$ be an irreducible incommensurable substitution scheme in $\R^d$. Then the dynamical system $(\X_\sigma^F,\R^d)$ is uniquely ergodic.
\end{thm}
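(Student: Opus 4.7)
The plan is to derive unique ergodicity from the uniform patch frequency result, Theorem~\ref{thm: uniform patch frequencies}, extended to every $\TT \in \X_\sigma^F$ via Corollary~\ref{cor: patch frequency for all tilings in the space}, following the framework used for fixed-scale substitution tilings in \cite{Lee-Moody-Solomyak2}. Recall the standard criterion for a compact metric $\R^d$-system: $(\X_\sigma^F, \R^d)$ is uniquely ergodic if and only if for every $f \in C(\X_\sigma^F)$ and every van Hove sequence $(A_q)_{q\ge 1}$ in $\R^d$, the Birkhoff averages
\begin{equation*}
M_q(f, \TT) := \frac{1}{\vol(A_q)} \int_{A_q} f(\TT - x) \, dx
\end{equation*}
converge uniformly in $\TT$ to a constant $c(f)$; this constant then defines, via the Riesz representation theorem, the unique translation-invariant Borel probability measure on $\X_\sigma^F$.

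First I would reduce to a distinguished family of test functions. For a legal patch $\PP$, a bounded interval $I \subset \R$ containing $1$ and a left neighborhood of $1$ inside the interval of legal scales for $\PP$, and a bounded open set $U_0 \ni 0$ in $\R^d$, define the cylinder
\begin{equation*}
C_{\PP, I, U_0} := \left\{\TT \in \X_\sigma^F : \exists\, g \in U_0,\, \alpha \in I \text{ with } g + \alpha\PP \subset \TT\right\}.
\end{equation*}
Using the left-continuity of the substitution semi-flow (Proposition~\ref{prop:F_t is left-continuous}) together with Proposition~\ref{prop:limiting tiles are similar to prototiles}, these cylinders generate the topology of $(\X_\sigma^F, D)$, so finite linear combinations of their indicators are dense (in a two-sided sandwich sense, see below) in $C(\X_\sigma^F)$.

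The crucial computation is the identity
\begin{equation*}
\int_{A_q} \chi_{C_{\PP, I, U_0}}(\TT - x) \, dx \;=\; \vol(U_0) \cdot L_{\PP, I}(A_q + h, \TT) + E_q(h, \TT),
\end{equation*}
where the offset $h$ accounts for a choice of reference point in $U_0$, and the boundary error satisfies $|E_q(h, \TT)| \le \vol(U_0) \cdot N_{\PP, I}(\partial A_q + h, \TT)$, arising only from occurrences of dilations of $\PP$ that straddle $\partial A_q$. Dividing by $\vol(A_q)$ and invoking Theorem~\ref{thm: uniform patch frequencies} for the bulk term together with part~(3) of Lemma~\ref{lem:A.4 in LMS} for the boundary error, we conclude that
\begin{equation*}
M_q(\chi_{C_{\PP, I, U_0}}, \TT) \xrightarrow{q \to \infty} \vol(U_0) \cdot c_{\PP, I},
\end{equation*}
uniformly in $\TT \in \X_\sigma^F$, with a limit independent of $\TT$. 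Linearity extends this to finite linear combinations of such cylinder indicators.

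The main obstacle will be the final density step, since cylinder indicators are only semicontinuous on $\X_\sigma^F$, not continuous. The standard remedy is to sandwich any $f \in C(\X_\sigma^F)$ between finite linear combinations $g^\pm$ of cylinder indicators with $g^- \le f \le g^+$ uniformly close, by perturbing the data $(\PP, I, U_0)$ slightly. The perturbation affects only those tilings whose patch near the origin sits exactly on the boundary of a cylinder specification, i.e.\ either a specific scale endpoint of $I$ or the boundary of $U_0$. By Theorem~\ref{thm: zero uniform patch frequency for patches} (applied via Corollary~\ref{cor: number of tiles of fixed scale in o(e^{dt})}) the contribution of exact-scale configurations vanishes, while the $U_0$-boundary contribution vanishes by choosing $\partial U_0$ of $d$-dimensional Lebesgue measure zero. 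Hence $M_q(f, \TT) \to c(f)$ uniformly for every continuous $f$, completing the proof of unique ergodicity.
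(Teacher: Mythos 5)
Your reduction pushes the real difficulty into the final ``density'' step, and that step has a genuine gap. The cylinders $C_{\PP,I,U_0}$ detect only tilings containing an \emph{exact} dilated translate of the fixed patch $\PP$. But incommensurable multiscale tilings have infinite local complexity even modulo global dilation: arbitrarily $D$-close to any tiling there are tilings whose central patch is translation-and-dilation-inequivalent to it (the ratios between scales of individual tiles vary over dense sets, and tilings admitted in the limit exhibit varying relative offsets across fault lines). Consequently $C_{\PP,I,U_0}$ is not open, and finitely many such cylinders of small diameter in $\X_\sigma^F$ (i.e.\ with $\supp\PP$ covering a large ball, $I$ short, $U_0$ small) cannot cover any neighbourhood of a given tiling, let alone the set where a continuous $f$ is not small. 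So for a general $f$ you cannot produce finite combinations $g^-\le f\le g^+$ of these indicators whose limiting averages are $\varepsilon$-close: dominating $f$ from above on the configurations missed by your finitely many fine cylinders forces either coarse cylinders (small $\PP$, long $I$, large $U_0$), which ruins the closeness of the two averages, or infinitely many fine ones. The perturbation argument you sketch only handles tilings sitting on the boundary of a single cylinder's specification (endpoint scales of $I$, $\partial U_0$), and Theorem \ref{thm: zero uniform patch frequency for patches} does not address this coverage problem at all. This is exactly where the paper has to work: it partitions $\X_\sigma^F$ into finitely many small-diameter pixelization cylinders $X(U_{m,\ell})$, whose indicators, however, decompose into \emph{countably} many exact-patch classes $X(s_{ij}\PP_j,V_{s_{ij}\PP_j})$, each of frequency zero; the limit of the Birkhoff average is then computed by interchanging the limit with this countable sum via Riemann--Stieltjes integration in the scale variable against $x\mapsto \mathrm{freq}(\PP_j,[a,x],\SS)$ with the patch-dependent weight $\vol(V_{x\PP_j})$ (Lemma \ref{lem: convergence to weighted frequencies}), together with a van Hove tail estimate over high-order supertiles (Lemma \ref{lem: sufficient condition for unique ergodicity} and Proposition \ref{prop: equation condition for unique ergodicity}). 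Your proposal bypasses this machinery without a substitute.

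A secondary, more repairable, issue: the identity $\int_{A_q}\chi_{C_{\PP,I,U_0}}(\TT-x)\,dx=\vol(U_0)\,L_{\PP,I}(A_q+h,\TT)+E_q(h,\TT)$ with $E_q$ a pure boundary term presupposes that the wiggle sets $t-U_0$, taken over all occurrences $g+\alpha\PP\subset\TT$ with $\alpha\in I$, are pairwise disjoint; otherwise the left-hand side undercounts in the bulk, not only near $\partial A_q$. This requires (and, for $U_0$ small and $I$ short, admits) a separation argument for occurrence vectors of dilates of a fixed patch, using that tile scales are bounded below; you do not give it, and note that in the paper's treatment the correct weight attached to a patch occurrence is the volume of its maximal wiggle set, which depends on the patch and its scale rather than being a universal $\vol(U_0)$.
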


Our proof draws inspiration from that given in \cite{Lee-Solomyak} for the case of tiling spaces associated with fixed scale substitution tilings of infinite local complexity. Although the approach and framework follow that of Lee and Solomyak, since Theorem \ref{thm: zero uniform patch frequency for patches} implies that the standard patch frequencies are all zero, their arguments cannot be applied directly. The main innovation is given in Lemma \ref{lem: convergence to weighted frequencies}, in which we take advantage of the fact that patch frequencies are non-zero only when patches are counted with their rescalings in some non-trivial interval of scales. This allows us to use the theory of Riemann-Stieltjes integration in order to evaluate the countable sum to which the ergodic averages converge.

\subsection{Cylinder sets and partitions of the space of tilings}

Let $\sigma$ be an irreducible incommensurable multiscale substitution scheme in $\R^d$, and let 
\begin{equation*}
\X_\SS:=\overline{\OO(\SS)}=\overline{\{\SS-x\,:\,x\in\R^d\}},
\end{equation*}
where $\SS\in\X_\sigma^F$ is a stationary tiling. As shown in \S\ref{sec:Dynamics}, the dynamical system $(\X_\sigma^F,\R^d)$ is minimal, and so $\X_\SS=\X_\sigma^F$. Nevertheless, we use the notation $\X_\SS$ to emphasize that the space is the orbit closure of a specified stationary tiling $\SS$. We begin by defining a sequence of partitions of $\X_\SS$ into finitely many cylinder sets. 

The idea is to create ``pixelized'' images from patches and to define cylinders as the collection of tilings with the same ``pixelized'' image on a large centered cube
\begin{equation*}
C_m:=[-2^m,2^m)^d.
\end{equation*}
Subdivide $C_m$ into small cubes $c_\omega$ of side length $\frac{1}{2^m}$ taken to be products of half open intervals (this are the ``pixels''), and write 
\begin{equation*}
C_m=\bigsqcup_{\omega = 1}^{2^{2m+1}d} c_\omega.
\end{equation*}
Let $\TT\in\X_\SS$, and consider the patch $[C_m]^\TT$, which consists of all tiles in $\TT$ that intersect $C_m$. We use this patch to color the small cubes with colors $\{0,1,\ldots,n\}$ according to the following rule. Recall that by Proposition \ref{prop:limiting tiles are similar to prototiles}, every tile in every tiling in $\X_\sigma^F$ is similar to one of the prototiles in $\tau_\sigma$ and can be assigned a type. If $c_\omega$ is contained in the interior of the support of a tile of type $i\in\{1,\ldots,n\}$, we color the cube $c_\omega$ in color $i$. Otherwise, $c_\omega$ intersects the boundary of the support of a tile in $\TT$. In such a case we color $c_\omega$ in the color $0$. This coloring of the small cubes in $C_m$ according to $[C_m]^\TT$ is the \textit{$m$-pixelization of $\TT$}, and is denoted by $\Pi_m(\TT)$, see Figure \ref{fig:pixelization} for an illustration.

\begin{figure}[ht!]
	\includegraphics[scale=0.9]{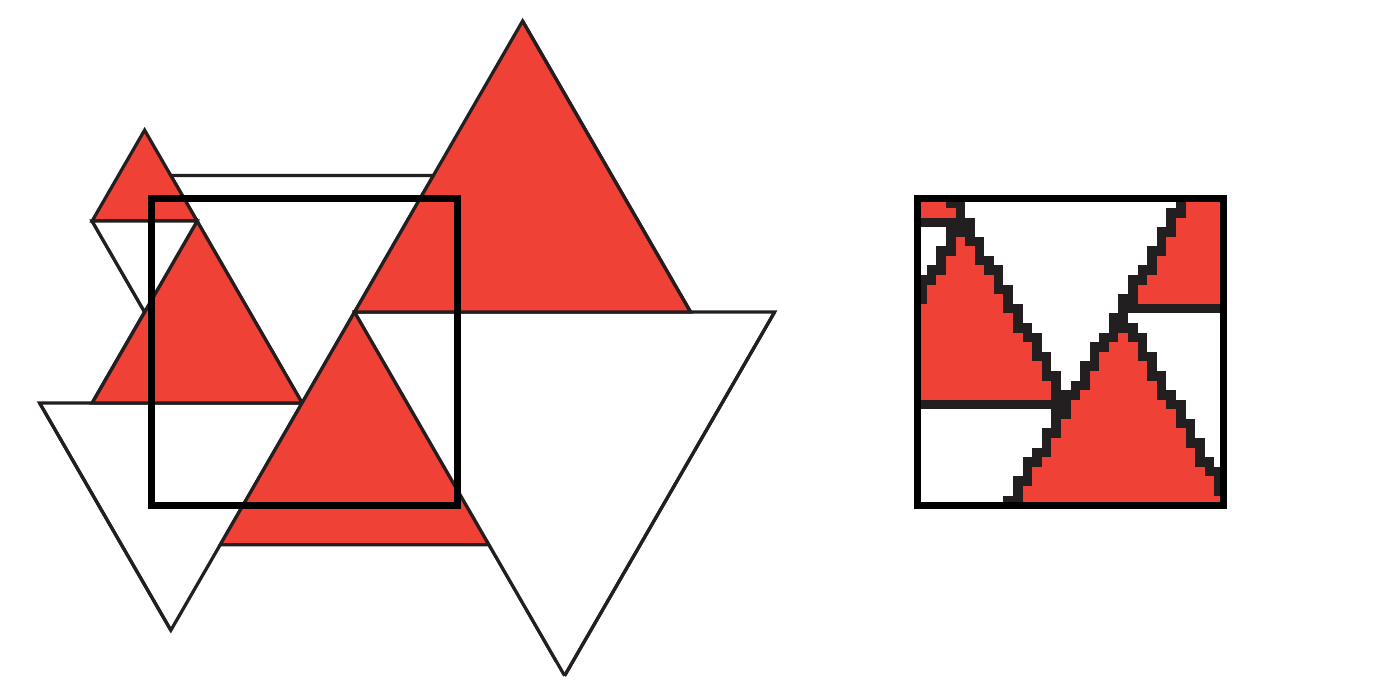}\caption{A patch $[C_m]^\TT$ and its $m$-pixelization.}	\label{fig:pixelization}	
\end{figure}

Next, denote by $\{U_{m,\ell}\}_{\ell=1}^{N_m}$ the finite set of all different colorings of the small cubes in $C_m$ using the colors $0,1,\ldots,n$. For $\ell\in\{1,\ldots,N_m\}$, let
\begin{equation*}
X(U_{m,\ell}):=\{\TT\in\X_\SS\,:\,\Pi_m(\TT)=U_{m,\ell}\}
\end{equation*}
be the set of tilings in $\X_\SS$ whose $m$-pixelization is $U_{m,\ell}$. This is a cylinder set in $\X_\SS$, and clearly for every $m\in\N$
\begin{equation}\label{eq:cylinder_decomposition}
\X_\SS=\bigsqcup_{\ell=1}^{N_m}X(U_{m,\ell}),
\end{equation} 
where $\bigsqcup$ denotes a disjoint union.

Fix $m\in\N$ and $\ell\in\{1,\ldots,N_m\}$, and consider the set $\mathcal{G}_{m,\ell}$ of all patches of the form $[C_m]^{\SS-x}$ with $\SS-x\in X(U_{m,\ell})$ and $x\in\R^d$. For every patch $\PP\in \mathcal{G}_{m,\ell}$, there is a Borel set $V_\PP\subset \R^d$ of \textit{maximal wiggle}, for which $\Pi_m(\PP-x)=U_{m,\ell}$ for all $x\in V_\PP$ and $V_\PP$ is maximal with this property with respect to inclusion. Since the small cubes $c_\omega$ are products of half open intervals, for every patch $\PP\in \mathcal{G}_{m,\ell}$ the set $V_\PP$ contains an open set. Note that although $V_\PP$ always contains the origin, it is not necessarily an interior point of $V_\PP$.

The tiling $\SS$ has countably many patches up to translation equivalence, and so there are countably many patches of the form $[C_m]^{\SS-x}$ in $\mathcal{G}_{m,\ell}$, modulo translation equivalence. We choose a set $(\mathcal{G}_{m,\ell})_\equiv$ of representatives in the following way. First, note that given a patch $\PP\in\mathcal{G}_{m,\ell}$, there exists an interval of scales in which dilations of $\PP$ have $U_{m,\ell}$ as their $m$-pixelization, where the $m$-pixelization of a patch with support that covers $C_m$ is defined similarly to the $m$-pixelization of a tiling. If this interval of scales is degenerate and contains only $1$, we can take advantage of the fact that the set $V_{\PP}$ of maximal wiggle contains an open set, and so for some small translation of $\PP$ the corresponding set of scales is non-degenerate. In addition, recall that by Lemma \ref{thm:The_scales_for_patches_are_dense}, patches appear in $\SS$ in a dense set of scales. Combining the above we conclude that it is always possible to choose a set $(\mathcal{G}_{m,\ell})_\equiv$ of representatives modulo translation equivalence of the form

\begin{equation}\label{eq:I_j} 
	(\mathcal{G}_{m,\ell})_\equiv=\bigcup_{j\ge 1}\{s_{ij}\PP_j\}_{i\ge 1},
\end{equation} 
with the property that for every $j\in\N$, the scaling constants $\{s_{ij}\}_{i\ge1}$ are dense in some interval $I_j$ that contains $1$ and a left neighborhood of $1$.

Given a patch $\PP$ and a set $V\subset\R^d$, define
\begin{equation*}
X(\PP,V):=\left\{\TT\in\X_\SS\,:\,\exists x\in V\text{ s.t. }\PP-x
\subset\TT\right\}.
\end{equation*}
It follows that 
\begin{equation*}
X(U_{m,\ell})=\bigsqcup_{i,j\ge1}X(s_{i,j}\PP_j,V_{s_{i,j}\PP_j})\bigsqcup\widetilde{X}_{m,\ell},
\end{equation*} 
where $\PP_j$, $\{s_{ij}\}_{i\ge1}$ and $I_j$ as in \eqref{eq:I_j}, $V_{s_{ij}\PP_j}$ are the sets of maximal wiggle for the patches $s_{ij}\PP_j$, and
\begin{equation*}
\widetilde{X}_{m,\ell}:=\left\{\TT\in\X_\SS\,:\,\Pi_m(\TT)=U_{m,\ell}, \not\exists x\in\R^d \text{ s.t. } [C_m]^\TT +x \subset\SS \right\}
\end{equation*}
are the tilings so that the patch $[C_m]^\TT$ is only admitted in the limit, compare  \cite{Lee-Solomyak}.

\subsection{Proof of unique ergodicity}

Denote by $\chi_{m,\ell}$ the characteristic function of $X(U_{m,\ell})$. Using the decomposition \eqref{eq:cylinder_decomposition}, in order to establish unique ergodicity for $(\X_\SS,\R^d)$ it is enough to show that for every van Hove sequence $\left(A_q\right)_{q\ge1}$ and every pair $(m,\ell)$
\begin{equation}\label{eq:unique_ergodicity_criterion}
\lim_{q\rightarrow \infty}\frac{1}{\vol(A_q)}\int_{A_q}\chi_{m,\ell}(\SS-x-h)dx= u_{m,\ell}
\end{equation}
uniformly in $h\in\R^d$, where $u_{m,\ell}$ is a constant depending only on $m$ and on $\ell$. We note that relying on the decomposition of the space \eqref{eq:cylinder_decomposition} into finitely many pairwise disjoint cylinders, which can be made arbitrarily small, the sufficiency of \eqref{eq:unique_ergodicity_criterion} is standard, see e.g. \cite[Theorem 2.6]{Lee-Moody-Solomyak1}. This is also the approach in  \cite[Theorem 3.2]{Lee-Solomyak}, where it is shown that
\begin{equation*}
\lim_{q\rightarrow \infty}\left(\frac{1}{\vol(A_q)}\int_{A_q}\chi_{m,\ell}(\SS-x-h)dx-\sum_{j=1}^\infty \sum_{i=1}^\infty\frac{\vol(V_{s_{i,j}\PP_j})L_{s_{i,j}\PP_j}(A_q+h)}{\vol(A_q)}\right)=0.
\end{equation*}
Here $L_\PP(A):=L_{\PP,\{1\}}(A)$, where as in \S\ref{sec: patches} we simplify notation and put $L_{\PP,I}(A)=L_{\PP,I}(A,\SS)$. Combined with \eqref{eq:unique_ergodicity_criterion}, unique ergodicity of $(\X_\SS,\R^d)$ will follow from the next result.

\begin{prop}\label{prop: equation condition for unique ergodicity}
	There exist constants $\widetilde{v}_j$ so that
	\begin{equation*}\label{eq: equation condition for unique ergodicity}
	\lim_{q\rightarrow \infty}\sum_{j=1}^\infty \sum_{i=1}^\infty\frac{\vol(V_{s_{i,j}\PP_j})L_{s_{i,j}\PP_j}(A_q+h)}{\vol(A_q)}=\sum_{j=1}^\infty\widetilde{v}_j\emph{freq}(\PP_j,I_j,\SS)<\infty,
	\end{equation*}
	uniformly in $h\in\R^d$, with $\PP_j$, $\{s_{ij}\}_{i\ge1}$ and $I_j$ as in \eqref{eq:I_j} and $\emph{freq}(\PP,I,\SS)$ as in \eqref{eq: frequencies}.
\end{prop}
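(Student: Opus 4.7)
The plan is to recast, for each fixed $j$, the inner sum as a Riemann--Stieltjes integral on $I_j$, then pass to the limit inside using Theorem \ref{thm: uniform patch frequencies}, and finally control the outer sum via a dominated--convergence argument built on a uniform tail bound. To implement this, define the step function $F_j^q: I_j \to \R$ by
\[
F_j^q(\alpha) := \frac{L_{\PP_j,\, I_j \cap (0,\alpha]}(A_q+h)}{\vol(A_q)},
\]
whose jumps occur precisely at the scales $\{s_{ij}\}_{i\ge 1}$, with magnitude $L_{s_{ij}\PP_j}(A_q+h)/\vol(A_q)$. Setting $w_j(\alpha) := \vol(V_{\alpha \PP_j})$, one has
\[
\sum_{i=1}^\infty \frac{\vol(V_{s_{ij}\PP_j})\, L_{s_{ij}\PP_j}(A_q+h)}{\vol(A_q)} = \int_{I_j} w_j(\alpha)\, dF_j^q(\alpha).
\]
The function $w_j$ is continuous on $I_j$: as $\alpha$ varies, the tile boundaries of $\alpha \PP_j$ deform continuously, so $\mathbf{1}_{V_{\alpha \PP_j}} \to \mathbf{1}_{V_{\alpha_0 \PP_j}}$ almost everywhere, and dominated convergence on the bounded region $\supp(\PP_j) + C_m$ yields continuity of $w_j$.

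Next, I would apply Theorem \ref{thm: uniform patch frequencies}---whose proof goes through for any sub-interval of $I_j$ where dilations of $\PP_j$ are dense in $\SS$, as guaranteed by Lemma \ref{thm:The_scales_for_patches_are_dense}---to deduce that for each $\alpha \in I_j$, $F_j^q(\alpha) \to F_j(\alpha) := \emph{freq}(\PP_j, I_j \cap (0,\alpha], \SS)$ as $q \to \infty$, uniformly in $h$. The limit $F_j$ is monotone non-decreasing, and it is continuous because every individual scale carries zero patch frequency (Theorem \ref{thm: zero uniform patch frequency for patches}, a consequence of incommensurability). By P\'olya's theorem for monotone convergence to a continuous limit, $F_j^q \to F_j$ uniformly in $\alpha$ as well, still uniformly in $h$. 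Continuity of $w_j$ combined with uniform convergence of $F_j^q$ then yields
\[
\int_{I_j} w_j\, dF_j^q \xrightarrow[q\to\infty]{} \int_{I_j} w_j\, dF_j,
\]
uniformly in $h$. Setting $\widetilde{v}_j := \left(\int_{I_j} w_j\, dF_j\right) / \emph{freq}(\PP_j, I_j, \SS)$---the average wiggle volume with respect to the normalized limiting distribution---the inner sum converges to $\widetilde{v}_j \cdot \emph{freq}(\PP_j, I_j, \SS)$, uniformly in $h$.

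For the outer sum, the key is a uniform tail bound. The translated wiggle sets $g + V_{s_{ij}\PP_j}$, taken over all triples $(i,j,g)$ with $g + s_{ij}\PP_j \subset \SS$ and $g + \supp(s_{ij}\PP_j) \subset A_q+h$, are pairwise disjoint: each such triple corresponds to a distinct family of translation offsets that realize the pixelization $U_{m,\ell}$ in $\SS$ (here we use that the $\PP_j$ were chosen as representatives modulo translation equivalence). Since all these sets lie in an $R$-neighborhood of $A_q+h$ where $R$ depends only on $C_m$,
\[
\sum_{j=1}^\infty \sum_{i=1}^\infty \frac{\vol(V_{s_{ij}\PP_j})\, L_{s_{ij}\PP_j}(A_q+h)}{\vol(A_q)} \le \frac{\vol((A_q+h)^{+R})}{\vol(A_q)} = 1 + o(1)
\]
as $q\to\infty$, uniformly in $h$, by the van Hove property of $(A_q)$. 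Fatou's lemma applied term-by-term then yields $\sum_j \widetilde{v}_j\, \emph{freq}(\PP_j, I_j, \SS) < \infty$. The interchange of infinite sum and limit follows by a standard $\varepsilon/3$ argument: given $\varepsilon > 0$, choose $N$ with $\sum_{j>N}\widetilde{v}_j\, \emph{freq}(\PP_j, I_j, \SS) < \varepsilon/3$; the uniform tail bound controls $\sum_{j>N}$ of the integrals for large $q$, and uniform convergence handles the first $N$ terms.

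The main obstacle is the disjointness and tail bound for the wiggle translations, which requires careful accounting near $\partial(A_q+h)$ where a patch $s_{ij}\PP_j$ may influence $\Pi_m(\SS-y)$ without being fully contained in $A_q+h$; the van Hove property absorbs this error but the book-keeping is delicate. The other subtle ingredient is continuity of $F_j$ (needed both for the P\'olya step and for the Riemann--Stieltjes convergence), which is where incommensurability enters via Theorem \ref{thm: zero uniform patch frequency for patches} to rule out atoms in the limiting distribution.
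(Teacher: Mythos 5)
Your treatment of the inner sum (fixed $j$) is essentially the paper's own argument: Lemma \ref{lem: convergence to weighted frequencies} writes that sum as a Riemann--Stieltjes integral $\int f\,dg_{q,h}$ with $f(x)=\vol(V_{x\PP_j})$ and $g_{q,h}$ the normalized counting function, and passes to the limit; the only difference is that the paper interchanges limit and integral via integration by parts plus dominated convergence, whereas you invoke P\'olya's theorem, which requires continuity of the limit distribution $F_j$. Note that Theorem \ref{thm: zero uniform patch frequency for patches} does not give that continuity: zero frequency of each single scale does not rule out an atom of $F_j$ arising as a limit of frequencies of intervals shrinking to a point, since infinitely many scales accumulate at every point of $I_j$. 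This slip is repairable (a Helly--Bray argument needs only pointwise convergence of the monotone integrators, or one can follow the paper's integration-by-parts route), so it is not the main problem.

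The genuine gap is in the outer sum. First, the claimed pairwise disjointness of the translated wiggle sets $g+V_{s_{ij}\PP_j}$ is not justified: a point $y$ lies in $g+V_{s_{ij}\PP_j}$ as soon as the sub-patch $g+s_{ij}\PP_j$ of $\SS$ has pixelization $U_{m,\ell}$ when recentered at $y$, and two distinct representatives (for instance two patches differing only in peripheral tiles that do not meet $C_m+y$, or very close scales from the dense set $\{s_{ij}\}_{i\ge1}$) can both occur at positions compatible with the same $y$; the pixelization seen at $y$ does not single out one triple $(i,j,g)$. More importantly, even if this is repaired to a bounded-multiplicity bound, what you obtain is only a bound on the \emph{full} double sum by a constant times $1+o(1)$, and such a bound does not control the tail $\sum_{j>N}$ uniformly in $q$ and $h$. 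Fatou gives the $\liminf$ inequality only; for the $\limsup$ direction one must prevent mass from escaping to large $j$ as $q\to\infty$, which is exactly the hypothesis of Lemma \ref{lem: sufficient condition for unique ergodicity}: for every $\varepsilon>0$ there is $q_0$ such that $\sum_{j\ge q_0}\sum_{i\ge1}\vol(V_{s_{ij}\PP_j})L_{s_{ij}\PP_j}(A_q+h)/\vol(A_q)<\varepsilon$ for all $q\ge q_0$ and all $h$. Your ``$\varepsilon/3$'' step silently assumes this estimate. The paper's proof of the proposition is devoted precisely to it: after regrouping the enumeration by the minimal supertile order $\mathrm{sp}(\PP)$, patches in the tail occur only as sub-patches of $k$-supertiles with $k$ large, hence all their occurrences lie in an $r$-neighborhood of boundaries of $k$-supertiles, and the van Hove property of supertiles (together with that of $(A_q)$) makes that region of relatively small volume inside $(A_q+h)^{+r}$. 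This geometric localization of the tail is the missing idea in your proposal.
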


For the proof of Proposition \ref{prop: equation condition for unique ergodicity} we establish the following two lemmas. Lemma \ref{lem: convergence to weighted frequencies} in particular is a key step in the proof, and the density of scales in which patches appear in incommensurable tilings   plays an important role, together with the existence of uniform patch frequencies as established in \S\ref{sec: patches}. 

\begin{lem}\label{lem: convergence to weighted frequencies}
	For every $j\ge1$ there exists a constant $\widetilde{v}_j$ so that
	\begin{equation*}
	\lim_{q\rightarrow \infty} \sum_{i=1}^\infty\frac{\vol(V_{s_{i,j}\PP_j})L_{s_{i,j}\PP_j}(A_q+h)}{\vol(A_q)}=\widetilde{v}_j\emph{freq}(\PP_j,I_j,\SS),
	\end{equation*}
	uniformly in $h\in\R^d$.
\end{lem}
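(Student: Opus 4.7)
The plan is to interpret the infinite sum as a Riemann--Stieltjes sum over the interval $I_j$ against the \emph{scale-frequency measure} $\mu_j$, defined on Borel subsets $J\subset I_j$ by $\mu_j(J):=\text{freq}(\PP_j,J,\SS)$. The constant will then be given by
\[
\widetilde{v}_j := \frac{1}{\text{freq}(\PP_j,I_j,\SS)}\int_{I_j}\vol(V_{s\PP_j})\,d\mu_j(s),
\]
so that $\widetilde{v}_j\cdot\text{freq}(\PP_j,I_j,\SS)$ is exactly the target of the limit.

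First I would justify that $\mu_j$ is a well-defined finite Borel measure. The key observation is that $\{s_{ij}\}_{i\ge 1}$ is an enumeration of \emph{every} scale $s\in I_j$ at which a translate of $\PP_j$ occurs in $\SS$, since $(\mathcal{G}_{m,\ell})_\equiv$ was chosen to contain representatives of all translation-equivalence classes. Consequently, for any subinterval $J\subset I_j$,
\[
\sum_{\{i\,:\,s_{ij}\in J\}} L_{s_{ij}\PP_j}(A_q+h)=L_{\PP_j,J}(A_q+h).
\]
Additivity and finiteness of $\mu_j$ follow from Lemma~\ref{lem:A.4 in LMS}(1), and Theorem~\ref{thm: uniform patch frequencies} yields
$L_{\PP_j,J}(A_q+h)/\vol(A_q)\to\mu_j(J)$ uniformly in $h$ for each such $J$.

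Next I would establish regularity of the weight $v_j(s):=\vol(V_{s\PP_j})$. On $I_j$ every dilation $s\PP_j$ has pixelization $U_{m,\ell}$, and $V_{s\PP_j}$ is cut out from a bounded set by finitely many inequalities expressing that the tile boundaries of $s\PP_j$ avoid the $2^{-m}$-grid inside $C_m$; the coefficients of these inequalities depend continuously (in fact piecewise linearly) on $s$. Hence $v_j$ is piecewise continuous on $I_j$ and bounded by $\vol(C_m)$, and therefore Riemann integrable against $\mu_j$.

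Given $\varepsilon>0$, I would partition $I_j$ into finitely many subintervals $J_1,\ldots,J_K$ on which the oscillation of $v_j$ is at most $\varepsilon$, and pick representatives $\zeta_k\in J_k$. By the previous step, the inner sum $\sum_{\{i\,:\,s_{ij}\in J_k\}} v_j(s_{ij})L_{s_{ij}\PP_j}(A_q+h)$ differs from $v_j(\zeta_k)\,L_{\PP_j,J_k}(A_q+h)$ by at most $\varepsilon\,L_{\PP_j,J_k}(A_q+h)$; summing over $k$, dividing by $\vol(A_q)$, and using Theorem~\ref{thm: uniform patch frequencies} on each of the finitely many $J_k$ gives, uniformly in $h$ and for all sufficiently large $q$,
\[
\Big|\,S_q(h)-\sum_{k=1}^K v_j(\zeta_k)\,\mu_j(J_k)\,\Big|\le \varepsilon\,\mu_j(I_j)+\varepsilon,
\]
where $S_q(h)$ denotes the sum in the lemma. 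Refining the partition and letting $\varepsilon\to 0$ identifies the limit with $\int_{I_j}v_j\,d\mu_j$, finishing the proof. The main obstacle will be the regularity of $v_j$ near the endpoints of $I_j$ (where the pixelization can degenerate) and the uniform control of the tail $\sum_{\{i\,:\,s_{ij}\notin\bigcup J_k\}}$; the former is handled by excising a small neighborhood of the endpoints, which contributes at most a $\mu_j$-mass going to zero, and the latter reduces to the same trick together with the uniform bound from Lemma~\ref{lem:A.4 in LMS}(1).
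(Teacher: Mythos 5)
Your proposal is correct in outline and rests on the same underlying idea as the paper's proof: interpret the sum as a Stieltjes-type integral of the wiggle-volume function $s\mapsto\vol(V_{s\PP_j})$ against the scale-frequency distribution of $\PP_j$ supplied by Theorem \ref{thm: uniform patch frequencies} (with Theorem \ref{thm: zero uniform patch frequency for patches} ensuring single scales carry no mass), and define $\widetilde{v}_j$ as the resulting weighted average; the paper extracts the same constant via the first mean value theorem for Riemann--Stieltjes integrals. Where you genuinely differ is the mechanism for passing $q\to\infty$ inside: the paper writes the sum as $\int_a^b f\,dg_{q,h}$ with $g_{q,h}(x)=L_{\PP_j,[a,x]}(A_q+h)/\vol(A_q)$ and interchanges limit and integral via integration by parts plus dominated convergence, whereas you use a fixed finite partition of $I_j$ with small oscillation of $\vol(V_{s\PP_j})$ on each cell together with uniform convergence of the cell counts. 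Your route is more elementary, but it leans on two facts that are not literally among the cited results. First, Theorem \ref{thm: uniform patch frequencies} is stated only for intervals containing $1$ and a left neighborhood of $1$, so the convergence $L_{\PP_j,J_k}(A_q+h)/\vol(A_q)\to\mu_j(J_k)$ on a general cell $J_k$ must be obtained by differencing, e.g. $L_{\PP_j,[a,x]}=L_{\PP_j,[a,b]}-L_{\PP_j,(x,b]}$ for $x<1$, which is legitimate because a given translate can occur at only one scale (the support of $\PP_j$ contains a neighborhood of the origin, being a $C_m$-patch); the paper implicitly needs the same extension for its $g_\infty(x)$, and your identification of $\sum_{i:\,s_{ij}\in J}L_{s_{ij}\PP_j}$ with $L_{\PP_j,J}$ is likewise the identification the paper itself uses. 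Second, your endpoint excision requires that the $\mu_j$-mass of a short interval at an endpoint tends to zero, a continuity statement for the limiting distribution function that is asserted rather than proved; it can be justified by dominating the patch count by the count of a single tile of $\PP_j$ at scales in a correspondingly short interval and appealing to the explicit formula of Theorem \ref{thm: type and scale frequencies}, and it is precisely the kind of step the paper's mean-value-theorem route avoids needing. Finally, note that your argument only ever uses $\mu_j$ as a finitely additive interval function, so calling it a Borel measure claims countable additivity you neither prove nor need. With these points patched, your proof is complete, uniform in $h$, and yields the same constant as the paper's.
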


\begin{proof}
	Fix $j$, and write $a:=\inf I_j$ and $b:=\sup I_j$. For every $x\in[a,b]$ denote $I(x):=[a,x]$. Define the functions  $f,g_{q,h}:[a,b]\rightarrow\R$ by
	\begin{equation*}
	\begin{split}
		f(x)&=\vol(V_{x\PP_j})\\
		g_{q,h}(x)&=\frac{L_{\PP_j,I(x)}(A_q+h)}{\vol(A_q)},	
	\end{split}
	\end{equation*}	
	where $f(a)=0$ if $\Pi_m(a\PP_j)\neq U_{m,\ell}$.
	By definition, $f$ is continuous and $g_{q,h}$ is monotone for every $q\in\N$ and $h\in\R^d$, and so the Riemann-Stieltjes integral 
	\begin{equation*}
	\int_{a}^bfdg_{q,h}
	\end{equation*}
	exists (see \cite[\S 7]{Apostol}). Fix $N\in\N$, and relabel the elements of  $\{s_{i,j}\}_{i=1}^N$ in an increasing order. Let $\{x_i\}_{i=1}^N$ be so that
	\begin{equation*}
		\begin{split}
			&a=x_0<x_1<\cdots<x_N=b\\
			&x_{i-1}\le s_{i,j}\le x_i \qquad \forall 1\le i\le N.	
		\end{split}
	\end{equation*}	
	By definition of the Riemann-Stieltjes integral and by our construction, we have 
	\begin{equation}\label{eq:connecting_RSintegral_with_the_required_sum}
	\begin{split}
	\int_{a}^bfdg_{q,h}&=\lim_{N\to\infty}\sum_{i=1}^N f(s_{i,j})\left(g_{q,h}(x_i)-g_{q,h}(x_{i-1})\right)\\
	&=\sum_{i=1}^\infty\frac{\vol(V_{s_{i,j}\PP_j})L_{s_{i,j}\PP_j}(A_q+h)}{\vol(A_q)},
	\end{split}
	\end{equation}
	where the second equality holds because for fixed $q$ and $h$ the sum on the right-hand side is finite.
		We remark that even though almost every term in the sum established in \eqref{eq:connecting_RSintegral_with_the_required_sum} is equal to $0$, as $q$ increases and $h$ varies the quantity $L_{s_{i,j}\PP_j}(A_q+h)$ gets positive values for every $i$. 
	
	By Theorems \ref{thm: uniform patch frequencies} and \ref{thm: zero uniform patch frequency for patches}, the function
	\begin{equation*}
	g_\infty(x):=\lim_{q\to\infty}g_{q,h}(x)=\text{freq} (\PP_j,I(x),\SS)
	\end{equation*}
	is well defined, and the convergence is uniform in $h$. Observe that $f$ is bounded and that $g_\infty$ is monotone. By the first mean value theorem for Riemann-Stieltjes integrals (see \cite[Theorem 7.30]{Apostol}) and Theorem \ref{thm: zero uniform patch frequency for patches}, there exists a positive constant $\inf\{f(x)\}\le \widetilde{v}_j\le \sup\{f(x)\}$ for which
	\begin{equation}\label{eq:connecting_the_RSintegral_and_freq}
	\int_{a}^bfdg_\infty=\widetilde{v}_j(g_\infty(b)-g_\infty(a))=\widetilde{v}_j\text{freq}(\PP_j,I_j,\SS).
	\end{equation}
	
	In view of \eqref{eq:connecting_RSintegral_with_the_required_sum} and \eqref{eq:connecting_the_RSintegral_and_freq}, to finish the proof we therefore must show that the order of the limit and the integration can be switched, that is 
	\begin{equation}\label{eq: change limit and integral}
	\lim_{q\rightarrow \infty}	\int_{a}^bfdg_{q,h}= \int_{a}^bfd\left(\lim_{q\rightarrow \infty}g_{q,h}\right)=	\int_{a}^bfdg_\infty.
	\end{equation}
	Indeed, let us fix $q\in\N$. Using integration by parts (see \cite[\S 7.5]{Apostol}) we obtain
	\begin{equation}\label{eq:integration_by_parts_1}
	\int_{a}^bfdg_{q,h}= f(b)g_{q,h}(b)-f(a)g_{q,h}(a)-\int_{a}^bg_{q,h}df.
	\end{equation}
	Note that $g_{q,h}$ is a monotone, piecewise-constant function, and that $f$ is continuous. Then the integral on the right-hand side of \eqref{eq:integration_by_parts_1} exists, and can be viewed as a Lebesgue integral. Taking limits as $q\to\infty$ in \eqref{eq:integration_by_parts_1} we obtain
	\begin{equation}\label{eq: limit of integration by parts}
	\lim_{q\rightarrow \infty}\int_{a}^bfdg_{q,h}= f(b)g_\infty(b)-f(a)g_\infty(a)-\lim_{q\rightarrow \infty}\int_{a}^bg_{q,h}df.
	\end{equation}

	Since the patches $s_{i,j}\PP_j$ are of bounded diameter $\rho$ (which is related to $m$), we have
	\begin{equation*}
	g_{q,h}\le C(\rho),
	\end{equation*}
	where $C(\rho)$ is a constant that depends only on the tiling $\SS$ and on $\rho$. Therefore, by the Lebesgue's dominated convergence theorem and another integration by parts 
	\begin{equation}\label{eq: dominated convergence}
	\lim_{q\rightarrow \infty}\int_{a}^bg_{q,h}df=\int_{a}^bg_\infty df=g_\infty(b)f(b)-g_\infty(a)f(a)-\int_{a}^bfdg_\infty.
	\end{equation}
	Combining \eqref{eq: limit of integration by parts} and \eqref{eq: dominated convergence} we arrive at \eqref{eq: change limit and integral}, thus finishing the proof.
\end{proof}

\begin{lem}\label{lem: sufficient condition for unique ergodicity}
	Assume that for every $\varepsilon>0$, there exists $q_0\in\N$ so that for every $q\ge q_0$ and every $h\in\R^d$
	\begin{equation*}
	\sum_{j=q_0}^\infty \sum_{i=1}^\infty\frac{\vol(V_{s_{i,j}\PP_j})L_{s_{i,j}\PP_j}(A_q+h)}{\vol(A_q)}<\varepsilon.
	\end{equation*}
	Then Proposition \ref{prop: equation condition for unique ergodicity} holds.
\end{lem}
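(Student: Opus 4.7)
The plan is to split the double sum into a finite head and a tail that is uniformly controlled by the hypothesis, then apply Lemma \ref{lem: convergence to weighted frequencies} term-by-term to the head. Fix $\varepsilon>0$ and let $q_0$ be as in the hypothesis. For any $q\ge q_0$ and any $h\in\R^d$, write
\begin{equation*}
\sum_{j=1}^\infty \sum_{i=1}^\infty\frac{\vol(V_{s_{i,j}\PP_j})L_{s_{i,j}\PP_j}(A_q+h)}{\vol(A_q)}
= \Sigma^{(q_0)}_{q,h}+R^{(q_0)}_{q,h},
\end{equation*}
where $\Sigma^{(q_0)}_{q,h}$ is the partial sum over $j<q_0$ and $R^{(q_0)}_{q,h}$ is the tail over $j\ge q_0$. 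By assumption $R^{(q_0)}_{q,h}<\varepsilon$ for every $q\ge q_0$ and every $h$.

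For the head, since $\Sigma^{(q_0)}_{q,h}$ involves only finitely many indices $j$, Lemma \ref{lem: convergence to weighted frequencies} applied to each of the $q_0-1$ summands yields the existence of $q_1\ge q_0$ so that for every $q\ge q_1$ and every $h\in\R^d$,
\begin{equation*}
\left|\Sigma^{(q_0)}_{q,h}-\sum_{j=1}^{q_0-1}\widetilde{v}_j\,\emph{freq}(\PP_j,I_j,\SS)\right|<\varepsilon,
\end{equation*}
and the finiteness of the index set ensures that the uniformity in $h$ is preserved.

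Next I would verify that the limiting series $\sum_{j=1}^\infty\widetilde{v}_j\,\emph{freq}(\PP_j,I_j,\SS)$ actually converges and that its tail is also of order $\varepsilon$. Since every quantity $\vol(V_{s_{i,j}\PP_j})L_{s_{i,j}\PP_j}(A_q+h)$ is non-negative, Fatou's lemma for series of non-negative terms, combined with Lemma \ref{lem: convergence to weighted frequencies}, gives
\begin{equation*}
\sum_{j=q_0}^\infty \widetilde{v}_j\,\emph{freq}(\PP_j,I_j,\SS)
=\sum_{j=q_0}^\infty \lim_{q\to\infty}\sum_{i=1}^\infty\frac{\vol(V_{s_{i,j}\PP_j})L_{s_{i,j}\PP_j}(A_q+h)}{\vol(A_q)}
\le \liminf_{q\to\infty} R^{(q_0)}_{q,h}\le\varepsilon.
\end{equation*}
In particular the partial sums of $\sum_j\widetilde{v}_j\,\emph{freq}(\PP_j,I_j,\SS)$ are bounded, hence the series converges, and its tail beyond $q_0$ is at most $\varepsilon$. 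A triangle inequality combining the three estimates then yields
\begin{equation*}
\left|\sum_{j=1}^\infty\sum_{i=1}^\infty\frac{\vol(V_{s_{i,j}\PP_j})L_{s_{i,j}\PP_j}(A_q+h)}{\vol(A_q)} - \sum_{j=1}^\infty\widetilde{v}_j\,\emph{freq}(\PP_j,I_j,\SS)\right|<3\varepsilon
\end{equation*}
for every $q\ge q_1$ and every $h\in\R^d$. Since $\varepsilon$ is arbitrary, Proposition \ref{prop: equation condition for unique ergodicity} follows.

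The only conceptual point to be careful about is the uniformity in $h$: it enters the hypothesis, is preserved by Lemma \ref{lem: convergence to weighted frequencies}, and since the head involves only finitely many indices, it is not destroyed by summation. The rest is the triangle inequality together with a Fatou-type bound that guarantees the limiting series makes sense; both steps are essentially automatic given the hypothesis, so I do not anticipate any genuine obstacle.
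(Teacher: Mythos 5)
Your proof is correct and follows essentially the same route as the paper, which (citing Lee--Solomyak, Corollary 3.3) also combines Lemma \ref{lem: convergence to weighted frequencies} with a Fatou-type bound and the assumed tail estimate; your write-up merely spells out the head/tail decomposition and the uniformity in $h$ that the paper leaves as a sketch.
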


\begin{proof}
	The proof is very similar to the proof of \cite[Corollary 3.3]{Lee-Solomyak}. The key step is to use Lemma \ref{lem: convergence to weighted frequencies} in order to apply Fatou's lemma and bound from below the limit inferior of the expression on the left-hand side of \eqref{eq: equation condition for unique ergodicity}. The limit superior is then bounded from above using the assumed inequality. 
\end{proof}

We now use Lemmas \ref{lem: convergence to weighted frequencies} and \ref{lem: sufficient condition for unique ergodicity} to establish Proposition \ref{prop: equation condition for unique ergodicity}, from which unique ergodicity of the dynamical system follows, thus proving Theorem \ref{thm: X is uniquely ergodic}.

\begin{proof}[Proof of Proposition \ref{prop: equation condition for unique ergodicity}]
	Fix $r>0$ and enumerate all patches in $\SS$ that have diameter less than $r$. A patch $\PP$ is called \textit{$k$-special} if it occurs as a sub-patch of a $k$-supertile, with $k$ minimal, and we denote $\text{sp}(\PP)=k$. As shown in the proof of Lemma \ref{lem: uniform patch frequency for supertiles}, up to dilation there are finitely many patches that can be supported on a $k$-supertile. Group all inflations of a given patch together. 
	By the above and by Lemma \ref{lem: sufficient condition for unique ergodicity}, it is enough to show that 
	\begin{equation}\label{eq:goal_of_8.2_in_terms_of_sp(P)}
	\sum_{\text{sp}(\PP)>k}\sum_{i=1}^\infty \frac{L_{s_i\PP}(A_q+h)}{\vol(A_q)}
	\end{equation}
	can be made arbitrarily small, where $(s_i)_{i\ge 1}$ is an enumeration of the set of possible scales in which $\PP$ can appear. Namely, we need to show that for every $\varepsilon>0$ there exist $k_1,q_1>0$ such that for every $k\ge k_1$ and $q\ge q_1$ the quantity in \eqref{eq:goal_of_8.2_in_terms_of_sp(P)} is less than $\varepsilon$. 
	
	Indeed, since the support of every $k$-special patch intersects the boundary of some $k$-supertile, we have
	\begin{equation}\label{eq: first inequality in proof of unique ergodicity}
	\sum_{\text{sp}(\PP)>k}\sum_{i=1}^\infty L_{s_i\PP}(A_q+h)\le\#\left\{\PP\,:\,\text{diam}(\PP)\le r, \supp (\PP)\subset \bigcup_{T\in\mathcal{A}}\left(\partial(e^{k s} \supp T)\right)^{+r}\right\}
	\end{equation}
	where the union is taken over the set $\mathcal{A}$ of all tiles $T\in\SS$ such that $e^{k s}\supp(T)\subset (A_q+h)^{+r}$. The rest of the proof is now identical to that of \cite[Theorem 4.14]{Lee-Solomyak}. There exists a constant $C_r$ such that the right-hand side of \eqref{eq: first inequality in proof of unique ergodicity} is bounded by 
	\begin{equation}\label{eq: second inequality in proof of unique ergodicity}
	C_r \sum_{T\in\mathcal{A}} \vol \left(\partial(e^{k s} \supp T)\right)^{+r}.
	\end{equation}
	Given $\delta>0$, since $\left(e^{k s}\supp T\right)_{k\ge1}$ is van Hove, \eqref{eq: second inequality in proof of unique ergodicity} can be  bounded by 
	\begin{equation*}
	 C_r\delta\vol\left((A_q+h)^{+r}\right)=C_r\delta\vol\left((A_q)^{+r}\right)
	\end{equation*}
	for sufficiently large $k$. The sequence $\left(A_q\right)_{q\ge1}$ is also van Hove, and so
	\begin{equation*}
	\vol\left((A_q)^{+r}\right)\le(1+\delta)\vol A_q
	\end{equation*}
	for sufficiently large $q$. Therefore, for any sufficiently large $k$, for sufficiently large $q$
	\begin{equation*}
	\sum_{\text{sp}(\PP)>k}\sum_{i=1}^\infty \frac{L_{s_i\PP}(A_q+h)}{\vol(A_q)}\le C_r\delta(1+\delta).
	\end{equation*}
	Since the right-hand side is arbitrarily small, the proposition follows.
\end{proof}

\begin{remark}
		Unique ergodicity for certain fusion tilings was established in \cite{Frank-Sadun (ILC fusion)}, and in particular for their one dimensional construction, which as mentioned in Example \ref{ex: Kakutani 1/3} corresponds to the $\frac{1}{3}$-Kakutani tiling. We note that in contrast to their approach, the proof described here does not require reconizability. 
		
		We believe that the arguments in \S\ref{sec: patches} and \S\ref{sec: unique ergodicity} can be extended to the case of schemes with ``incommensurability of orientations'', in the sense of the pinwheel tilings. Presumably, an additional layer of isometries would be added to the information carried by the associated graph, and in the definition of patch frequencies, patches would be counted together with isometric copies within a neighborhood of the identity in the associated isometry group. More on this will appear in future work.
\end{remark}

\newpage

\section* {Appendix 1. Patches of multiscale substitution tilings}

\begin{figure}[ht!]
	\includegraphics[scale=0.71]{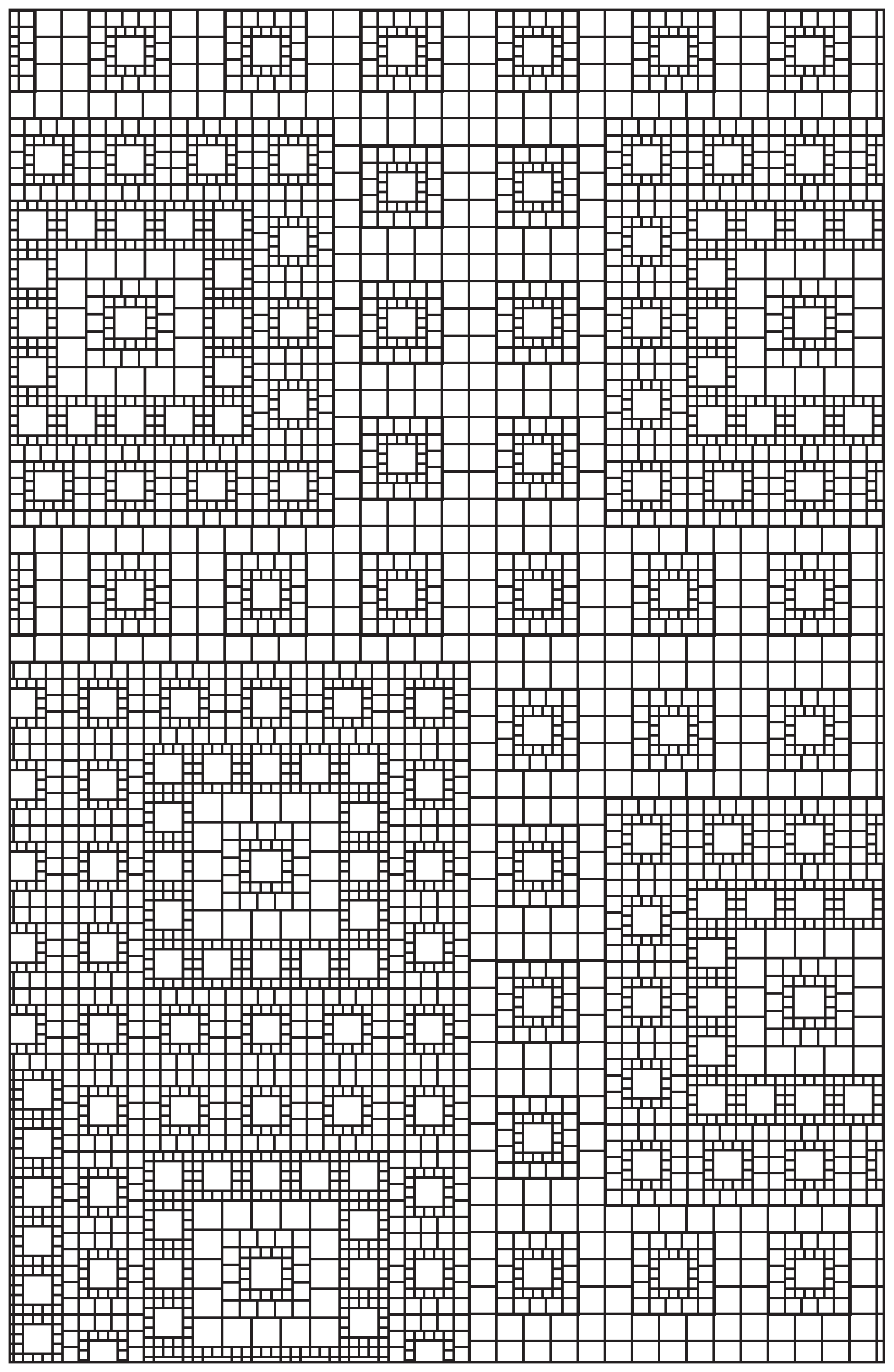}\caption{A fragment of an incommensurable multiscale substitution tiling of $\R^2$, generated by the square scheme illustrated in Figure \ref{fig: square scheme}.}	\label{fig:square full patch}
\end{figure}  

\newpage

\begin{figure}[ht!]
	\includegraphics[scale=0.71]{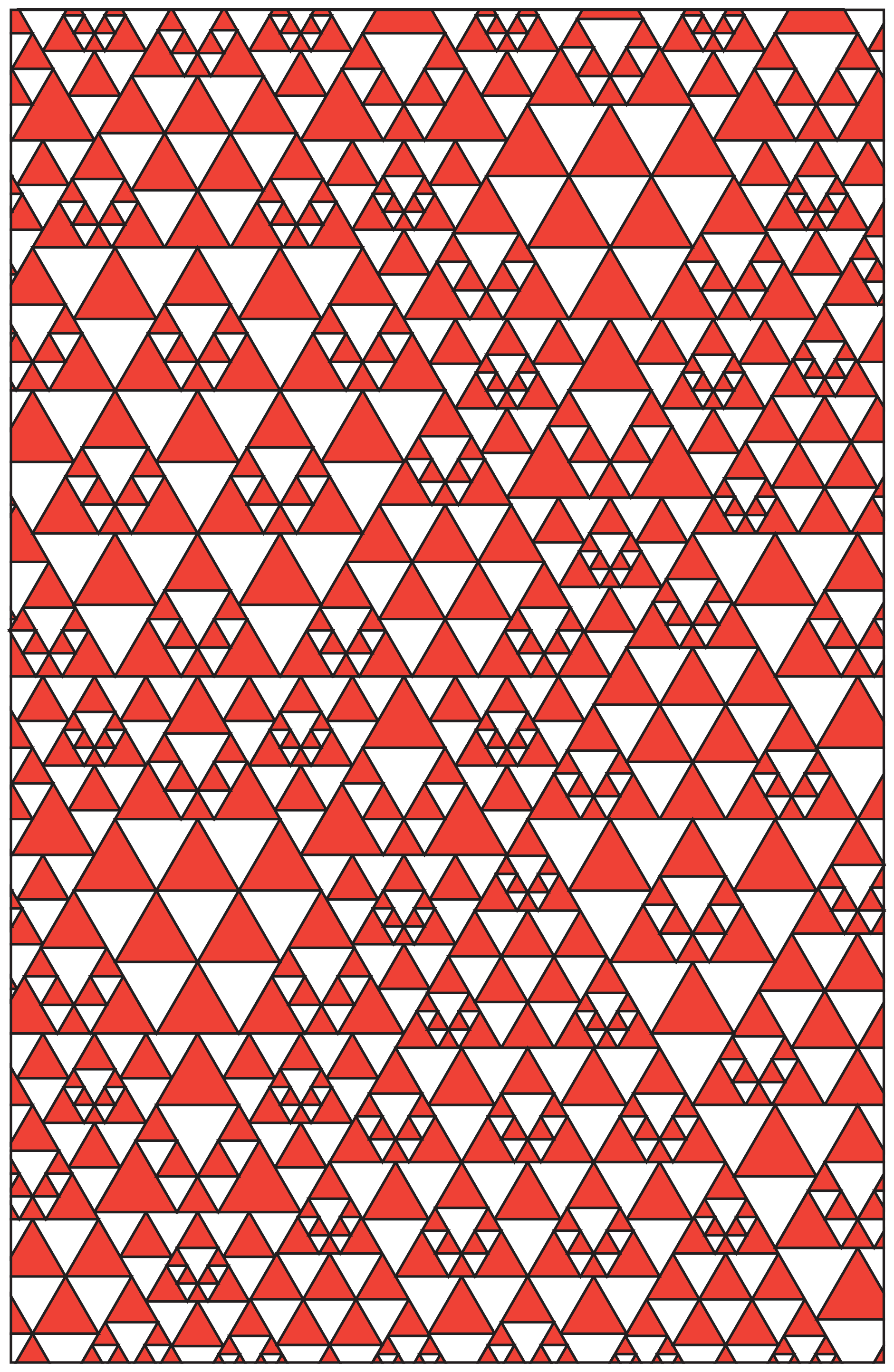}\caption{A fragment of an incommensurable multiscale substitution tiling of $\R^2$, generated by the triangles scheme illustrated in Figure \ref{fig:triangle substitution rule}.}	\label{fig:triangle patch}
\end{figure}

\newpage

\end{document}